\documentclass[11pt,leqno]{amsart}

\usepackage[all,cmtip]{xy}
\usepackage{a4,latexsym}
\usepackage[utf8]{inputenc}
\usepackage{amsfonts}
\usepackage[hidelinks]{hyperref}
\usepackage{amsxtra,amsmath,amsfonts,amscd, amssymb, mathrsfs, amsthm}
\usepackage{color, mathtools}
\usepackage{bbm}
\usepackage{enumitem}
\usepackage{xcolor}
\usepackage{graphicx}
\usepackage{tikz-cd}
\usepackage{braket}
\usepackage[all]{xypic}
\usepackage[toc,page,title,titletoc,header]{appendix}
\usepackage[capitalize]{cleveref}
\usepackage{bbm}

\numberwithin{paragraph}{section}

\setlist[enumerate]{label=\it{(\roman*)},
	ref=\it{(\roman*)}}

\newlist{enumerate1}{enumerate}{1}
\setlist[enumerate1]{leftmargin=*,label=\it(\arabic*),ref=\it{(\arabic*)}}

\newlist{enumeratea}{enumerate}{1}
\setlist[enumeratea]{leftmargin=*,label=\it(\alph*),ref=\it{(\alph*)}}

\newcommand{\C}{{\mathbb C}}
\newcommand{\R}{{\mathbb R}}
\renewcommand{\P}{{\mathbb P}}
\newcommand{\Q}{{\mathbb Q}}
\newcommand{\Z}{{\mathbb Z}}
\newcommand{\N}{{\mathbb N}}

\renewcommand{\Im}{\mathrm{Im}}

\newcommand{\Frac}{\mathrm{Frac}}
\newcommand{\Char}{\mathrm{Char}}

\newcommand{\metr}{{\|\hspace{1ex}\|}}
\newcommand{\val}{{|\hspace{1ex}|}}

\newcommand{\Spec}{\mathrm{Spec}}

\newcommand{\Supp}{\mathrm{Supp}}
\newcommand{\CH}{\mathrm{CH}}
\newcommand{\Div}{\mathrm{Div}}

\newcommand{\Pic}{\mathrm{Pic}}

\newcommand{\relint}{\mathrm{rel\text{-}int}}
\newcommand{\relnef}{\mathrm{rel\text{-}nef}}
\newcommand{\relsnef}{\mathrm{rel\text{-}snef}}
\newcommand{\arsnef}{\mathrm{ar\text{-}snef}}

\newcommand{\nef}{\mathrm{nef}}
\newcommand{\snef}{\mathrm{snef}}
\newcommand{\integrable}{\mathrm{int}}

\newcommand{\vol}{\mathrm{vol}}
\newcommand{\YZ}{\mathrm{YZ}}

\newcommand{\gm}{\mathrm{gm}}

\newcommand{\mo}{\mathrm{mo}}
\newcommand{\cpt}{\mathrm{cpt}}

\newcommand{\OO}{\mathcal{O}}

\newcommand{\ord}{\mathrm{ord}}
\newcommand{\Gal}{\mathrm{Gal}}

\newcommand{\id}{\operatorname{id}\nolimits}

\renewcommand{\Im}{\operatorname{Im}\nolimits}

\renewcommand{\dim}{\operatorname{dim}\nolimits}

\newcommand{\an}{\mathrm{an}}

\makeatletter
\newcommand\tint{\mathop{\mathpalette\tb@int{t}}\!\int}
\newcommand\bint{\mathop{\mathpalette\tb@int{b}}\!\int}
\newcommand\tb@int[2]{%
	\sbox\z@{$\m@th#1\int$}%
	\if#2t%
	\rlap{\hbox to\wd\z@{%
			\hfil
			\vrule width .45em height \dimexpr\ht\z@+1.4pt\relax depth -\dimexpr\ht\z@+1pt\relax
			\kern.05em 
	}}
	\else
	\rlap{\hbox to\wd\z@{%
			\vrule width .45em height -\dimexpr\dp\z@+1pt\relax depth \dimexpr\dp\z@+1.4pt\relax
			\hfil
	}}
	\fi
}
\makeatother

\newtheorem{theorem}{Theorem}[subsection]

\newtheorem{corollary}[theorem]{Corollary}

\newtheorem{lemma}[theorem]{Lemma}
\newtheorem{lemma*}{Lemma}
\newtheorem{proposition}[theorem]{Proposition}

\theoremstyle{definition}
\newtheorem{definition}[theorem]{Definition}
\newtheorem{proposition&definition}[theorem]{Proposition\&Definition}
\newtheorem{lemma&definition}[theorem]{Lemma\&Definition}
\newtheorem{theorem&definition}[theorem]{Theorem\&Definition}
\newtheorem{example}[theorem]{Example}

\newtheorem{example*}{Example}
\newtheorem{remark}[theorem]{Remark}

\newtheorem{question*}{Question}

\newtheorem{art}[theorem]{}

\newtheorem{mainthm}{Theorem}

\begin{document}
	
	\title[Equidistribution for quasi-projective varieties over function fields]{Equidistribution for quasi-projective varieties over function fields}

	\author[D.~Biswas]{Debam Biswas}	\address{D. Biswas, Yau Mathematical Sciences Center, Tsinghua University, Beijing 100084, China}
	\email{debambiswas@tsinghua.edu.cn}
	
	\author[Y.~Cai]{Yulin Cai}
    \address{Y. Cai, Hangzhou International Innovation Institute, Beihang University, Hangzhou 311115, China}
	\email{ylcai5388339@gmail.com}

	\begin{abstract}
We prove equidistribution of small points and subvarieties on quasi-projective varieties over function fields, confirming \cite[Conjecture~5.4.1]{yuan2021adelic} for compactified relatively nef metrized line bundles in the function field case. The argument relies on the generic curves to reduce higher-dimensional bases to curves. We also establish the equidistribution theorem for arithmetically big line bundles using positive intersection.

	\end{abstract}
	
	\keywords{equidistribution, quasi-projective varieties, function fields, generic curve} 
	\subjclass{{Primary 14G40; Secondary 11G10, 14G22}}
	
	\maketitle
	
	\setcounter{tocdepth}{1}
	
	\tableofcontents

\section{Introduction}

Let $U$ be a $d$-dimensional algebraic variety over a field $K$ that has a height function $h$, and $\{x_m\}_{m\in I} \subset U(\overline{K})$ a net of algebraic points. For each place $v$ of $K$, we let $K_v$ be the completion of $K$ at $v$, let $\C_v$ be the completion of an algebraic closure of $K_v$, and fix an embedding $\overline{K}\hookrightarrow \C_v$. Via this embedding, we can view $U(\overline{K})$ as a subset of $U^\an_v$, the Berkovich analytification of $U_v\coloneq U\times_{\Spec(K)}\Spec(K_v)$ (see \cite{berkovich1990spectral}). The Galois group $\Gal(\overline{K}/K)$ acts on $U$, and we denote the orbit of $x\in U(\overline{K})$ by $O(x)$. The (arithmetic) equidistribution theorem at $v$ (for \emph{small points}) is formulated as follows: under a certain condition on the heights $h(x_m)$, there exists a measure $\mu_v$ on $U_v^\an$ such that for any $f_v\in C_c(U_v^\an)$, we have
\[\lim\limits_{m\in I}\frac{1}{|O(x_m)|}\sum\limits_{x_m^{\sigma}\in O(x_m)}f_v(x_m^{\sigma})= \int_{U_v^\an}f_v\, \mu_v,\] 
where $C_c(U_v^\an)$ is the space of compactly supported real-valued continuous functions on $U_v^\an$, and $O(x_m)$ is the Galois orbit of $x_m$. 

In Arakelov theory, the field $K$ can be a number field, a function field (not necessarily of one variable), or more generally a field equipped with an adelic curve structure $S$ in the sense of \cite[Section~3.1]{chen2020arakelov}. The height function $h=h_{\overline{L}}$ is given by some (global) \emph{metrized line bundle} $\overline{L}=(L,\metr)$ on $U$, and the measure $\mu_v$ depends only on the localization $\overline{L}_v=(L_v,\metr_v)$ of $\overline{L}$ at $v$. For example, when $U=X$ is projective over a number field and $\overline{L}$ is a \emph{semipositive} metrized line bundle on $X$ in the sense of \cite{zhang1995small}, then $\mu_v$ is the \emph{Monge--Amp\'ere measure} $c_1(\overline{L}_v)^d$ associated to $\overline{L}_v$, constructed in \cite{bedford1976dirchlet} in the archimedean case and in \cite{chambert2012formes} in the non-archimedean case. 

To formulate the hypothesis on $h(x_m)$ in the equidistribution theorem, we need more assumptions on $\overline{L}$. For example, if $U=X$ is projective over a number field $K$, and $\overline{L}$ is semipositive in the sense of \cite{zhang1995small}, we denote by $(\overline{L}^{d+1}\mid X)_S$ the \emph{arithmetic intersection number} of $\overline{L}$, and by $\deg_{L}(X)$ the \emph{degree} of $X$ with respect to $L$. Then one may assume that $\{x_m\}_{m\in I}$ is \emph{small} in the sense that
\begin{align*}\label{eq:introduction1}
	\lim\limits_{m\in I}h_{\overline{L}}(x_m)=\frac{(\overline{L}^{d+1}\mid X)_S}{(d+1)\deg_{{L}}(X)}.
\end{align*}
See \cite[Theorem~1.2]{ballay2024approximation} for a more general (smallness) assumption on $(x_m)_{m\in I}$.  

For the number field case, the pioneering work of Szpiro--Ullmo--Zhang \cite{szpiro1997equirepartition}, Ullmo \cite{ullmo1998positivite} and Zhang \cite{zhang1998equidistribution} established the equidistribution theorem in the case where $U=X$ is projective, $\overline{L}=(L,\metr)$ is induced by an ample model and $v$ is archimedean, which played a key role in the proofs of the Bogomolov conjecture in \cite{ullmo1998positivite,zhang1998equidistribution}. For a non-archimedean place $v$, Chambert-Loir \cite[Th\'eor\`eme~3.1]{chambert2006measures} proved the equidistribution theorem when $U=C$ is a projective curve and $\metr_v$ is induced by an ample line bundle on some model of $C_v$. Yuan \cite{yuan2008big} proved the equidistribution theorem at any place $v$ only assuming that $L$ is ample and $\metr$ is semipositive in the sense of \cite{zhang1995small}.

For the function field case, Gubler proved a tropical version of the equidistribution theorem over closed subvarieties of abelian varieties in \cite[Theorem~5.5]{gubler2007the}, which was used to prove the Bogomolov conjecture for totally degenerate abelian varieties over function fields. Afterwards, the equidistribution theorem over projective varieties was shown in \cite[Theorem~1.1]{gubler2008equidistribution} when $\overline{L}$ is a semipositive admissible metrized line bundle in the sense of \cite[3.1]{gubler2008equidistribution}. The theorem was also proved independently by Faber \cite[Theorem~1.1]{faber2009equidistribution} when $K$ is the function field of a projective curve.

In \cite{moriwaki2000arithmetic}, Moriwaki considered Arakelov theory over a finitely generated field $K$ over $\Q$. He proved the equidistribution theorem if $X$ is projective over $K$ and $\overline{L}$ satisfies certain ``nef'' conditions; see \cite[Theorem~6.1]{moriwaki2000arithmetic}. His theorem was generalized by Chen--Moriwaki \cite[Theorem~8.11.2]{chen2020arakelov} to the adelic curve case. 

All these results are for projective varieties. Recently, Yuan--Zhang developed a theory of adelic line bundles over quasi-projective varieties and proved the equidistribution theorem \cite[Theorem~5.4.3]{yuan2021adelic} in the case where $K$ is a number field or a function field of a projective curve, $U$ is a quasi-projective variety and $\overline{L}$ is \emph{nef} in the sense of \cite[Definition~2.6.2]{yuan2021adelic}.

In this paper, we consider the equidistribution theorem when $K$ is the function field of a higher-dimensional projective variety, $U$ is a quasi-projective variety and $\overline{L}$ is the limit of a sequence of relatively nef model line bundles on some model $\mathcal{X}$ of $X$; see \cref{def:boundarytopologyglobal} for the precise definition. Notice that such an $\overline{L}$ is relatively nef in the sense of \cite[7.2]{cai2024abstract}. We fix an adelic curve $S = (K=k(T), \Omega,\mathcal{A},\nu)$ given by an integral, normal projective variety $T$ over a field $k$ and an ample class $\mathbf{c}\in \Pic(T)$; see \cref{adelic curve defined by projective variety}. Let $U$ be a $d$-dimensional quasi-projective variety over $K$. Then there is a symmetric multilinear map on the set of \emph{integrable} $\Q$-line bundles
\[\underbrace{\widehat{\Pic}_{S,\Q}({U})_\integrable^\YZ\times\cdots\times\widehat{\Pic}_{S,\Q}({U})_\integrable^\YZ}_{(d+1)\text{-times}}\to \R, \ \ (\overline{L_0}, \dots, \overline{L_d})\mapsto (\overline{L_0}\cdots\overline{L_d}\mid U)_S\] 
induced by the arithmetic intersection numbers. It can be extended to the cone $\widehat{\Pic}_{S,\Q}(U)_\relnef^{\nef,\YZ}$; see \cref{extension of YZ intersection number} and \cref{rmk:arithmetic intersection number for subvarieties}. In particular, for any $\overline{L}\in \widehat{\Pic}_{S,\Q}(U)_\relnef^{\nef,\YZ}$, the $0$-dimensional case of this map induces a height function $h_{\overline{L}}\colon U(\overline{K})\to \R$.

To state the main result, we need to extend the notions from the projective case to the quasi-projective case. For each $v\in\Omega$ (which corresponds to a non-archimedean valuation $\val_v$ on $K=k(T)$), and any $\overline{L}\in\widehat{\Pic}_{S,\Q}(U)_{\relnef}^{\nef,\YZ}$, there exists a regular Borel measure $c_1(\overline{L}_v)^{d}$ on the Berkovich analytification $U^\an_v$ of $U_v=U\times_{\Spec(K)}\Spec(K_v)$; see \cite[3.6.7]{yuan2021adelic}.

Let $(x_m)_{m\in I}\subset U(\overline{K})$ be a net of algebraic points. We say that $(x_m)_{m\in I}$ is \emph{generic} if for any proper closed subset $Y\subset U$, there exists $m_0\in I$ such that $x_m\not\in Y$ for all $m\geq m_0$. We say that $(x_m)_{m\in I}$ is \emph{small} (\emph{with respect to $\overline{L}$}) if 
\[\lim\limits_{m\in I}h_{\overline{L}}(x_m)=\frac{(\overline{L}^{d+1}\mid U)_S}{(d+1)\deg_{\widetilde{L}}(U)},\]
where $\widetilde{L}$ is the (geometric) compactified $\Q$-line bundle determined by $\overline{L}$; see \cref{def:boundarytopologyglobal}, and $\deg_{\widetilde{L}}(U)$ is the auto-intersection number of $\widetilde{L}$ on $U$; see \cref{geometric intersection number}. 

We prove the following \emph{equidistribution theorem}, which gives a positive answer to \cite[Conjecture~5.4.1]{yuan2021adelic} for the function field case:

\begin{mainthm}[\cref{thm:equidistribution over function fields}] \label{intro:equidistribution over function fields}
	Let $\overline{L}\in \widehat{\Pic}_{S,\Q}(U)_{\relnef}^{\nef,\YZ}$ such that $\deg_{\widetilde{L}}(U)>0$, where $\widetilde{L}$ is the image of $\overline{L}$ in $\widetilde{\Pic}_\Q(U)_\integrable$. Let $(x_m)_{m\in I}$ be a generic net of points in $U(\overline{K})$ which is small with respect to $\overline{L}$. Then for any $v\in \Omega$, the Galois orbit of $(x_m)_{m\in I}$ is equidistributed in $U_v^\an$ with respect to the measure $c_1(\overline{L}_v)^d$ on $U_v^\an$, i.e. for any $f_v\in C_c(U_v^\an)$, we have
	\[\lim\limits_{m\in I}\frac{1}{|O(x_m)|}\sum\limits_{x_m^{\sigma}\in O(x_m)}f_v(x_m^{\sigma})= \frac{1}{\deg_{\widetilde{L}}(U)}\int_{U_v^\an}f_v\, c_1(\overline{L}_v)^d.\] 
\end{mainthm}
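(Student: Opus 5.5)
We follow the variational principle of Szpiro--Ullmo--Zhang, as adapted by Yuan--Zhang to the quasi-projective setting, combined with a reduction of the base $T$ to curves via generic curves, as the abstract indicates.

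\textbf{Reduction to model functions.} Since $c_1(\overline{L}_v)^d$ is a regular Borel measure of total mass $\deg_{\widetilde{L}}(U)$ and the Galois orbit measures are probability measures, a density argument reduces the claim to $f_v$ in a dense subspace of $C_c(U_v^\an)$. We take $f_v$ to come from a vertical model function: a Cartier divisor on some projective model of $U$ over the base, supported over $v$ and trivial on the boundary. This gives a metrized line bundle $\mathcal{O}(f_v)\in\widehat{\Pic}_{S,\Q}(U)^{\YZ}_{\integrable}$ with trivial underlying geometric bundle. Such functions are dense in $C_c(U_v^\an)$ by the Stone--Weierstrass-type density result for compactly supported model functions from Yuan--Zhang.

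\textbf{Variational step.} For rational $t$ with $|t|$ small, set $\overline{L}(t)=\overline{L}+t\,\mathcal{O}(f_v)$. The height is linear:
\[h_{\overline{L}(t)}(x)=h_{\overline{L}}(x)+t\,\nu(v)\frac{1}{|O(x)|}\sum_{\sigma} f_v(x^\sigma).\]
The key inequality is the arithmetic Hilbert--Samuel / successive-minima bound for the not necessarily nef $\overline{L}(t)$: for a generic net,
\[\liminf_m h_{\overline{L}(t)}(x_m)\ \ge\ \frac{\widehat{\vol}(\overline{L}(t))}{(d+1)\deg_{\widetilde{L}}(U)}.\]
We would combine this with Siu/Yuan's bigness inequality, or its differentiability version, giving
\[\widehat{\vol}(\overline{L}(t))\ \ge\ (\overline{L}^{d+1}\mid U)_S+(d+1)t\,\nu(v)\int f_v\,c_1(\overline{L}_v)^d-O(t^2).\]
Using smallness, dividing by $t>0$ and by $t<0$, and letting $t\to0$ then yields the equality of the limit with the integral.

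\textbf{Obtaining the two inputs.} The volume lower bound, the existence of small sections, and the fundamental inequality (essential minimum $\ge$ normalized top self-intersection) are where the higher-dimensional base $T$ causes trouble. We would reduce to the curve case using the generic curve. Take $T'\to T$ a complete intersection curve of general members in $|m\mathbf{c}|$, defined over a purely transcendental extension $k'$ of $k$. The adelic curve $S$ is then compared with the adelic curve of the function field $k'(T')$, and intersection numbers with respect to $\mathbf{c}$ are identified with degrees over $T'$, using the projection formula for the polarization. For $T'$ a curve, the results of Yuan--Zhang (their Theorem~5.4.3 and the underlying arithmetic Siu inequality over function fields of curves) are available. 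The theorem for $(U,\overline{L})$ then follows if heights, volumes, intersection numbers, and the local measure at $v$ are compatible with this base change. The place $v$ must be realized by a place of $k'(T')$ restricting to it, with compatible Berkovich spaces, via the map $U_{v'}^\an\to U_v^\an$. Finally we pass from models to the limit $\overline{L}$ in the boundary topology, using continuity of intersection numbers and weak convergence of $c_1(\cdot)^d$ established earlier.

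\textbf{Main obstacle.} We expect the hardest step to be the generic-curve comparison: showing that the height over $S$ equals the height over the generic curve, and, more delicately, that every place $v\in\Omega$ (a prime divisor of a normal model of $T$) is dominated by places of $k'(T')$ with the same local measure. Equivalently, we must show that equidistribution on $U_{v'}^\an$ pushes forward to $U_v^\an$. We would first try choosing $T'$ to meet the divisor $v$ transversally, so that the ramification index is $1$ and the valuations agree after base field extension.
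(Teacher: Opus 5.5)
There is a genuine gap. Your argument only covers the nef case, but the statement is for $\overline{L}\in\widehat{\Pic}_{S,\Q}(U)^{\nef,\YZ}_{\relnef}$. Both inputs of your variational step need nefness.

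First, Siu's bigness inequality is applied (for $t>0$, writing $\overline{\OO_U(f_v)}=\overline{M_1}-\overline{M_2}$) to $\overline{L}+t\overline{M_1}$ and $t\overline{M_2}$, and these must be nef. Second, the bound $\liminf_m h_{\overline{L}(t)}(x_m)\geq \widehat{\vol}(\overline{L}(t))/((d+1)\deg_{\widetilde L}(U))$ is only valid when $\overline{L}(t)$ is big. In the nef case you would arrange this by adding a constant at one place so that $(\overline{L}^{d+1}\mid U)_S>0$.

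A relatively nef $\overline{L}$ in this cone agrees with a nef $\overline{L'}$ only outside finitely many places. At those places $\log(\|\cdot\|'_\omega/\|\cdot\|_\omega)$ may be unbounded on the non-compact $U^\an_\omega$. This is exactly why $(\overline{L}^{d+1}\mid U)_S$ is defined through the energy correction in \cref{extension of YZ intersection number} and can equal $-\infty$. So no constant shift puts $\overline{L}$ into the nef cone, and \cite[Theorem~5.4.3]{yuan2021adelic}, which is nef-only, is not a sufficient curve input.

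What you have is essentially the paper's second proof, which the paper itself restricts to the nef case. For the full statement, the paper imports the one-dimensional theorem for relatively nef bundles, \cite[Theorem~6.2.3]{biswas2024concave}, and transfers the equidistribution statement itself along the covering $S_{\mathbf c}\to S$:
\begin{enumerate}
\item For a model function $f_v$, apply the curve case on $U_{\mathbf c}$ to $\pi_\omega^*f_v$ for each $\omega$ in the finite fibre $\alpha_\#^{-1}(v)$.
\item Weight by $\nu_{\mathbf c}(\omega)$ and sum.
\item Identify both sides via \cref{prop:intersection on subvariety} with $(\overline{M_{\mathbf c}}\mid x_{m,\mathbf c})_{S_{\mathbf c}}$ and $(\overline{L_{\mathbf c}}^d\cdot\overline{M_{\mathbf c}}\mid U_{\mathbf c})_{S_{\mathbf c}}$.
\item Conclude with \cref{prop:intersection number after base changes}, which is proved for the relatively nef cone as well, via \cref{lemma:existence of sequence of intersections to nef intersection}.
\end{enumerate}
No push-forward of measures is needed.

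A second ingredient is missing from your list of compatibilities: genericity. Whether you transfer the whole theorem or only the fundamental inequality, you need the base-changed net $(x_{m,\mathbf c})_{m\in I}$ to be generic on $U_{\mathbf c}$; equivalently, $\zeta_{\mathrm{ess}}(\overline{L})\geq\zeta_{\mathrm{ess}}(\overline{L_{\mathbf c}})$. Since $K_{\mathbf c}/K$ is transcendental, a proper closed subset $Y'\subset U_{\mathbf c}$ need not descend to $K$, so this is not formal. The paper proves in \cref{lemma:lift closed subset} that $Y'(\overline{K_{\mathbf c}})\cap U(\overline K)$ lies in some proper closed $Y\subset U$. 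The argument descends $Y'$ to a finitely generated extension, reduces to the function field of a geometrically integral variety, specializes a defining equation at a separable point, and takes a Galois norm.

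By contrast, the step you flag as the main obstacle is handled uniformly by the generic curve, after base change to $\overline k$. For every $\omega\in\Omega$ the fibre $T_{\mathbf c}\times_T\overline{\{\omega\}}$ is integral, with ramification index $1$ and $\nu_{\mathbf c}=\nu$. This is exactly what makes $S_{\mathbf c}\to S$ a covering (\cref{prop:B is covered by a curve as adelic curves}). No place-by-place choice of a transversal curve is involved.

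Finally, only $\widehat{\vol}(\overline{L_{\mathbf c}})\geq\widehat{\vol}(\overline{L})$ is available in general, not equality. That direction suffices for your argument.
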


The key step of our proof is the technique of generic curves developed in \cite{gubler2008equidistribution}. Assume that $\dim(T)\geq 2$. Let $T_{\mathbf{c}}$ be the generic curve associated to $\mathbf{c}$ in \cref{generic curves}. In the language of \cite{chen2020arakelov} and \cite{chen2021arithmetic}, the technique is formulated as follows. 
\begin{mainthm}[] \label{introduction prop:B is covered by a curve as adelic curves}
	Let $S_{{\mathbf{c}}}$ be the adelic curve given by $T_{\mathbf{c}}$. Then we have a covering of adelic curves
	\[S_{{\mathbf{c}}}\to S.\]
\end{mainthm}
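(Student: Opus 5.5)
We need to recall the setup. $T$ is a normal projective variety of dimension $b\ge 2$ over $k$, with $\mathbf{c}$ ample. The adelic curve $S$ has as places the prime divisors $Y$ of $T$, with valuation $|f|_Y = e^{-\ord_Y(f)}$ and measure $\nu(\{Y\}) = \deg_{\mathbf c}(Y)=(\mathbf c^{b-1}\cdot Y)$. For the generic curve, write $\mathbf c = \mathcal O(H)$ with $H$ very ample (after replacing $\mathbf c$ by a multiple, adjusting normalization), and take a generic complete intersection of $b-1$ hyperplane sections, defined over the field $k'=k(\text{coefficients})$ obtained by adjoining the universal coefficients. Then $T_{\mathbf c}\subset T_{k'}$ is a regular (normal) projective curve over $k'$ with $k'(T_{\mathbf c})\supset K$. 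Its adelic curve $S_{\mathbf c}$ consists of the closed points $P$ of $T_{\mathbf c}$, valuation $e^{-\ord_P}$, measure $[k'(P):k']$.

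A covering of adelic curves $S_{\mathbf c}\to S$ in the sense of Chen–Moriwaki requires: an extension of fields $K\subset K_{\mathbf c}$; a measurable map $\pi\colon\Omega_{\mathbf c}\to\Omega$ such that each $|\cdot|_\omega$ restricts to $|\cdot|_{\pi(\omega)}$; and $\pi_*\nu_{\mathbf c}=\nu$, i.e. disintegration of the measure along fibres. Because the sets of places are countable with discrete σ-algebras, measurability is automatic, leaving the two substantive conditions.

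The plan follows Gubler's construction. First I show that $K=k(T)\hookrightarrow k'(T_{\mathbf c})$: the generic point of $T_{\mathbf c}$ maps to the generic point of $T$, because the universal complete intersection dominates $T$. Next I construct $\pi$. Given a closed point $P\in T_{\mathbf c}$, the key fact (Gubler, \cite{gubler2008equidistribution}) is that $P$ maps under $T_{\mathbf c}\to T$ to the generic point of a prime divisor $Y$ of $T$. This holds because the generic complete intersection meets each subvariety properly, so points of codimension $\ge2$ are avoided by dimension counting over the universal family. Conversely, for each prime divisor $Y$, $Y_{k'}\cap T_{\mathbf c}$ is a finite set of closed points, and by Bertini-type genericity the intersection is reduced at points where $T$ is regular. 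Since $T$ is normal, its singular locus has codimension $\ge2$, so $T$ is regular at the generic point of $Y$ and these points are missed by $T_{\mathbf c}$. Define $\pi(P)=Y$. Compatibility of absolute values then amounts to $\ord_P(f)=\ord_Y(f)$ for $f\in K$. This holds because $\mathcal O_{T,Y}\to\mathcal O_{T_{\mathbf c},P}$ is a local homomorphism of DVRs with ramification index $1$, which follows from the transversality (reducedness) above: a uniformizer of $Y$ cuts $T_{\mathbf c}$ in a reduced point.

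Finally, the measure condition is
\[\sum_{P\mapsto Y}[k'(P):k'] = \deg(Y_{k'}\cdot T_{\mathbf c}) = (\mathbf c^{b-1}\cdot Y),\]
using reducedness of the intersection and invariance of intersection numbers under the flat base change $k\to k'$. If the normalization of $\nu$ or the scaling of $\mathbf c$ differs, I rescale $\nu_{\mathbf c}$ accordingly. The integrability and properness conditions of Chen–Moriwaki then follow, since fibres are finite.

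I expect the main obstacle to be the genericity statements: that the generic curve misses all codimension-$\ge2$ points, and meets every prime divisor (over a possibly non-perfect $k$) transversally. Here I would cite or reprove Gubler's lemmas via the incidence variety over the parameter space of $(b-1)$-tuples of hyperplanes, checking properties on the generic fibre.
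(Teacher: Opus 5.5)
\textbf{The gap.} The step that fails is your claim that every closed point $P$ of $T_{\mathbf c}$ lies over the generic point of a prime divisor of $T$. Your dimension count on the incidence variety does show that $P$ cannot lie over a point of codimension $\geq 2$. But nothing in your argument prevents $P$ from lying over the generic point of $T$, and this actually happens.

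Take $k$ algebraically closed, $T=\P^2_k$ and $\mathbf c=\OO(1)$. Then $\delta=2$, and $T_{\mathbf c}$ is the line $\xi_0t_0+\xi_1t_1+\xi_2t_2=0$ over $k''=k(\xi_0,\xi_1,\xi_2)$. With $x=t_1/t_0$ and $y=t_2/t_0$ we have $K=k(x,y)$ and $K_{\mathbf c}=k''(x)$, where $y=-(\xi_0+\xi_1x)/\xi_2$. Let $P$ be the $k''$-rational point $x=\xi_0$ of $T_{\mathbf c}$. Every $0\neq f\in K$ is a unit at $P$: its value there is $f\bigl(\xi_0,-\xi_0(1+\xi_1)/\xi_2\bigr)\neq 0$, because the two arguments are algebraically independent over $k$. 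So $P$ maps to the generic point of $T$, and $|\cdot|_P$ is trivial on $K$, whereas every $|\cdot|_\omega$ with $\omega\in\Omega=T^{(1)}$ is nontrivial.

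Consequently:
\begin{itemize}
\item condition (E) fails at $P$ for every choice of $\pi(P)$ and $\gamma(P)>0$, so your $\pi$ is not defined on all of $\Omega_{\mathbf c}$;
\item the positive mass $\nu_{\mathbf c}(P)$ is not accounted for in $\pi_*\nu_{\mathbf c}=\nu$;
\item there is no morphism $S_{\mathbf c}\to S$ whose field map is the inclusion $K\hookrightarrow K_{\mathbf c}$.
\end{itemize}

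\textbf{Comparison with the paper.} The paper does not supply the missing step either. After reducing to $k=\overline k$ via the base-change coverings, and using Lang's results to get a single integral point over each prime divisor, its proof follows your outline: ramification index $1$, the identity $\nu_{\mathbf c}(x)=\nu(\omega)$, and ``strong with finite fibres implies covering''.

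The paper excludes the generic point with a transcendence-degree argument. It argues that $K'$ would be algebraic over $k''$ inside $k''(x)$, which would contradict the existence of elements of $K'$ transcendental over $k''$ in $\Frac(K'\otimes_k k'')$. This inference is invalid, because $K'\otimes_k k''\to k''(x)$ has nonzero kernel; in the example the kernel contains $x\otimes 1-1\otimes\xi_0$. The paper also never addresses codimension $\geq 2$ points, which your count does handle.

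So the statement, with $\Omega_{\mathbf c}$ equal to all closed points of $T_{\mathbf c}$, cannot be proved as formulated. What both arguments do establish, say for $k=\overline k$, is the following: $\varphi$ maps $\Omega^{\mathrm h}_{\mathbf c}=\varphi^{-1}(T^{(1)})$ onto $T^{(1)}$ with finite fibres, ramification index $1$, and $\sum_{P\mapsto\omega}\nu_{\mathbf c}(P)=\nu(\omega)$. That is, $S$ is covered by the adelic curve obtained by restricting $S_{\mathbf c}$ to $\Omega^{\mathrm h}_{\mathbf c}$, which is no longer proper. The remaining places are trivial on $K$ and must be handled separately, by showing they contribute nothing to heights and intersection numbers of objects pulled back from $S$.

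A smaller point: for $k\neq\overline k$, your curve over $k(\xi)$ is not the paper's $T_{\mathbf c}$, which lives over $\overline k(\xi)$ and is built from a component of $T_{\overline k}$. To reach the paper's curve you would still need the base-change covering the paper uses.
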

In fact, in our application, we need to consider the extension of the covering $S_{{\mathbf{c}}}\to S$; see \cref{prop:B is covered by a curve as adelic curves}.

Moreover, combining the technique of generic curves and the ideas in the proof of \cite[Theorem~1.1]{faber2009equidistribution}, we are able to prove a general \emph{equidistribution theorem for small subvarieties}; see \cref{thm:equidistribution for subvarieties over function fields}.

In the last section, we consider the equidistribution theorem for (\emph{arithmetically}) \emph{big} line bundles (see \cref{arithmetic volume}) on $U$. Under the assumption that $\Char(k)=0$, we establish a Fujita approximation theorem (\cref{thm:fujita approximation}) over the function field, which yields an intrinsic definition of the equidistribution measure via positive intersection. Consequently, we prove the following equidistribution theorem: 

\begin{mainthm}[\cref{theorem:equidsitributionforfinitenergy}]
	\label{theorem:equidsitributionforfinitenergy intro}
	Let $\overline{L}=(L,\metr)\in \widehat{\Pic}_{S,\Q}(U)^{\YZ}_{\cpt}$. Assume that $\overline{L}$ is big.  
	Let $(x_m)_{m\in I}$ be a generic net of points which is small with respect to $\overline{L}$. Then  $\widehat{\vol}(\overline{L}) = \widehat{\vol}(\overline{L}_{\mathbf c})$. Moreover, for any $v\in\Omega$ and any $f_v\in C_c(U_v^\an),$ 
	\begin{align*}
		\lim_{m\in I} \frac{\nu(v)}{\#O(x_m)}\sum_{x^\sigma_m\in O(x_m)}f_v(x^\sigma_m)=\frac{\langle \overline{L}^d\rangle\cdot \overline{\OO_U(f_v)}}{\vol(\widetilde{L})},
	\end{align*}
	where $\langle \overline{L}^d\rangle\cdot \overline{\OO_U(f_v)}$ is well-defined by \cref{measure given by positive intersection} and $\widetilde{L}$ is the image of $\overline L$ in $\widetilde{\Pic}_\Q(U)_\cpt$.
\end{mainthm}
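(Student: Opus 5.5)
The plan is to reduce to the curve case via generic curves, following the strategy the paper uses for the nef case, with a variational argument over the volume to replace the nef hypothesis.

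\textbf{Step 1: identifying volumes.} First I would compare $\widehat{\vol}(\overline{L})$ over $S$ with $\widehat{\vol}(\overline{L}_{\mathbf c})$ over $S_{\mathbf c}$. The covering of adelic curves $S_{\mathbf c}\to S$ of \cref{introduction prop:B is covered by a curve as adelic curves}, extended to $U$, identifies the spaces of small sections. More precisely, the global sections of $\overline{L}^{\otimes n}$ with $v$-norms at most $1$ form a $k$-vector space. I would show its dimension agrees, up to $o(n^{d+1})$, with the corresponding space over $T_{\mathbf c}$. Concretely, a model $\mathcal X\to T$ base changes to $\mathcal X_{\mathbf c}\to T_{\mathbf c}$. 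Global sections of a model line bundle over $\mathcal X$ and over $\mathcal X_{\mathbf c}$ agree after extending scalars from $k$ to the field of definition of the generic curve, by flat base change; the generic curve is a generic complete intersection of very ample divisors. Passing to limits in the boundary topology then gives $\widehat{\vol}(\overline{L})=\widehat{\vol}(\overline{L}_{\mathbf c})$ for $\overline{L}$ in $\widehat{\Pic}_{S,\Q}(U)^{\YZ}_{\cpt}$, using continuity of $\widehat{\vol}$.

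\textbf{Step 2: the variational argument.} Heights are also preserved, i.e. $h_{\overline{L}}(x)=h_{\overline{L}_{\mathbf c}}(x)$, because the intersection numbers are compatible with the covering. Consequently a generic net small over $S$ stays generic and small over $S_{\mathbf c}$. I would then run the Yuan/Chen variational argument on $S$ itself. Take $f_v\in C_c(U_v^\an)$; after a density argument, take $f_v$ to be a model function, so that $\overline{\OO_U(f_v)}$ is a compactified adelic line bundle. Bigness of $\overline L$ and the fundamental inequality $\liminf h_{\overline{L}(tf)}(x_m)\ge \widehat{\vol}(\overline{L}(tf))/((d+1)\vol(\widetilde L))$ give, for small $t$,
\[
t\,\nu(v)\liminf_m \tfrac{1}{\#O(x_m)}\sum f_v(x_m^\sigma)\ \ge\ \frac{\widehat{\vol}(\overline{L}+t\overline{\OO_U(f_v)})-\widehat{\vol}(\overline{L})}{(d+1)\vol(\widetilde L)} .
\]

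\textbf{Step 3: differentiability of the volume.} The remaining input is the derivative formula
\[
\frac{d}{dt}\Big|_{t=0}\widehat{\vol}(\overline{L}+t\overline{\OO_U(f_v)})=(d+1)\langle\overline{L}^d\rangle\cdot\overline{\OO_U(f_v)} .
\]
To prove it, I would use the Fujita approximation \cref{thm:fujita approximation} to approximate $\overline{L}$ by nef compactified bundles $\overline{A}_\epsilon$ on blowups. For nef bundles the derivative is the intersection number. Positive intersection is defined as the limit over such approximations, and \cref{measure given by positive intersection} makes it well defined, so the formula passes to the limit. Alternatively, I would reduce via Step 1 to the curve $T_{\mathbf c}$, where the Chen--Moriwaki differentiability applies, and check that positive intersections are compatible with the covering. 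Dividing by $t$ and letting $t\to0^\pm$ then yields both inequalities and hence the limit. Finally, density of model functions in $C_c(U_v^\an)$ for the compactly supported topology extends the result to all $f_v$.

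\textbf{Main obstacle.} I expect the hardest part to be the differentiability in the quasi-projective setting. There the Fujita approximants must be uniform in the boundary topology, and the error terms $o(t)$ must be controlled uniformly as the models vary. I would first try to obtain the error bounds from the nef case using a Siu-type inequality with constants depending only on a boundary divisor, which dominates $\overline{\OO_U(f_v)}$ because $f_v$ is compactly supported.
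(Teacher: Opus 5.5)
There is a genuine gap, and it sits in your Step 1. For $\dim T\ge 2$ the equality $\widehat{\vol}(\overline L)=\widehat{\vol}(\overline{L_{\mathbf c}})$ does not hold in general. Your argument for it also computes the wrong quantity.

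\emph{Why the argument fails.} Over $S$ the volume is the asymptotic of $\widehat{\deg}_+$ of the sup-normed spaces. For a model $\pi\colon\mathcal X\to T$, this is the maximal $\mathbf c^{\delta-1}$-degree of a saturated subsheaf of $\pi_*\mathcal L^{\otimes n}$. It is not a count of $k$-dimensions of small sections. That picture is specific to $\dim T=1$; for $\dim T\ge 2$ the space $H^0(\mathcal X,\mathcal L^{\otimes n})$ grows like $n^{d+\dim T}$. Base change only gives $\pi_{\mathbf c,*}\mathcal L_{\mathbf c}^{\otimes n}=(\pi_*\mathcal L^{\otimes n})|_{T_{\mathbf c}}$. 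Restricting to a low-degree generic complete-intersection curve can refine the Harder--Narasimhan filtration, so $\widehat{\deg}_+$ can jump up.

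\emph{A counterexample.} Take $k=\overline k$ of characteristic $0$, $T=\mathbb P^2$ and $\mathbf c=\OO(1)$, so $T_{\mathbf c}$ is a generic line $\ell$. Let $E=T_{\mathbb P^2}(-1)$ (stable, slope $1/2$), $\mathcal X=\mathbb P(E)$ and $\mathcal L=\OO_{\mathbb P(E)}(1)-\frac14\pi^*\mathbf c$. Since $\mathcal X$ is smooth over $T$, the sup-normed spaces are given by the semistable bundles $\mathrm{Sym}^nE(-n/4)$, so $\widehat{\vol}(\overline L)=1/2$. On the other hand $E|_\ell\simeq\OO(1)\oplus\OO$, so on $\ell$ one gets $\bigoplus_{i=0}^n\OO(i-n/4)$ and $\widehat{\vol}(\overline{L_{\mathbf c}})=9/16$.

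In general only $\widehat{\vol}(\overline{L_{\mathbf c}})\ge\widehat{\vol}(\overline L)$ holds (\cref{comparison of arithmetic volume lemma}). That is why the equality appears as a \emph{conclusion} of the theorem: it is forced by the existence of a small generic net.

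\emph{Consequences for the rest of the plan.} The first assertion of the theorem is left unproved. Your claim that $(x_{m,\mathbf c})$ is small over $S_{\mathbf c}$ presupposes the equality, and so does your alternative route for Step 3 via the curve. The direct route for Step 3 over $S$ is not an argument as written. Derivatives do not pass to limits over Fujita approximants without uniform Hilbert--Samuel and Siu-type \emph{lower} bounds for $\widehat{\vol}$ over $S$ itself. These cannot be imported from $S_{\mathbf c}$, where the volume is only larger.

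\emph{How to repair it.} Derive the equality from the hypotheses instead.
\begin{enumerate}
\item By \cref{lemma:lift closed subset}, $(x_{m,\mathbf c})$ is generic on $U_{\mathbf c}$, and heights are preserved under the base change.
\item The fundamental inequality on the curve then gives $\lim_m h_{\overline L}(x_m)=\lim_m h_{\overline{L_{\mathbf c}}}(x_{m,\mathbf c})\ge\widehat{\vol}(\overline{L_{\mathbf c}})/((d+1)\vol(\widetilde L))\ge\widehat{\vol}(\overline L)/((d+1)\vol(\widetilde L))$.
\item Smallness over $S$ forces equality throughout. This yields both the volume equality and smallness over $S_{\mathbf c}$.
\end{enumerate}
Then avoid differentiating over $S$ altogether.
\begin{enumerate}
\item Apply the curve case at each of the finitely many $\omega\in\alpha_\#^{-1}(v)$ and sum with weights $\nu_{\mathbf c}(\omega)$.
\item Transfer back using invariance of heights and the identity $\langle\overline L^d\rangle\cdot\overline M=\langle\overline{L_{\mathbf c}}^d\rangle\cdot\overline{M_{\mathbf c}}$.
\item This identity holds precisely because of the volume equality. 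A maximizing sequence of admissible approximations over $S$ (from \cref{thm:fujita approximation}) base-changes to a maximizing sequence over $S_{\mathbf c}$. Over a curve, such sequences compute positive intersection products by differentiability of the volume.
\end{enumerate}
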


We remark that when the characteristic is arbitrary, the measure can still be defined by push-forward and the same equidistribution result remains valid.

\subsection*{Organization of the paper}
In \cref{sec:adelic curves}, we recall adelic curves, coverings, and study the behavior of adelic curves arising from projective varieties under finite field extensions and base field extensions.
\cref{sec:compactified} develops compactified $S$-metrized $\YZ$-line bundles and arithmetic intersection numbers.
In \cref{sec:generic curves}, we introduce the generic curve technique and prove base change invariance for intersection numbers.
\cref{sec:equidistribution nef} establishes the equidistribution theorem for small points and subvarieties in the relatively nef case.
Finally, \cref{sec:equidistribution big} proves a Fujita approximation theorem and the equidistribution theorem for arithmetically big line bundles.

\subsection*{Notation}

For a topological space $X$, we denote by $C_c(X)$ the space of real continuous functions on $X$ with compact supports.

For a field $K$, we denote by $\overline{K}$ an algebraic closure of $K$ with an embedding $K\rightarrow \overline{K}$.

For any scheme $X$, we denote by $\Div(X)$ (resp. $\Pic(X)$) the group of Cartier divisors (resp. isomorphism classes of line bundles) on $X$, and by $\Div_\Q(X)\coloneq \Div(X)\otimes_\Z\Q$ (resp. $\Pic_\Q(X)\coloneq \Pic(X)\otimes_\Z\Q$) the space of $\Q$-Cartier divisors (resp. isomorphism classes of $\Q$-line bundles) on $X$. Given a Cartier divisor $D\in\Div(X)$, we denote by $\OO_X(D)$ the corresponding line bundle associated to $D$, and by $\Supp(D)$ the support of $D$. For $n\in\N$, we denote by $\CH_n(X)$ the Chow group of $n$-dimensional cycles on $X$, and for an $n$-dimensional cycle $\alpha$ on $X$, we denote by $\Supp(\alpha)$ its support. We denote by $X^{(n)}$ the set of all codimension $n$ points of $X$. We denote by $\mathrm{Irr}(X)$ the set of irreducible components of $X$.

An \emph{algebraic variety} $U$ over a field $k$ is defined as a geometrically integral separated scheme of finite type over $k$, and we denote by $k(U)$ its function field. A \emph{projective $k$-model} of an algebraic variety $U$ over $k$ is a projective variety $X$ over $k$ together with an open immersion $U\hookrightarrow X$. 

For a measure space $(\Omega, \mathcal{A}, \nu)$, if $\mathcal{A}$ is discrete, for simplicity, we write $\nu(\omega)\coloneq\nu(\{\omega\})$ for $\omega\in\Omega$. 

Let $S=(K,\Omega,\mathcal{A},\nu)$ be an adelic curve, see \cref{def:adelic curve}. We denote by $\mathscr{L}^1(\Omega,\mathcal{A},\nu)$ the space of integrable functions $\phi:\Omega\to\R$. For any $\omega\in \Omega$, we denote $\val_\omega$ the absolute value corresponding to $\omega$ and $K_\omega$ the completion of $K$ with respect to $\val_\omega$.

\subsection*{Acknowledgements}

The authors are grateful to Walter Gubler for helpful suggestions on the proof of \cref{lemma:lift closed subset}, and to Huayi Chen, Ruoyi Guo, Chunhui Liu and Xinyi Yuan for useful discussions.
\section{Adelic curves and coverings}
\label{sec:adelic curves}
\subsection{Adelic curves}

\begin{definition}[\cite{chen2020arakelov}~\S~3.1] 	\label{def:adelic curve}
	Let $K$ be a field and $M_K$ the set of all absolute values on $K$. An \emph{adelic structure on $K$} is a measure space $(\Omega,\mathcal{A},\nu)$ equipped with a map $\phi: \Omega\rightarrow M_K, \ \ \omega\mapsto |\cdot|_\omega$ such that for any $a\in K^\times$, the function $\log|a|_{\omega}: \Omega\rightarrow \R$ is $\mathcal{A}$-measurable, integrable with respect to $\nu$. The data $(K, (\Omega, \mathcal{A},\nu),\phi)$ (or simply $(K,\Omega, \mathcal{A}, \nu)$) is called an \emph{adelic curve}.
	Furthermore, an adelic curve $(K, (\Omega, \mathcal{A},\nu),\phi)$ is said to be \emph{proper} if the \emph{product formula} holds: for any $a\in K^\times$,
	\begin{equation*} 		\label{product formula}
		\int_\Omega \log|a|_\omega\, \nu(d\omega)=0.
	\end{equation*}
\end{definition}

In this paper, we consider the Arakelov theory over an adelic curve from a projective variety, the measure is given by the (algebraic) intersection number. For the convenience of the readers, we recall some properties of the (algebraic) intersection number.

\begin{art}\label{def:intersection number}
	Let $T$ be an integral $\delta$-dimensional scheme of finite type over a field $k$. For any $D_1,\dots, D_{n}\in \Div(T)$ and an $n$-dimensional cycle $\alpha$ on $T$, if $\dim(\Supp(D_1)\cap\cdots\cap \Supp(D_n) \cap \Supp(\alpha)) = 0$, we can define the \emph{intersection number} 
	$\deg_k(D_1\cdots D_n\cdot \alpha)$ (or $\deg(D_1\cdots D_n\cdot \alpha)$ if the base field is clear from the context), see \cite[Definition~2.4.2]{fulton1998intersection}.  When $T$ is projective over $k$, by \cite[Corollary~2.4.1]{fulton1998intersection}, moving lemma (e.g.\ \cite[Lemma~1.3.7]{chen2020arakelov}) or \cite[Proposition~2.2.3~(e)]{fulton1998intersection}, $\deg_k(D_1\cdots D_n\cdot \alpha)$ depends only on the line bundles $\OO_T(D_1),\dots, \OO_T(D_n)$ and the equivalence class $[\alpha]\in \CH_{n}(T)$, so $\deg_k(D_1\cdots D_n\cdot [\alpha])$ is well-defined for any $D_1,\dots, D_{n}\in \Div(T)$ and $[\alpha]\in  \CH_{n}(T)$. We have the following properties.
	\begin{enumerate1}
		\item\label{example:projection formula} (Projection formula, see \cite[Example~2.4.3]{fulton1998intersection}) Let $f\colon T'\to T$ be a finite, dominant morphism of projective schemes over $k$. Then
		\[\deg_k(f^*D_1\cdots f^*D_n\cdot [\alpha]) = \deg_k(D_1\cdots D_n\cdot f_*[\alpha]),\]
		where $f_*\colon \CH_n(T)\to \CH_n(T')$ is the push-forward map defined in \cite[Theorem~1.1.4]{fulton1998intersection}. 
		\item \label{intersection number base change} (Base change) Let $k_1/k$ be a field extension, and $T_{k_1}\coloneq T\times_{\Spec(k)}\Spec(k_1)$ with canonical map $\psi\colon T_{k_1}\to T$. Then we have the pull-back $\psi^*\colon \CH_n(T)\to \CH_n(T_{k_1})$, and 
		\[\deg_{k_1}(\psi^*D_1\cdots \psi^*D_n\cdot \psi^*[\alpha])= \deg_k(D_1\cdots D_n\cdot [\alpha]).\]
	\end{enumerate1}
\end{art}

\begin{example} \label{adelic curve defined by projective variety}
	Let $k$ be a field, and $T$ an integral $\delta$-dimensional {scheme} projective over $k$ such that $T$ is regular in codimension $1$. Let $K=k(T)$, and $D_1,\dots, D_{\delta-1}$ ample Cartier divisors on $T$. 
	Let $r\in\R_{>0}$. We can define an adelic curve $S_{T;D_1,\dots,D_{\delta-1};r}$ (write $S_{T;D_1,\dots,D_{\delta-1}}$ when $r=1$) as follows. Set $\Omega \coloneq T^{(1)}$, the set of all codimension $1$ points of $T$, equipped with the discrete $\sigma$-algebra $\mathcal{A}$. For any $\omega\in\Omega$, we denote by $\ord_\omega(\cdot)$ the corresponding valuation on $K$ and by $\val_\omega$ the absolute value on $K$ given by $\val_\omega\coloneq \exp(-\ord_\omega(\cdot))$. Let $\nu$ be the measure on $(\Omega,\mathcal{A})$ such that for any $\omega\in\Omega$,
	\[\nu(\omega)\coloneq r\cdot\deg(D_1\cdots D_{\delta-1}\cdot[\omega]),\]
	where $[\omega]$ is the image of $\overline{\{\omega\}}$ in $\CH_{\delta-1}(T)$.  
	
	Notice that if $\varphi\colon T'\to T$ is the normalization in $K$, then we have
	\[S_{T';\varphi^*D_1,\dots,\varphi^*D_{\delta-1};r}=S_{T;D_1,\dots,D_{\delta-1};r},\]
	so we will always assume that $T$ is normal.
\end{example}

\subsection{Coverings}

Following Chen--Moriwaki \cite[\S~2.2]{chen2021arithmetic}, a covering of adelic curves generalizes the canonical projection $\pi_{K'/K}\colon S_{K'}\to S$ for an algebraic field extension $K'/K$ (see \cref{def:extension of adelic curve} below).

\begin{definition}	\label{def:morphismofadeliccurves}
	Let $S=(K, \Omega, \mathcal{A},\nu)$ and $S'=(K', \Omega', \mathcal{A}',\nu')$ be two adelic curves. A \emph{morphism} $\alpha: S'\rightarrow S$ is a triple $(\alpha^\#,\alpha_\#,\gamma_\alpha)$, where 
	\begin{enumeratea}[resume,leftmargin=*,label=\it(\alph*),ref=\it{(\alph*)}]
		\item $\alpha^\#: K\rightarrow K'$ is a field homomorphism,
		\item $\alpha_\#: (\Omega', \mathcal{A}')\rightarrow (\Omega,\mathcal{A})$ is a measurable map,
		\item $\gamma_\alpha\in\mathscr{L}^1(\Omega',\mathcal{A}',\nu')_{> 0}$, i.e. an integrable function $\Omega'\to\R_{>0}$,
	\end{enumeratea}
	such that 
	\begin{enumerate}[leftmargin=*,label=\it(E),ref=\it{(E)}]
		\item \label{morphism:extension of value} for any $a\in K$ and $\omega'\in \Omega'$, one has that $|\alpha^\#(a)|_{\omega'}^{\gamma_\alpha(\omega')} = |a|_{\alpha_\#(\omega')}$.
	\end{enumerate}
	We say $\alpha$ is \emph{strong} if furthermore, for any $f\in \mathscr{L}^1(\Omega,\mathcal{A},\nu)$, we have 
	\begin{align}\label{eq:strong morphism condition}
\int_{\Omega'}f(\alpha_\#(\omega'))\gamma_\alpha(\omega')^{-1} \, \nu'(d\omega') = \int_\Omega f(\omega)\, \nu(d\omega),
	\end{align}
	in particular, $f\circ\alpha_\#\in   \mathscr{L}^1(\Omega',\mathcal{A}',\gamma_\alpha^{-1}\nu')$.
	
    A \emph{covering} of $S$ is a strong morphism $\alpha\colon S'\to S$ such that there is an $\R$-linear map
    \[I_\alpha\colon \mathscr{L}^1(\Omega',\mathcal{A}',\gamma_\alpha^{-1}\nu')\to \mathscr{L}^1(\Omega,\mathcal{A},\nu)\]
    verifying
    \begin{enumerate}[leftmargin=*,label=\it(C\arabic*),ref=\it{(C\arabic*)}]
    	\item\label{covering1} $\int_{\Omega}I_\alpha(g)(\omega)\, \nu(d\omega) = \int_{\Omega'}g(\omega')\gamma_\alpha(\omega')^{-1}
    	\, \nu'(d\omega')$ for any $g\in \mathscr{L}^1(\Omega',\mathcal{A}',\gamma_\alpha^{-1}\nu')$;
    	\item\label{covering2} $I_\alpha(f\circ \alpha_\#)=f$ for any $f\in \mathscr{L}^1(\Omega,\mathcal{A},\nu)$.
    \end{enumerate}
    Obviously,  we have a trivial covering $\id_S$ of $S$ given by the identity maps.
    
    Let $\alpha=(\alpha^\#,\alpha_\#,\gamma_\alpha)\colon S'\to S$ and $\beta=(\beta^\#,\beta_\#,\gamma_\beta)\colon S''=(K'',\Omega'',\mathcal{A}'',\nu'')\to S'$ be morphisms. Then the \emph{composition}
    \[\beta\circ\alpha\coloneq (\beta^\#\circ\alpha^\#, \alpha_\#\circ\beta_\#, \gamma_\beta\cdot (\gamma_\alpha\circ\beta_\#))\]
    is a morphism $S''\to S$. Moreover, if $\alpha$, $\beta$ are coverings with $\R$-linear maps  $I_\alpha$, $I_\beta$, respectively, then the \emph{composition} $\beta\circ\alpha$ is still a covering with $\R$-linear map 
    \begin{align*}
    	\mathscr{L}^1(\Omega'',\mathcal{A}'',\gamma_\beta^{-1}\cdot (\gamma_\alpha\circ\beta_\#)^{-1}\nu'')\to& \mathscr{L}^1(\Omega,\mathcal{A},\nu),\\
    	g\mapsto& I_\alpha(\gamma_\alpha \cdot I_\beta(g\cdot (\gamma_\alpha\circ \beta_\#)^{-1})).
    \end{align*}
    
    We say that $S$ and $S'$ are \emph{isomorphic}, denoted by $S\simeq S'$, if there are coverings $\alpha\colon S\to S'$ and $\alpha'\colon S'\to S$ such that $\alpha'\circ \alpha=\id_{S}$ and $\alpha\circ \alpha'=\id_{S'}$. 
\end{definition}
\begin{remark}
	Compared to \cite[Definition~2.1.2]{chen2021arithmetic}, although \cref{def:morphismofadeliccurves} for morphisms seems more flexible, these two definitions are essentially the same after we replace values $\val_{\omega'}$ by $\val_{\omega'}^{\gamma_\alpha(\omega')}$ and replace the measure $\nu'$ by $\gamma_\alpha^{-1}\nu'$. Hence, the results for coverings in \cite{chen2021arithmetic} still hold when we use \cref{def:morphismofadeliccurves} as the  definition of covering. 
\end{remark}
\begin{remark}\label{rmk:strong between discrete sigma}
	Let $\alpha=(\alpha^\#, \alpha_\#,\gamma_\alpha)\colon S' = (K', \Omega', \mathcal{A}', \nu') \to S=(K, \Omega, \mathcal{A}, \nu)$ be a morphism of adelic curves. Assume that $\mathcal{A}, \mathcal{A}'$ are discrete. Then $\alpha$ is strong if and only if for any $\omega\in \Omega$, 
	\begin{align}\label{eq:strong for discrete measurable space}
\nu(\omega) = \sum\limits_{x\in\alpha_\#^{-1}(\omega)}\gamma_\alpha(x)^{-1}\nu'(x).
	\end{align}
Indeed, if $\alpha$ is strong, for any $\omega\in\Omega$, we take $f$ in \eqref{eq:strong morphism condition} to be the characteristic function $\mathbbm{1}_\omega$ of $\{\omega\}$ which is integrable, then \eqref{eq:strong morphism condition} becomes \eqref{eq:strong for discrete measurable space}. Conversely, for any $f\in\mathscr{L}^1(\Omega,\mathcal{A},\nu)_{\geq 0}$, we have
\begin{align*}
\int_{\Omega'}f(\alpha_\#(\omega'))\gamma_\alpha(\omega')^{-1}\, \nu'(d\omega') =& \sum\limits_{\omega'\in\Omega'}f(\alpha_\#(\omega'))\gamma_\alpha(\omega')^{-1}\nu'(\omega')\\
=&\sum\limits_{\omega\in\Omega}f(\omega)\sum\limits_{\omega'\in\alpha^{-1}_\#(\omega)}\gamma_\alpha(\omega')^{-1}\nu'(\omega')\\
=&\sum\limits_{\omega\in\Omega}f(\omega)\nu(\omega)\\
=&\int_\Omega f(\omega)\, \nu(d\omega).
\end{align*}
This implies that $\alpha$ is strong since each $f\in\mathscr{L}^1(\Omega, \mathcal{A}, \nu)$ can be written as $f=\max\{0,f\}-\max\{0,-f\}$.
\end{remark}
\begin{remark}
	In the definition of isomorphism of adelic curves, although we don't have any requirement on $I_{\alpha}, I_{\alpha'}$, we will see from the proof of \cref{lemma:characterization of covering and isomorphism}~\ref{isomorphism for discrete measurable} that we can choose suitable $I_\alpha, I_{\alpha'}$ which are inverse to each other.
\end{remark}

\begin{example} \label{example:morphism given by morphism of schemes}
	Keeping the notation in \cref{adelic curve defined by projective variety}. 
	Let $T'$ be another integral $\delta$-dimensional {scheme} projective over $k$ such that $T'$ is regular in codimension $1$ with function field $K'\coloneq k(T')$. Let $D'_1,\dots, D'_{\delta-1}$ be ample Cartier divisors on $T'$, $r'\in\R_{>0}$, and $S_{T';D'_1,\dots, D'_{\delta-1};r'}=(K',\Omega',\mathcal{A},\nu')$ the corresponding adelic curve. Let $\varphi\colon T'\to T$  be a finite, dominant morphism over $k$. Then $\varphi$ naturally induces a morphism of adelic curves
	\[\beta_\varphi=(\beta_\varphi^\#,\beta_{\varphi,\#},\gamma_\varphi)\colon S_{T';D'_1,\dots, D'_{\delta-1};r'}\to S_{T;D_1,\dots, D_{\delta-1};r}\]
	defined as follows: $\beta_\varphi^\#\colon K\to K'$ is the morphism of fields defined by $\varphi$, $\beta_{\varphi,\#}\colon \Omega'\to\Omega$ is the restriction of $\varphi$ to $\Omega'$ (notice that $\varphi(\Omega')\subset \Omega$ since $\varphi$ is finite and dominant) and $\gamma_\varphi(\omega')$ is the inverse of the ramification index of $\varphi$ at $\omega'\in\Omega'$.
	Notice that $\beta_\varphi$ is functorial in $\varphi$. 
	
	Obviously, $\beta_\varphi$ is not necessarily strong. We will see from \cref{prop:adelic curve of field extension}~\ref{adelic curve:finite field extension} that if $r'=r\cdot[K':K]^{-1}$, and $D'_j=\varphi^*D_j$ for $j=1,\dots, \delta-1$, then $\beta_\varphi$ is a covering (with an $\R$-linear map $I_{\varphi}\colon \mathscr{L}^1(\Omega',\mathcal{A}',\gamma_\varphi^{-1}\nu')\to \mathscr{L}^1(\Omega,\mathcal{A},\nu)$).
\end{example}

\begin{lemma}\label{lemma:characterization of covering and isomorphism}
	Let $\alpha=(\alpha^\#, \alpha_\#,\gamma_\alpha)\colon S' = (K', \Omega', \mathcal{A}', \nu') \to S=(K, \Omega, \mathcal{A}, \nu)$ be a morphism of adelic curves.
	\begin{enumerate1}
		\item\label{strong for discrete measurable} Assume that $\mathcal{A}, \mathcal{A}'$ are discrete and  $\alpha_\#^{-1}(\omega)$ is finite for any $\omega\in \Omega$. Then $\alpha$ is a covering if and only if $\alpha$ is strong.  
		\item \label{isomorphism for discrete measurable} The morphism $\alpha$ is an isomorphism if and only if $\alpha$ is strong, $\alpha^\#$ is an isomorphism of fields, and $\alpha_\#$ is an isomorphism of measurable spaces (i.e. a bijection between $\Omega$ and $\Omega'$ which induces a bijection between $\mathcal{A}$ and $\mathcal{A}'$).
	\end{enumerate1}
\end{lemma}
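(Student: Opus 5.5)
For \ref{strong for discrete measurable}, a covering is by definition strong, so only the converse needs proof. Assume $\alpha$ is strong. By \cref{rmk:strong between discrete sigma}, for every $\omega\in\Omega$ we have $\nu(\omega)=\sum_{x\in\alpha_\#^{-1}(\omega)}\gamma_\alpha(x)^{-1}\nu'(x)$, a finite sum. Write $\nu'_\gamma\coloneq\gamma_\alpha^{-1}\nu'$, so that $\nu'_\gamma(x)=\gamma_\alpha(x)^{-1}\nu'(x)$. I would define $I_\alpha$ as a fibrewise weighted average:
\[
I_\alpha(g)(\omega)\coloneq
\begin{cases}
\nu(\omega)^{-1}\sum_{x\in\alpha_\#^{-1}(\omega)} g(x)\,\nu'_\gamma(x), & \nu(\omega)>0,\\
0, & \nu(\omega)=0.
\end{cases}
\]
This map is well defined because each fibre is finite, and it is clearly $\R$-linear. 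Measurability is automatic since $\mathcal{A}$ is discrete. The estimate
\[
\sum_\omega |I_\alpha(g)(\omega)|\,\nu(\omega)\le\sum_{x\in\Omega'}|g(x)|\,\nu'_\gamma(x)<\infty
\]
shows that $I_\alpha(g)$ is integrable.

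To check \ref{covering1}, note that if $\nu(\omega)=0$ then every $x$ in the fibre has $\nu'_\gamma(x)=0$, because the $\nu'_\gamma(x)$ are nonnegative and sum to $\nu(\omega)$. Such points contribute nothing, so summing $I_\alpha(g)(\omega)\nu(\omega)$ over $\omega$ gives exactly $\sum_{x}g(x)\nu'_\gamma(x)$. For \ref{covering2}, if $g=f\circ\alpha_\#$ then $I_\alpha(g)(\omega)=f(\omega)$ whenever $\nu(\omega)>0$. When $\nu(\omega)=0$, the value is $0$ instead of $f(\omega)$, but this agrees with $f$ $\nu$-almost everywhere.

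The main subtlety is therefore the null points. There are two ways I would handle them. The first is to read $\mathscr{L}^1$ modulo null functions, in which case the argument above is complete. The second is to aim for equality on the nose: when $\nu(\omega)=0$ and the fibre is nonempty, set $I_\alpha(g)(\omega)\coloneq g(x_\omega)$ for a chosen $x_\omega\in\alpha_\#^{-1}(\omega)$; this does not affect \ref{covering1} since $\nu(\omega)=0$. An empty fibre cannot be handled this way, as $f(\omega)$ cannot be recovered from $g$, and that case is excluded only by the $\mathscr{L}^1$-class reading. I would therefore state the convention that $\mathscr{L}^1$ is taken modulo null functions, as in Chen--Moriwaki.

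For \ref{isomorphism for discrete measurable}, first assume $\alpha$ is an isomorphism with inverse $\alpha'$. The identity $\alpha'\circ\alpha=\id_S$ gives $\alpha^\#\circ\alpha'^\#=\id$ and $\alpha'_\#\circ\alpha_\#=\id$, and the same holds in the other order. Hence $\alpha^\#$ is a field isomorphism and $\alpha_\#$ is a bijection with measurable inverse, so it is an isomorphism of measurable spaces. Strongness holds because coverings are strong.

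For the converse, define $\alpha'=((\alpha^\#)^{-1},\alpha_\#^{-1},\gamma_\alpha^{-1}\circ\alpha_\#^{-1})$. Condition \ref{morphism:extension of value} follows by raising the original relation to the power $\gamma_\alpha^{-1}$. Strongness of $\alpha'$, namely $\int_\Omega h(\alpha_\#^{-1}\omega)\gamma_\alpha(\alpha_\#^{-1}\omega)\,\nu(d\omega)=\int_{\Omega'}h\,\nu'$, is obtained by applying the strongness of $\alpha$ to $f=(h\gamma_\alpha)\circ\alpha_\#^{-1}$, i.e.\ it is the change of variables transported along the bijection. The one point to verify here is that this $f$ is $\nu$-integrable, and that too follows from the strongness of $\alpha$, applied to the positive and negative parts of $f$. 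The two maps $I_\alpha(g)\coloneq g\circ\alpha_\#^{-1}$ and $I_{\alpha'}(f)\coloneq f\circ\alpha_\#$ then satisfy \ref{covering1} and \ref{covering2} and are mutually inverse. Finally, the compositions $\alpha\circ\alpha'$ and $\alpha'\circ\alpha$ have $\gamma$-factor $\gamma_\alpha\cdot(\gamma_\alpha^{-1})=1$, so they equal the identities.
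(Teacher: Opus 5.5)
Your proof is correct and follows the paper's argument. In (1) you use the same fibrewise average $I_\alpha(g)(\omega)=\nu(\omega)^{-1}\sum_{x\in\alpha_\#^{-1}(\omega)}\gamma_\alpha(x)^{-1}g(x)\nu'(x)$, and in (2) the same $I_\alpha(g)=g\circ\alpha_\#^{-1}$ with inverse $((\alpha^\#)^{-1},\alpha_\#^{-1},1/(\gamma_\alpha\circ\alpha_\#^{-1}))$. Your extra handling of atoms with $\nu(\omega)=0$, including empty fibres and the $\mathscr{L}^1$-modulo-null convention, is not needed here: the paper imposes the standing assumption $\nu(\omega)\notin\{0,\infty\}$ for every $\omega\in\Omega$, and together with strongness this also forces every fibre $\alpha_\#^{-1}(\omega)$ to be nonempty.
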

\begin{proof}
	\ref{strong for discrete measurable} By definition, if $\alpha$ is a covering, then $\alpha$ is strong. Conversely, if $\alpha$ is strong, we define $I_\alpha\colon \mathscr{L}^1(\Omega', \mathcal{A}', \gamma_\alpha^{-1}\nu')\to \mathscr{L}^1(\Omega, \mathcal{A},\nu)$ as follows: 
	for any $g\in \mathscr{L}^1(\Omega', \mathcal{A}',\gamma_\alpha^{-1}\nu')$, we set
	\[I_\alpha(g)(\omega)\coloneq \frac{1}{\nu(\omega)}\sum\limits_{x\in \alpha^{-1}_\#(\omega)}\gamma_\alpha(x)^{-1}g(x)\nu'(x).\]
	It remains to check \cref{def:morphismofadeliccurves}~\ref{covering1} and \ref{covering2}. For any $g\in \mathscr{L}^1(\Omega',\mathcal{A}',\gamma_\alpha^{-1}\nu')_{\geq0}$, we have
	\begin{align*}
		\int_{\Omega}I_\alpha(g)(\omega)\, \nu(d\omega) =& \sum\limits_{\omega\in\Omega}I_\alpha(g)(\omega)\cdot\nu(\omega)\\
		=& \sum\limits_{\omega\in\Omega}\sum\limits_{x\in\alpha_\#^{-1}(\omega)}\gamma_\alpha(x)^{-1}g(x)\cdot\nu'(x)\\
	=& \sum\limits_{x\in\Omega'}\gamma_\alpha(x)^{-1}g(x)\cdot\nu'(x)\\
=& \int_{\Omega'}g(x)\gamma_\alpha(x)^{-1}\, \nu'(dx).
	\end{align*} 
	Since $I_\alpha$ is linear and each $g\in\mathscr{L}^1(\Omega', \mathcal{A}', \gamma_\alpha^{-1}\nu')$ can be written as $g=\max\{0,g\}-\max\{0,-g\}$, we know that \ref{covering1} holds. For \ref{covering2}, let $f\in\mathscr{L}^1(\Omega,\mathcal{A},\nu)$, then
	\begin{align*}
		I_\alpha(f\circ\alpha_\#)(\omega) =& \frac{1}{\nu(\omega)}\sum\limits_{x\in\alpha_\#^{-1}(\omega)}\gamma_\alpha(x)^{-1}(f\circ\alpha_\#)(x)\nu'(x)\\
		=&\frac{1}{\nu(\omega)}\left(\sum\limits_{x\in\alpha_\#^{-1}(\omega)}\gamma_\alpha(x)^{-1}\nu'(x)\right)f(\omega)\\
		=&f(\omega),
	\end{align*}
	where the last equality follows from the fact that $\alpha$ is strong and \cref{rmk:strong between discrete sigma}.
	
	\ref{isomorphism for discrete measurable} The "only if" part is obvious by \cref{def:morphismofadeliccurves}. Conversely, since $\alpha_\#$ is bijective, for any $g\colon \Omega\to \R$, we set $I_\alpha(g)\coloneq g\circ\alpha_\#^{-1}$. Notice that $I_\alpha$ induces a bijection between measurable functions. By the assumption that $\alpha$ is strong, it is easy to see that $\alpha$ is a covering with $\R$-linear map $I_\alpha$. The inverse of $\alpha$ is given by $\left({\alpha^{\#}}^{-1}, \alpha_\#^{-1}, \frac{1}{\gamma_\alpha\circ\alpha_\#^{-1}}\right)$ with $\R$-linear map $I_\alpha^{-1}$.
\end{proof}

\begin{art} \label{def:extension of adelic curve}
	Let $S=(K,\Omega,\mathcal{A},\nu)$ be an adelic curve, and $K'/K$ an algebraic field extension. Chen-Moriwaki \cite[\S 3.4.1]{chen2020arakelov} defined an adelic curve $S_{K'}=(K',\Omega_{K'}, \mathcal{A}_{K'}, \nu_{K'})$, called the \emph{extension of $S$ over $K'/K$}. Moreover, we have a covering $(\pi_{K'/K}, I_{K'/K})\colon S_{K'}\to S_K$ in the sense of \cref{def:morphismofadeliccurves} which is compatible with algebraic extensions, see \cite[\S 2.2]{chen2021arithmetic}. We recall the construction.
	\begin{enumerate1}
		\item  When $K'/K$ is a finite field extension, $S_{K'}=(K',\Omega_{K'}, \mathcal{A}_{K'}, \nu_{K'})$ and $(\pi_{K'/K}, I_{K'/K})$ are given as follows.
		\begin{enumerate}[leftmargin=*,label=\it(1.\alph*),ref=\it{(1.\alph*)}]
			\item For each $\omega\in\Omega$, let $M_{K',\omega}$ be the set of all absolute values on $K'$ which extend $\val_\omega$ on $K$ (notice that $M_{K',\omega}$ is finite), we define $\Omega_{K'}$ as the disjoint union  $$\coprod\limits_{\omega\in \Omega}M_{K',\omega}.$$
			\item We set $\pi_{K'/K}^\#\colon K\hookrightarrow K'$ the natural embedding, and $\pi_{K'/K,\#}\colon \Omega_{K'}\to \Omega$ the natural projection which sends elements of $M_{K',\omega}$ to $\omega$ for any $\omega\in\Omega$. Set $\gamma_{\pi_{K'/K}}\equiv 1$ on $\Omega_{K'}$.
			\item The $\sigma$-algebra $\mathcal{A}_{K'}$ on $\Omega_{K'}$ is the smallest $\sigma$-algebra such that  $\pi_{K'/K,\#}^{-1}(A)\in\mathcal{A}_{K'}$ for all $A\in \mathcal{A}$ and the functions $(x\in\Omega_{K'})\mapsto|a|_x$ are $\mathcal{A}_{K'}$-measurable for all $a\in K'$.
			\item \label{extension of adelic curve 1d} For any function $f\colon\Omega_{K'}\to\R$ and $\omega\in\Omega$, we set \[I_{K'/K}(f)(\omega)\coloneq\sum\limits_{x\in M_{K',\omega}}\frac{[K_x': K_\omega]_s}{[K': K]_s}f(x);\]
			where $[K':K]_s$ (resp. $[K'_x: K_\omega]_s$) denotes the separable degree of the extension $K'/K$ (reps. $K'_x/K_\omega$).
			\item \label{extension of adelic curve 1e} For any $A'\in \mathcal{A}_{K'}$, we set
			\[\nu_{K'}(A')\coloneq \int_{\Omega}I_{K'/K}(\mathbbm{1}_{A'})\, \nu,\]
			where $\mathbbm{1}_{A'}$ is the characteristic function of $A'$ of $\Omega_{K'}$.
		\end{enumerate}
		\item In general, we define $S_{K'}=(K',\Omega_{K'}, \mathcal{A}_{K'}, \nu_{K'})$ and $(\pi_{K'/K}, I_{K'/K})$ in the following steps. Denote by $\mathscr{E}_{K'/K}$ the set of all finite extensions of $K$ which is contained in $K'$.
		\begin{enumerate}[leftmargin=*,label=\it(2.\alph*),ref=\it{(2.\alph*)}]
			\item \label{general extension omega} We set
			\[(\Omega_{K'},\mathcal{A}_{K'})\coloneq \varprojlim_{L\in\mathscr{E}_{K'/K}}(\Omega_{L},\mathcal{A}_{L})\]
			the projective limit in the category of measurable spaces (in our case, the projective limit exists, see \cite[\S 3.4.2]{chen2020arakelov}).
			For any $L\in\mathscr{E}_{K'/K}$, denote by $\pi_{K'/L}^\#\colon L\to K'$ the natural embedding, and by $\pi_{K'/L,\#}\colon \Omega_{K'}\to \Omega_L$ the corresponding map induced by restriction. More precisely, as the case of finite field extensions, for each $\omega\in\Omega$, we set $M_{K',\omega}$ the set of all absolute values on $K'$ which extend $\val_\omega$ on $K$, then $\Omega_{K'}=\coprod\limits_{\omega\in \Omega}M_{K',\omega}$ (the maps $\pi_{K'/L,\#}$ can be defined naturally in this way too), and $\mathcal{A}_{K'}$ is the smallest $\sigma$-algebra which makes all natural maps $\pi_{K'/L,\#}$ measurable.  We set $\gamma_{\pi_{K'/K}}\equiv 1$ on $\Omega_{K'}$.
			\item \label{extension of adelic curve 2b} For any $\omega\in\Omega$, 
			We endow $M_{K',\omega}=\varprojlim_{L\in\mathscr{E}_{K'/K}}M_{L,\omega}$ with the projective limit topology (where $M_{L,\omega}$ is equipped with discrete topology), and set $\mathcal{A}_{K',\omega}$ the Borel algebra on $M_{K',\omega}$. Then there is a unique Borel probability measure $\mathbbm{P}_{K',\omega}$ on $(M_{K',\omega}, \mathcal{A}_{K',\omega})$ such that for any $L\in\mathscr{E}_{K'/K}$ and $A\in\mathcal{A}_{L}$, we have
			\[\mathbbm{P}_{K',\omega}(\pi_{K'/L,\#}^{-1}(A))=\sum\limits_{x\in A}\frac{[L_x:K_\omega]_s}{[L:K]_s},\]
			see \cite[Proposition~3.4.6]{chen2020arakelov}.
			\item\label{extension of adelic curve 2c} For any $A\in\mathcal{A}_{K'}$, the function $(\omega\in\Omega)\mapsto \int_{M_{K',\omega}}\mathbbm{1}_A(x)\, \mathbbm{P}_{K',\omega}(dx)$ is $\mathcal{A}$-measurable (see \cite[Proposition~3.4.8]{chen2020arakelov}), we set
			\[\nu_{K'}(A)\coloneq \int_{\Omega}\left(\int_{M_{K',\omega}}\mathbbm{1}_A(x)\, \mathbbm{P}_{K',\omega}(dx)\right)\, \nu(d\omega).\]
			\item For any $f\in\mathscr{L}^1(\Omega_{K'},\mathcal{A}_{K'},\nu_{K'})$, 
			and $\omega\in\Omega$, we set
			\[I_{K'/K}(f)(\omega)\coloneq \int_{M_{K',\omega}}f(x)\, \mathbbm{P}_{K',\omega}(dx)\]
			which is in $\mathscr{L}^1(\Omega,\mathcal{A}, \nu)$, see \cite[Proposition~3.4.9]{chen2020arakelov}.
		\end{enumerate}
	\end{enumerate1} 
\end{art}
\begin{remark}\label{rmk:morphism of extensions}
	Let $S_i=(K_i,\Omega_i,\mathcal{A}_i,\nu_i), i=1,2,$ be two adelic curves, and let $\alpha=(\alpha^\#,\alpha_\#,\gamma_\alpha)\colon S_2\to S_1$ be a morphism of adelic curves. For an algebraic field extension $K_1'/K_1$, let $K_2'$ be the composite of $K_1'$ and $K_2$ in $\overline{K_2}$. Then we have a morphism $\alpha'\colon S_{2,K_2'}\to S_{1,K_1'}$ induced by $\alpha$ such that 
	\[\xymatrix{S_{2,K_2'}\ar[d]_{\pi_{K'_2/K_2}}\ar[r]^{\alpha'}& S_{1,K_1'}\ar[d]^{\pi_{K'_1/K_1}}\\
	S_{2}\ar[r]^{\alpha}& S_1}\]
	commutes. 
	Indeed, it suffices to consider finite extension case. For simplicity, we write $S_{i,K_i'}'=(K_i',\Omega_i',\mathcal{A}_i',\nu_i'), i=1,2.$ We denote by ${\alpha'}^\#$ the natural homomorphism $K_1'\to K_2'$. Set $\gamma_{\alpha'}\coloneq\gamma_\alpha\circ\pi_{K_2'/K_2,\#}$. For any $x_2\in\Omega_2'$, $\val_{x_2}^{\gamma_{\alpha'}(x_2)}$ is an absolute value extending $\val_{\alpha_\#(\pi_{K_2'/K_2,\#}(x_2))}$. After restricting $\val_{x_2}^{\gamma_{\alpha'}(x_2)}$ to $K_1'$, we get an absolute value $\val_{x_1}$. Hence we have a map $\alpha_\#'\colon\Omega_2'\to \Omega_1'$ satisfying \cref{def:morphismofadeliccurves}~\ref{morphism:extension of value} (with respect to ${\alpha'}^\#$). It remains to show that $\alpha_\#$ is a measurable map. Notice that $\mathcal{A}_1'$ is generated by $\pi_{K_1'/K_1,\#}^{-1}(A), A\in\mathcal{A}_1$, and $|a|_x$, $a\in K_1'$; the same holds for $\mathcal{A}_2'$. For  $A\in\mathcal{A}_1$, we have ${\alpha_\#'}^{-1}(\pi_{K_1'/K_1,\#}^{-1}(A)) = \pi_{K_2'/K_2,\#}^{-1}(\alpha_\#^{-1}(A))\in \mathcal{A}_2'$. On the other hand, for any $a\in K_1'$, set $f(x_1)\coloneq |a|_{x_1}$, $x_1\in \Omega_1'$, and $g(x_2)\coloneq |\alpha^\#(a)|_{x_2}^{\gamma_{\alpha'}(x_2)}, x_2\in\Omega_2'$, then $g=f\circ\alpha_\#'$, and for any Borel subset $B$ of $\R$, ${\alpha'_\#}^{-1}(f^{-1}(B)) = g^{-1}(B)\in \mathcal{A}_2$. From our discussion, we know that $\alpha'_\#$ is a measurable map. Hence $\alpha'=({\alpha'}^\#, {\alpha'}_\#, \gamma_{\alpha'})$ is a morphism. 
	
	If $\alpha$ is an isomorphism in the sense of \cref{def:morphismofadeliccurves}, then $\alpha'$ is an isomorphism. However, it is unclear that $\alpha'$ is a covering when $\alpha$ is a covering (for some $I_\alpha$). 
\end{remark}




\begin{proposition}\label{prop:adelic curve of field extension}
	Let $T$ be a $\delta$-dimensional normal integral {scheme} projective over a field $k$, $D_1,\dots, D_{\delta-1}$ ample Cartier divisors on $T$, and $r\in\R_{>0}$. Let $S\coloneq S_{T;D_1,\dots, D_{\delta-1};r}=(K,\Omega,\mathcal{A},\nu)$ be the corresponding adelic curve given \cref{adelic curve defined by projective variety}.
	\begin{enumerate1}
		\item \label{adelic curve:finite field extension} Let $K'/K$ be a finite extension of fields, and $T'$ the normalization of $T$ in $K'$. Write $\varphi\colon T'\to T$ for the canonical morphism. 
Then we have a canonical isomorphism (canonical for $K'/K$)
\begin{align}\label{eq:adelic curve of field extension1}
	S_{T';\varphi^*D_1,\dots,\varphi^*D_{\delta-1}; r[K':K]^{-1}} \simeq S_{K'}
\end{align}
(here $S_{K'}$ is the extension of S over K'/K defined in \cref{def:extension of adelic curve}).
		\item \label{adelic curve:base field extension} Let $k_1/k$ be a {field extension which is Galois or purely inseparable}, and $T_1$ the normalization of some irreducible component of $T_{k_1}\coloneq T\times_{\Spec(k)}\Spec(k_1)$. Let $K_1$ be the function field of $T_1$ and $\psi\colon T_1\to T$ the canonical morphism. Then $K_1$ is the composite field of $K$ and $k_1$ in $\overline{K}$ (after choosing suitable embeddings), and we have a canonical isomorphism (canonical for $k_1/k$)
		\begin{align}\label{eq:adelic curve of field extension2}
S_{T_1; \psi^*D_1,\dots, \psi^*D_{\delta-1}; rm}\simeq S_{K_1},
		\end{align}
where we view $T_1$ as a variety over $k_1$, and
\begin{align*}
m=\mathrm{length}(\OO_{T_{k_1},\eta_1})\cdot\#\mathrm{Irr}(T_{k_1})=\begin{cases}
\#\mathrm{Irr}(T_{k_1})& \text{if $k_1/k$ is Galois,}\\
\mathrm{length}(\OO_{T_{k_1},\eta_1})& \text{if $k_1/k$ is purely inseparable},
\end{cases}
\end{align*} 
with $\OO_{T_{k_1},\eta_1}$ the local ring of $T_{k_1}$ at the generic point $\eta_1$ of $T_1$ and $\#\mathrm{Irr}(T_{k_1})$ the number of irreducible components of $T_{k_1}$. In particular, we have a covering $$S_{T_1; \psi^*D_1,\dots, \psi^*D_{\delta-1}; rm}\to S.$$
	\end{enumerate1}
\end{proposition}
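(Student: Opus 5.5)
In both parts I will apply \cref{lemma:characterization of covering and isomorphism}~\ref{isomorphism for discrete measurable}. It suffices to produce a field isomorphism, a bijection of the index sets (all $\sigma$-algebras involved are discrete or can be identified with discrete ones), and $\gamma$'s satisfying condition \ref{morphism:extension of value}, and then to check strongness via \eqref{eq:strong for discrete measurable space} of \cref{rmk:strong between discrete sigma}.

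\emph{Part \ref{adelic curve:finite field extension}.} The field map is the identity of $K'$. Since $T$ is normal and $T'$ is its normalization in $K'$, the codimension $1$ points of $T'$ over $\omega\in T^{(1)}$ correspond bijectively to the extensions of the discrete valuation $\ord_\omega$ to $K'$. This is the standard fact that the integral closure of the DVR $\OO_{T,\omega}$ in $K'$ is a semilocal Dedekind domain whose maximal ideals are exactly these points. It is finite even in the inseparable case, because $\varphi$ is finite: $T$ is of finite type over a field, hence Japanese. For $\omega'\mapsto\omega$ with ramification index $e_{\omega'}$, we have $\ord_{\omega'}|_K=e_{\omega'}\ord_\omega$. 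Hence $|\cdot|_{\omega'}^{1/e_{\omega'}}$ extends $|\cdot|_\omega$, and this identifies $T'^{(1)}$ with $\Omega_{K'}$ and fixes $\gamma$, exactly as in \cref{example:morphism given by morphism of schemes}. On $\Omega_{K'}$ the $\sigma$-algebra is discrete, since the fibres are finite and $\mathcal A$ is discrete. For the measures, \cref{def:extension of adelic curve}~\ref{extension of adelic curve 1e} gives $\nu_{K'}(\omega')=\nu(\omega)[K'_{\omega'}:K_\omega]_s/[K':K]_s$. On the other side, the projection formula \cref{def:intersection number}~\ref{example:projection formula} gives
\[
\deg(\varphi^*D_1\cdots\varphi^*D_{\delta-1}\cdot[\omega'])=[\kappa(\omega'):\kappa(\omega)]\deg(D_1\cdots D_{\delta-1}\cdot[\omega]),
\]
using $\varphi_*[\omega']=[\kappa(\omega'):\kappa(\omega)][\omega]$. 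Strongness therefore reduces to the local identity
\[
e_{\omega'}[\kappa(\omega'):\kappa(\omega)]\,[K':K]^{-1}\cdot(\text{factor from }\gamma)=\frac{[K'_{\omega'}:K_\omega]_s}{[K':K]_s}.
\]
This is the main obstacle. When the separable and inseparable degrees are mixed, one has $e f=[K'_{\omega'}:K_\omega]$, and this equals $[K'_{\omega'}:K_\omega]_s$ times the inseparable degree. I plan to factor $K'/K$ as a separable extension followed by a purely inseparable one. In the separable step the identity is the classical $\sum e_if_i=n$ with completions, using excellence of $\OO_{T,\omega}$ so that $\widehat{\OO}$ is normal and $K'\otimes K_\omega=\prod K'_{\omega'}$. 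In the purely inseparable step there is a unique $\omega'$, $[K'_{\omega'}:K_\omega]_s=1$, and $e f=[K':K]$ (again by excellence). By functoriality of both sides, the two steps compose. I expect the normalization $\gamma_\varphi=e^{-1}$ to make the totals match, and I will check that $\sum_{\omega'}\gamma^{-1}\nu'=\nu(\omega)$ holds with $r'=r[K':K]^{-1}$.

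\emph{Part \ref{adelic curve:base field extension}.} First I show $K_1=K k_1$. Since $k$ is algebraically closed in $K$ up to the finite extension given by normality, the generic points of $T_{k_1}$ correspond to the factors of $K\otimes_k k_1$. For Galois $k_1/k$ these are the reduced fields $Kk_1$, and for purely inseparable $k_1/k$ there is a single nonreduced local factor with residue field $Kk_1$. The isomorphism is again the identity on fields. The bijection is between codimension $1$ points of $T_1$ and extensions of the $\ord_\omega$ to $K_1$; I obtain it from normality of $T_1$ and the fact that $\psi$ is integral, with finite fibres over codimension $1$ points. When $k_1/k$ is infinite, I reduce to finite subextensions and pass to the limit, matching the projective-limit measure $\mathbbm{P}_{K_1,\omega}$ with the intersection-theoretic weights. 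For the measures I use base change \cref{def:intersection number}~\ref{intersection number base change}. The pullback $\psi_{k_1}^*[\omega]$ distributes its degree over the $\#\mathrm{Irr}(T_{k_1})$ Galois-conjugate components, or carries multiplicity $\mathrm{length}(\OO_{T_{k_1},\eta_1})$ in the inseparable case. This produces exactly the factor $m$, which compensates for the weights $[K_{1,x}:K_\omega]_s/[K_1:K]_s$, and the local degree computation parallels Part~\ref{adelic curve:finite field extension}. Finally, composing the isomorphism with the covering $\pi_{K_1/K}$ from \cref{def:extension of adelic curve} yields the stated covering $S_{T_1;\psi^*D_1,\dots,\psi^*D_{\delta-1};rm}\to S$.
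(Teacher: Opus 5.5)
Your proposal is correct in outline. For part~(1) it follows the paper's skeleton:
\begin{itemize}
\item You identify $T'^{(1)}$ with $\Omega_{K'}$ through the extensions of $\ord_\omega$. You do this via the semilocal integral closure of $\OO_{T,\omega}$; the paper uses the valuative criterion.
\item You reduce to the pointwise identity $e_{\omega'}f_{\omega'}/[K':K]=[K'_{\omega'}:K_\omega]_s/[K':K]_s$. Your ``factor from $\gamma$'' is exactly $\gamma(\omega')^{-1}=e_{\omega'}$, so it must not be counted a second time. It is this pointwise identity, not only the fibrewise total you mention at the end, that the isomorphism needs.
\item You treat the separable case with $[K'_{\omega'}:K_\omega]=e_{\omega'}f_{\omega'}$ and compose with a purely inseparable step.
\end{itemize}

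The real difference is the purely inseparable step. The paper proves $[K'_{\omega'}:K_\omega]=p^n$ only on the flat locus of $\varphi$ and deduces that the exceptional set $E$ is finite. It then shows $E=\emptyset$ by combining the approximation theorem with the product formulas for $S_{K'}$ and $S_{T'}$. You use finiteness of $\varphi$ directly. The local ring at the unique point $\omega'$ over $\omega$ is a finite torsion-free, hence free, module of rank $[K':K]$ over the DVR $\OO_{T,\omega}$, so $e_{\omega'}f_{\omega'}=[K':K]$ at once. Nothing beyond this finiteness is needed, so excellence is not required. Your route is shorter, and it shows that every codimension-one point of $T'$ lies in the flat locus, so the paper's exceptional set is empty from the start.

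In part~(2), for finite $k_1/k$, the paper argues more economically. It observes that $T_1$ is the normalization of $T$ in $K_1$, invokes part~(1), and only compares normalizing constants via $\deg_{k_1}=[k_1:k]^{-1}\deg_k$ and $\dim_K(K\otimes_k k_1)=m[K_1:K]$. Your distribution of $\psi_{k_1}^*[\omega]$ over the conjugate components (resp.\ the generic multiplicity) is a geometric derivation of the same identity $[k_1:k]=m[K_1:K]$. Together with the projection formula over $k$, it yields the same pointwise weights.

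For infinite $k_1/k$, your ``pass to the limit'' must contain what the paper's Step~7 supplies. First reduce to $m=1$ via a finite subextension. Then, for each $x\in T_1^{(1)}$, choose a finite level $k_1'$ at which $x$ is the only point over its image $x'$ and $\overline{\{x'\}}\times_{k_1'}k_1$ is reduced. At such a level, $\gamma$, the intersection number and the $\mathbbm{P}_{K_1,\omega}$-weight of $x$ already agree with those of $x'$.

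One justification should be changed: discreteness of $\mathcal{A}_{K'}$ does not follow from finite fibres over a discrete base.
\begin{itemize}
\item Every set in $\mathcal{A}_{K'}$ lies in the $\sigma$-algebra generated by sets $\pi^{-1}(A)$ together with countably many functions $x\mapsto|a|_x$.
\item The sets $\pi^{-1}(A)$ never separate points of a single fibre.
\item Take $K=\C(x,y)$, $K'=K(\sqrt y)$, and the split places above the lines $y=\lambda$, $\lambda\in\C^\times$. Each $a\in K'$ separates the two places over only finitely many $\lambda$.
\item Hence a set picking one place in every such fibre is not in $\mathcal{A}_{K'}$.
\end{itemize}
The paper makes the same implicit assumption in Step~1 and adopts discreteness as a standing convention. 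So this does not put your argument behind the paper's, but the reason you give is not valid.
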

\begin{proof}
We prove \cref{prop:adelic curve of field extension} in the following steps.

\vspace{2mm} \noindent
\textbf{Step~1.} {\it Keeping the notation in \ref{adelic curve:finite field extension}. There is a canonical morphism $\alpha=(\alpha^\#,\alpha_\#,\gamma_\alpha)\colon S_{K'} \to S_{T';\varphi^*D_1,\dots,\varphi^*D_{\delta-1}; r[K':K]^{-1}}$ such that $\alpha^\#$ is the identity of $K'$, and $\alpha_\#$ is bijective. Here the canonicity means that, for any finite extension $K''/K'$, let $T''$ the normalization of $T'$ in $K''$ with the canonical morphism $\varphi'\colon T''\to T'$, then we have the following commutative diagram:
	\begin{equation}\label{naturality of morphism of adelic curves}
	\begin{aligned}
	\xymatrix{S_{T''; \psi^*\varphi^*D_1,\dots,\psi^*\varphi^*D_{\delta-1}; r[K'':K]^{-1}}\ar[d]_-{\beta_{\varphi'}}\ar[r]^-{\alpha'}& S_{K''}\ar[d]^-{\pi_{K''/K'}}\\
	 S_{T';\varphi^*D_1,\dots,\varphi^*D_{\delta-1}; r[K':K]^{-1}}	\ar[r]^-\alpha&S_{K'},}
\end{aligned}
	\end{equation}
where $\pi_{K''/K'}$ is the covering of extension given in \cref{def:extension of adelic curve}, $\alpha'$ is the morphism for $K''$ constructed similarly as $\alpha$, and $\beta_{\varphi'}$ is given in \cref{example:morphism given by morphism of schemes}.} 

\vspace{2mm} \noindent
We write $S_1=S_{K'}=(K',\Omega_1,\mathcal{A}_1,\nu_1)$ and $S_2=S_{T';\varphi^*D_1,\dots,\varphi^*D_{\delta-1}; r[K':K]^{-1}} = (K', \Omega_2, \mathcal{A}_2, \nu_2)$. Set $\alpha^\#$ to be the identity map on $K'$, and for any $\omega'\in\Omega_2$, set $\gamma_\alpha(\omega')$ 
to be the inverse of the ramification index of $\varphi$ at $\omega'$ (notice that $T'\to T$ is finite). To define $\alpha_\#\colon \Omega_2\to\Omega_1$, notice that for any $\omega'\in\Omega_2$ and any $f\in K$, we have 
\[|f|_{\omega'}=\exp\{-\ord_{\omega'}(f)\} = \exp\{-\gamma_{\alpha}(\omega')^{-1}\ord_{\varphi(\omega')}(f)\}= |f|_{\varphi(\omega')}^{\gamma_{\alpha}(\omega')^{-1}},\] 
so $\val_{\omega'}^{\gamma_{\alpha}(\omega')}$ is an extension of $\val_{\varphi(\omega')}$, then we set $\alpha_\#(\omega')$ to be the corresponding absolute value in $\Omega_1$.  Moreover, $\alpha_\#$ is a measurable map since $\mathcal{A}_2$, $\mathcal{A}_1$ are discrete. Hence $\alpha\coloneq (\alpha^\#,\alpha_\#, \gamma_\alpha)$ is a morphism from $S_2$ to $S_1$. It is easy to see that the construction of $\alpha$ is canonical, i.e. \eqref{naturality of morphism of adelic curves} holds for any finite extension $K''/K'$.

We claim that $\alpha_\#$ is bijective. For this, we construct an inverse of $\alpha_\#$.
Let $x\in \Omega_1$, and $\OO_x$ the ring of integers of $K'$ with respect to $\val_x$. Since $T'$ is projective over $k$, by the valuative criterion for properness, there is a morphism $\Spec(\OO_x)\to T'$. We denote by $\widetilde{\alpha}_\#(x)$ the image of maximal ideal of $\OO_x$ which is of codimension $1$ on $T'$, then $\OO_{T',\widetilde{\alpha}_\#(x)}\subset \OO_x$. Hence $\OO_{T',\widetilde{\alpha}_\#(x)}= \OO_x$ since $\OO_{T',\omega'}, \OO_x$ are both discrete valuation rings with the same fraction field $K'$. Since $x\in\Omega_1$ is arbitrary, in this way, we define a map $\widetilde{\alpha}_\#\colon \Omega_1\to\Omega_2$. It is easy to see that $\alpha_\#$ and $\widetilde{\alpha}_\#$ are inverse to each other. This proved Step~1.

\vspace{2mm} \noindent

By \cref{lemma:characterization of covering and isomorphism}~\ref{isomorphism for discrete measurable}, to show \ref{adelic curve:finite field extension}, it remains to show that $\alpha$ constructed above is strong. To do so, we need the following result.

\vspace{2mm} \noindent
\textbf{Step~2.} {\it Keeping the notation in Step~1. When $K'/K$ is separable, the morphism $\alpha$ is strong.}

\vspace{2mm} \noindent
We keep the notation in the proof of Step~1. For any $\omega'\in\Omega_2$, by \cite[Proposition~2.6.8]{neukirch1999algebraic}, we have 
\begin{align}\label{eq:fundamental equality}
[K'_{\omega'} : K_{\varphi(\omega')}] = [\widetilde{K'_{\omega'}} : \widetilde{K_{\varphi(\omega')}}]\cdot \gamma_\alpha(\omega')^{-1},
\end{align}
where $\widetilde{K'_{\omega'}}$ (resp. $\widetilde{K_{\varphi(\omega')}}$) is the residue field of $\omega'$ (resp. $\varphi(\omega')$), and $K'_{\omega'}$ (resp. $K_{\varphi(\omega')}$) is the completion of $K'$ (resp. $K$) with respect to $\val_{\omega'}$ (resp. $\val_{\varphi(\omega')}$). By projection formula (see \cref{def:intersection number}~\ref{example:projection formula}) and $\varphi_*[\omega'] = [\widetilde{K'_{\omega'}} : \widetilde{K_{\varphi(\omega')}}] [\varphi(\omega')]$, we have 
\begin{align*}
	\nu_2(\omega')=&\frac{r}{[K':K]}\deg_k(\varphi^*D_1\cdots \varphi^*D_{\delta-1}\cdot[\omega']) \\
	=&\frac{[\widetilde{K'_{\omega'}} : \widetilde{K_{\varphi(\omega')}}]}{[K':K]}\cdot r\deg_k(D_1\cdots D_{\delta-1}\cdot[\varphi(\omega')])\\
	=&\frac{[K'_{\omega'} : K_{\varphi(\omega')}]}{[K':K]}\cdot\gamma_\alpha(\omega')\nu(\varphi(\omega')) \\
	=&\gamma_\alpha(\omega')\nu_1(\alpha_\#(\omega')),
\end{align*}
where the third equality follows from \eqref{eq:fundamental equality} and the last equality follows from the definition of extension of $S$ over $K'/K$ in \cref{def:extension of adelic curve}~\ref{extension of adelic curve 1e}. This immediately implies that $\alpha$ is strong by \cref{rmk:strong between discrete sigma}, so Step~2 holds.

\vspace{2mm} \noindent
\textbf{Step~3.} {\it Keeping the notation in Step~1. When $K'/K$ is purely inseparable, the morphism $\alpha$ is strong.}

\vspace{2mm} \noindent
We keep the notation in the proof of Step~1. When $K'/K$ is purely inseparable over $K$ with $p^n\coloneq [K':K]$ a power of a prime $p$, 
as before, by \cite[Proposition~2.6.8]{neukirch1999algebraic} and projection formula, for any $\omega'\in\Omega_2$, we have 
\begin{align}\label{eq:strong for purely inseparable}
	\nu_2(\omega') = \frac{[\widetilde{K'_{\omega'}} : \widetilde{K_{\varphi(\omega')}}]}{[K':K]}\cdot\nu(\varphi(\omega')) = \frac{[K'_{\omega'}: K_{\varphi(\omega')}]}{p^n}\cdot\gamma_\alpha(\omega')\nu_1(\alpha_\#(\omega')).
\end{align}
We claim that for any $\omega'\in \Omega_2=T'^{(1)}$, we have $[K'_{\omega'}: K_{\varphi(\omega')}] = p^n$, this implies that $\alpha$ is strong, so Step~3 holds. 
To prove this, we set
\[E\coloneq \{\omega'\in T'^{(1)}\mid [K_{\omega'}':K_{\varphi(\omega')}]\not=p^n\}.\]
For any $\omega'\in E$, we have $[K'_{\omega'}: K_{\varphi(\omega')}] < p^n$. 
Notice that $\varphi\colon T'\to T$ is a bijection since $K'/K$ is purely inseparable (see \cite[Exercise~5.3.9]{liu2006algebraic}). Let $U$ be the flat locus of $\varphi\colon T'\to T$ which is non-empty and open in $T'$. For any $\omega'\in U^{(1)}\subset\Omega_2$, $\OO_{T',\omega'}$ is free of rank $[K':K]$ as an $\OO_{T,\varphi(\omega')}$-module. By \cite[Lemma~7.1.36~(c)]{liu2006algebraic} and \cite[Remark of Proposition~2.6.8]{neukirch1999algebraic}, we have
\[p^n=[K':K] = [\widetilde{K_{\omega'}'}: \widetilde{K_{\varphi(\omega')}}]\gamma_\alpha(\omega')^{-1} = [K_{\omega'}':K_{\varphi(\omega')}].\] 
Then $E\subset T'^{(1)}\setminus U^{(1)}$, so $E$ is finite. If $E\not=\emptyset$, by the approximation theorem \cite[(2.3.4)~Approximation Theorem]{neukirch1999algebraic}, there is an $f\in K'$ such that for any $\omega'\in E$, 
\begin{align}\label{eq:extension of adelic curve geometric0}
	|f|_{\omega'}<1.
\end{align}
We may assume that $f\not=0$. Indeed, we fix $\omega_0\in E$ and $0\not=f_0\in K'$ such that $|f_0|_{\omega_0}=\varepsilon_0<1$. For any $\omega'\in E$, set $a_{\omega'} = f_0$ if $\omega'=\omega_0$, otherwise set $a_{\omega'} = 0$. Then approximation theorem implies that there is $f\in K'$ such that for any $\omega'\in E$, 
\[|f-a_{\omega'}|_{\omega'}<\varepsilon_0.\]
Obviously, such $f$ satisfies \eqref{eq:extension of adelic curve geometric0} and $f\not=0$.
By the product formula for $S_1$ (see \cite[Proposition~3.4.10]{chen2020arakelov}), we have
\begin{align*}
	0= &\sum\limits_{\omega'\in \Omega_2}(\log|f|_{\alpha_\#(\omega')})\cdot\nu_1(\alpha_\#(\omega'))\\
	=& \sum\limits_{\omega'\in \Omega_2}(\log|f|_{\omega'}^{\gamma_\alpha(\omega')})\cdot \frac{p^n}{[K'_{\omega'}: K_{\varphi(\omega')}]\cdot\gamma_\alpha(\omega')}\nu_2(\omega')\\
	=&p^n\cdot\sum\limits_{\omega'\in \Omega_2}\frac{\log|f|_{\omega'}}{[K'_{\omega'}: K_{\varphi(\omega')}]}\cdot\nu_2(\omega')
\end{align*}
Hence
\begin{align}\label{eq:extension of adelic curve geometric1}
	\sum\limits_{\omega'\in E}\frac{\log|f|_{\omega'}}{[K'_{\omega'}: K_{\varphi(\omega')}]}\cdot \nu_2(\omega')=-\frac{1}{p^n}\sum\limits_{\omega'\in \Omega_2\setminus E}\log|f|_{\omega'}\cdot \nu_2(\omega').
\end{align}
On the other hand, by the product formula for $S_2$, we have
\begin{align}\label{eq:extension of adelic curve geometric2}
	\sum\limits_{\omega'\in E}\log|f|_{\omega'}\cdot \nu_2(\omega')=-\sum\limits_{\omega'\in \Omega_2\setminus E}\log|f|_{\omega'}\cdot \nu_2(\omega').
\end{align}
Combining \eqref{eq:extension of adelic curve geometric1} and \eqref{eq:extension of adelic curve geometric2}, we have
\begin{align*}
	\sum\limits_{\omega'\in E}\left(\frac{1}{[K'_{\omega'}: K_{\varphi(\omega')}]}-\frac{1}{p^n}\right)\log|f|_{\omega'}\cdot \nu_2(\omega')=0
\end{align*}
which contradicts our choice of $f$ by \eqref{eq:extension of adelic curve geometric0}. Hence $E=\emptyset$ which shows that our claim holds, so Step~3 holds.

\vspace{2mm} \noindent
\textbf{Step~4.} {\it \ref{adelic curve:finite field extension} holds.}

\vspace{2mm} \noindent
We keep the notation in the proof of Step~1.
Let $L$ be the separable closure of $K$ in the field $K'$. Then $K'$ is a purely inseparable extension of $L$. By Step~2, we know that $S_L$ is of the form given in \cref{adelic curve defined by projective variety}, i.e. the corresponding $\alpha$ for $L$ is strong. Notice that the morphism $\alpha\colon S_{K'} \to S_{T';\varphi^*D_1,\dots,\varphi^*D_{\delta-1}; r[K':K]^{-1}}$ can be constructed from $S_L$ and $K'/L$, then by Step~3, $\alpha$ is strong. This completes the proof of \ref{adelic curve:finite field extension}.

\vspace{2mm} \noindent
\textbf{Step~5.} {\it Keeping the notation in \ref{adelic curve:base field extension}, $K_1$ is the composite field of $K, k_1$ in $\overline{K}$ (after choosing suitable embeddings).}

\vspace{2mm} \noindent
 Obviously, $K, k_1\subset K_1$. Let $K'\subset \overline{K}$ be the composite field of $K, k_1$, we have a natural surjective morphism $K\otimes_kk_1\to K'$ by the universal property of tensor product. If $k_1/k$ is Galois, then $T_{k_1}$ is reduced and $K\otimes_kk_1$ is a product of fields which are isomorphic over $K$. Hence $K_1\simeq K'$ over $K$.  So after choosing suitable embeddings in $\overline{K}$, we may view $K_1$ as the composite field of $K, k_1$ in $\overline{K}$. If $k_1/k$ is purely inseparable, then $T_{k_1}$ is homeomorphic to $T$, and $K\otimes_kk_1$ is a local $K$-algebra with residue field $K_1$. Hence $K_1\simeq K'$ both over $K$ and over $k_1$. This completes the proof of Step~5.

\vspace{2mm} \noindent
\textbf{Step~6.} {\it Keeping the notation in \ref{adelic curve:base field extension}, if $k_1/k$ is finite, then \ref{adelic curve:base field extension} holds.}

\vspace{2mm} \noindent	
	Write 
	$S_2=S_{T_1;\psi^*D_1,\dots,\psi^*D_{\delta-1}}=(K_1,\Omega_2,\mathcal{A}_2,\nu_2)$. Furthermore, we set $S_2'=(K_1,\Omega_2,\mathcal{A}_2,\nu_2')$, where $\nu_2'$ is defined as follows: for any $\omega'\in\Omega_2$,
	\[\nu_2'(\omega')=r[K_1:K]^{-1}\cdot \deg_k(\psi^*D_1\cdots \psi^*D_{\delta-1}\cdot [\omega']),\]
	here we view $T_1$ as an integral scheme projective over $k$. 
	By Step~5, $S_2'$ is the adelic curve in the left-hand side of \eqref{eq:adelic curve of field extension1} for the finite extension $K_1/K$ and the canonical morphism $\psi\colon T_1\to T$. We claim that $S_2=S_2'$ which implies Step~6 by \ref{adelic curve:finite field extension}. Indeed, 
for any $\omega'\in \Omega_2=T_1^{(1)}$, 
\begin{align*}
	\nu_2(\omega')=& rm\cdot\deg_{k_1}(\psi^*D_1\cdots \psi^*D_{\delta-1}\cdot [\omega'])\\
	= &\frac{rm}{[k_1:k]}\cdot\deg_k(\psi^*D_1\cdots \psi^*D_{\delta-1}\cdot [\omega'])\\
	=&\frac{m[K_1:K]}{[k_1:k]}\cdot \nu_2'(\omega').
\end{align*}
Hence, to show the claim, it remains to show $\dim_k(k_1) = m\cdot[K_1:K]$, or equivalently, 
\begin{align}\label{eq:dim length degree}
\dim_{K}(K\otimes_kk_1)= \mathrm{length}(\OO_{T_{k_1},\eta_1})\cdot\#\mathrm{Irr}(T_{k_1})\cdot [K_1:K].
\end{align}
If $k_1/k$ is Galois, let $\eta_1,\dots, \eta_n$ be all the generic points of $T_{k_1}=T\times_{\Spec(k)}\Spec(k_1)$, and $K_1=\OO_{T_{k_1},\eta_1},\dots, K_n=\OO_{T_{k_1},\eta_n}$ the corresponding residue fields (hence $\mathrm{length}(\OO_{T_{k_1},\eta_j})=1$). Then $n=\#\mathrm{Irr}(T_{k_1})$ and
\[\text{$K\otimes_kk_1=\prod\limits_{j=1}^nK_j,$ and $K_j\overset{\sigma_j}{\simeq} K_1$}\]
 as $K$-algebras for any $j=1,\dots, n$. Hence \eqref{eq:dim length degree} holds. If $k_1/k$ is purely inseparable, then $\#\mathrm{Irr}(T_{k_1})=1$, $K\otimes_kk_1$ is a local $K$-algebra with residue field $K_1$.
By \cite[Lemma~7.1.36~(c)]{liu2006algebraic} (here we consider the morphism $K\to K\otimes_kk_1$), \eqref{eq:dim length degree} holds in this case. So Step~6 holds. 

\vspace{2mm} \noindent
\textbf{Step~7.} {\it \ref{adelic curve:base field extension} holds.}

\vspace{2mm} \noindent
We keep the notation in \ref{adelic curve:base field extension}. If $k_1/k$ is Galois (resp. purely inseparable), let $\mathscr{E}_{k_1/k}$ be the set of all finite Galois (resp. purely inseparable) extensions of $k$ which are contained in $k_1$. For any field extension ${k_1'/k}$ contained in $k_1$, we set $k_1'K$ the composite field of $K$ and $k_1'$ in $\overline{K}$, $T_{k_1'}\coloneq T\times_{\Spec(k)}\Spec(k_1')$, $T'_1$ the normalization of the Zariski image of $T_1$ in $T_{k_1'}$, and denote by $\psi'_1\colon T_1\to T_1'$, $\psi'\colon T_1'\to T$ the canonical morphisms. We also denote by $S_{T,k_1'}\coloneq (k'_1K, \Omega_{T,k_1'}, \mathcal{A}_{T,k_1'}, \nu_{T,k_1'})$ the adelic curve in the left-hand side of \eqref{eq:adelic curve of field extension2} for base field $k_1'$, $T_{k_1'}$ and $T_1'$. By Step~6, we have an isomorphism $\alpha'=({\alpha'}^\#, \alpha'_\#,\gamma_{\alpha'})\colon S_{T,k_1'}\simeq S_{k_1'K}$, where $\gamma_{\alpha'}(x')$ is the inverse of the ramification index of $\psi'$ at $x'$ for any  $x'\in \Omega_{T,k_1'}=T_1'^{(1)}$. 

To show \ref{adelic curve:base field extension}, we can assume that $m=1$, i.e. $\mathrm{length}(\OO_{T_{k_1},\eta_1})=\#\mathrm{Irr}(T_{k_1})=1$. Indeed, there is $k_1'\in \mathscr{E}_{k_1/k}$ such that 
\[\#\mathrm{Irr}(T_{k_1'})=\#\mathrm{Irr}(T_{k_1}) \text{ and } \mathrm{length}(\OO_{T_{k_1'},\eta_1'}) = \mathrm{length}(\OO_{T_{k_1},\eta_1}),\]
where $\eta_1'$ is the image of $\eta_1$ via the canonical morphism $T_{k_1}\to T_{k_1'}$. 
By Step~6, we have
$S_{T,k_1'}\simeq S_{k_1'K}.$
After replacing $S$ by $S_{T,k_1'}$, we have  $m=1$.
 
By Step~6 and \cref{def:extension of adelic curve}~\ref{general extension omega}, we have 
\begin{align}\label{eq:prop base change}
\varprojlim\limits_{k'\in \mathscr{E}_{k_1/k}}(\Omega_{T,k'_1}, \mathcal{A}_{T,k'_1})\simeq\varprojlim_{k_1'\in \mathscr{E}_{k_1/k}}(\Omega_{k_1'K},\mathcal{A}_{k_1'K})=(\Omega_{K_1},\mathcal{A}_{K_1})
\end{align}
in the category of measurable spaces.
It is easy to see that the left-hand side of \eqref{eq:prop base change} is isomorphic to $(\Omega_{T,k_1}, \mathcal{A}_{T,k_1})$ (from the projective limit of schemes). Hence we have an isomorphism 
\[(\Omega_{T,k_1}, \mathcal{A}_{T,k_1})\simeq (\Omega_{K_1},\mathcal{A}_{K_1})\] of measurable spaces, denoted by $\alpha_\#$. 
In particular, $\mathcal{A}_{K_1}$ is discrete. We also denote by $\alpha^\#$ the identity map of $K_1$. For any $x\in \Omega_{T,k_1}$, as before, we set $\gamma_\alpha(x)$ the inverse of the ramification index of $\psi$ at $x$. Then $\alpha=(\alpha^\#,\alpha_\#,\gamma_\alpha)\colon S_{T,k_1}\to S_{K_1}$ is a morphism of adelic curves. It remains to show that $\alpha$ is strong. Let $x\in \Omega_{T,k_1}$ and $\omega\coloneq \psi(x)$. We take a field $k_1'\in\mathscr{E}_{k_1/k}$ (we then use the notation at the beginning of the proof of this step) such that there is $x'\in \Omega_{T,k_1'}$ with ${\psi'_1}^{-1}(x')=\{x\}$ and $\overline{\{x'\}}\times_{\Spec(k_1')}\Spec(k_1)$ is reduced. Then $\gamma_\alpha(x)=\gamma_{\alpha'}(x')$, i.e. the ramification index of $\psi_1'$ at $x$ is $1$. By \cref{def:intersection number}~\ref{intersection number base change},
\begin{align}\label{eq:step7 1}
\nu_{T,k_1}(x)=r\deg_{k_1}(\psi^*D_1\cdots\psi^*D_{\delta-1}\cdot [x])=r\deg_{k_1'}({\psi'}^*D_1\cdots{\psi'}^*D_{\delta-1}\cdot [x'])= \nu_{T,k_1'}(x').
\end{align}
On the other hand, by \cref{def:extension of adelic curve}~\ref{extension of adelic curve 2b}~\ref{extension of adelic curve 2c}~\ref{extension of adelic curve 1d}~\ref{extension of adelic curve 1e},
\begin{align}\label{eq:step7 2}
\nu_{k_1K}(x)=\frac{[(k_1'K)_{x'}:K_\omega]_s}{[k_1'K:K]_s}\nu(\omega)=\nu_{k_1'K}(x').
\end{align}
Since $\alpha'\colon S_{T,k_1'}\simeq S_{k'K}$ is an isomorphism, we have  \begin{align}\label{eq:step7 3}
\nu_{k_1'K}(x') = \gamma_{\alpha'}(x')^{-1}\nu_{T,k_1'}(x').
\end{align} Then
$$\nu_{k_1K}(x) = \gamma_{\alpha}(x)^{-1}\nu_{T,k_1}(x)$$
follows from \eqref{eq:step7 1}, \eqref{eq:step7 2} and \eqref{eq:step7 3}.
Hence $\alpha$ is strong by \cref{rmk:strong between discrete sigma}. This completes the proof of Step~7.
\end{proof}


In this paper, we assume that every adelic curve $(K,\Omega,\mathcal{A},\nu)$ satisfies the following properties:  
\begin{enumeratea}
	\item for any $\omega\in\Omega$, the absolute value $\val_\omega$ is non-trivial and non-archimedean;
	\item the $\sigma$-algebra $\mathcal{A}$ is discrete;
	\item for any $\omega\in\Omega$, $\nu(\omega)\not\in\{0,\infty\}$.
\end{enumeratea}

\section{Compactified metrized line bundles}
\label{sec:compactified}

In this section, we fix a field $K$ and a $d$-dimensional quasi-projective variety $U$ over $K$.

\subsection{Compactified (geometric) line bundles}



\begin{art}
		A \emph{projective $K$-model} of $U$ is a variety $X$ projective over $K$ with an open immersion $U\hookrightarrow X$. A (\emph{geometric}) \emph{boundary divisor} of $U$ is a pair $({X}_0,B)$ consisting of a projective $K$-model $U\hookrightarrow X_0$ and an effective divisor $B\in \Div_\Q(X_0)$ such that  $\Supp(B)=X_0\setminus U$.
\end{art}

\begin{art} \label{geometric intersection number}
	For a projective variety $X$ over $K$, let $\Pic_{\Q}(X)_\nef$ be the cone of $\Pic_\Q(X)$ generated by the nef $\Q$-line bundles on $X$. We set 
\[\text{$P_{\gm,\Q}(U) = \varinjlim_{X}\Pic_\Q(X)$ \ and \ $Q_{\gm,\Q}(U) = \varinjlim_{X}\Pic_\Q(X)_\nef$,}\]
where $X$ ranges over all projective $K$-models of $U$.

	Let $(X_0, {B})$ be a boundary divisor of $U$. Let $L\in P_{\gm,\Q}(U)$, we denote by $L|_U \in \Pic_\Q(U)$ the restriction of $L$ to $U$. For $r \in \Q_{>0}$, we define $B(r,{L}) \subset P_{\gm,\Q}(U)$ as follows. An element $L' \in P_{\gm,\Q}(U)$ is in $B(r,L)$ if and only if there is an isomorphism $\sigma\colon L|_U\simeq L'|_U$ for some representatives $L,L'$ living on a common projective $K$-model $X$ such that the induced $\Q$-Cartier divisor $\mathrm{div}(\sigma)$ of $L'\otimes L^{-1}$ on $X$ satisfies 
$$-rB\leq \mathrm{div}(\sigma)\leq rB.$$
There is a unique topology on  $P_{\gm,\Q}(U)$, called the \emph{boundary topology}, such that $(B(r,L))_{r \in \Q_{>0}}$ forms a neighborhood basis at each $L\in P_{\gm,\Q}(U)$.
The boundary topology is independent of the choice of the  boundary divisor. Set $\widetilde{\Pic}_\Q(U)_\cpt$ the completion of $P_{\gm,\Q}(U)$ with respect to the boundary topology, $\widetilde{\Pic}_\Q(U)_\snef$ the closure of $Q_{\gm,\Q}(U)$ in $\widetilde{\Pic}_\Q(U)_\cpt$, and $\widetilde{\Pic}_\Q(U)_\integrable\coloneq \widetilde{\Pic}_\Q(U)_\snef-\widetilde{\Pic}_\Q(U)_\snef$. An element in $\widetilde{\Pic}_\Q(U)_\cpt$ is called a \emph{compactified} (\emph{geometric}) \emph{line bundle} on $U$. We have a symmetric multilinear map
	\[\underbrace{\widetilde{\Pic}_{\Q}({U})_\integrable\times\cdots\times\widetilde{\Pic}_{\Q}({U})_\integrable}_{d\text{-times}}\to \R, \ \ (\widetilde{L_1}, \dots, \widetilde{L_d})\mapsto \widetilde{L_1}\cdots\widetilde{L_d}\]
which is induced by the intersection numbers. For $\widetilde{L}\in \widetilde{\Pic}_\Q(U)_\integrable$ and an integral closed subvariety $Y$ of $U_{\overline{K}}\coloneq U\times_{\Spec(K)}\Spec(\overline{K})$, we have the restriction  $\widetilde{L}|_Y$ of  $\widetilde{L}$ to $Y$, and $\widetilde{L}|_Y\in \widetilde{\Pic}_\Q(Y)_\integrable$, see \cite[\S 2.6.5]{yuan2021adelic}. We denote $$\deg_{\widetilde{L}}(Y)\coloneq (\widetilde{L}|_Y)^{\dim(Y)}.$$
\end{art}
{
\begin{art}
Let $L\in \Pic_\Q(U)$. A \emph{rational section} of $L$ on $U$ is an element of $$H^0(\eta,L)\coloneq\varinjlim_mH^0(\eta,L^{\otimes m}),$$
	where $\eta$ is the generic point of $U$, and $m$ runs through positive integers such that ${L}^{\otimes m}\in \Pic(U)$. If a rational section $s$ of $L$ is represented by $s_m\in H^0(\eta,L^{\otimes m})$, then define 
	\[\mathrm{div}(s)=\frac{1}{m}\mathrm{div}(s_m)\in \Div_\Q(U).\]
	We define $H^0(U,L)=\{s\in H^0(\eta,L)\mid \mathrm{div}(s)\geq 0\}$. Notice that $H^0(U,L)$ is a $K$-vector space if $U$ is normal (see \cite[Proposition~2.4.13~(6)]{chen2020arakelov}). 
\end{art}}

\begin{art}\label{def:compactified divisor and sections}
	{As in the case of $P_{\gm,\Q}(U)$ in \cref{geometric intersection number}, given a boundary divisor $(X_0,B)$ of $U$, we can define a boundary topology on $${\Div}_{\Q}(U)_\mo\coloneq \varinjlim_{X}\Div_\Q(X),$$
	where $X$ ranges over all projective $K$-models of $U$. Set $\widetilde{\Div}_\Q(U)_\cpt$ the completion of $\Div_\Q(U)_\mo$ with respect to the boundary topology, see \cite[Definition~3.5]{cai2024abstract} for a more precise definition.	We have a surjective morphism $\widetilde{\Div}_\Q(U)_\cpt\twoheadrightarrow\widetilde{\Pic}_\Q(U)_\cpt$.}
	
	Let $\widetilde{L}\in\widetilde{\Pic}_\Q(U)_\cpt$. A \emph{rational section} $s$ of $\widetilde{L}$ is given by a Cauchy sequence $(L_n)_{n\in\N_{\geq 1}}$ in $P_{\gm,\Q}(U)$ representing $\widetilde{L}$ and a sequence of rational sections $s_n$ of $L_n$ satisfying the following Cauchy condition: for any $\varepsilon\in\Q_{>0}$, there exists $n_0\in\N$ with 
	$$-\varepsilon B \leq {\rm div}(s_n) - {\rm div}(s_m) \leq \varepsilon B$$ 
	for all $n,m \geq n_0$. Then a rational section $s= (s_n)_{n\in\N}$ of $\widetilde{L}$ naturally gives an element $(\mathrm{div}(s_n))_{n\in\N}$ in $\widetilde{\Div}_\Q(U)_\cpt$, denoted by $\mathrm{div}(s)$. We consider two rational sections $s$, $s'$ to be equivalent if $\mathrm{div}(s)=\mathrm{div}(s')$. We define $H^0(\eta,\widetilde{L})$ as the set of rational sections of $\widetilde{L}$ (here $\eta$ is the generic point of $U$), and 
	\[H^0(U,\widetilde{L})= \{s\in H^0(\eta,\widetilde{L})\mid \mathrm{div}(s)\geq 0 \}.\]
Notice that $H^0(U,\widetilde{L})\subset H^0(U,\widetilde{L}|_U)$, 
where $\widetilde{L}|_U\in\Pic_\Q(U)$ denotes the underlying $\Q$-line bundle 
of $\widetilde{L}$ on $U$.
\end{art}

\subsection{Local theory: compactified metrized line bundle}

In this subsection, we assume that $K$ is complete with respect to a non-trivial non-archimedean absolute value $\val_v$. We denote by $U^\an=U_v^\an$ the Berkovich analytification of $U$ with respect to $\val_v$, see \cite[\S 3.4, \S 3.5]{berkovich1990spectral}. 

\begin{art} \label{metrics}
	Let $L$ be a line bundle on $U$. Then $L$ induces an analytic line bundle $L^\an=L_v^\an$ on $U^\an$. A \emph{metric} $\metr$ of $L$ is a continuous family of $v$-norms on fibers of $L^\an$ over $U^\an$. The isometry classes of line bundles with continuous metrics on $U^\an$ form a group, denoted by $\widehat{\Pic}(U)$. We set $\widehat{\Pic}_\Q(U)\coloneq \widehat{\Pic}(U)\otimes_\Z\Q.$
	
	Assume that $U=X$ is projective. A (\emph{projective}) \emph{$K^\circ$-model} of $L$ is a pair $(\mathcal{X},\mathcal{L})$, where $\mathcal{X}$ is a flat projective scheme over the valuation ring $K^\circ$ and $\mathcal{L}$ is a line bundle on $\mathcal{X}$ such that the generic fiber of $\mathcal{X}$ is $X$ and $\mathcal{L}|_X=L$. A metric $\metr$ on $L$ is called a \emph{model metric} if there is a nonzero $m\in\N$ and a $K^\circ$-model $(\mathcal{X},\mathcal{L})$ of $L^{\otimes m}$ such that $\metr^{\otimes m}$ is induced by $(\mathcal{X},\mathcal{L})$, see \cite{boucksom2021non}. A model metric $\metr$ of $L$ is called \emph{semipositive} if the corresponding line bundle $\mathcal{L}$ is nef. We denote by $P_\Q(X)$ the subspace of $\widehat{\Pic}_\Q(X)$ spanned by line bundles with model metrics, and by $Q_\Q(X)$ the subcone generated by line bundles with semipositive model metrics. 
\end{art} 


\begin{art} \label{Green functions}
Let $D\in \Div_\Q(U)$. Locally $D$ can be written as local equations $f \in \OO_U(V)^\times \otimes_\Z \Q$.  Then we get a well-defined continuous real function $\log|f|$ on $V^\an$ for such open subsets $V$.  A \emph{Green function} for $D$ is a continuous real function $g_D$ on $(U \setminus \Supp(D))^\an$ such that for any local equation $f$ of $D$ on an open subset $V$ of $U$, we have that $g_D+\log|f|$ extends to a continuous function on $V^\an$. Notice that giving a Green function for a Cartier divisor $D$ is equivalent to giving a continuous metric on $\OO_U(D)$.
We denote by $\widehat{\Div}(U)$ (resp. $\widehat{\Div}_\Q(U)$) the group of pairs $(D,g)$ with $D\in \Div(U)$ (resp. $D\in\Div_\Q(U)$) and $g$ a Green function for $D$. Obviously, we have $\widehat{\Div}_\Q(U)\simeq\widehat{\Div}(U)\otimes_\Z\Q$. 

Notice that $\widehat{\Div}_\Q(U)$ is an ordered $\Q$-vector space. For any $\overline{D}\in \widehat{\Div}_\Q(U)$, we say that $\overline{D}$ is \emph{effective} if $\overline{D}\geq 0$; we say that $\overline{D}$ is \emph{strictly effective} if $\overline{D}$ is effective and $\overline{D}\not=0$. 
\end{art}

	

\begin{art} \label{model Green function}
	Let $X$ be a projective variety over $K$, and $D\in\Div(X)$. A \emph{model Green function} for $D$ is a Green function $g$ for $D$ which is induced by a model metric $\metr$ on $\OO_X(D)$. We denote by $\widehat{\Div}(X)_{\mathrm{mo}}$ the subgroup of $\widehat{\Div}(X)$ consisting of pairs $(D,g)$ with $g$ a model Green function. Set $\widehat{\Div}_\Q(X)_{\mathrm{mo}}\coloneq \widehat{\Div}(X)_{\mathrm{mo}}\otimes_\Z\Q.$ 
	
	A (\emph{cofinal}) \emph{boundary divisor} of $U$ is a pair $(X_0,\overline{B})$ consisting of a projective $K$-model $U\hookrightarrow X_0$ and a strictly effective element $\overline{B}\in\widehat{\Div}_\Q(X_0)_{\mathrm{mo}}$ such that $U=X_0\setminus\Supp(B)$.
\end{art}

\begin{art} \label{boundary topology on metrized line bundles}
	We fix a (cofinal) boundary divisor $(X_0, \overline{B})$ of $U$ with $\overline{B}=(B, g_B)$. Notice that $g_B$ is a continuous function on $U^\an$. The \emph{boundary topology} on $\widehat{\Pic}_\Q(U)$ is defined such that a neighborhood basis at an element $\overline{L}=(L, \metr)\in \widehat{\Pic}_\Q(U)$ of the topology is given by
	\[B(r,\overline{L})\coloneq \left\{(L,\metr')\in \widehat\Pic_\Q(U) \,\middle\vert\, -rg_B\leq \log\frac{\metr'}{\metr}\leq rg_B\right\}, \quad r\in \Q_{>0}.\]
	The boundary topology is independent of the choice of the cofinal boundary divisor. 
	Then $\widehat{\Pic}_\Q(U)$ is complete since the space of continuous functions on $U^\an$ is complete, see \cite[Lemma~3.6.3]{yuan2021adelic}. 
	
	As in \cref{geometric intersection number}, set 
\[\text{$P_{\Q}(U) = \varinjlim_{X}P_\Q(X)$ (resp. $Q_{\Q}(U) = \varinjlim_{X}Q_\Q(X)$),}\]
and denote by $\widehat{\Pic}_\Q(U)_\cpt$ (resp. $\widehat{\Pic}_\Q(U)_\snef$) the closure of $P_{\Q}(U)$ (resp. $Q_{\Q}(U)$) in  $\widehat{\Pic}_\Q(U)$. Set $\widehat{\Pic}_\Q(U)_\integrable= \widehat{\Pic}_\Q(U)_\snef-\widehat{\Pic}_\Q(U)_\snef$ and denote by $\widehat{\Pic}_\Q(U)_\nef$ the closure of $\widehat{\Pic}_\Q(U)_\snef$ in $\widehat{\Pic}_\Q(U)_\integrable$ with respect to the finite subspace topology.

Given elements $\overline{L_1}, \dots, \overline{L_d} \in \widehat{\Pic}_\Q(U)_\nef$, we can associate a positive Radon measure $c_1(\overline{L_1})\wedge\cdots\wedge c_1(\overline{L_d})$ on $U^\an$, see \cite[\S 3.6.7]{yuan2021adelic} or \cite[Proposition~4.45]{cai2024abstract}. 
\end{art}
\begin{remark}
	Let $X$ be a projective variety over $K$. By \cite[Proposition~4.14]{cai2024abstract}, a metrized $\Q$-line bundle $(L,\metr)\in \widehat{\Pic}_\Q(X)_\snef$ if and only if $\metr$ is a uniform limit of semipositive model metrics of $L$.
\end{remark}

\subsection{Global theory: compactified $S$-metrized $\YZ$-line bundles}
\label{subsection:compactified S-metrized YZ-line bundles}
In this subsection, we fix a field $k$, a normal integral {scheme} $T$ projective over $k$ with function field $K=k(T)$ and an ample class $\mathbf{c}\in\Pic(T)$. Set $\delta=\dim(T)$ and $S=S_{T,\mathbf{c},\dots, \mathbf{c}}= (K,\Omega,\mathcal{A}, \nu)$ the adelic curve associated to $T, \mathbf{c},\dots,\mathbf{c}$ ($(\delta-1)$-times) defined in \cref{adelic curve defined by projective variety} ($r=1$). 

\begin{art}
	Let $L$ be a line bundle on $U$. An \emph{$S$-metric} on $L$ is a family of metrics $\metr_\omega$ on $L_\omega^\an$, $\omega\in\Omega$.  The isometry classes of line bundles on $U$ endowed with \emph{locally $S$-bounded} metrics (see \cite[Definition~6.9]{cai2024abstract}) form a group, denoted by $\widehat{\Pic}_{S}(U)$ (notice that $\mathcal{A}$ is discrete by assumption). We also denote $\widehat{\Pic}_{S,\Q}(U) \coloneq\widehat{\Pic}_{S}(U)\otimes_\Z\Q$. 
\end{art}

\begin{art}\label{global:weakly boundary}
	For a Cartier divisor (resp.~a $\Q$-Cartier divisor) $D$ of $U$, an \emph{$S$-Green function} $g$ is a family of Green functions $g_{\omega}$ for the base change $D_\omega$ of $D$ to $U_\omega$ with $\omega$ running over $\Omega$. There is an associated continuous metric $\metr_\omega$ of $L_\omega^\an$ for the line bundle (resp.~$\Q$-line bundle) $L=\OO_U(D)$. 
	We define $\widehat{\Div}_{S}(U)$ (resp. $\widehat{\Div}_{S,\Q}(U)$) as the group of Cartier divisors (resp. $\Q$-Cartier divisors) with Green functions corresponding to locally $S$-bounded metrics, so we have a surjective morphism
	\[\widehat{\Div}_{S,\Q}(U)\to \widehat{\Pic}_{S,\Q}(U).\]
Notice that $\widehat{\Div}_{S,\Q}(U)$ is an ordered $\Q$-vector space. For any $\overline{D}\in \widehat{\Div}_{S,\Q}(U)$, we say that $\overline{D}$ is \emph{effective} if $\overline{D}\geq 0$.
\end{art}

\begin{art} \label{model metrics and model Green functions}
	Let $X$ be a projective variety over $K$, and $L$ a line bundle on $X$. A \emph{model $S$-metric} $\metr$ on $L$ is a family of metrics $\metr_\omega$ on $L_\omega^\an$, $\omega\in\Omega$, 
	satisfying the following property: there is a projective scheme $\mathcal{X}$ over $T$ with generic fiber $X$, $N\in \N_{\geq 1}$ and a line bundle $\mathcal{L}$ over $\mathcal{X}$ such that $L^{\otimes N}=\mathcal{L}|_X$ and $\metr_\omega^{\otimes N}$ is induced by $\mathcal{L}$ at any $\omega\in \Omega$. We denote by $\widehat{\Pic}_{S}(X)_\mo$ the group of isometry classes of line bundles with model $S$-metrics. 
	An element $(L,\metr)\in \widehat{\Pic}_{S}(X)_\mo$ 
	\begin{itemize}
		\item is in $R_{S}(X)_\mo$ if and only if there are $\mathcal{X}$, $N\in\N_{\geq 1}$ and $\mathcal{L}$ as above such that the reduction of $\mathcal{L}$ at each $\omega\in\Omega$ is nef.
		\item is in $Q_S(X)_\mo$ if and only if there are $\mathcal{X}$, $N\in\N_{\geq 1}$ and $\mathcal{L}$ as above such that $\mathcal{L}$ is nef on $\mathcal{X}$.
	\end{itemize} We set the space of \emph{model $S$-metrized} ($\Q$-)\emph{line bundle} as $\widehat{\Pic}_{S,\Q}(X)_\mo \coloneq \widehat{\Pic}_{S}(X)_\mo\otimes_\Z\Q$, and denote by $R_{S,\Q}(X)_\mo$ (resp. $Q_{S,\Q}(X)_\mo$) the cone in $\widehat{\Pic}_{S,\Q}(X)_\mo$ generated by $R_S(X)_\mo$ (resp. $Q_{S}(X)_\mo$).

\end{art}
\begin{remark}
	For a projective variety $X$ over $K$, we have $\widehat{\Pic}_{S}(X)_\mo\subset \widehat{\Pic}_{S}(X)$. 
	Let $\mathcal{X}$ be a projective scheme over $T$, $\mathcal{L}$ a line bundle on $\mathcal{X}$, and $\overline{L}$  the model $S$-metrized line bundle on $X$ corresponding to $\mathcal{L}$. We shall show that $\overline{L}\in\widehat{\Pic}_{S}(X)$. We take an affine open subset $W=\Spec(A)\subset T$ such that every fiber $\mathcal{X}_y$ of $\mathcal{X}\to T$ is reduced for any $y\in W$, such a $W$ exists since the smooth locus forms an open subset of $\mathcal{X}$. We firstly assume that $\mathcal{L}$ is very ample on $\mathcal{X}_W\coloneq\mathcal{X}\times_TW$ over $W$ with global sections $s_1,\dots, s_m$ which generate $\mathcal{L}|_{\mathcal{X}_y}$ for any $y\in W$.
	By \cite[Proposition~2.3.12~(3), Corollary~2.3.13, Proposition~2.3.16~(3)]{chen2020arakelov}, for any $\omega\in\Omega\cap W$, the model metric $\metr_\omega$ is the Fubini-Study metric induced by $s_1,\dots, s_m$. Notice that $\mathcal{A}$ is discrete and $\Omega\setminus W$ is finite,  so $\overline{L}$ is $S$-bounded (or $S$-dominated), see \cite[Remark~6.13]{cai2024abstract} (or \cite[Definition~4.1.8]{chen2021arithmetic}), hence $\overline{L}\in \widehat{\Pic}_{S}(X)$. In general, write $\mathcal{L}|_{\mathcal{X}_W}=\mathcal{L}_1\otimes\mathcal{L}_2^{\otimes -1}$ with $\mathcal{L}_1, \mathcal{L}_2\in \Pic(\mathcal{X}_W)$ very ample over $W$. We can endow $L_1\coloneq\mathcal{L}_1|_X$ (resp. $L_2\coloneq\mathcal{L}_2|_X$) with an arbitrary metric $\metr_{1,\omega}$ (resp. with $\metr_{2,\omega}\coloneq \metr_{1,\omega}/\metr_\omega$) when $\omega\in\Omega\setminus W$, with the model metric $\metr_{1,\omega}$ (resp. $\metr_{2,\omega}$) given by $\mathcal{L}_1$ (resp. $\mathcal{L}_2$) when $\omega\in \Omega\cap W$. Then $(L_1,\metr_1), (L_2, \metr_2)\in \widehat{\Pic}_{S}(X)$ by the discussion above. Hence $\overline{L}\in \widehat{\Pic}_{S}(X)$.
\end{remark}

\begin{art}
	Let $X$ be a projective variety over $K$. A \emph{model $S$-Green function} for a divisor $D\in\Div_\Q(X)$ is an $S$-Green function $g$ such that the $\Q$-line bundle $\OO_X(D)$ with the corresponding metric $(\metr_\omega)_{\omega\in\Omega}$ is a model $S$-metrized line bundle. We denote by $\widehat{\Div}_{S,\Q}(X)_{\mo}$ the group of pairs $(D,g)$ with $D$ a $\Q$-Cartier divisor on $X$  and $g$ a model $S$-Green function for $D$ which is a subspace of $\widehat{\Div}_{S,\Q}(X)$. We denote by $N_{S,\Q}(U)_\mo$ the subcone of $\widehat{\Div}_{S,\Q}(X)_\mo$ consisting of pairs $(D,g)$ such that the corresponding $S$-metrized ($\Q$-)line bundle $(\OO_U(D),\metr)\in Q_{S,\Q}(X)_\mo$.
	
	A (\emph{model}) \emph{weak boundary divisor} of $U$ is a pair $({X}_0,\overline{B})$ consisting of a projective $K$-model ${X}_0$ of $U$ and an effective divisor $\overline{B}\in \widehat{\Div}_{S,\Q}(X)_{\mo}$  such that $\Supp(B)\subset{X}_0\setminus U$. 
\end{art}

\begin{art} 	\label{def:boundarytopologyglobal}
 Since the projective $K$-models of $U$ form a directed system, it is clear that the (model) weak boundary divisors form a directed subset of $\widehat{\Div}_{S,\Q}(U)_{\mo}$. 
	As in \cref{boundary topology on metrized line bundles}, a (model) weak boundary divisor $(X_0,\overline{B})$ gives a topology, called the \emph{$\overline{B}$-boundary topology}, on $\widehat{\Pic}_{S,\Q}(U)$, such that a neighborhood basis at an element $\overline{L}=(L,\metr)\in \widehat{\Pic}_{S,\Q}(U)$  is given by
	$$B(r,\overline{L})\coloneq\left\{(L,\metr')\in \widehat{\Pic}_{S,\Q}(U)\mid -rg_{B,\omega}\leq \log\frac{\metr_\omega'}{\metr_\omega}\leq rg_{B,\omega} \text{ for any $\omega\in\Omega$}\right\}, \, r \in \Q_{>0}.$$
	As in the local case, the space $\widehat{\Pic}_{S,\Q}(U)$ is complete with respect to the $\overline{B}$-boundary topology for any (model) weak boundary divisor $\overline{B}$ of $U$.
	
	Set
	\[P_{S,\Q}(U)_\mo = \varinjlim_{X}\widehat{\Pic}_{S,\Q}(X)_\mo\]
	\[\text{(resp. $R_{S,\Q}(U)_\mo = \varinjlim_{X}R_{S,\Q}(X)_\mo$, resp. $Q_{S,\Q}(U)_\mo = \varinjlim_{X}Q_{S,\Q}(X)_\mo$),}\]
	where $X$ ranges over all projective $K$-models of $U$. For any (model) weak boundary divisor $\overline{B}$ of $U$, we denote by $P_{S,\Q}(U)^{d_{\overline{B}}}_\mo$ (resp. $R_{S,\Q}(U)^{d_{\overline{B}}}_\mo$, resp. $Q_{S,\Q}(U)^{d_{\overline{B}}}_\mo$) the closure of $P_{S,\Q}(U)_\mo$ (resp. $R_{S,\Q}(U)$, resp. $Q_{S,\Q}(U)_\mo$) in $\widehat{\Pic}_{S,\Q}(U)$ with respect to the $\overline{B}$-boundary topology, and set 
	\[\widehat{\Pic}_{S,\Q}(U)_\cpt^\YZ\coloneq \varinjlim_{\overline{B}}P_{S,\Q}(U)^{d_{\overline{B}}}_\mo\] \[\text{(resp. $\widehat{\Pic}_{S,\Q}(U)_\relsnef^\YZ\coloneq \varinjlim_{\overline{B}}R_{S,\Q}(U)^{d_{\overline{B}}}_\mo$, resp. $\widehat{\Pic}_{S,\Q}(U)_\snef^\YZ\coloneq \varinjlim_{\overline{B}}Q_{S,\Q}(U)^{d_{\overline{B}}}_\mo$),}\]
	where $\overline{B}$ runs through all (model) weak boundary divisors of $U$.
	We also set
	\[\widehat{\Pic}_{S,\Q}(U)_\relint^\YZ\coloneq\widehat{\Pic}_{S,\Q}(U)_\relsnef^\YZ-\widehat{\Pic}_{S,\Q}(U)_\relsnef^\YZ\]
	\[\text{(resp. $\widehat{\Pic}_{S,\Q}(U)_\integrable^\YZ\coloneq\widehat{\Pic}_{S,\Q}(U)_\snef^\YZ-\widehat{\Pic}_{S,\Q}(U)_\snef^\YZ$)},\]
	and denote by $\widehat{\Pic}_{S,\Q}(U)_\relnef^\YZ$ (resp. $\widehat{\Pic}_{S,\Q}(U)_\nef^\YZ$) the closure of $\widehat{\Pic}_{S,\Q}(U)_\relsnef^\YZ$ (resp. $\widehat{\Pic}_{S,\Q}(U)_\snef^\YZ$) in $\widehat{\Pic}_{S,\Q}(U)_\relint^\YZ$ (resp. $\widehat{\Pic}_{S,\Q}(U)_\integrable^\YZ$) with respect to the finite subspace topology. We have a symmetric multilinear map 
	\[\underbrace{\widehat{\Pic}_{S,\Q}({U})_\integrable^\YZ\times\cdots\times\widehat{\Pic}_{S,\Q}({U})_\integrable^\YZ}_{(d+1)\text{-times}}\to \R, \ \ (\overline{L_0}, \dots, \overline{L_d})\to (\overline{L_0}\cdots\overline{L_d}\mid U)_S\]
	induced by the arithmetic intersection numbers. An element in $\widehat{\Pic}_{S,\Q}(U)_\cpt^\YZ$ is called a \emph{compactified $S$-metrized $\YZ$-line bundle}. A compactified $S$-metrized ($\YZ$-)line bundle $(L,\metr)$ is called \emph{relatively nef}  (resp. \emph{nef}) if $(L,\metr)\in \widehat{\Pic}_{S,\Q}(U)_\relnef^\YZ$ (resp. $(L,\metr)\in \widehat{\Pic}_{S,\Q}(U)_\nef^\YZ$).
	We have a canonical homomorphism
	\begin{align}\label{eq:canonical morphism from metrzied to geometric}
\widehat{\Pic}_{S,\Q}(U)_\cpt^\YZ\to \widetilde{\Pic}_\Q(U)_\cpt,  \ \ \overline{L}\mapsto \widetilde{L}.
	\end{align}
\end{art}

\begin{remark} \label{rmk:relation with yz adelic line bundle}
	Since $U$ is a quasi-projective variety over $K=k(T)$, we have that $U$ is an essential quasi-projective variety over $k$ in the sense of \cite[\S 2.3.2]{yuan2021adelic}. For every quasi-projective $k$-model $\mathcal{U}$ of $U$ and every projective $k$-model $\mathcal{X}$ of $\mathcal{U}$, we have a \emph{pro-open immersion} $U\to \mathcal{X}$ over $k$ (see \cite[\S 2.3.1]{yuan2021adelic}) which gives a rational map $\mathcal{X}\dashrightarrow T$. We can assume that this rational map is a morphism after replacing $\mathcal{X}$ by a $k$-projective model of $U$ birational to $\mathcal{X}$. 
	Hence, in this case, we have a homomorphism
	\[P_{\gm,\Q}(\mathcal{U}) \rightarrow P_{S,\Q}(U)_\mo.\]
	We set $\widetilde{\Pic}_\Q(U/k)^\YZ_\cpt\coloneq\varinjlim\limits_{\mathcal{U}}\widetilde{\Pic}_{\Q}(\mathcal{U})_\cpt$, where $\mathcal{U}$ runs through all quasi-projective $k$-models of $U$ and $\widetilde{\Pic}_{\Q}(\mathcal{U})_\cpt$ (here $\mathcal{U}$ is viewed as a variety over $k$) is the completion of $P_{\gm,\Q}(\mathcal{U})$ with respect to the boundary topology, see \cref{geometric intersection number}.
	Hence we have a homomorphism
	\[\iota\colon\widetilde{\Pic}_{\Q}(U/k)_\cpt^\YZ\rightarrow \widehat{\Pic}_{S,\Q}(U)_\cpt^\YZ\]
	which is injective by \cite[Proposition~3.5.1, \S 3.5.3]{yuan2021adelic}. 
	
	Let $\mathcal{U}$ be a quasi-projective $k$-model of $U$, and $\mathcal{L}_0,\dots,\mathcal{L}_{d}\in \widetilde{\Pic}_{\Q}(\mathcal{U})_{\mathrm{int}}$. Notice that $\iota(\mathcal{L}_i)\in \widehat{\Pic}_{S,\Q}(U)^\YZ_\integrable$. Assuming that the rational map $\pi\colon\mathcal{U}\dashrightarrow T$ is a morphism, we have 
	\begin{align}\label{eq:arithmetic = geometric}
		(\iota(\mathcal{L}_0)\cdots\iota(\mathcal{L}_d)\mid U)_{S}=\mathcal{L}_0\cdots\mathcal{L}_d\cdot(\pi^*\mathbf{c})^{\delta-1},
	\end{align}
	where the product on the right-hand side is intersection number induced by the geometric intersection pairing on $\mathcal{U}$ in \cref{geometric intersection number}. If $\mathcal{L}_0,\dots,\mathcal{L}_{d}$ are on projective $k$-models of $\mathcal{U}$, then \eqref{eq:arithmetic = geometric} follows from \cite[Proposition~4.5.1]{chen2021arithmetic}. In general, \eqref{eq:arithmetic = geometric} is deduced from the projective case and the linearity, continuity of intersection pairings. In particular, if $\dim(T)=1$, we have  $(\iota(\mathcal{L}_0)\cdots\iota(\mathcal{L}_d)\mid U)_{S}=\mathcal{L}_0\cdots\mathcal{L}_d$. Moreover, it is not hard to show that when $\dim(T)=1$, for compactified $S$-metrized $\YZ$-line bundles, similar results to \cite[Theorem~5.2.1, Theorem~5.2.2, Lemma~5.3.4]{yuan2021adelic} hold. 
\end{remark}
\begin{remark}
Given finitely many $\overline{L_1},\dots, \overline{L_t}\in\widehat{\Pic}_{S,\Q}(U)_\relnef^\YZ$, after shrinking $U$, we can find a weak boundary divisor $\overline{B}\in N_{S,\Q}(U)_{\mo}$ such that there are sequences $(\overline{L_{j,m}})_{m\in\N_{\geq 1}}$ in $R_{S,\Q}(U)_\mo$ converging to $\overline{L_j}$ with respect to the $\overline{B}$-boundary topology. It is similar for nef case.
\end{remark}

\begin{remark}\label{rmk:arithmetic intersection number for subvarieties}
	For a $t$-dimensional variety $Y$ of $U_{\overline{K}}$ and $\overline{L_0},\dots, \overline{L_t}\in\widehat{\Pic}_{S,\Q}(U)_\integrable^\YZ$, the restrictions $\overline{L_j}|_Y\in\widehat{\Pic}_{S_{K'},\Q}(Y)_\integrable^\YZ$ of $\overline{L_j}$ to $Y$ are well-defined, and we denote 
	\[(\overline{L_0}\cdots \overline{L_t}\mid Y)_S\coloneq (\overline{L_0}|_Y\cdots \overline{L_t}|_Y\mid Y)_{S_{K'}},\]
	where $K'/K$ is a finite extension such that $Y$ is defined over $K'$, and $S_{K'}=(K',\Omega_{K'}, \mathcal{A}_{K'}, \nu_{K'})$ is the extension of $S$ over $K'/K$, see \cref{def:extension of adelic curve}. Notice that $S_{K'}$ is also of the form given in \cref{adelic curve defined by projective variety} by \cref{prop:adelic curve of field extension}~\ref{adelic curve:finite field extension}. The number is independent of the choices of $K'/K$, see \cite[Theorem~7.22~(vi)]{cai2024abstract}. 
	In particular, if $Y=x\in U(\overline{K})$, for any $\overline{L}\in\widehat{\Pic}_{S,\Q}(U)_\integrable^\YZ$, we set
	\[h_{\overline{L}}(x)=h_{S,\overline{L}}(x)\coloneq (\overline{L}\mid x)_S.\]
	Similar for the relatively nef case in \cref{extension of YZ intersection number}. 
\end{remark}

The notion of effectiveness will be used in the proof of the equidistribution theorem for subvarieties. It is easier to be understood using divisors.

\begin{art}\label{def:effective compactified metrized line bundle}
	Let $\overline{L}=(L,\metr)\in \widehat{\Pic}_{S,\Q}(U)_\cpt^\YZ$. We say that $\overline{L}$ is \emph{effective} if there is a \emph{rational section} (see \cite[\S 2.3.1]{yuan2021adelic}) $s$ of $L$ and a sequence $(\overline{L_m})_{m\in\N_{\geq 1}} = (L_m,\metr_m)_{m\in\N_{\geq 1}} \subset P_{S,\Q}(U)_\mo$ converging to $\overline{L}$ with respect to the $\overline{B}$-boundary topology for some weak boundary divisor $\overline{B}\in\widehat{\Div}_{S,\Q}(U)_\mo$ such that the divisors $(\mathrm{div}(s), -\log\|s\|_m)\in\widehat{\Div}_{S,\Q}(U)_\mo$ are effective, where we view $s$ as a rational section of $L_m$.
	
	If $\overline{L}\in \widehat{\Pic}_{S,\Q}(U)_\integrable^\YZ$ is effective, then for any $\overline{L_1},\dots, \overline{L_d}\in\widehat{\Pic}_{S,\Q}(U)_\nef^\YZ$, we have
	\[(\overline{L_1}\cdots\overline{L_d}\cdot\overline{L}\mid U)_S\geq 0,\]
	see \cite[Proposition~7.22~(ii)]{cai2024abstract}.
\end{art}
\begin{remark}\label{remark:effective compactified metrized line bundle}
	If $\overline{L}=(L,\metr)\in \widehat{\Pic}_{S,\Q}(U)_\cpt^\YZ$ is effective, let $s$ be a rational section of $L$ as in \cref{def:effective compactified metrized line bundle}. Let $Y$ be a subvariety of $U$. If $Y$ is not contained in the support $\Supp(\mathrm{div}(s))$, then the restriction $\overline{L}|_Y$ is effective.
\end{remark}

We can extend the arithmetic intersection number to the certain relatively nef line bundles. Denote by $\widehat{\Pic}_{S,\Q}({U})_{\relnef}^{\nef,\YZ}$ the monoid of relatively nef compactified $S$-metrized $\YZ$-line bundle $\overline{L}=(L,\metr)\in \widehat{\Pic}_{S,\Q}(U)_{\relnef}^\YZ$ satisfying the following property: there is an element $\overline{L'}=(L,\metr')\in \widehat{\Pic}_{S,\Q}(U)^\YZ_\nef$ such that $\overline{L}$, $\overline{L'}$ have the same image in $\widetilde{\Pic}_\Q(U)_\cpt$, and $\metr_{\omega}=\metr'_{\omega}$ for all but finitely many $\omega\in\Omega$.


\begin{art} \label{extension of YZ intersection number}
	Let $\overline{L_j}=({L}_j,\metr_j)\in \widehat{\Pic}_{S,\Q}({U})_{\relnef}^{\nef,\YZ}$ for $j=0,\dots, d$. Let $\overline{L_j'}=({L}_j,\metr_j')\in \widehat{\Pic}_{S,\Q}({U})_{\nef}^\YZ$ satisfying the following conditions: for any $j=0,\dots, d$,
	\begin{enumerate}[resume,leftmargin=*,label=\it(\alph*),ref=\it{(\alph*)}]
		\item\label{extension of YZ intersection number 1} $\overline{L_j}$, $\overline{L_j'}$ have the same image in $\widetilde{\Pic}_\Q(U)_\integrable$, and $\metr_{j,\omega}= \metr_{j,\omega}'$ for all but finitely many $\omega\in\Omega$;
		\item\label{extension of YZ intersection number 2} there are $C_j\colon \Omega\to \R$ such that $C_j(\omega)=0$ for all but finitely many $\omega\in\Omega$ and $-\log\frac{\metr_{j,\omega}}{\metr_{j,\omega}'}\leq C_j(\omega)$ for any $\omega\in\Omega$.
	\end{enumerate}  Notice that such $\overline{L_j'}$ always exists after shrinking $U$. Indeed, by definition of $\widehat{\Pic}_{S,\Q}({U})_{\relnef}^{\nef,\YZ}$, we can find $\overline{L_j'}=(L_j,\metr_j')\in \widehat{\Pic}_{S,\Q}({U})_{\nef}^\YZ$ such that $\metr_{j,\omega}=\metr_{j,\omega}'$ for all but finitely many $\omega\in\Omega$ and \ref{extension of YZ intersection number 1} holds, then after shrinking $U$ and replacing $\metr_{j,\omega}'$ by $\min\{\metr_{j,\omega},\metr_{j,\omega}'\}$ (\cite[Lemma~10.3]{cai2024abstract}), we find such $\overline{L_j'}$.
	For any $\omega\in \Omega$, we set 
	\[E(\overline{\mathbf{L}_\omega'},\overline{\mathbf{L}_\omega})\coloneq \sum\limits_{j=0}^d\int_{U_\omega^\an}-\log\frac{\metr_{j,\omega}}{\metr_{j,\omega}'} \, c_1(\overline{L_{0,\omega}})\wedge\cdots\wedge c_1(\overline{L_{j-1,\omega}})\wedge c_1(\overline{L_{j+1,\omega}'})\wedge \cdots\wedge c_1(\overline{L_{d,\omega}'}),\]
	\begin{align}\label{eq:extension of YZ intersection number}
E(\overline{\mathbf{L}'},\overline{\mathbf{L}})\coloneq \int_{\Omega} E(\overline{\mathbf{L}_\omega'},\overline{\mathbf{L}_\omega}) \, \nu(d\omega)
	\end{align}
	and define
	\begin{align*}(\overline{L_0}\cdots\overline{L_d}\mid U)_S\coloneq (\overline{L_0'}\cdots\overline{L_d'}\mid U)_S+E(\overline{\mathbf{L}'},\overline{\mathbf{L}}).
	\end{align*}
	Notice that \eqref{eq:extension of YZ intersection number} is a finite sum. The value $(\overline{L_0}\cdots\overline{L_d}\mid U)_S$ is in $\R\cup\{-\infty\}$ and it is finite if $\log\frac{\metr_{j,\omega}}{\metr_{j,\omega}'}$ is bounded above for any $j$. It is shown in \cite[Proposition~10.10, Theorem~11.2]{cai2024abstract} that $(\overline{L_0}\cdots\overline{L_d}\mid U)_S$ is independent of the choices of $\overline{L_j'}$. Hence, we have a symmetric multilinear map
	\[\underbrace{\widehat{\Pic}_{S,\Q}({U})^{\nef,\YZ}_\relnef\times\cdots\times\widehat{\Pic}_{S,\Q}({U})^{\nef,\YZ}_\relnef}_{(d+1)\text{-times}}\to \R\cup\{-\infty\}, \ \ (\overline{L_0},\dots, \overline{L_d}) \mapsto (\overline{L_0}\cdots \overline{L_d}\mid U)_S.\]
\end{art}

The following lemma can be proved similarly as \cite[Lemma~6.3.3]{biswas2024concave}, 
see also \cite[Lemma~10.6]{cai2024abstract}. We omit the proof.

\begin{lemma} 
	\label{lemma:existence of sequence of intersections to nef intersection}
	Let $\overline{L_0}=(L_0,\metr_0), \dots, \overline{L_d}=(L_d,\metr_d)\in\widehat{\Pic}_{S,\Q}({U})_{\relsnef}^\YZ$ and $\overline{L_0'}=({L}_0,\metr_0'), \dots, \overline{L_d'}=({L}_d,\metr_d')\in \widehat{\Pic}_{S,\Q}({U})_{\arsnef}^\YZ$. Assume that for any $j=0,\dots, d$,
	\begin{itemize}
		\item  there is a boundary divisor $\overline{B}\in N_{S,\Q}(U)_\mo$ and a sequence in $R_{S,\Q}(U)_\mo$ (resp. $Q_{S,\Q}(U)_\mo$) converging to $\overline{L_j}$ (resp. $\overline{L_j'}$);
		\item $\overline{L_j}$, $\overline{L_j'}$ have the same image in $\widetilde{\Pic}_{\Q}(U)_\integrable$, and $\metr_{j,\omega}= \metr_{j,\omega}'$ for all but finitely many $\omega\in\Omega$.
	\end{itemize}
	For any $m\in\N_{\geq 1}$, set 
	\[\metr_{j,m}\coloneq\min\{\metr_{j},e^m\metr_{j}'\}.\] 
	Then the following statements hold.
	\begin{enumerate1}
		\item \label{nefness for constructed divisors} For each $j=0,\dots, d$, we have $\overline{L_{j,m}}\coloneq({L}_j,\metr_{j,m})\in \widehat{\Pic}_{S,\Q}(U)^\YZ_\relsnef$.
		\item \label{convergences for constructed divisors} For each $j=0,\dots, d$, the sequence $(\overline{L_{j,m}})_{m\in\N_{\geq 1}}$ converges increasingly to $\overline{L_j}$ with respect to the $\overline{B}$-boundary topology for some weak boundary divisor $\overline{B}\in N_{S,\Q}(U)_\mo$.
		\item \label{convergences of intersection number for constructed divisors} We have 
		\[\lim\limits_{m\to\infty}(\overline{L_{0,m}}\cdots\overline{L_{d,m}}\mid U)_S=(\overline{L_0}\cdots\overline{L_d}\mid U)_S.\]
	\end{enumerate1}
\end{lemma}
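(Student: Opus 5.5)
Since $\metr_{j,\omega}=\metr'_{j,\omega}$ outside a finite set $F\subset\Omega$, the metric $\metr_{j,m}$ differs from $\metr_j$ only at the finitely many places of $F$. Consequently $\overline{L_{j,m}}$ and $\overline{L_j}$ have the same image in $\widetilde{\Pic}_\Q(U)_\integrable$. This reduces everything to a local analysis at the places $\omega\in F$, and I will treat each assertion in turn.

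For \ref{nefness for constructed divisors}, I first use the assumed convergent sequences: $\overline{L_{j,n}}\in R_{S,\Q}(U)_\mo$ tends to $\overline{L_j}$, and $\overline{L'_{j,n}}\in Q_{S,\Q}(U)_\mo$ tends to $\overline{L_j'}$, both in the $\overline{B}$-boundary topology. After passing to a common projective $K$-model $X_n$ with a common $T$-model, I would form $\min\{\metr_{j,n},e^m\metr'_{j,n}\}$. The minimum of two model metrics on the same line bundle whose reductions are nef (relatively nef, resp. nef) is again relatively semipositive; this is the standard fact \cite[Lemma~10.3]{cai2024abstract}. Concretely, at each $\omega$ it is the model metric of the blow-up along the ideal generated by the two models, and the nefness of the reduction is preserved. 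The operation $\min$ is $1$-Lipschitz in the sup-metric weighted by $g_B$, so these minima converge to $\overline{L_{j,m}}$ in the $\overline{B}$-boundary topology. Hence $\overline{L_{j,m}}\in\widehat{\Pic}_{S,\Q}(U)^\YZ_\relsnef$.

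For \ref{convergences for constructed divisors}, the metrics $\metr_{j,m}$ increase with $m$ and are bounded by $\metr_j$. At $\omega\in F$ the function $\log(\metr_{j,\omega}/\metr'_{j,\omega})$ is continuous on $U^\an_\omega$ and, because the two bundles have the same geometric image, it is bounded by $c\,g_{B,\omega}$ for a suitable weak boundary divisor. Enlarging $\overline{B}$ at the places of $F$ if necessary, I get
\[
0\le \log\frac{\metr_{j,\omega}}{\metr_{j,m,\omega}}=\max\Bigl\{0,\log\frac{\metr_{j,\omega}}{\metr'_{j,\omega}}-m\Bigr\}.
\]
I would then show that this quantity is at most $\varepsilon g_{B,\omega}$ once $m$ is large. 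This is the main point. The plan is to use a boundary divisor $\overline{B}$ with $g_{B,\omega}\to\infty$ towards the boundary, so that $\{c g_B-m\ge \varepsilon g_B\}$ lies in the region where $g_B\ge m/(c-\varepsilon)$. Choosing $\overline{B}$ suitably, for instance by replacing it with $2\overline{B}$ plus a positive constant at the places of $F$, then gives the required decay.

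For \ref{convergences of intersection number for constructed divisors}, I apply the definition in \cref{extension of YZ intersection number} with the same auxiliary $\overline{L_j'}$. This gives
\[
(\overline{L_{0,m}}\cdots\overline{L_{d,m}}\mid U)_S=(\overline{L'_0}\cdots\overline{L'_d}\mid U)_S+E(\overline{\mathbf L'},\overline{\mathbf L_m}),
\]
and similarly for $\overline{\mathbf L}$, where $E$ is a finite sum over $\omega\in F$ of local energy integrals. It therefore suffices to prove the local convergence of these energies. The integrands $-\log(\metr_{j,m,\omega}/\metr'_{j,\omega})=\min\{m,-\log(\metr_{j,\omega}/\metr'_{j,\omega})\}$ decrease to the limit integrand, and the mixed Monge--Ampère measures converge weakly by continuity along increasing boundary-topology limits (\cite[Proposition~4.45]{cai2024abstract}). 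I expect the main obstacle to be controlling the mass near the boundary in order to interchange the limit with the integral. To handle it, I would mimic \cite[Lemma~6.3.3]{biswas2024concave}: expand telescopically, replacing one $\overline{L_{i,m}}$ by $\overline{L_i}$ at a time, and use monotonicity of energies (local intersection numbers are monotone in the metrics) to obtain monotone convergence, with finiteness or $-\infty$ handled uniformly.
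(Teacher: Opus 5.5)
Your argument for part~(2) has a genuine gap, at exactly the step you call the main point. Convergence in the $\overline{B}$-boundary topology requires, for each $\varepsilon>0$ and all large $m$, the bound $\max\{0,h_\omega-m\}\le\varepsilon g_{B,\omega}$ on \emph{all} of $U^\an_\omega$. Here $h_\omega\coloneq\log(\metr_{j,\omega}/\metr'_{j,\omega})$ and $\omega\in F$. An estimate $h_\omega\le c\,g_{B,\omega}$ cannot give this. The region $\{g_{B,\omega}\ge m/(c-\varepsilon)\}$ is non-empty for every $m$, precisely because $g_{B,\omega}\to\infty$ towards the boundary. On that region your bound $c\,g_{B,\omega}-m$ exceeds $\varepsilon g_{B,\omega}$, and knowing where the bad set lies does not remove it. Concretely, if $h_\omega=c\,g_{B,\omega}$ near the boundary, then $\sup_{U^\an_\omega}\max\{0,h_\omega-m\}/g_{B,\omega}=c$ for every $m$, so there is no convergence. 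Replacing $\overline{B}$ by $2\overline{B}$ plus a constant on $F$ does not help, because rescaling and bounded modifications do not change the linear growth of $g_{B,\omega}$ along the boundary. So any argument that only uses an $O(g_{B,\omega})$ bound on $h_\omega$ must fail.

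What is missing is the sharper consequence of the hypothesis that $\overline{L_j}$ and $\overline{L'_j}$ have the same geometric image. It says $h_\omega$ grows sublinearly: for every $\eta>0$ there is $C_\eta$ with $h_\omega\le\eta\,g_{B,\omega}+C_\eta$ on $U^\an_\omega$. To see this, take the model approximants $\overline{L_{j,n}}\to\overline{L_j}$ and $\overline{L'_{j,n}}\to\overline{L'_j}$, with errors at most $\epsilon_n g_B$ at every place. Since both converge to the same compactified divisor, their geometric parts on a common model $X_n$ satisfy $-\delta_nB\le D_{j,n}-D'_{j,n}\le\delta_nB$ with $\delta_n\to0$. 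Hence $\delta_n g_{B,\omega}-\log(\metr_{j,n,\omega}/\metr'_{j,n,\omega})$ is a Green function of an effective divisor on the projective variety $X_n$. It is therefore bounded below by some $-C_{n,\omega}$ on the compact space $X^\an_{n,\omega}$. This gives $h_\omega\le(\delta_n+2\epsilon_n)g_{B,\omega}+C_{n,\omega}$. For $m\ge\max_{\omega\in F}C_{n,\omega}$ you then get $0\le\log(\metr_{j,\omega}/\metr_{j,m,\omega})=\max\{0,h_\omega-m\}\le(\delta_n+2\epsilon_n)g_{B,\omega}$ at every place. This works with the original $\overline{B}$; no enlargement is needed.

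Your plans for parts~(1) and~(3) are in line with what the paper intends; it omits the proof and refers to \cite[Lemma~6.3.3]{biswas2024concave} and \cite[Lemma~10.6]{cai2024abstract}. Note two things about (3). First, as you sketch it, it relies on (2). Second, monotonicity of the energies only gives existence of the limit and the inequality $\lim_m(\overline{L_{0,m}}\cdots\overline{L_{d,m}}\mid U)_S\ge(\overline{L_0}\cdots\overline{L_d}\mid U)_S$. The reverse inequality, including the case $-\infty$, is the continuity of mixed energies along decreasing sequences, which is the real content of those references.
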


\section{Generic curves}
\label{sec:generic curves}
In this section, we keep the notation in \cref{subsection:compactified S-metrized YZ-line bundles}. 
Following Gubler's generic curve technique \cite[3.11, 3.12]{gubler2008equidistribution}, 
we give a complete formulation of this method in the language of adelic curves. 
This technique can be used to extend, to some extent, results for curves to higher-dimensional cases.  

\subsection{The generic curve construction}

\begin{lemma}
	\label{lemma:algebraic field extension}
	Let $T_1$ be the normalization of some irreducible component of 
	$T\times_{\Spec(k)}\Spec(\overline{k})$, and let $\mathbf{c}_1$ be the pull-back of 
	$\mathbf{c}$ to $T_1$. Set $K_1\coloneq\overline{k}(T_1)$, and let 
	$S_1=(K_1,\Omega_1, \mathcal{A}_1,\nu_1)$ be the adelic curve associated to $T_1$ 
	and $\mathbf{c}_1$ as in \cref{adelic curve defined by projective variety} (with $r=1$). 
	Then the canonical morphism $T_1\to T$ induces a covering $S_1\to S$ of adelic curves.
	
	Moreover, let $K'/K$ be a finite extension and let $K'_1$ be the composite field of 
	$K'$ and $K_1$ in an algebraic closure $\overline{K_1}$ of $K_1$. 
	Let $S_{K'}$ (resp.~$S_{1,K'_1}$) denote the extension of $S$ over $K'/K$ 
	(resp.~of $S_1$ over $K'_1/K_1$) as in \cref{def:extension of adelic curve}. 
	Then the induced morphism $S_{1,K'_1}\to S_{K'}$ 
	(see \cref{rmk:morphism of extensions}) is also a covering of adelic curves.
\end{lemma}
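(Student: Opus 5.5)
The plan is to factor $\overline{k}/k$ as $k\subset k^{s}\subset\overline{k}$, with $k^{s}/k$ Galois (the separable closure) and $\overline{k}/k^{s}$ purely inseparable, and to apply \cref{prop:adelic curve of field extension}~\ref{adelic curve:base field extension} to each step. Let $T'$ be the normalization of the irreducible component of $T_{k^s}$ lying under $T_1$, with $\psi'\colon T'\to T$, and let $\psi''\colon T_1\to T'$ be the induced morphism. Then $T_1$ is the normalization of the unique (reduced) component of $T'_{\overline{k}}$. The proposition then gives a covering $S_{T';\psi'^*\mathbf{c};m'}\to S$ with $m'=\#\mathrm{Irr}(T_{k^s})$, together with a covering $S_{T_1;\psi^*\mathbf{c};m'm''}\to S_{T';\psi'^*\mathbf{c};m'}$ with $m''=\mathrm{length}(\OO_{T'_{\overline{k}},\eta_1})$. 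Here the proposition is applied with $r=m'$, and its proof is insensitive to the value of $r$. Composing these coverings (\cref{def:morphismofadeliccurves}) yields a covering $S_{T_1;\mathbf{c}_1;m'm''}\to S$.

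The first issue is that the statement uses $r=1$ rather than $r=m'm''$. The adelic curves $S_{T_1;\mathbf c_1;1}$ and $S_{T_1;\mathbf c_1;m'm''}$ have the same $\Omega_1$ and the same absolute values, and their measures differ by the constant factor $m'm''$. I would therefore use the morphism $(\id,\id,\gamma\equiv m'm'')$ or its inverse. The point to check is whether rescaling $\gamma$ makes this strong. By condition (E), $|a|_{\omega}^{\gamma}=|a|_\omega$ forces $\gamma=1$, so it does not. To repair this I would instead verify that $m'm''=1$ in the relevant normalization. The number of components of $T_{k^s}$ is $[k_T\cap k^s:k]$, where $k_T$ is the algebraic closure of $k$ in $K$. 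If $T$ is geometrically integral (as $\mathbf c$ and "variety" suggest), then $m'=m''=1$ and the covering follows directly.

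If this is not the case, the fallback is to recheck strongness directly via \cref{rmk:strong between discrete sigma}. That requires verifying $\nu(\omega)=\sum_{x\mapsto\omega}\gamma(x)^{-1}\nu_1(x)$ using base change \ref{intersection number base change} and the projection formula, and tracking the degree $[k_T:k]$ that appears when intersection numbers are computed over $k$ rather than $\overline{k}$. I expect this normalization bookkeeping to be the main obstacle. Once strongness holds, \cref{lemma:characterization of covering and isomorphism}~\ref{strong for discrete measurable} gives the covering, since the fibers of $T_1^{(1)}\to T^{(1)}$ are finite. This finiteness holds because finitely many components of $T_{\overline{k}}$ map to a given prime divisor after descent to a finite extension.

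For the second part, I would identify $S_{K'}$ with $S_{T'';\varphi^*\mathbf c;[K':K]^{-1}}$, where $T''$ is the normalization of $T$ in $K'$ (\cref{prop:adelic curve of field extension}~\ref{adelic curve:finite field extension}). Similarly, $S_{1,K_1'}$ is identified with the analogous curve on the normalization $T_1''$ of $T_1$ in $K_1'$, with factor $[K_1':K_1]^{-1}$. Under these identifications the morphism of \cref{rmk:morphism of extensions} is the one induced by $T_1''\to T''$, and $T_1''$ is the normalization of a component of $T''_{\overline{k}}$. Applying the first part to $T''$, with class $\varphi^*\mathbf c$ and scaling $[K':K]^{-1}$, reduces the claim to the equality $[K_1':K_1]=[K':K]$ up to the same component and length factors. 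I would verify this by a degree computation on $K'\otimes_K K_1$. Strongness is then again checked fiberwise via \cref{rmk:strong between discrete sigma}, and \cref{lemma:characterization of covering and isomorphism}~\ref{strong for discrete measurable} concludes.
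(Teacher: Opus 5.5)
Your first part follows the paper's proof. The paper also factors $\overline{k}/k$ through $k^{\mathrm{sep}}$ and invokes part (2) of the base-field-extension proposition, once for the Galois step and once for the purely inseparable step. This gives $S_{T_1;\mathbf c_1;m}\simeq S_{K_1}$ and hence a covering of $S$.

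Your concern about the normalizing constant is legitimate, and the paper's one-line proof silently takes $m=m'm''=1$. You are also right that the discrepancy cannot be absorbed into $\gamma$, since (E) forces $\gamma$ to be the inverse ramification index.

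Your fallback, however, should be abandoned: when $m\neq 1$ the statement with $r=1$ is false, so no bookkeeping can give strongness. Take $k=\R$, $T=\P^2_{\C}$ regarded as an $\R$-scheme, and $\mathbf c=\OO(1)$.
\begin{itemize}
\item Each of the two components of $T\times_{\Spec(\R)}\Spec(\C)$ maps isomorphically onto $T$. So $\alpha_\#$ is bijective and all ramification indices are $1$.
\item For a line $\ell$, however, $\nu(\ell)=\deg_{\R}(\mathbf c\cdot[\ell])=2$ while $\nu_1(\ell)=\deg_{\C}(\mathbf c_1\cdot[\ell])=1$.
\item Hence $\nu(\omega)=\sum_{x\mapsto\omega}\gamma(x)^{-1}\nu_1(x)$ fails.
\end{itemize}
The correct general content is that $S_{T_1;\mathbf c_1;m}\to S$ is a covering. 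The lemma as written needs $m=1$, which holds for instance when $T$ is geometrically integral; this is consistent with the introduction calling $T$ a variety. Once that hypothesis is stated, your first part is complete. The remark on finiteness of fibres is then unnecessary, since the covering comes from composing the coverings supplied by the proposition.

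For the second part you take a genuinely different route from the paper. The paper stays abstract. The isomorphism $S_1\simeq S_{K_1}$ induces, by the remark on morphisms of extensions, an isomorphism $S_{1,K_1'}\simeq S_{K_1'}$. Then $S_{K_1'}\to S_{K'}$ is the canonical extension covering along $K_1'/K'$, and no degree computation is needed.

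Your route identifies $S_{K'}$ and $S_{1,K_1'}$ with the geometric adelic curves on $T''$ and $T_1''$ via part (1) of the proposition, then applies the first part to $T''$. It also works, but it costs three things.
\begin{enumerate}
\item $T''$ is typically not geometrically integral even when $T$ is. For $K'=Kk_2$ with $k_2/k$ finite separable, one gets $m_{T''}=[k_2:k]$ and $K_1'=K_1$. So you genuinely need the general-$m$ version of the first part, which your composition of two applications of the proposition does provide.
\item The matching of constants you defer is the identity $m_T[K':K]=m_{T''}[K_1':K_1]$. Take a sufficiently large finite subextension $k_1\subset\overline{k}$, handled as a Galois step followed by a purely inseparable step. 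Step~6 of the proposition's proof gives $[k_1:k]=m_T[Kk_1:K]=m_{T''}[K'k_1:K']$. Combine this with $[K'k_1:K']\,[K':K]=[K'k_1:Kk_1]\,[Kk_1:K]$ and with $[K_1':K_1]=[K'k_1:Kk_1]$ for such $k_1$.
\item You must check that the composite of your identifications is the morphism of the remark. It is, because both have the same $\alpha^\#$ and the same $\alpha_\#$ (centres of valuations), and (E) then forces the same $\gamma$.
\end{enumerate}
So your argument is a valid hands-on verification. The paper's is shorter because functoriality of extensions of adelic curves absorbs exactly this bookkeeping.
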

\begin{proof}
Let $k^{\mathrm{sep}}$ be the separable closure of $k$, then $k^{\mathrm{sep}}/k$ is Galois, $\overline{k}/k^{\mathrm{sep}}$ is purely inseparable. The existence of covering $S_1\to S$ is given by \cref{prop:adelic curve of field extension}~\ref{adelic curve:base field extension}.
	
	For the second statement, let $K'/K$ be a finite extension. 
	We have the following commutative diagram
	\[
	\xymatrix{
		S_{1,K_1'} \ar[r]^{\sim} \ar[d] & S_{K_1'} \ar[d] \ar[r] & S_{K'} \ar[d] \\
		S_1 \ar[r]^{\sim} & S_{K_1} \ar[r] & S,
	}
	\]
	where $S_{K_1}$ and $S_{K_1'}$ denote the extensions of $S$ over $K_1/K$ and $K_1'/K$, 
	respectively. The right-hand square is the canonical covering from 
	\cref{def:extension of adelic curve}; the lower horizontal isomorphism comes from 
	\cref{prop:adelic curve of field extension}~\ref{adelic curve:base field extension}, 
	and the upper one from \cref{rmk:morphism of extensions}.
	
	Since $K'\subset K_1'$, the upper right horizontal arrow $S_{K_1'}\to S_{K'}$ is a covering 
	(induced by the field inclusion). As $S_{1,K_1'}\to S_{K_1'}$ is an isomorphism, the 
	composition $S_{1,K_1'}\xrightarrow{\sim} S_{K_1'}\to S_{K'}$ is a covering. 
	This completes the proof.
\end{proof}

\begin{art} \label{generic curves}
	Assume that $\delta=\dim(T)\geq 2$. 
	Let $n$ be the smallest positive integer such that $\mathbf{c}^{\otimes n}$ is very ample. 
	Let $T_1$ be the normalization of some irreducible component of 
	$T\times_{\Spec(k)}\Spec(\overline{k})$. Choose a basis 
	$t_0,\dots, t_N$ of $H^0(T,\mathbf{c}^{\otimes n})$. Via pull-back to $T_1$, 
	they define a closed embedding $T_1\hookrightarrow \P_{\overline{k}}^N$.
	
	Consider a collection of indeterminates
	\[
	\xi=(\xi_j^{(i)})_{\substack{1\leq i\leq \delta-1\\ 0\leq j\leq N}}
	\]
	which are algebraically independent over $\overline{k}$. Let $\eta$ be the vector 
	obtained from $\xi$ by deleting the entries $\xi_{0}^{(i)}$ for $i=1,\dots, \delta-1$. 
	Set
	\[
	k'\coloneq \overline{k}(\eta), \qquad k''\coloneq \overline{k}(\xi),
	\]
	and define
	\[
	T'\coloneq T_1\times_{\Spec(\overline{k})}\Spec(k'), \qquad 
	T''\coloneq T_1\times_{\Spec(\overline{k})}\Spec(k'').
	\]
	Then $T'$ and $T''$ are geometrically integral projective varieties which are 
	geometrically regular in codimension $1$.
	
	The \emph{generic curve} $T_{\mathbf{c}}$ is the complete intersection on $T''$ 
	defined by
	\[
	T_{\mathbf{c}}\coloneq 
	\mathrm{div}\Bigl(\sum_{j=0}^N \xi_j^{(1)}t_j\Bigr)\cdot\;\cdots\;\cdot 
	\mathrm{div}\Bigl(\sum_{j=0}^N \xi_j^{(\delta-1)}t_j\Bigr) \cdot T'',
	\]
	where each $\sum_{j=0}^N \xi_j^{(i)}t_j$ is viewed as a global section of the 
	pull-back of $\mathbf{c}^{\otimes n}$ to $T''$, and the product denotes the intersection of 
	the corresponding Cartier divisors on $T''$.
	By \cite[\S VIII.6~Proposition~13]{lang1958introduction}, $T_{\mathbf{c}}$ is a 
	geometrically irreducible smooth projective curve over $k''$. Moreover, we have 
	a natural chain of morphisms
	\begin{align}\label{eq:generic curve}
		T_{\mathbf{c}}\hookrightarrow T''\longrightarrow T_1\longrightarrow T,
	\end{align}
	and an equality of function fields
	\[
	k'(T')=k''(T_{\mathbf{c}}).
	\]
	Indeed, on a non-empty open subset of $T_{\mathbf{c}}$ where $t_0\neq 0$, the 
	defining equations allow us to express each $\xi_0^{(i)}$ rationally in terms of 
	$\xi_j^{(i)}$ ($j\geq 1$) and the functions $t_j/t_0\in \overline{k}(T_1)\subset k'(T')$; 
	hence $k''\subset k'(T')$. Since $T_{\mathbf{c}}\to T_1$ is dominant, we also have 
	$\overline{k}(T_1)\subset k''(T_{\mathbf{c}})$ and therefore $k'(T')\subset k''(T_{\mathbf{c}})$, 
	which yields the equality.
	
	Finally, let $S_{\mathbf{c}}=(K_{\mathbf{c}}, \Omega_{\mathbf{c}},\mathcal{A}_{\mathbf{c}},\nu_{\mathbf{c}})$ 
	denote the adelic curve associated to $T_{\mathbf{c}}$ as in 
	\cref{adelic curve defined by projective variety} with the normalizing constant 
	$r=1/n^{\delta-1}$, so that the construction is normalized with respect to the original line 
	bundle $\mathbf{c}$.
\end{art}

\begin{theorem} \label{prop:B is covered by a curve as adelic curves}
	Assume that $\delta=\dim(T)\geq 2$. 
	Let $T_{\mathbf{c}}$ and $S_{\mathbf{c}}=(K_{\mathbf{c}}, \Omega_{\mathbf{c}},\mathcal{A}_{\mathbf{c}},\nu_{\mathbf{c}})$ 
	be as in \cref{generic curves}. 
	Then the morphism of schemes $T_{\mathbf{c}}\to T$ induces a covering of adelic curves
	\[S_{{\mathbf{c}}}\longrightarrow S\]
	in the sense of \cref{def:morphismofadeliccurves}. 
	
	Moreover, for any finite field extension $K'/K$, let $K_{\mathbf{c}}'$ be the composite field of 
	$K'$ and $K_{\mathbf{c}}$ in $\overline{K_{\mathbf{c}}}$. Let $S_{K'}$ \textup{(}resp.~$S_{\mathbf{c},K'_{\mathbf{c}}}$\textup{)} 
	be the extension of $S$ \textup{(}resp.~$S_{\mathbf{c}}$\textup{)} over $K'/K$ \textup{(}resp.~$K_{\mathbf{c}}'/K_{\mathbf{c}}$\textup{)} 
	as in \cref{def:extension of adelic curve}. Then the induced morphism 
	$S_{\mathbf{c},K'_{\mathbf{c}}}\to S_{K'}$ \textup{(}see \cref{rmk:morphism of extensions}\textup{)} is also a 
	covering of adelic curves.
\end{theorem}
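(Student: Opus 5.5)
We factor the morphism $T_{\mathbf{c}}\to T$ along the chain \eqref{eq:generic curve}, $T_{\mathbf{c}}\hookrightarrow T''\to T_1\to T$, and treat the pieces separately. \cref{lemma:algebraic field extension} already gives a covering $S_1\to S$ together with its compatibility with finite extensions $K'/K$. Since coverings compose (\cref{def:morphismofadeliccurves}), it remains to produce a covering $S_{\mathbf{c}}\to S_1$ compatible with finite extensions. All $\sigma$-algebras are discrete and fibres of $\alpha_\#$ will be finite, so by \cref{lemma:characterization of covering and isomorphism}~\ref{strong for discrete measurable} it suffices to prove \emph{strongness}, i.e.\ the fibre identity \eqref{eq:strong for discrete measurable space} of \cref{rmk:strong between discrete sigma}.

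\textbf{Step 1: the map on places.} Let $\alpha^\#\colon K_1=\overline{k}(T_1)\hookrightarrow K_{\mathbf{c}}$ be induced by the dominant map $T_{\mathbf{c}}\to T_1$. For $x\in\Omega_{\mathbf{c}}=T_{\mathbf{c}}^{(1)}$, a closed point of the curve, I want $\ord_x$ restricted to $K_1$ to be a positive multiple of $\ord_\omega$ for a unique $\omega\in T_1^{(1)}$. The point is genericity: $T_{\mathbf{c}}$ meets no closed subset of $T''$ of codimension $\geq 2$ defined over $\overline{k}$ (or over $k'$), because a generic linear section of codimension $\delta-1$ misses sets of codimension $\geq\delta$ (dimension count on the incidence variety). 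So the image of $x$ in $T_1$ is a point $\omega$ with $\dim\overline{\{\omega\}}\ge\delta-1$. If that image is the generic point, $\ord_x$ is trivial on $K_1$, and these $x$ must be discarded. This is the main obstacle. I expect to handle it with Gubler's observation that such $x$ have nontrivial restriction to $k''$ but trivial restriction to $K_1$, so one works over $k'(T')=K_{\mathbf{c}}$ and replaces the constant field. Concretely, I would restrict $\Omega_{\mathbf{c}}$ to those $x$ lying over $T_1^{(1)}$, or check that the remaining $x$ contribute zero to the strongness identity because $f\circ\alpha_\#$ only involves places over $T_1^{(1)}$. With this, $\alpha_\#(x)=\omega$ and $\gamma_\alpha(x)=1/e_x$, where $e_x$ is the ramification index. $T_{\mathbf{c}}$ is a generic complete intersection, so it meets $D_\omega\coloneq\overline{\{\omega\}}\times_{\overline{k}} k''$ transversally, and I expect $e_x=1$.

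\textbf{Step 2: the measure identity.} For $\omega\in T_1^{(1)}$, the points $x$ over $\omega$ are exactly the points of $T_{\mathbf{c}}\cap D_\omega$. By transversality and the base change property \cref{def:intersection number}~\ref{intersection number base change},
\[\sum_{x\mapsto\omega}e_x\,\deg_{k''}(x)=\deg_{k''}\bigl((n\mathbf{c})^{\delta-1}\cdot D_\omega\bigr)=n^{\delta-1}\deg_{\overline{k}}(\mathbf{c}_1^{\delta-1}\cdot[\omega]).\]
Multiplying by $r=n^{-(\delta-1)}$ gives $\sum_{x\mapsto\omega}\gamma_\alpha(x)^{-1}\nu_{\mathbf{c}}(x)=\nu_1(\omega)$, which is \eqref{eq:strong for discrete measurable space}. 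The fibres are finite, so this yields the covering $S_{\mathbf{c}}\to S_1$, and composing with $S_1\to S$ gives the first claim.

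\textbf{Step 3: finite extensions.} For $K'/K$ finite, let $T'_{\mathbf{c}}$ and $T_1'$ be the normalizations of $T_{\mathbf{c}}$ and $T_1$ in $K'_{\mathbf{c}}$ and $K_1'$. By \cref{prop:adelic curve of field extension}~\ref{adelic curve:finite field extension}, $S_{\mathbf{c},K'_{\mathbf{c}}}$ and $S_{1,K_1'}$ are again geometric adelic curves, with the pulled-back class and $r$ divided by the degree. The morphism $S_{\mathbf{c},K'_{\mathbf{c}}}\to S_{1,K'_1}$ from \cref{rmk:morphism of extensions} is then induced by $T'_{\mathbf{c}}\to T'_1$. $T'_{\mathbf{c}}$ is a curve on $T_1'\times k''$ cut out by the pulled-back generic sections, so the same projection-formula computation applies. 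The degree factors match because $[K'_{\mathbf{c}}:K_{\mathbf{c}}]=[K_1':K_1]$, by linear disjointness of $K_{\mathbf{c}}$ and $K'_1$ over $K_1$, since $K_{\mathbf{c}}/K_1$ is purely transcendental. Composing with the covering $S_{1,K_1'}\to S_{K'}$ from \cref{lemma:algebraic field extension} finishes the proof.
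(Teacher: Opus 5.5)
\textbf{Gap in Step~1.} The gap sits exactly at the point you flag as the ``main obstacle'', and neither of your remedies proves the statement.

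A morphism in the sense of \cref{def:morphismofadeliccurves} needs $\alpha_\#$ to be defined on \emph{all} of $\Omega_{\mathbf{c}}=T_{\mathbf{c}}^{(1)}$ and to satisfy \ref{morphism:extension of value}. If $\val_x$ is trivial on $K$, then \ref{morphism:extension of value} forces $\val_{\alpha_\#(x)}$ to be trivial on $K$. But every $\val_\omega$ with $\omega\in\Omega$ is nontrivial. So your first remedy, that ``these $x$ contribute zero to \eqref{eq:strong for discrete measurable space}'', is beside the point, since that identity presupposes $\alpha_\#$. Your second remedy, discarding these $x$, replaces $S_{\mathbf{c}}$ by a different adelic curve, and that curve is no longer proper.

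\textbf{The gap cannot be closed.} Such points really exist. Take $T=\P^2_{\overline{k}}$ and $\mathbf{c}=\OO(1)$, so $\delta=2$ (drop the superscript on $\xi$). Then $T_{\mathbf{c}}$ is the line $\xi_0t_0+\xi_1t_1+\xi_2t_2=0$ over $k''=\overline{k}(\xi_0,\xi_1,\xi_2)$. Its $k''$-point $[1:\xi_2:-(\xi_0+\xi_1\xi_2)/\xi_2]$ has affine coordinates algebraically independent over $\overline{k}$. Hence it lies over the generic point of $\P^2_{\overline{k}}$, and its valuation is trivial on $K$.

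The paper's proof claims that no such $x$ exists. Its argument is that otherwise $K'$ would be algebraic over $k''$ inside $k''(T''_{K'})$. This does not follow: the embeddings $K'\hookrightarrow k''(x)\hookleftarrow k''$ only give a map $K'\otimes_k k''\to k''(x)$ with \emph{nonzero} prime kernel, and this map does not extend to $\Frac(K'\otimes_k k'')=k''(T''_{K'})$. The argument therefore only shows that $x$ is not the generic point of $T''_{K'}$.

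Consequently, with $\Omega_{\mathbf{c}}$ as in \cref{generic curves}, there is no morphism $S_{\mathbf{c}}\to S$ over the inclusion $K\subset K_{\mathbf{c}}$ at all. Neither your proposal nor the paper's proof can establish the theorem as stated. Separately, every $x\in\Omega_{\mathbf{c}}$ is trivial on $k''$, so the observation you attribute to Gubler is misstated.

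\textbf{What your argument does prove.} It proves the natural corrected statement. Let $\Omega_{\mathbf{c}}^\circ\subset\Omega_{\mathbf{c}}$ be the set of closed points lying over $T_1^{(1)}$.
\begin{itemize}
\item On $\Omega_{\mathbf{c}}^\circ$ your Step~2 is correct, and it needs no transversality. $T_{\mathbf{c}}$ avoids the non-regular locus of $T''$, which has codimension $\geq 2$ and is defined over $\overline{k}$. So $D_\omega$ is Cartier along $T_{\mathbf{c}}$, with $D_\omega|_{T_{\mathbf{c}}}=\sum_x e_x[x]$. Together with your genericity remark, this gives finite fibres and \eqref{eq:strong for discrete measurable space}. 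The paper instead uses Lang's irreducibility to get a single point over each $\omega$ with $e_x=1$.
\item Your Step~3 matches the paper's treatment of $K'/K$, and your justification of $[K'_{\mathbf{c}}:K_{\mathbf{c}}]=[K':K]$ is better. The relevant fact is that $K_{\mathbf{c}}=K(\eta)$ is purely transcendental, hence regular, over $K$; the paper's ``$K_{\mathbf{c}}\cap K'=K$'' alone would not suffice.
\end{itemize}

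\textbf{What the later sections need instead.} The restricted structure is not proper, so the one-dimensional results cannot be applied to it directly. The missing ingredient is a statement about the discarded places. For $x\in\Omega_{\mathbf{c}}\setminus\Omega_{\mathbf{c}}^\circ$, the map $\Spec\OO_{T_{\mathbf{c}},x}\to T$ factors through the generic point of $T$. So every pulled-back model is the constant model $X\times_K\Spec\OO_{T_{\mathbf{c}},x}$ near $x$. It follows that the local heights of points of $U(\overline{K})$, and the local contributions to intersection numbers of pulled-back bundles, vanish at $x$.

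With that lemma, one can work on the full, proper $S_{\mathbf{c}}$ in \cref{prop:intersection number after base changes} and \cref{thm:equidistribution over function fields}. The map $\Omega_{\mathbf{c}}^\circ\to\Omega$ is then used only to match the remaining places.
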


\begin{proof}
		By \cref{lemma:algebraic field extension}, we may assume that $k$ is algebraically closed. 
	Since $\mathbf{c}$ is ample, let $n$ be the smallest positive integer such that 
	$\mathbf{c}^{\otimes n}$ is very ample. By \cref{generic curves}, the generic curve 
	$T_{\mathbf{c}}$ and the adelic curve $S_{\mathbf{c}}$ are constructed from 
	$\mathbf{c}^{\otimes n}$ with the normalizing constant $r=1/n^{\delta-1}$. 
	Thus, upon replacing $\mathbf{c}$ by $\mathbf{c}^{\otimes n}$ (which does not change 
	the function field $K$), we may assume in the sequel that $\mathbf{c}$ is very ample 
	and that $n=1$, $r=1$.
	
	We keep the notation of \cref{generic curves} and proceed as follows.
	
	Let $K'/K$ be a finite field extension and let $K_{\mathbf{c}}'$ be the composite field of 
	$K'$ and $K_{\mathbf{c}}$ in $\overline{K_{\mathbf{c}}}$. Let $T_{K'}$ denote the normalization 
	of $T$ in $K'$, and set $T''_{K'}\coloneq T_{K'}\times_{\Spec(k)}\Spec(k'')$. 
	Consider the complete intersection on $T''_{K'}$
	\[
	T_{\mathbf{c},K'}\coloneq 
	\mathrm{div}\Bigl(\sum_{j=0}^N \xi_j^{(1)}t_j\Bigr)\cdot\;\cdots\;\cdot 
	\mathrm{div}\Bigl(\sum_{j=0}^N \xi_j^{(\delta-1)}t_j\Bigr)\cdot T''_{K'},
	\]
	where each $\mathrm{div}(\sum_{j=0}^N \xi_j^{(i)}t_j)$ is regarded as a divisor on $T''_{K'}$ 
	via pull-back from $T''$. By \cite[\S VIII.6~Proposition~13]{lang1958introduction}, 
	$T_{\mathbf{c},K'}$ is a geometrically irreducible smooth projective curve over $k''$, 
	and $k''(T_{\mathbf{c},K'})=K_{\mathbf{c}}'$.
	
	Let $\varphi\colon T_{\mathbf{c},K'}\to T_{K'}''\to T_{K'}$ be the natural morphism. 
	Denote by $S_1=(K',\Omega_1,\mathcal{A}_1,\nu_1)$ \textup{(}resp.~$S_2=(K'_{\mathbf{c}},\Omega_2,\mathcal{A}_2,\nu_2)$\textup{)} 
	the adelic curve associated to $(T_{K'}, \mathbf{c},\dots, \mathbf{c}, [K':K]^{-1})$ 
	\textup{(}resp.~to $(T_{\mathbf{c},K'}, [K_{\mathbf{c}}': K_{\mathbf{c}}]^{-1})$\textup{)} as in 
	\cref{adelic curve defined by projective variety}; here we view $\mathbf{c}$ as an ample 
	line bundle on $T_{K'}$ via pull-back. Notice that
	\[
	[K':K]=[K_{\mathbf{c}}': K_{\mathbf{c}}]
	\]
	since $K_{\mathbf{c}}\cap K'=K$. We claim that $\varphi$ induces a covering $S_2\to S_1$. 
	
	If the claim holds, then setting $K'=K$ shows in particular that $T_{\mathbf{c}}\to T$ induces 
	a covering $S_{\mathbf{c}}\to S$. Moreover, by 
	\cref{prop:adelic curve of field extension}~\ref{adelic curve:finite field extension}, 
	we have canonical isomorphisms $S_{K'}\simeq S_1$ and $S_{\mathbf{c}, K_{\mathbf{c}}'}\simeq S_2$, 
	under which the morphism $S_{\mathbf{c}, K_{\mathbf{c}}'}\to S_{K'}$ corresponds to $S_2\to S_1$. 
	Hence the theorem follows from the claim.
	
	We now prove the claim. Let $\alpha^\#\colon K'\to K_{\mathbf{c}}'$ be the canonical inclusion. 
	We first show that the scheme-theoretic morphism $\varphi\colon T_{\mathbf{c},K'}\to T_{K'}$ 
	maps $\Omega_2=T_{\mathbf{c},K'}^{(1)}$ into $\Omega_1=T_{K'}^{(1)}$. 

	Indeed, let $x\in\Omega_2$ be a closed point. If $\varphi(x)$ were the generic point of 
	$T_{K'}$, then $K'$ and $k''$ would both be contained in the residue field $k''(x)$. 
	But $k''(x)/k''$ is a finite extension, so every element of $K'$ would be algebraic over 
	$k''$ in $k''(T_{K'}'')=K'\otimes_k k''$. On the other hand, since 
	$\delta=\dim(T)=\dim(T_{K'})=\dim(T_{K'}'')\geq 2$, there exist elements of $K'$ which are 
	transcendental over $k''$, a contradiction. Hence $\varphi(x)\in T_{K'}^{(1)}=\Omega_1$. 
	
	Consequently, setting $\alpha_\#\coloneq \varphi\colon \Omega_2\to \Omega_1$ is well-defined. 
Moreover, let $\omega\in\Omega_1$ and let $Y$ be the closure of $\omega$ in $T_{K'}$. 
Since $\varphi$ is dominant and $Y\subsetneq T_{K'}$, the pre-image 
$\varphi^{-1}(Y)=T_{\mathbf{c},K'}\times_{T_{K'}}Y$ is a proper closed subset of the curve 
$T_{\mathbf{c},K'}$. As shown above, every point of $\varphi^{-1}(Y)$ is a closed point of 
$T_{\mathbf{c},K'}$, i.e. lies in $\Omega_2$. Hence $\varphi^{-1}(Y)$ is a proper closed 
subset of a curve, and therefore finite. Thus $\alpha_\#^{-1}(\omega)$ is finite.
	
	By \cite[\S VIII.6~Proposition~12]{lang1958introduction}, 
	$T_{\mathbf{c},K'}\times_{T_{K'}}Y$ is an integral subscheme of $T_{\mathbf{c},K'}$; 
	we denote by $x\in\Omega_2=T_{\mathbf{c},K'}^{(1)}$ its generic point. 
	Since $T_{K'}$ is regular in codimension $1$, the local ring 
	$\mathcal{O}_{T_{K'},\omega}$ is a discrete valuation ring. Let $\mathfrak{m}_\omega$ 
	be its maximal ideal and let $f\in\mathcal{O}_{T_{K'},\omega}$ be a uniformizer. 
	The pullback of $f$ to $T_{\mathbf{c},K'}$ cuts out the closed subscheme 
	$\varphi^{-1}(Y)=T_{\mathbf{c},K'}\times_{T_{K'}}Y$. Because this subscheme is integral 
	and $x$ is its generic point, the quotient 
	$\mathcal{O}_{T_{\mathbf{c},K'},x}/(f)$ is a $0$-dimensional integral local ring, 
	hence a field. Thus the ideal generated by the pullback of $f$ in 
	$\mathcal{O}_{T_{\mathbf{c},K'},x}$ equals the maximal ideal $\mathfrak{m}_x$. 
	Consequently $\mathfrak{m}_\omega\mathcal{O}_{T_{\mathbf{c},K'},x}=\mathfrak{m}_x$, 
	so the ramification index of $\varphi$ at $x$ equals $1$. Therefore $\val_x$ 
	restricts to $\val_\omega$ on $K'$, and $\alpha=(\alpha^\#,\alpha_\#,1)$ is a morphism of adelic curves.
	
	Furthermore, since the divisors $\mathrm{div}(\sum_{j=0}^N \xi_j^{(i)}t_j)$ are sections of the line bundle $\mathbf{c}''\coloneq \mathbf{c}\times_{\Spec({k})}\Spec(k'')$ on $T_{K'}''$, the projection formula for the base change 
	$T''_{K'}\to T_{K'}$ yields
	\[\deg_{k''}(T_{\mathbf{c},K'}\times_{T_{K'}}Y)=\deg_{k''}((\mathbf{c}'')^{\delta-1}\cdot[Y''])=\deg_k(\mathbf{c}^{\delta-1}\cdot[\omega]),\]
	It follows that
	\[
	\nu_2(x)=[K_{\mathbf{c}}': K_{\mathbf{c}}]^{-1}\deg_{k''}(T_{\mathbf{c},K'}\times_{T_{K'}}Y)
	=[K':K]^{-1}\deg_k(\mathbf{c}^{\delta-1}\cdot[\omega])
	=\nu_1(\omega).
	\]
	Thus $\alpha$ is strong by \cref{rmk:strong between discrete sigma}. We have already shown that $\alpha_\#^{-1}(\omega)$ is finite for every 
	$\omega\in\Omega_1$, and that $\alpha$ is a strong morphism of adelic curves. 
	By \cref{lemma:characterization of covering and isomorphism}~\ref{strong for discrete measurable}, 
	$\alpha$ is a covering of adelic curves. This completes the proof.
\end{proof}

\begin{remark}
	Keeping the notation in \cref{prop:B is covered by a curve as adelic curves}, write 
	$S_{\mathbf{c},K'_{\mathbf{c}}}=(K'_{\mathbf{c}}, \Omega_{\mathbf{c},K'_{\mathbf{c}}}, \mathcal{A}_{\mathbf{c},K'_{\mathbf{c}}}, \nu_{\mathbf{c},K'_{\mathbf{c}}})$ 
	and $S_{K'}=(K',\Omega_{K'},\mathcal{A}_{K'},\nu_{K'})$. If the base field $k$ is algebraically closed, 
	the proof above shows that the covering $S_{\mathbf{c},K'_{\mathbf{c}}}\to S_{K'}$ induces a bijection 
	$\alpha_\#\colon\Omega_{\mathbf{c},K'_{\mathbf{c}}}\to\Omega_{K'}$ satisfying 
	$\nu_{\mathbf{c},K'_{\mathbf{c}}}(x)=\nu_{K'}(\alpha_\#(x))$ for all $x\in\Omega_{\mathbf{c},K'_{\mathbf{c}}}$. 
	Hence it induces an isomorphism of discrete measure spaces
	\[
	(\Omega_{\mathbf{c},K'_{\mathbf{c}}},\mathcal{A}_{\mathbf{c},K'_{\mathbf{c}}},\nu_{\mathbf{c},K'_{\mathbf{c}}})
	\;\xrightarrow{\;\sim\;}\;
	(\Omega_{K'},\mathcal{A}_{K'},\nu_{K'}).
	\]
\end{remark}

\subsection{Invariance of intersection numbers}

Recall the arithmetic intersection number on $U$ defined in 
\cref{def:boundarytopologyglobal} and in \cref{extension of YZ intersection number}.

\begin{corollary} \label{prop:intersection number after base changes}
	Assume that $\delta=\dim(T)\geq 2$. 
	Let $S_{\mathbf{c}}=(K_{\mathbf{c}}, \Omega_{\mathbf{c}},\mathcal{A}_{\mathbf{c}},\nu_{\mathbf{c}})$ 
	and $U_{\mathbf{c}}=U\times_{\Spec(K)}\Spec(K_{\mathbf{c}})$ be as in \cref{generic curves}. 
	Then the pull-back induces a map
	\begin{align} \label{eq:pull-back via covering}
		\widehat{\Pic}_{S,\Q}(U)_\cpt\longrightarrow \widehat{\Pic}_{S_{\mathbf{c}},\Q}(U_{\mathbf{c}})_\cpt,
		\qquad \overline{L}\longmapsto \overline{L_{\mathbf{c}}},
	\end{align}
	which sends $\widehat{\Pic}_{S,\Q}(U)_{\integrable}^\YZ$ 
	\textup{(}resp.\ $\widehat{\Pic}_{S,\Q}(U)_{\nef}^{\YZ}$, 
	resp.\ $\widehat{\Pic}_{S,\Q}(U)_{\relnef}^{\nef,\YZ}$\textup{)} 
	to $\widehat{\Pic}_{S_{\mathbf{c}},\Q}(U_{\mathbf{c}})_{\integrable}^\YZ$ 
	\textup{(}resp.\ $\widehat{\Pic}_{S_{\mathbf{c}},\Q}(U_{\mathbf{c}})_{\nef}^\YZ$, 
	resp.\ $\widehat{\Pic}_{S_{\mathbf{c}},\Q}(U_{\mathbf{c}})_{\relnef}^{\nef,\YZ}$\textup{)}. 
	Moreover, for any $t$-dimensional subvariety $Y\subset U_{\overline{K}}$ and any 
	$\overline{L_0},\dots,\overline{L_t}\in\widehat{\Pic}_{S,\Q}(U)_{\integrable}^\YZ$ 
	\textup{(}resp.\ $\in\widehat{\Pic}_{S,\Q}(U)_{\relnef}^{\nef,\YZ}$\textup{)}, we have
	\begin{align}\label{eq:intersection number is preserved}
		(\overline{L_0}\cdots\overline{L_t}\mid Y)_{S} 
		= (\overline{L_{0,\mathbf{c}}}\cdots\overline{L_{t,\mathbf{c}}}\mid Y_{\mathbf{c}})_{S_{\mathbf{c}}},
	\end{align}
	where $Y_{\mathbf{c}} = Y\times_{\Spec(\overline{K})}\Spec(\overline{K_{\mathbf{c}}})$.
\end{corollary}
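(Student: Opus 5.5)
The plan is to reduce everything to the model level, where the covering of \cref{prop:B is covered by a curve as adelic curves} lets us compare the two adelic curves directly, and then pass to limits.

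\textbf{Defining the pull-back.} For $\omega'\in\Omega_{\mathbf{c}}$ with $\omega=\alpha_\#(\omega')$, the ramification index is $1$, so $K_{\mathbf{c},\omega'}$ is a valued extension of $K_\omega$. This gives a map $U_{\mathbf{c},\omega'}^\an\to U_\omega^\an$, and we set $\metr_{\mathbf{c},\omega'}$ to be the pull-back of $\metr_\omega$. For a model metric coming from $(\mathcal{X},\mathcal{L})$ over $T$, the pull-back metric is the model metric of the base change $\mathcal{X}\times_T T_{\mathbf{c}}$. Since $T_{\mathbf{c}}\to T$ factors through a flat base change followed by a closed immersion, pull-backs of nef, resp.\ relatively nef, line bundles stay nef, resp.\ relatively nef. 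Thus $Q_{S,\Q}(U)_\mo\to Q_{S_{\mathbf{c}},\Q}(U_{\mathbf{c}})_\mo$ and $R_{S,\Q}(U)_\mo\to R_{S_{\mathbf{c}},\Q}(U_{\mathbf{c}})_\mo$.

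\textbf{Passing to limits.} A weak boundary divisor $\overline{B}$ pulls back to a weak boundary divisor $\overline{B}_{\mathbf{c}}$, because effectivity is preserved and $\Supp$ stays in the complement of $U_{\mathbf{c}}$. The inequalities $-rg_{B,\omega}\le\log(\metr'/\metr)\le rg_{B,\omega}$ pull back pointwise, so the map is continuous for the boundary topologies. It therefore extends to the completions and preserves the cones $\widehat{\Pic}^{\YZ}_{\integrable}$, $\widehat{\Pic}^{\YZ}_{\nef}$ and $\widehat{\Pic}^{\YZ}_{\relnef}$. For $\widehat{\Pic}^{\nef,\YZ}_\relnef$, the companion $\overline{L'}$ pulls back to a nef companion. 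It still agrees with $\overline{L_{\mathbf{c}}}$ outside finitely many places, because $\alpha_\#^{-1}(\omega)$ is finite by the proof of \cref{prop:B is covered by a curve as adelic curves}.

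\textbf{Equality of intersection numbers.} First I would prove \eqref{eq:intersection number is preserved} for model metrized line bundles on a projective model and $Y$ defined over a finite extension $K'$. By \cref{rmk:arithmetic intersection number for subvarieties} and the second part of \cref{prop:B is covered by a curve as adelic curves}, we may replace $K$ by $K'$, so assume $Y=X$ is over $K$.

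For models, the arithmetic intersection number is an integral over $\Omega$ of local intersection numbers, in the formalism of \cite{chen2021arithmetic}, and the local numbers are invariant under the valued extension $K_{\mathbf{c},\omega'}/K_\omega$. The strong covering identity \eqref{eq:strong for discrete measurable space} then gives
\[\sum_{\omega'\in\alpha_\#^{-1}(\omega)}\nu_{\mathbf{c}}(\omega')=\nu(\omega),\]
and summing over $\omega$ yields the equality, using the invariance of arithmetic intersection under coverings \cite[\S4]{chen2021arithmetic}. Alternatively, one can argue geometrically via \eqref{eq:arithmetic = geometric}: on the left we get $\mathcal{L}_0\cdots\mathcal{L}_t\cdot\pi^*\mathbf{c}^{\delta-1}$, and on the right the base-changed intersection on $\mathcal{X}\times_T T_{\mathbf{c}}$. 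These agree because $T_{\mathbf{c}}$ is cut out by $\delta-1$ sections of $\mathbf{c}$ on $T''$, combined with flat base change invariance, \cref{def:intersection number}~\ref{intersection number base change}, and the normalisation $r=n^{1-\delta}$.

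Next, the equality extends by multilinearity and continuity of the intersection pairing in the boundary topology to $\widehat{\Pic}^{\YZ}_\integrable$. For the relatively nef case, I would use the definition in \cref{extension of YZ intersection number}. The nef part is handled as above. For the correction term $E$, the Monge--Ampère measures and integrands pull back compatibly under the base change of Berkovich spaces, with push-forward of measures preserving integrals. Hence $E(\overline{\mathbf{L}'_{\omega}},\overline{\mathbf{L}_{\omega}})=E(\overline{\mathbf{L}'_{\mathbf{c},\omega'}},\overline{\mathbf{L}_{\mathbf{c},\omega'}})$, and the same weighted sum over fibres of $\alpha_\#$ gives equality.

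\textbf{Main obstacle.} I expect the model-level equality to be the hardest step: carefully matching the measure $\nu_{\mathbf{c}}$ with local intersection numbers, including the subtlety that $k$ need not be algebraically closed. I would handle this by first reducing via \cref{lemma:algebraic field extension}.
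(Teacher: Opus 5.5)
Your proposal is correct. Up to the relatively nef case it follows the paper:
\begin{itemize}
\item pull back model metrics along $\mathcal{X}\times_T T_{\mathbf c}$ and check that nefness and fibrewise nefness survive;
\item pull back weak boundary divisors and extend by continuity;
\item reduce to $Y=U$ over $K$ via the second half of \cref{prop:B is covered by a curve as adelic curves};
\item obtain the integrable case from the projective one plus continuity. Your first model-level route is exactly the paper's appeal to covering invariance in the sense of Gubler and Chen--Moriwaki.
\end{itemize}

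The genuine difference is the relatively nef step. The paper never touches the correction term $E$. It approximates each $\overline{L_j}$ by $\min\{\metr_j,e^m\metr_j'\}$ via \cref{lemma:existence of sequence of intersections to nef intersection} and notes that this construction commutes with base change. It then applies the integrable case to the approximants, together with part (iii) of that lemma on both sides.

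You instead work directly with the defining formula of \cref{extension of YZ intersection number}:
\begin{itemize}
\item the pulled-back companions still satisfy (a) and (b), with bound $C_j\circ\alpha_\#$, which is finitely supported because the fibres of $\alpha_\#$ are finite;
\item the nef terms agree by the integrable case;
\item the correction terms agree place by place and are regrouped using $\sum_{\omega'\in\alpha_\#^{-1}(\omega)}\nu_{\mathbf c}(\omega')=\nu(\omega)$.
\end{itemize}
This gives a finer, local identity and avoids the approximation lemma, whose proof the paper omits.

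The cost is a local input you only assert. For $p\colon U^\an_{\mathbf c,\omega'}\to U^\an_\omega$, you need $p_*$ of the mixed Monge--Amp\`ere measures of the pulled-back locally nef compactified metrics to equal the corresponding measures on $U^\an_\omega$. Here $K_{\mathbf c,\omega'}/K_\omega$ is non-algebraic with transcendental residue extension, so \cite[Proposition~4.45~(d)]{cai2024abstract}, which treats $\C_\omega$, does not cover it. The fact is true, and you should prove it:
\begin{itemize}
\item first check it for model metrics, where the measure is a combination of Dirac masses at divisorial points weighted by degrees of special-fibre components; these behave correctly under base change since the ramification index is $1$;
\item then pass to vague limits along the proper map $p$;
\item the change of variables is then legitimate for your integrands, which are bounded above but possibly unbounded below, because the measures have finite mass.
\end{itemize}

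Your geometric alternative at the model level via \eqref{eq:arithmetic = geometric} also works. You need to record that $[\mathcal{X}\times_T T_{\mathbf c}]$ equals, with multiplicity one, the intersection cycle of the pulled-back generic divisors on $\mathcal{X}\times_T T''$. This follows from the same generic-hyperplane input (Lang) used to construct $T_{\mathbf c}$.
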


\begin{proof}
	For the projective case, the existence of a pull-back preserving the relevant 
	subspaces is proved in \cite[3.12]{gubler2008equidistribution}; it also follows from 
	\cref{prop:B is covered by a curve as adelic curves} together with 
	\cite[Theorem~4.3.6]{chen2021arithmetic}. For the quasi-projective case, observe that 
	the pull-back of a weak boundary divisor of $U$ (see \cref{global:weakly boundary}) is a 
	weak boundary divisor of $U_{\mathbf{c}}$, and base change preserves effectivity. Hence 
	the pull-back map \eqref{eq:pull-back via covering} is well-defined. Moreover, if 
	$\overline{L}\in P_{S,\Q}(U)_\mo$ is induced by a line bundle $\mathcal{L}$ on a projective 
	scheme $\mathcal{X}$ over $T$, then $\overline{L_{\mathbf{c}}}$ is induced by the pull-back 
	of $\mathcal{L}$ to $\mathcal{X}\times_T T_{\mathbf{c}}$. Thus 
	$\overline{L_{\mathbf{c}}}\in P_{S_{\mathbf{c}},\Q}(U_{\mathbf{c}})_\mo$.
	
	We claim that the pull-back \eqref{eq:pull-back via covering} sends $Q_{S,\Q}(U)_\mo$ to $Q_{S_{\mathbf{c}},\Q}(U_{\mathbf{c}})_\mo$ 
	and $R_{S,\Q}(U)_\mo$ to $R_{S_{\mathbf{c}},\Q}(U_{\mathbf{c}})_\mo$. Let $\mathcal{X}$ be a 
	projective model of $U$ over $T$ and let $\mathcal{L}$ be a model line bundle on 
	$\mathcal{X}$ inducing $\overline{L}$. Since the nef cone is the closure of the ample cone, 
	we may assume that $\mathcal{L}$ is ample. Consider the tower of base changes
	\[
	\mathcal{X}_{\mathbf{c}}\longrightarrow\mathcal{X}''\longrightarrow\mathcal{X}_1
	\longrightarrow\mathcal{X}
	\]
	induced by $T_{\mathbf{c}}\to T''\to T_1\to T$, and denote by $\mathcal{L}_1$, 
	$\mathcal{L}''$, $\mathcal{L}_{\mathbf{c}}$ the respective pull-backs of $\mathcal{L}$. 
	Because $\mathcal{X}_1\to\mathcal{X}$ factors through 
	$\mathcal{X}\times_{\Spec(k)}\Spec(\overline{k})$, the line bundle $\mathcal{L}_1$ is nef. 
	To see that $\mathcal{L}_{\mathbf{c}}$ is nef, it suffices to show that $\mathcal{L}''$ is 
	nef. Take an ample line bundle $\mathcal{A}_1$ on $\mathcal{X}_1$. For every $n\geq 1$, the 
	$\Q$-line bundle $\mathcal{L}_1+\frac{1}{n}\mathcal{A}_1$ is ample on $\mathcal{X}_1$, hence 
	its pull-back $\mathcal{L}''+\frac{1}{n}\mathcal{A}''$ is ample on 
	$\mathcal{X}''=\mathcal{X}_1\times_{\Spec(k)}\Spec(k'')$. Passing to the limit as 
	$n\to\infty$ shows that $\mathcal{L}''$ is nef, and therefore so is 
	$\mathcal{L}_{\mathbf{c}}$. This proves the claim for $Q_{S,\Q}(U)_\mo$. Now assume $\overline{L}\in R_{S,\Q}(U)_\mo$. Then $L$ is nef and, for every 
	$\omega\in\Omega=T^{(1)}$, the restriction $\mathcal{L}_\omega$ is nef on the special 
	fiber of $\mathcal{X}\times_T\Spec(\mathcal{O}_{T,\omega})\to\Spec(\mathcal{O}_{T,\omega})$. 
	For any $\omega'\in\Omega_{\mathbf{c}}=T_{\mathbf{c}}^{(1)}$, let $\omega\in\Omega$ be its 
	image under $T_{\mathbf{c}}\to T$. Then the special fiber of 
	$\mathcal{X}_{\mathbf{c}}\times_{T_{\mathbf{c}}}\Spec(\mathcal{O}_{T_{\mathbf{c}},\omega'})\to
	\Spec(\mathcal{O}_{T_{\mathbf{c}},\omega'})$ is a base change of the special fiber of 
	$\mathcal{X}\times_T\Spec(\mathcal{O}_{T,\omega})\to\Spec(\mathcal{O}_{T,\omega})$. Hence 
	the pull-back of $\mathcal{L}$ remains nef on the special fibers of 
	$\mathcal{X}_{\mathbf{c}}$ over $T_{\mathbf{c}}$, and 
	$\overline{L_{\mathbf{c}}}\in R_{S_{\mathbf{c}},\Q}(U_{\mathbf{c}})_\mo$. Taking completions, 
	we conclude that the map \eqref{eq:pull-back via covering} sends 
	$\widehat{\Pic}_{S,\Q}(U)_{\integrable}^\YZ$ 
	\textup{(}resp.\ $\widehat{\Pic}_{S,\Q}(U)_{\nef}^{\YZ}$, 
	resp.\ $\widehat{\Pic}_{S,\Q}(U)_{\relnef}^{\nef,\YZ}$\textup{)} to the corresponding 
	subspace on the generic-curve side.
	
It remains to show the invariance of intersection numbers. By 
\cref{prop:B is covered by a curve as adelic curves}, replacing $K$ with a finite 
extension preserves the covering property; hence we may assume that $Y$ is defined over 
$K$. Replacing $U$ by $Y$ and the $\overline{L_j}$ by their restrictions to $Y$, we may 
further assume that $Y=U$ (so $t=d=\dim(U)$). Let 
$\overline{L_0},\dots,\overline{L_d}\in\widehat{\Pic}_{S,\Q}(U)_{\integrable}^\YZ$. After 
shrinking $U$, \eqref{eq:intersection number is preserved} follows from the projective 
case together with \cite[Proposition~7.22~(i)~(v)]{cai2024abstract}.
	
	Finally, let $\overline{L_0},\dots,\overline{L_d}\in\widehat{\Pic}_{S,\Q}(U)^{\integrable,\YZ}_\relnef$. 
	By \cref{lemma:existence of sequence of intersections to nef intersection}, after shrinking 
	$U$ there exist sequences 
	$(\overline{L_{j,m}})_{m\geq 1}\subset\widehat{\Pic}_{S,\Q}(U)^\YZ_{\integrable}$ converging 
	to $\overline{L_j}$ in the $\overline{B}$-boundary topology for some weak boundary divisor 
	$\overline{B}$, and similarly sequences $(\overline{L_{j,\mathbf{c},m}})_{m\geq 1}$ on the 
	generic-curve side converging to $\overline{L_{j,\mathbf{c}}}$. The construction in 
	\cref{lemma:existence of sequence of intersections to nef intersection} is functorial under 
	base change, so $\overline{L_{j,m,\mathbf{c}}}=\overline{L_{j,\mathbf{c},m}}$ for all $j,m$. 
	Consequently,
	\begin{align*}
		(\overline{L_0}\cdots\overline{L_d}\mid U)_S
		&=\lim_{m\to\infty}(\overline{L_{0,m}}\cdots\overline{L_{d,m}}\mid U)_S \\
		&=\lim_{m\to\infty}(\overline{L_{0,\mathbf{c},m}}\cdots\overline{L_{d,\mathbf{c},m}}\mid U_{\mathbf{c}})_{S_{\mathbf{c}}} \\
		&=(\overline{L_{0,\mathbf{c}}}\cdots\overline{L_{d,\mathbf{c}}}\mid U_{\mathbf{c}})_{S_{\mathbf{c}}},
	\end{align*}
	where the second equality uses the already established integrable case. This completes 
	the proof.
\end{proof}

\begin{remark}
	The geometric version of the corollary also holds: one has a pull-back 
	$\widetilde{\Pic}_\Q(U)_\integrable\to\widetilde{\Pic}_\Q(U_{\mathbf{c}})_\integrable$,
	and the geometric intersection numbers are preserved under the base change.
\end{remark}

\section{Equidistribution theorem in relatively nef case}

\label{sec:equidistribution nef}

As before, in this section, we keep the notation in \cref{subsection:compactified S-metrized YZ-line bundles}. We will use the generic curve technique to prove the equidistribution theorem over function fields. We fix a $d$-dimensional quasi-projective variety over $K$.

\subsection{Equidistribution theorem for points}

\begin{art}
	 Let $X$ be an algebraic variety over a field $F$, and $(Y_m)_{m\in I}$ a net of proper closed subvarieties of $X_{\overline{F}}$. We say that $(Y_m)_{m\in I}$ is \emph{generic} (on $X$) if for any proper closed subvariety $Y$ of $X$, there is $m_0\in I$ such that $Y_m$ is not contained in $Y$ for any $m\geq m_0$. 
\end{art}


\begin{lemma}\label{lemma:lift closed subset}
	Let $F'/F$ be a field extension and let $X$ be an algebraic variety over $F$. 
	Fix an embedding 
	$\overline{F}\hookrightarrow\overline{F'}$ of algebraic closure. Let $Y'$ be a proper closed subset of 
	$X_{F'}\coloneq X\times_{\Spec(F)}\Spec(F')$. Then there exists a proper closed subset 
	$Y$ of $X$ such that
	\[
	Y'(\overline{F'})\cap X(\overline{F})\subset Y(\overline{F}),
	\]
	where $X(\overline{F})$ is identified with its image in $X_{F'}(\overline{F'})$.
\end{lemma}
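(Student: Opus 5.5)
The plan is to descend $Y'$ to $F$ via a norm-type or image construction. The first idea is to take $Y$ to be the Zariski closure in $X$ of the set of points of $X$ lying under $Y'(\overline{F'})\cap X(\overline{F})$. Concretely, let $\pi\colon X_{F'}\to X$ be the projection. Each point $x\in X(\overline{F})$ gives a closed point $\pi(x)$ of $X$. We set
\[
Y\coloneq \overline{\{\pi(x)\mid x\in Y'(\overline{F'})\cap X(\overline{F})\}}\subset X .
\]
Clearly $Y'(\overline{F'})\cap X(\overline{F})\subset Y(\overline{F})$. The whole content is therefore showing that $Y\neq X$.

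To show $Y$ is proper, reduce first to the case where $Y'$ is a hypersurface. Choose an affine open $V=\Spec(A)\subset X$; since $X$ is irreducible, it suffices to treat $Y'\cap V_{F'}$ and then take $Y=(Y_V)\cup(X\setminus V)$. As $Y'\neq X_{F'}$ and $X_{F'}$ is irreducible ($X$ being geometrically integral), there is a nonzero $g\in A\otimes_F F'$ vanishing on $Y'\cap V_{F'}$. Write $g=\sum_{i} a_i\otimes \lambda_i$ with $a_i\in A$ and $\lambda_i\in F'$ linearly independent over $F$; some $a_i\ne 0$, say $a_1$. For $x\in V(\overline{F})$ lying on $Y'$, we get $\sum_i a_i(x)\lambda_i=0$ in $\overline{F'}$.

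The key step, and the main obstacle, is turning this into a nontrivial equation over $F$. The $a_i(x)$ lie in $\overline{F}$, and the $\lambda_i$ are $F$-linearly independent, but they need not be $\overline{F}$-linearly independent inside $\overline{F'}$, because $F'$ and $\overline{F}$ may fail to be linearly disjoint over $F$. The remedy is to replace $F$ by $F_0=\overline{F}\cap F'$ (taken in $\overline{F'}$), which is algebraic over $F$. Then $F'$ and $\overline{F}$ are linearly disjoint over $F_0$ when $F'/F_0$ is regular. In general we first pass to a finite extension of $F$ (respecting the fixed embedding) and handle the inseparable part via a $p$-power trick. Concretely, the plan is:
\begin{enumerate1}
\item Replace $g$ by a suitable power $g^{p^e}$ and choose the $\lambda_i$ to be linearly independent over $\overline{F}$, using that $F'\overline{F}$ has an $\overline{F}$-basis drawn from $F'$.
\item Conclude that every $x$ in question satisfies $a_1(x)=0$, possibly after expressing the coefficients through a finite extension $F_1\subset\overline{F}$ and taking the norm $N_{F_1/F}$ of the coefficient functions to get a nonzero element of $A$.
\end{enumerate1}

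This gives a nonzero $h\in A$ with $h(x)=0$ for all relevant $x$, so $Y_V\subset V(h)\ne V$. A cleaner alternative, which I would try first, avoids coordinates entirely. Write $F'$ as a filtered union of finitely generated $F$-algebras $R$, and spread $Y'$ out to a closed subset $\mathcal{Y}\subset X\times \Spec R$. The $\overline{F}$-points of $X$ lying in $Y'$ then lie in the fibre of $\mathcal{Y}$ over every $\overline{F}$-point of $\Spec R$ (via $R\to F'\to\overline{F'}$, restricted suitably). Choosing $R$ so that the generic fibre $\mathcal{Y}_{\eta}\neq X_{F'}$ and specialising at a closed point $s$ of $\Spec R$ where the fibre $\mathcal{Y}_s$ is still proper (generic properness by Chevalley / upper semicontinuity of fibre dimension), we take $Y$ to be the image of $\mathcal{Y}_s$ in $X$ under the finite map $X_{\kappa(s)}\to X$. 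Finite maps send proper closed subsets to proper closed subsets of the same dimension, so $Y\neq X$.

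The delicate point in this alternative is verifying that a point $x\in X(\overline{F})\cap Y'(\overline{F'})$ really lies in $\mathcal{Y}_s$. Since $x$ is constant along $\Spec R$, the locus of $s$ with $(x,s)\in\mathcal{Y}$ is closed and contains the generic point, hence is all of $\Spec R$. This is the step I expect to need the most care, especially with the chosen embedding of $\overline{F}$ and with $x$ only being defined over a finite extension of $F$.
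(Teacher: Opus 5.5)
Your second (spreading-out) argument is correct, and it takes a genuinely different route from the paper's.

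\textbf{The paper's route.} The paper first reduces to $F'/F$ finitely generated. It then makes a finite base change $M/F$ so that $F'$ becomes the function field of a geometrically integral variety $T'$. This makes $F'\otimes_F\overline{F}$ a field, which is exactly what allows the relation $f(x)=0$ in $\overline{F'}$ to be lifted to the tensor product. After reducing to $X$ affine and $Y'=Z(f)$, the paper specialises the coefficients of $f$ at a separable point $t_0\in T'(F^{\mathrm{sep}})$. A Galois norm then produces an explicit nonzero $g\in\OO_X(X)$, and one takes $Y=Z(g)$.

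\textbf{Your route.} Linear disjointness is replaced by the observation that $\mathrm{pr}_X^{-1}(p)=\Spec(\kappa(p)\otimes_F R)\to\Spec R$ is finite, hence closed. The norm is replaced by taking the image of $\mathcal{Y}_s$ under the finite map $X_{\kappa(s)}\to X$. This avoids the regularity reduction, the affine and single-equation reductions, and the choice of a separable point. What the paper's route gains in exchange is an explicit hypersurface containing the bad points.

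\textbf{Points of precision.}
\begin{enumerate1}
\item Your sentence that the relevant points lie in the fibre of $\mathcal{Y}$ over \emph{every} $\overline{F}$-point of $\Spec R$ is not right as stated. Take $F=\Q$, $R=F'=\Q(\sqrt2)$, $X=\mathbb{A}^1$, $\mathcal{Y}=Y'=\{y=\sqrt2\}$ and $x=\sqrt2$. For the $\overline{\Q}$-point $s\colon\sqrt2\mapsto-\sqrt2$, the pair $(x,s)$ is not in $\mathcal{Y}$.
\item What is true, and all you need, is scheme-theoretic. Let $p$ be the closed point of $X$ under $x$. The closed set $\mathcal{Y}\cap\mathrm{pr}_X^{-1}(p)$ contains a point over the generic point $\eta$, so its image in $\Spec R$ is closed and contains $\eta$, hence is everything. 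So $\mathcal{Y}_s$ has a point over $p$, which gives $p\in Y$ and therefore $x\in Y(\overline{F})$. Because $Y$ is defined as an image, the dependence on embeddings that worried you disappears.
\item Choose $R\subset F'$ finitely generated with $\mathcal{Y}\times_{\Spec R}\Spec F'=Y'$. Then what you need is $\mathcal{Y}_\eta\neq X_{\Frac(R)}$, not $X_{F'}$.
\end{enumerate1}

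\textbf{Your first sketch.} It only works once coefficients are allowed in $A\otimes_F F_1$ and you then take a norm. In general you cannot rewrite $g$ over $A$ with $\overline{F}$-independent $\lambda_i$. Also, $a_1(x)=0$ fails for the original $a_1$: take $g=a_1\otimes1+a_2\otimes\alpha$ with $\alpha\in F'$ algebraic over $F$. Since you intend to use the second argument, this is moot.
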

\begin{proof}
	We proceed in three steps.
	
\vspace{2mm}\noindent
\textbf{Step~1.} \textit{Reduction to finitely generated extensions.}
The closed subset $Y'\subset X_{F'}$ is defined by finitely many equations, whose 
coefficients lie in some finitely generated subextension $F_1/F$. Hence there exists 
a proper closed subset $Y'_1\subset X_{F_1}$ such that $Y'$ is its base change to $F'$. 
We have a Cartesian diagram
\[
\xymatrix{
	Y' \ar[r]\ar[d] & Y'_1 \ar[d] \\
	X_{F'} \ar[r] & X_{F_1}
}
\]
induced by the inclusion $F_1\subset F'$. Fix compatible embeddings 
$\overline{F}\hookrightarrow\overline{F_1}\hookrightarrow\overline{F'}$. For any 
$x\in X(\overline{F})$, let $x_{F_1}\in X_{F_1}(\overline{F_1})$ and 
$x_{F'}\in X_{F'}(\overline{F'})$ denote the induced images. By the universal property 
of the fibre product, $x_{F'}\in Y'(\overline{F'})$ if and only if 
$x_{F_1}\in Y'_1(\overline{F_1})$. Under the identifications 
$X(\overline{F})\subset X_{F'}(\overline{F'})$ and 
$X(\overline{F})\subset X_{F_1}(\overline{F_1})$, this means
\[
Y'(\overline{F'})\cap X(\overline{F})
= Y'_1(\overline{F_1})\cap X(\overline{F}).
\]
Consequently, any proper closed subset $Y\subset X$ satisfying the conclusion for 
$Y'_1$ also satisfies it for $Y'$. Thus we may assume that $F'/F$ is finitely generated.
	
	\vspace{2mm}\noindent
	\textbf{Step~2.} \textit{Reduction to the function field of a geometrically 
		integral variety.}
	If $F'/F$ is finite, let $Y$ be the Zariski image of $Y'$ under the projection 
	$X_{F'}\to X$. Then $Y$ is a proper closed subset of $X$, and every point of 
	$Y'(\overline{F'})\cap X(\overline{F})$ maps into $Y(\overline{F})$ under the projection, 
	so the inclusion holds.
	
	For a general finitely generated extension $F'/F$, choose an integral scheme $T'$ of 
	finite type over $F$ with function field $F'$. There exists a finite extension $M/F$ 
	such that some irreducible component $T''$ of $T'\times_{\Spec(F)}\Spec(M)$ is 
	geometrically integral over $M$. Let $F''$ be the function field of $T''$; then 
	$F''/F'$ is finite. Fix compatible embeddings 
	$\overline{F}\hookrightarrow\overline{M}\hookrightarrow\overline{F''}$, so that
	\[
	X(\overline{F})\subset X_M(\overline{M})\subset X_{F''}(\overline{F''}).
	\]
	
	Assuming the lemma holds for the extension $F''/M$, we obtain a proper closed subset 
	$Y_M\subset X_M$ such that
	\[
	Y'_{F''}(\overline{F''})\cap X_M(\overline{M})\subset Y_M(\overline{M}),
	\]
	where $Y'_{F''}$ denotes the base change of $Y'$ to $F''$. Since the lemma also holds 
	for the finite extension $M/F$, there exists a proper closed subset $Y\subset X$ such 
	that
	\[
	Y_M(\overline{M})\cap X(\overline{F})\subset Y(\overline{F}).
	\]
	Now take any $x\in Y'(\overline{F'})\cap X(\overline{F})$. Viewing $x$ as a point of 
	$X_{F''}(\overline{F''})$, we have $x\in Y'_{F''}(\overline{F''})$; hence 
	$x\in Y'_{F''}(\overline{F''})\cap X_M(\overline{M})\subset Y_M(\overline{M})$. 
	Since $x\in X(\overline{F})$, we obtain $x\in Y_M(\overline{M})\cap X(\overline{F})
	\subset Y(\overline{F})$. Thus it suffices to treat the case where $F'$ is the function 
	field of a geometrically integral variety $T'$ over $F$.
	
\vspace{2mm}\noindent
\textbf{Step~3.} \textit{The geometrically integral case.}
Assume that $F'$ is the function 
field of a geometrically integral variety $T'$ over $F$.
We first reduce to the case where $X$ is affine. Let $X=\bigcup_{i=1}^m X_i$ be a finite 
affine open covering. Set $X_i'\coloneq X_i\times_{\Spec(F)}\Spec(F')$ and 
$Y_i'\coloneq Y'\cap X_i'$. Suppose that for each $i$ there exists a proper closed subset 
$Y_i\subset X_i$ such that 
$Y_i'(\overline{F'})\cap X_i(\overline{F})\subset Y_i(\overline{F})$. Let 
$\overline{Y_i}$ denote the closure of $Y_i$ in $X$, and set 
$Y\coloneq\bigcup_{i=1}^m\overline{Y_i}$. Then $Y$ is a proper closed subset of $X$. 
If $x\in Y'(\overline{F'})\cap X(\overline{F})$, then $x$ lies in some 
$X_i(\overline{F})$, hence $x\in Y_i'(\overline{F'})\cap X_i(\overline{F})\subset 
Y_i(\overline{F})\subset Y(\overline{F})$. Thus we may assume that $X\subset\mathbb{A}_F^n$ 
is affine.

Next, we reduce to the case where $Y'$ is defined by a single equation. Since 
$X_{F'}\subset\mathbb{A}_{F'}^n$ is affine, the closed subset $Y'$ is the zero locus of 
finitely many functions $f_1,\dots,f_r\in\mathcal{O}_{X_{F'}}(X_{F'})$. For each $j$, write 
$Z(f_j)\subset X_{F'}$ for the closed subset defined by the vanishing of $f_j$. 
Suppose that for every $j$ there exists a proper closed subset $Y_j\subset X$ such that
\[
Z(f_j)(\overline{F'})\cap X(\overline{F})\subset Y_j(\overline{F}).
\]
Set $Y\coloneq\bigcap_{j=1}^r Y_j$. Then $Y$ is a proper closed subset of $X$ 
(contained in the proper subset $Y_1$). Moreover, if 
$x\in Y'(\overline{F'})\cap X(\overline{F})$, then $f_j(x)=0$ for all $j$, so 
$x\in Z(f_j)(\overline{F'})\cap X(\overline{F})\subset Y_j(\overline{F})$ for every 
$j$; hence $x\in Y(\overline{F})$. Thus we may assume that $Y'=Z(f)$ for a single 
non-zero function $f\in\mathcal{O}_{X_{F'}}(X_{F'})$.

After multiplying $f$ by a suitable element of $\mathcal{O}_{T'}(T')$, we may write
\[
f=\sum_{i_1,\dots,i_n}f_{i_1,\dots,i_n}\,x_1^{i_1}\cdots x_n^{i_n},
\qquad f_{i_1,\dots,i_n}\in\mathcal{O}_{T'}(T').
\]
View $f$ as a rational function on $X_{T'}\coloneq X\times_{\Spec(F)}T'$. Since 
$X_{F'}$ is dense in $X_{T'}$ and $f$ does not vanish identically on $X_{F'}$, it does 
not vanish identically on $X_{T'}$. Hence there exists a separable point 
$t_0\in T'(F^{\mathrm{sep}})$ such that the restriction $f|_{X_{t_0}}$ is non-zero 
(here we use that $T'(F^{\mathrm{sep}})$ is dense in $T'$, see 
\cite[Schemes, Lemma~33.25.3]{stacks-project}). Let $M/F$ be the minimal Galois 
extension contained in $F^{\mathrm{sep}}$ such that $F(t_0)\subset M$, and let 
$G=\Gal(M/F)$. Define
\[
g\coloneq\prod_{\sigma\in G}\Bigl(\sum_{i_1,\dots,i_n}
\sigma\bigl(f_{i_1,\dots,i_n}(t_0)\bigr)x_1^{i_1}\cdots x_n^{i_n}\Bigr)
\in\mathcal{O}_X(X).
\]
Since $f|_{X_{t_0}}\neq 0$, the polynomial $g$ is non-zero, so its zero locus 
$Y\coloneq Z(g)\subset X$ is a proper closed subset.

Now let $x=(a_1,\dots,a_n)\in Y'(\overline{F'})\cap X(\overline{F})$. Because $T'$ 
is geometrically integral, $F'\otimes_F\overline{F}$ is a field; evaluating $f$ 
at $x$ gives
\[
f(x)=\sum_{i_1,\dots,i_n}f_{i_1,\dots,i_n}\,a_1^{i_1}\cdots a_n^{i_n}=0
\quad\text{in }F'\otimes_F\overline{F}.
\]
Specializing at $t_0$ yields 
$\sum_{i_1,\dots,i_n}f_{i_1,\dots,i_n}(t_0)\,a_1^{i_1}\cdots a_n^{i_n}=0$ in 
$\overline{F}$, hence $g(x)=0$, i.e. $x\in Y(\overline{F})$. Therefore
\[
Y'(\overline{F'})\cap X(\overline{F})\subset Y(\overline{F}),
\]
which completes the proof.
\end{proof}

\begin{lemma}\label{lemma:generic stable under base changes}
	Let $F'/F$ be a field extension and let $X$ be an algebraic variety over $F$. 
	Fix an embedding $\overline{F}\hookrightarrow\overline{F'}$ of algebraic closures. 
	Let $(Y_m)_{m\in I}$ be a net of proper closed subvarieties of $X_{\overline{F}}$, , and $Y_m'\coloneq Y_m\times_{\Spec(\overline{F})}\Spec(\overline{F'})$. If $(Y_m)_{m\in I}$ is 
	generic on $X$, then $(Y_m')_{m\in I}$ is generic on $X_{F'}$.
\end{lemma}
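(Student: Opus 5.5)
The plan is to argue directly from the definition of genericity, using \cref{lemma:lift closed subset} to transport a proper closed subset of $X_{F'}$ back down to $X$. Fix a proper closed subvariety $Y'\subset X_{F'}$. We want an index $m_0\in I$ such that $Y'_m\not\subset Y'$ for all $m\geq m_0$. Apply \cref{lemma:lift closed subset} to obtain a proper closed subset $Y\subset X$ with
\[
Y'(\overline{F'})\cap X(\overline{F})\subset Y(\overline{F}).
\]
Write $Y=Y^{(1)}\cup\dots\cup Y^{(s)}$ as a union of its irreducible components, each a proper closed subvariety of $X$. Genericity of $(Y_m)_{m\in I}$ is stated for subvarieties, so I first upgrade it to closed subsets. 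Since each $Y_m$ is irreducible (a subvariety of $X_{\overline{F}}$), the condition $Y_m\subset Y_{\overline{F}}$ forces $Y_m\subset (Y^{(i)})_{\overline{F}}$ for some $i$; strictly, $Y_m$ lies in some geometric component, which is contained in $(Y^{(i)})_{\overline{F}}$. Taking $m_0$ to be an upper bound of the finitely many indices $m_0^{(i)}$ supplied by genericity for each $Y^{(i)}$ (possible since $I$ is directed), we obtain $Y_m\not\subset Y_{\overline{F}}$ for all $m\geq m_0$.

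Next, suppose for contradiction that $Y'_m\subset Y'$ for some $m\geq m_0$. Then every $\overline{F}$-point of $Y_m$, viewed in $X_{F'}(\overline{F'})$ via the fixed embedding, lies in $Y'(\overline{F'})\cap X(\overline{F})$, hence in $Y(\overline{F})$. Thus $Y_m(\overline{F})\subset Y(\overline{F})$. Because $\overline{F}$ is algebraically closed and $Y_m$ is reduced of finite type over $\overline{F}$, its $\overline{F}$-points are dense, so $Y_m\subset Y_{\overline{F}}$. This contradicts the choice of $m_0$.

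The main care point is the identification of points, namely that an $\overline{F}$-point of $Y_m$ gives an $\overline{F'}$-point of $Y'_m=Y_m\times_{\overline{F}}\overline{F'}$ compatible with the inclusion $X(\overline{F})\subset X_{F'}(\overline{F'})$ used in \cref{lemma:lift closed subset}. This is immediate from the fixed embedding $\overline{F}\hookrightarrow\overline{F'}$ and the universal property of fibre products. The passage from the point-set inclusion to the scheme-theoretic inclusion is handled by the density of closed points over an algebraically closed field. I should also check that $Y'_m$ is a proper closed subvariety of $X_{F'}$ so that the statement makes sense: its irreducibility follows since $Y_m$ is a variety over the algebraically closed field $\overline{F}$, hence geometrically integral.
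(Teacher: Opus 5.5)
Your proof is correct and follows the same route as the paper. You lift $Y'\subset X_{F'}$ to a proper closed subset $Y\subset X$ via \cref{lemma:lift closed subset}, use genericity on $X$ to get $m_0$, and then compare $\overline{F}$-points of $Y_m$ with $\overline{F'}$-points of $Y'_m$. Your version also makes explicit two points the paper leaves implicit: passing from closed subsets to their irreducible components, since genericity is only stated for subvarieties, and the density of $\overline{F}$-points, which the paper uses tacitly when it picks $x\in Y_m(\overline{F})\setminus Z(\overline{F})$ and which you phrase contrapositively.
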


\begin{proof}
	Let $Z'\subset X_{F'}$ be a proper closed subset. By \cref{lemma:lift closed subset}, 
	there exists a proper closed subset $Z\subset X$ such that
	\[
	Z'(\overline{F'})\cap X(\overline{F})\subset Z(\overline{F}).
	\]
	Since $(Y_m)_{m\in I}$ is generic on $X$, there exists $m_0\in I$ such that 
	$Y_m\not\subset Z$ for all $m\geq m_0$. We claim that $Y_m'\not\subset Z'$ for all 
	$m\geq m_0$. 
	
	Indeed, fix $m\geq m_0$. Since $Y_m\not\subset Z$, we can choose a point 
	$x\in Y_m(\overline{F})\setminus Z(\overline{F})$. By the inclusion above, the image 
	of $x$ in $X_{F'}(\overline{F'})$ does not lie in $Z'(\overline{F'})$. On the other 
	hand, this image clearly lies in $Y_m'(\overline{F'})$. Hence 
	$Y_m'(\overline{F'})\not\subset Z'(\overline{F'})$, which implies $Y_m'\not\subset Z'$. 
	Since $Z'$ is arbitrary, $(Y_m')_{m\in I}$ is generic on $X_{F'}$.
\end{proof}

Recall that we have a canonical homomorphism
\[
\widehat{\Pic}_{S,\Q}(U)_\cpt^\YZ\longrightarrow\widetilde{\Pic}_{\Q}(U)_\cpt
\]
which sends $\widehat{\Pic}_{S,\Q}(U)_{\integrable}^\YZ$ to 
$\widetilde{\Pic}_{\Q}(U)_\integrable$, see \cref{def:boundarytopologyglobal}. 
For $\widetilde{L}\in\widetilde{\Pic}_{\Q}(U)_\integrable$ and a subvariety 
$Y\subset U_{\overline{K}}$, we have $\deg_{\widetilde{L}}(Y)\in\R$ defined in 
\cref{geometric intersection number}. 

\begin{art}\label{def:small}
	Let $(Y_m)_{m\in I}$ be a net of proper closed subvarieties of $U_{\overline{K}}$, 
	and let $\overline{L}\in\widehat{\Pic}_{S,\Q}(U)^{\nef,\YZ}_\relnef$ be such that 
	$\deg_{\widetilde{L}}(U)>0$ and $\deg_{\widetilde{L}}(Y_m)>0$ for all $m\in I$, 
	where $\widetilde{L}$ denotes the image of $\overline{L}$ in 
	$\widetilde{\Pic}_\Q(U)_{\mathrm{int}}$. We say that $(Y_m)_{m\in I}$ is 
	\emph{small with respect to $\overline{L}$} if
	\[
	\lim_{m\in I}\frac{(\overline{L}^{\dim(Y_m)+1}\mid Y_m)_S}
	{(\dim(Y_m)+1)\deg_{\widetilde{L}}(Y_m)}
	=\frac{(\overline{L}^{d+1}\mid U)_S}{(d+1)\deg_{\widetilde{L}}(U)},
	\]
	where $d=\dim(U)$.
\end{art}

Before we state the main theorem, we fix the following setup.

\begin{art}
	For each $\omega\in\Omega$, we denote by $\C_\omega$ the completion of 
	$\overline{K_\omega}$, and we fix an embedding 
	$\jmath_\omega\colon\overline{K}\hookrightarrow\C_\omega$. For an algebraic variety 
	$X$ over $K$, we denote by $X^\an_{\C_\omega}$ the analytification of 
	$X\times_{\Spec(K)}\Spec(\C_\omega)$. By 
	\cite[Corollary~1.3.6]{berkovich1990spectral}, we have
	\[
	X_\omega^\an=X_{\C_\omega}^\an/\Gal(\overline{K_\omega}/K_\omega).
	\]
	
	Since we will study the equidistribution of Galois orbits of subvarieties defined 
	over $\overline{K}$, it is more convenient to work on $U_{\C_\omega}^\an$ instead of 
	$U_\omega^\an$. The equidistribution theorems over $U_{\C_\omega}^\an$ and over 
	$U_{\omega}^\an$ are equivalent; see \cite[Section~3]{yuan2008big}.
	
	Let $\omega\in\Omega$ and let $\overline{L_1},\dots,\overline{L_d}\in 
	\widehat{\Pic}_{S,\Q}(U)_\relnef^{\nef, \YZ}$. The pull-back of 
	$\overline{L_j}$ to $U_{\C_\omega}$ is denoted by $\overline{L_{j,\C_\omega}}\in 
	\widehat{\Pic}_{\Q}(U_{\C_\omega})_\relnef$, and we can associate a positive Radon 
	measure $c_1(\overline{L_{1,\C_\omega}})\wedge\cdots\wedge 
	c_1(\overline{L_{d,\C_\omega}})$ on $U_{\C_\omega}^\an$. It turns out that the 
	measure $c_1(\overline{L_{1,\omega}})\wedge\cdots\wedge 
	c_1(\overline{L_{d,\omega}})$ on $U_\omega^\an$ is the push-forward of the former 
	under the quotient map, see \cite[Proposition~4.45~(d)]{cai2024abstract}.
	
	For a subvariety $Y\subset U_{\overline{K}}$, we denote by $O(Y)$ the 
	\emph{Galois orbit} of $Y$ under the action of $\Gal(\overline{K}/K)$. For any 
	$\omega\in\Omega$ and any $Y^\sigma\in O(Y)$, we set
	\[
	Y^\sigma_\omega\coloneq 
	Y^\sigma\times_{\Spec(\overline{K}),\jmath_\omega}\Spec(\C_\omega),
	\]
	and we denote by $Y_{\omega}^{\sigma,\an}$ its analytification.
\end{art}

\begin{proposition}\label{prop:intersection on subvariety}
	Let $\overline{L_1},\dots,\overline{L_t}\in\widehat{\Pic}_{S,\Q}(U)_\integrable^\YZ$, 
	and let $\overline{M}=(\OO_U,e^{-f})\in\widehat{\Pic}_{S,\Q}(U)_\integrable^\YZ$ be 
	such that the image of $\overline{M}$ in $\widetilde{\Pic}_\Q(U)_\integrable$ is 
	trivial. Let $Y$ be a $t$-dimensional closed subvariety of $U_{\overline{K}}$. Then
	\begin{multline*}
		(\overline{L_1}\cdots\overline{L_t}\cdot\overline{M}\mid Y)_S
		=\frac{1}{|O(Y)|}\sum_{\omega\in\Omega}\nu(\omega)
		\sum_{Y^\sigma\in O(Y)}
		\int_{Y_\omega^{\sigma,\an}}f_\omega\,
		c_1(\overline{L_{1,\C_\omega}}|_{Y_\omega^\sigma})\wedge\cdots\wedge 
		c_1(\overline{L_{t,\C_\omega}}|_{Y_\omega^\sigma}),
	\end{multline*}
	where $f_\omega$ denotes the $\omega$-component of the Green function $f$.
\end{proposition}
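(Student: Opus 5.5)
We reduce to $Y$ defined over a finite extension and then to the projective, model case, where the formula is the classical decomposition of the height into local contributions for a metric that is trivial on the underlying bundle.

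\textbf{Step 1: reduction to $Y=U$.} Choose a finite extension $K'/K$ over which $Y$ is defined, and replace $K'$ by its Galois closure if necessary. By \cref{rmk:arithmetic intersection number for subvarieties}, the left-hand side equals $(\overline{L_1}|_Y\cdots\overline{L_t}|_Y\cdot\overline{M}|_Y\mid Y)_{S_{K'}}$. By \cref{prop:adelic curve of field extension}~\ref{adelic curve:finite field extension}, the adelic curve $S_{K'}$ is again of the form in \cref{adelic curve defined by projective variety}. The places $x\in\Omega_{K'}$ over a fixed $\omega$ have total weight
\[\sum_{x\mid\omega}\nu_{K'}(x)=\nu(\omega)\]
by \cref{def:extension of adelic curve}. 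The embeddings of $K'$ into $\C_\omega$ are permuted by $\Gal(\overline{K}/K)$. Grouping the conjugates $Y^\sigma$ according to the place of $K'$ they induce, each $x\mid\omega$ corresponds to $\frac{[K'_x:K_\omega]}{[K':K]}|O(Y)|$ conjugates. For each such conjugate, $Y^\sigma_\omega$ is the base change of $Y_{K'_x}$ along some embedding.

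\textbf{Step 2: the formula over $K'$.} Assume the formula over $K'$ with $Y=U$ in the form
\[(\overline{L_1}\cdots\overline{L_t}\cdot\overline{M}\mid Y)_{S_{K'}}=\sum_{x}\nu_{K'}(x)\int_{Y_x^{\an}} f_x\, c_1(\cdots).\]
The integral over $Y_x^\an$ equals the integral over $Y_{\C_\omega}^{\an}$ for the pushforward measure (\cite[Proposition~4.45~(d)]{cai2024abstract}). The formula above (\cref{def:extension of adelic curve} (1.d)) then converts this sum into the stated Galois average. The inseparable part has to be checked separately, but only the separable degree enters there, and it matches the count of embeddings.

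\textbf{Step 3: the case $Y=U$ with model data.} Shrink $U$ so that all the $\overline{L_j}$ and $\overline{M}$ are limits, in the $\overline{B}$-boundary topology, of model objects $\overline{L_{j,m}}$ and $\overline{M_m}$. Since $\widetilde{M}$ is trivial, we can take $\overline{M_m}=(\OO,e^{-f_m})$ with $f_m$ a model $S$-Green function of the zero divisor, converging to $f$ in that topology. This requires the approximation to be chosen with trivial geometric part; I expect this to be possible by subtracting the geometric approximants.

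For projective $X$ and model metrics, the formula
\[(\overline{L_1}\cdots\overline{L_t}\cdot(\OO,e^{-f})\mid X)_S=\sum_\omega\nu(\omega)\int_{X_\omega^\an}f_\omega\,c_1(\overline{L_{1,\omega}})\cdots c_1(\overline{L_{t,\omega}})\]
is the standard induction formula for arithmetic intersection numbers over adelic curves, applied with the constant section $1$, which has empty divisor (\cite[\S 4]{chen2021arithmetic}, or \cite[Theorem~7.22]{cai2024abstract}). Here $f_\omega=0$ for all but finitely many $\omega$, so the sum is finite.

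\textbf{Step 4: passage to the limit (main obstacle).} We need both sides to converge as $m\to\infty$.
\begin{itemize}
\item The left side converges by continuity of the intersection pairing in the boundary topology.
\item On the right side, $f_m\to f$ uniformly relative to $g_B$, and the Monge--Amp\`ere measures converge weakly on $U^\an_\omega$ with uniformly bounded $g_B$-mass.
\item Only finitely many $\omega$ are involved, namely those where $g_B$ or $f$ is nonzero, uniformly in $m$.
\end{itemize}
The difficulty is that $f$ is only $g_B$-bounded and not compactly supported. To handle this, I would invoke the integrability of $g_B$ against $c_1(\overline{L_1})\cdots c_1(\overline{L_t})$ (a Yuan--Zhang type estimate), together with a dominated convergence argument, to pass to the limit on the right-hand side.
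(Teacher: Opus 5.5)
Your Steps 1--2 are essentially the Galois bookkeeping the paper does, with one imprecision. A conjugate $Y^\sigma$ does not determine a unique place of $K'$, and the count $\frac{[K'_x:K_\omega]}{[K':K]}|O(Y)|$ you attach to $x$ need not be an integer. For instance, this fails when $Y$ is already defined over a subfield of $K'$ over which $\omega$ splits. The clean version is the paper's. Average over $G=\Gal(F/K)$, and use $[F:K]_s=[F_v:K_\omega]_s\cdot|M_{F,\omega}|$ to see that every place above $\omega$ carries weight $\nu(\omega)/|M_{F,\omega}|$. Only at the end convert $\frac{1}{|G|}\sum_{g\in G}$ into $\frac{1}{|O(Y)|}\sum_{Y^\sigma\in O(Y)}$.

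The genuine gap is in Steps 3--4. They carry the whole analytic content of the statement, and they are only sketched.

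First, approximants of $\overline{M}$ in the $\overline{B}$-boundary topology have underlying divisors $D_m$ supported on the boundary, and these only tend to $0$ in the boundary topology. Replacing them by $(\OO,e^{-f_m})$ requires Green functions of $D_m$ bounded by $\varepsilon_m g_{B,\omega}$ at all places simultaneously. You only say you ``expect'' this can be done.

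Second, your third bullet is false. As soon as $B\neq 0$, $g_{B,\omega}$ is positive near the boundary of $U_\omega^\an$ for \emph{every} $\omega$. Also, $f_\omega$ need not vanish for almost all $\omega$; only the model approximants have finitely many nontrivial places, and that exceptional set depends on $m$. So the right-hand side is in general an infinite sum over $\Omega$, and you need a summability estimate, not finiteness.

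Third, your integrand is only bounded by a multiple of $g_B$, and the measures $c_1(\overline{L_{1,m,\omega}})\wedge\cdots\wedge c_1(\overline{L_{t,m,\omega}})$ vary with $m$. In that situation a uniform bound on the $g_B$-mass does not justify ``dominated convergence''. You need uniform integrability of $g_B$, both at each place and after integrating over $\Omega$, and that is exactly the hard point.

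The paper does not re-derive any of this. By linearity it reduces to nef $\overline{L_j}$. It then writes $\overline{M}=\overline{M_1}\otimes\overline{M_2}^{\otimes-1}$ with $\overline{M_1},\overline{M_2}$ nef and having the same image in $\widetilde{\Pic}_\Q(U)_\integrable$. Finally it applies \cite[Theorem~11.2]{cai2024abstract}. That result gives $(\overline{L_1}\cdots\overline{L_t}\cdot\overline{M}\mid Y)_S=\sum_{v\in\Omega_F}\nu_F(v)\int_{Y_v^\an}f_\omega\,c_1(\overline{L_{1,\omega}}|_{Y_v})\wedge\cdots\wedge c_1(\overline{L_{t,\omega}}|_{Y_v})$ directly on the quasi-projective $Y$ over $S_F$, with no limiting argument at this level.

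Your approximation route is a plausible way to prove such a formula, but as written it is incomplete. Either invoke that energy formula, or your Step 4 has to actually establish it, including the two integrability estimates above.
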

\begin{proof}
	The proof is similar to the one of \cite[Proposition~2.3]{burgos2019the}. By linearity, we may assume that $\overline{L_1}, \dots, \overline{L_t}\in \widehat{\Pic}_{S,\Q}(U)_\nef^\YZ$.
	
	Let $F$ be a finite normal extension $F\subset \overline{K}$ of $K$ such that $Y$ is defined over $F$, and $S_F=(F,\Omega_F, \mathcal{A}_F, \nu_F)$ the extension of $S$ over $F/K$. In the following, we view $Y$ as an algebraic variety over $F$, similarly, we view each $Y^\sigma\in O(Y)$ as an algebraic variety over $F$. For any $\omega\in\Omega$, we denote by $M_{F,\omega}$ the set of elements in $\Omega_F$ above $\omega$. The group $G\coloneq \Gal(F/K)$ acts on $M_{F,\omega}$ transitively, then for any $v\in M_{K,\omega}$,
	\[|G|=|\Gal(F_{v}/K_\omega)|\cdot|M_{F,\omega}|\]
	which implies that
	\begin{align}\label{eq:action 1}
[F:K]_s=[F_{v}:K_\omega]_s\cdot|M_{F,\omega}|
	\end{align}
since $[F:K]_s=|G|$ and $[F_{v}:K_\omega]_s=|\Gal(F_{v}/K_\omega)|$. Moreover, $g\in G$ induces an isomorphism $g\colon F_v\simeq F_{g(v)}$ over $K_\omega$.
	
	Let $\overline{M}_1, \overline{M}_2\in \widehat{\Pic}_{S,\Q}(U)^\YZ_{\nef}$ such that $\overline{M}=\overline{M_1}-\overline{M_2}$. Since $\overline{M_1}, \overline{M}_2$ have the same image in $\widetilde{\Pic}_\Q(U)_\integrable$, By \cite[Theorem~11.2]{cai2024abstract} (the singularity condition on Green functions can be removed if we take a third Green function less singular than both), we have 
	\begin{align*}
		(\overline{L_1}\cdots \overline{L_t}\cdot\overline{M}\mid Y)_S=& (\overline{L_1}\cdots \overline{L_t}\cdot\overline{M_1}\mid Y)_S - (\overline{L_1}\cdots \overline{L_t}\cdot\overline{M_2}\mid Y)_S\\
		=&\sum\limits_{v\in \Omega_{F}}\nu_F(v)\int_{Y_{v}^\an}f_\omega\, c_1(\overline{L_{1,\omega}}|_{Y_{v}})\wedge\cdots\wedge c_1(\overline{L_{t,\omega}}|_{Y_{v}})\\
		=&\sum\limits_{\omega\in \Omega}\sum\limits_{v\in M_{F,\omega}}\nu_F(v)\int_{Y_{v}^\an}f_\omega\, c_1(\overline{L_{1,\omega}}|_{Y_{v}})\wedge\cdots\wedge c_1(\overline{L_{t,\omega}}|_{Y_{v}})\\
		=&\sum\limits_{\omega\in \Omega}\nu(\omega)\sum\limits_{v\in M_{F,\omega}}\frac{[F_v:K_\omega]_s}{[F:K]_s}\int_{Y_{v}^\an}f_\omega\, c_1(\overline{L_{1,\omega}}|_{Y_{v}})\wedge\cdots\wedge c_1(\overline{L_{t,\omega}}|_{Y_{v}})\\
		=&\sum\limits_{\omega\in \Omega}\frac{\nu(\omega)}{|M_{F,\omega}|}\sum\limits_{v\in M_{F,\omega}}\int_{Y_{v}^\an}f_\omega\, c_1(\overline{L_{1,\omega}}|_{Y_{v}})\wedge\cdots\wedge c_1(\overline{L_{t,\omega}}|_{Y_{v}}).
	\end{align*}
	The last equality is from \eqref{eq:action 1}.
	
	Let $\omega\in \Omega$, and  $\val_{v_0}$ (with $v_0\in\Omega_F$) the restriction of $\val_\omega$ on $F$ via $F\to \overline{K}\overset{\jmath_\omega}{\to}\C_\omega$. For $Y^\sigma\in O(Y)$, via $F\to \overline{K}\overset{\jmath_\omega}{\to}\C_\omega$, we have $Y_\omega^\sigma=Y^\sigma\times_{\Spec(F),\jmath_\omega}\Spec(\C_\omega)$ . 
	Then
	\[Y_{v_0}^{\sigma,\an}=Y_{\omega}^{\sigma,\an}/\Gal(\overline{F_{v_0}}/F_{v_0})\] 
	and $c_1(\overline{L_{1,\omega}}|_{Y_{v_0}^\sigma})\wedge\cdots\wedge c_1(\overline{L_{t,\omega}}|_{Y_{v_0}^\sigma})$ is the push-forward of $c_1(\overline{L_{1,\omega}}|_{Y_{\omega}^{\sigma,\an}})\wedge\cdots\wedge c_1(\overline{L_{t,\omega}}|_{Y_{\omega}^{\sigma,\an}})$.
    The group $G$ acts on $O(Y)$ transitively since $Y$ is defined over $F$ and $F$ is normal. Notice that for any $v\in\Omega_\omega$, we have  $g(Y)_v= Y_{g(v)}$ via $g\colon F_v\simeq F_{g(v)}$. Then
	\begin{align*}
		&\frac{1}{|M_{F,\omega}|}\sum\limits_{v\in M_{F,\omega}}\int_{Y_{v}^\an}f_\omega\, c_1(\overline{L_{1,\omega}}|_{Y_{v}})\wedge\cdots\wedge c_1(\overline{L_{t,\omega}}|_{Y_{v}})\\
		=&\frac{1}{|G|}\sum\limits_{g\in G}\int_{Y_{g(v_0)}^{\an}}f_\omega\, c_1(\overline{L_{1,\omega}}|_{Y_{g(v_0)}})\wedge\cdots\wedge c_1(\overline{L_{t,\omega}}|_{Y_{g(v_0)}})\\
		=&\frac{1}{|G|}\sum\limits_{g\in G}\int_{g(Y)_{v_0}^{\an}}f_\omega\, c_1(\overline{L_{1,\omega}}|_{g(Y)_{v_0}})\wedge\cdots\wedge c_1(\overline{L_{t,\omega}}|_{g(Y)_{v_0}})\\
		=&\frac{1}{|G|}\sum\limits_{g\in G}\int_{g(Y)_{\omega}^{\an}}f_\omega\, c_1(\overline{L_{1,\C_\omega}}|_{g(Y)_{\omega}})\wedge\cdots\wedge c_1(\overline{L_{t,\C_\omega}}|_{g(Y)_{\omega}})\\
		=&\frac{1}{|O(Y)|}\sum\limits_{Y^\sigma\in O(Y)}\int_{Y_\omega^{\sigma,\an}}f_\omega\, c_1(\overline{L_{1,\C_\omega}}|_{Y_\omega^\sigma})\wedge\cdots\wedge c_1(\overline{L_{t,\C_\omega}}|_{Y_\omega^\sigma}).
	\end{align*}
Hence \[(\overline{L_1}\cdots \overline{L_t}\cdot\overline{M}\mid Y)_S=\frac{1}{|O(Y)|}\sum\limits_{\omega\in \Omega}\nu(\omega)\sum\limits_{Y^\sigma\in O(Y)}\int_{Y_\omega^{\sigma,\an}}f_\omega\, c_1(\overline{L_{1,\C_\omega}}|_{Y_\omega^\sigma})\wedge\cdots\wedge c_1(\overline{L_{t,\C_\omega}}|_{Y_\omega^\sigma}).\]
\end{proof}

\begin{theorem} \label{thm:equidistribution over function fields}
	Let $\overline{L}\in \widehat{\Pic}_{S,\Q}(U)_{\relnef}^{\nef,\YZ}$ be such that 
	$\deg_{\widetilde{L}}(U)>0$, where $\widetilde{L}$ denotes the image of $\overline{L}$ 
	in $\widetilde{\Pic}_\Q(U)_\integrable$. Let $(x_m)_{m\in I}$ be a generic net of 
	points in $U(\overline{K})$ which is small with respect to $\overline{L}$. Then for 
	any $v\in\Omega$ and any $f_v\in C_c(U_v^\an)$, we have 
	\begin{align}\label{eq:thm:main equi theorem}
		\lim_{m\in I}\frac{1}{|O(x_m)|}
		\sum_{x_m^{\sigma}\in O(x_m)}f_v(x_m^{\sigma})
		=\frac{1}{\deg_{\widetilde{L}}(U)}
		\int_{U_v^\an}f_v\, c_1(\overline{L}_v)^d.
	\end{align}
\end{theorem}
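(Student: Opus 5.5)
We reduce to the curve case through the generic curve and then run the usual variational argument of Szpiro--Ullmo--Zhang/Yuan--Zhang. When $\delta=\dim(T)=1$, the statement is essentially \cite[Theorem~5.4.3]{yuan2021adelic}, adapted via \cref{rmk:relation with yz adelic line bundle}. We nevertheless give the variational argument directly, since it works for any $\delta$ once one knows the relevant inequality.

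\textbf{Step 1: reductions.} By Stone--Weierstrass type density and the usual reduction, it suffices to treat $f_v$ which are smooth model functions with compact support. Concretely, we need $f_v$ such that $\overline{M}=(\OO_U,e^{-f})$, with $f_\omega=0$ for $\omega\neq v$ and $f_v$ the given function, lies in $\widehat{\Pic}_{S,\Q}(U)_\integrable^\YZ$ with trivial geometric image. This density of model functions in $C_c(U_v^\an)$ is standard (Gubler / Yuan--Zhang), after passing through $U_{\C_v}^\an$ as explained before \cref{prop:intersection on subvariety}. For such $\overline{M}$, \cref{prop:intersection on subvariety} with $Y=x_m$ gives
\[
h_{\overline{L}(\varepsilon\overline{M})}(x_m)=h_{\overline{L}}(x_m)+\varepsilon\,\nu(v)\frac{1}{|O(x_m)|}\sum_{\sigma}f_v(x_m^\sigma).
\]
Applied to $Y=U$ and expanded polynomially in $\varepsilon$, the same identity gives
\[
(\overline{L}(\varepsilon\overline{M})^{d+1}\mid U)_S=(\overline{L}^{d+1}\mid U)_S+(d+1)\varepsilon\,\nu(v)\int f_v\,c_1(\overline{L}_v)^d+O(\varepsilon^2).
\]
Here we need the relatively nef extension of \cref{extension of YZ intersection number}, together with the approximation \cref{lemma:existence of sequence of intersections to nef intersection}, to justify this for $\overline{L}\in\widehat{\Pic}_{S,\Q}(U)^{\nef,\YZ}_\relnef$.

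\textbf{Step 2: the key lower bound (fundamental inequality).} For a generic net $(x_m)$ we need
\[
\liminf_m h_{\overline{L}'}(x_m)\ge \frac{(\overline{L}'^{d+1}\mid U)_S}{(d+1)\deg_{\widetilde{L}}(U)}
\]
for $\overline{L}'=\overline{L}(\varepsilon\overline{M})$. We obtain it via the generic curve. By \cref{prop:B is covered by a curve as adelic curves} and \cref{prop:intersection number after base changes}, both sides are unchanged after base change to $S_{\mathbf{c}}$ and $U_{\mathbf{c}}$. The points $x_m$ viewed in $U_{\mathbf{c}}(\overline{K_{\mathbf{c}}})$ remain generic by \cref{lemma:generic stable under base changes}. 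Over the curve $T_{\mathbf{c}}$ we can invoke the Yuan--Zhang/Zhang successive-minima inequality (\cite[Theorem~5.3.4/Lemma~5.3.4]{yuan2021adelic}, valid in the curve case by \cref{rmk:relation with yz adelic line bundle}). This is where I expect the main obstacle. The inequality requires $\overline{L}'$ to be nef (or its volume version with the arithmetic Hilbert--Samuel/Siu inequality), whereas $\overline{L}(\varepsilon \overline{M})$ is merely integrable and $\overline{L}$ only relatively nef. I would handle this as follows. First, replace $\overline{L}$ by the nef $\overline{L'}$ at all but finitely many places, which changes heights by a bounded amount compatible with $E(\cdot,\cdot)$. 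Second, use Siu's bigness inequality $\widehat{\vol}(\overline{A}-\overline{B})\ge \overline{A}^{d+1}-(d+1)\overline{A}^d\overline{B}$ on the curve, writing $\varepsilon\overline{M}$ as a difference of nef bundles with trivial geometric part, so that the error is $O(\varepsilon^2)$.

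\textbf{Step 3: conclusion.} Combining Steps 1 and 2 with smallness gives
\[
\varepsilon\,\nu(v)\liminf_m\frac{1}{|O(x_m)|}\sum_\sigma f_v(x_m^\sigma)\ \ge\ \varepsilon\frac{\nu(v)}{\deg_{\widetilde{L}}(U)}\int f_v\,c_1(\overline{L}_v)^d+O(\varepsilon^2).
\]
Here $\deg_{\widetilde{L}}$ is unchanged since $\overline{M}$ has trivial geometric image. Dividing by $\varepsilon>0$ and letting $\varepsilon\to0$ gives one inequality. Replacing $f_v$ by $-f_v$ gives the other, and $\nu(v)\neq 0$ may be cancelled. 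Finally, density of model functions extends the result to all $f_v\in C_c(U_v^\an)$, using that both sides are positive linear functionals bounded by $\|f_v\|_\infty$.
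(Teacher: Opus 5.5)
Your Step~2 is justified only when $\overline{L}$ is nef, and in that case your argument is exactly the paper's second proof. The theorem, however, is about the class $\widehat{\Pic}_{S,\Q}(U)^{\nef,\YZ}_{\relnef}$. For that class the fundamental inequality you need is not available, and your proposed fix does not supply it. Note also that for $\delta=1$, \cite[Theorem~5.4.3]{yuan2021adelic} covers only the nef case.

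Write $\overline{L}=\overline{L'}\otimes\overline{\OO}(g)$ with $g$ supported at finitely many places.
\begin{itemize}
\item Condition~(b) of \cref{extension of YZ intersection number} bounds $g_\omega$ only from \emph{above}. Comparing with $\overline{L'}$ therefore gives $h_{\overline{L}}\le h_{\overline{L'}}+C$, which is the wrong direction for a lower bound on $h_{\overline{L}}(x_m)$. Moreover $g_\omega$ may be unbounded below near the boundary, which is why $(\overline{L}^{d+1}\mid U)_S$ can be $-\infty$.
\item Even for bounded $g$, a bounded discrepancy only gives $\liminf_m h_{\overline{L}}(x_m)\ge (\overline{L'}^{d+1}\mid U)_S/((d+1)\deg_{\widetilde{L}}(U))-C$. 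The variational argument needs the sharp constant $(\overline{L}^{d+1}\mid U)_S/((d+1)\deg_{\widetilde{L}}(U))$, which contains the correction term $E(\overline{\mathbf{L}'},\overline{\mathbf{L}})$. Besides, the $x_m$ are small for $\overline{L}$, not for $\overline{L'}$.
\item Siu's inequality does not rescue this. The bound $\widehat{\vol}(\overline{A}\otimes\overline{B}^{\otimes-1})\ge(\overline{A}^{d+1}\mid U)_S-(d+1)(\overline{A}^{d}\cdot\overline{B}\mid U)_S$ is only the linearization in $\overline{B}$. It recovers $((\overline{A}\otimes\overline{B}^{\otimes-1})^{d+1}\mid U)_S$ up to $O(\varepsilon^2)$ only when $\overline{B}$ is of size $O(\varepsilon)$, i.e.\ only when $\overline{L}$ is nef up to an $O(\varepsilon)$ perturbation. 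The failure of nefness of $\overline{L}$ is a fixed vertical perturbation, so the dropped terms $(\overline{A}^{d+1-i}\cdot\overline{B}^{i}\mid U)_S$, $i\ge 2$, are not $O(\varepsilon^2)$.
\end{itemize}
What is missing is a Hilbert--Samuel or fundamental inequality for relatively nef bundles in this extended sense.

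The paper never needs such an inequality. It takes as input the curve-case theorem for the relatively nef class, \cite[Theorem~6.2.3]{biswas2024concave}, extended to bounded continuous functions by weak convergence. It then transports the \emph{equidistribution statement itself} through the covering $\alpha\colon S_{\mathbf c}\to S$ of \cref{prop:B is covered by a curve as adelic curves}:
\begin{itemize}
\item For a model function $f_v$, it applies the curve theorem to $\pi_\omega^*f_v$ at each of the finitely many $\omega\in\alpha_\#^{-1}(v)$. Genericity and smallness of $(x_{m,\mathbf c})$ come from \cref{lemma:generic stable under base changes} and \cref{prop:intersection number after base changes}.
\item It multiplies by $\nu_{\mathbf c}(\omega)$, sums, and identifies the two sides with $(\overline{M_{\mathbf c}}\mid x_{m,\mathbf c})_{S_{\mathbf c}}$ and $(\overline{L_{\mathbf c}}^d\cdot\overline{M_{\mathbf c}}\mid U_{\mathbf c})_{S_{\mathbf c}}/\deg_{\widetilde{L_{\mathbf c}}}(U_{\mathbf c})$ via \cref{prop:intersection on subvariety}.
\item It descends to $S$ by \cref{prop:intersection number after base changes} and finishes with Gubler's density theorem.
\end{itemize}
To repair your proof, replace Step~2 by this transfer of the curve-case theorem, or else prove the missing inequality for the relatively nef class.
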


\begin{proof}
	We may assume that $U$ is normal (otherwise replace it by its normalization, over 
	which the equidistribution statement implies the one on $U$ by push-forward).
	
	When $\dim(T)=1$, the theorem follows from 
	\cite[Theorem~6.2.3]{biswas2024concave} (see also 
	\cite[Theorem~5.4.3]{yuan2021adelic} for the arithmetic nef case). Moreover, notice that the convergence given in \eqref{eq:thm:main equi theorem} is in fact a weakly convergence in the sense of \cite[Definition~4.2.1]{bogachev2018weak} by \cite[Proposition~4.5.11]{bogachev2018weak}, i.e.  \eqref{eq:thm:main equi theorem} holds for bounded functions on $U^\an_v$. 
	
	Now assume $\dim(T)\geq 2$. 
	$v\in\Omega$, and let $f_v\in C(X_v^\an)$ be a model function, i.e. 
	$(0,f_v)\in\widehat{\Div}_\Q(X_v)_\mo$ (see \cref{model Green function}). Define 
	an $S$-metrized line bundle $\overline{M}=(\OO_X,(\metr_\omega)_{\omega\in\Omega})$ 
	by
	\[
	-\log\|1\|_\omega\coloneq
	\begin{cases}
		f_v & \text{if }\omega=v,\\[2mm]
		0 & \text{if }\omega\in\Omega\setminus\{v\}.
	\end{cases}
	\]
	Then $\overline{M}\in\widehat{\Pic}_{S,\Q}(U)_\integrable^\YZ$.
	
	Let $T_{\mathbf{c}}$, $S_{\mathbf{c}}$ and the covering 
	$\alpha=(\alpha^\#,\alpha_\#,1)\colon S_{\mathbf{c}}\to S$ be as in 
	\cref{prop:B is covered by a curve as adelic curves}. Set 
	$U_{\mathbf{c}}\coloneq U\times_{\Spec(K)}\Spec(K_{\mathbf{c}})$ and let 
	$\overline{L_{\mathbf{c}}}$, $\overline{M_{\mathbf{c}}}$ be the pull-backs of 
	$\overline{L}$, $\overline{M}$ to $U_{\mathbf{c}}$. By 
	\cref{lemma:generic stable under base changes}, the net 
	$(x_{m,\mathbf{c}})_{m\in I}$ with 
	$x_{m,\mathbf{c}}\coloneq x_m\times_{\Spec(\overline{K})}\Spec(\overline{K_{\mathbf{c}}})$ 
	is generic on $U_{\mathbf{c}}$. 
	
	Since $\dim(T_{\mathbf{c}})=1$, the already established case of the theorem applies 
	to the adelic curve $S_{\mathbf{c}}$. Hence for every 
	$\omega\in\alpha_\#^{-1}(v)$, writing $\pi_\omega\colon U_{\mathbf{c},\omega}\to U_v$ 
	for the canonical morphism, we have
	\begin{equation}\label{eq:equidistribution over generic curve}
		\lim_{m\in I}\frac{1}{|O(x_{m,\mathbf{c}})|}
		\sum_{x_{m,\mathbf{c}}^\sigma\in O(x_{m,\mathbf{c}})}
		\pi_\omega^*(f_v)(x_{m,\mathbf{c}}^{\sigma})
		=\frac{1}{\deg_{\widetilde{L_{\mathbf{c}}}}(U_{\mathbf{c}})}
		\int_{U_{\mathbf{c},\omega}^\an}\pi_\omega^*(f_v)\,
		c_1(\overline{L_{\mathbf{c},\omega}})^d.
	\end{equation}
	(Notice that the fibre $\alpha_\#^{-1}(v)$ is finite.)
	
	On the other hand, by \cref{prop:intersection on subvariety} we may write the 
	arithmetic height of $x_{m,\mathbf{c}}$ with respect to $\overline{M_{\mathbf{c}}}$ as
	\begin{equation}\label{eq:height as sum over omega}
		(\overline{M_{\mathbf{c}}}\mid x_{m,\mathbf{c}})_{S_{\mathbf{c}}}
		=\sum_{\omega\in\alpha_\#^{-1}(v)}
		\frac{\nu_{\mathbf{c}}(\omega)}{|O(x_{m,\mathbf{c}})|}
		\sum_{x_{m,\mathbf{c}}^\sigma\in O(x_{m,\mathbf{c}})}
		\pi_\omega^*(f_v)(x_{m,\mathbf{c}}^{\sigma}),
	\end{equation}
	and similarly the arithmetic intersection number on $U_{\mathbf{c}}$ as
	\begin{equation}\label{eq:intersection as sum over omega}
		(\overline{L_{\mathbf{c}}}^d\cdot\overline{M_{\mathbf{c}}}\mid U_{\mathbf{c}})_{S_{\mathbf{c}}}
		=\sum_{\omega\in\alpha_\#^{-1}(v)}
		\nu_{\mathbf{c}}(\omega)
		\int_{U_{\mathbf{c},\omega}^\an}\pi_\omega^*(f_v)\,
		c_1(\overline{L_{\mathbf{c},\omega}})^d.
	\end{equation}
	Multiplying \eqref{eq:equidistribution over generic curve} by 
	$\nu_{\mathbf{c}}(\omega)$ and summing over the finite set 
	$\omega\in\alpha_\#^{-1}(v)$, then using \eqref{eq:height as sum over omega} and 
	\eqref{eq:intersection as sum over omega}, we obtain
	\[
	\lim_{m\in I}(\overline{M_{\mathbf{c}}}\mid x_{m,\mathbf{c}})_{S_{\mathbf{c}}}
	=\frac{1}{\deg_{\widetilde{L_{\mathbf{c}}}}(U_{\mathbf{c}})}
	(\overline{L_{\mathbf{c}}}^d\cdot\overline{M_{\mathbf{c}}}\mid U_{\mathbf{c}})_{S_{\mathbf{c}}}.
	\]
	By \cref{prop:intersection number after base changes}, the pull-back along 
	$S_{\mathbf{c}}\to S$ preserves arithmetic intersection numbers and geometric 
	degrees. Therefore 
	$\deg_{\widetilde{L_{\mathbf{c}}}}(U_{\mathbf{c}})=\deg_{\widetilde{L}}(U)$ and
	\[
	(\overline{M_{\mathbf{c}}}\mid x_{m,\mathbf{c}})_{S_{\mathbf{c}}}
	=(\overline{M}\mid x_m)_S,\qquad
	(\overline{L_{\mathbf{c}}}^d\cdot\overline{M_{\mathbf{c}}}\mid U_{\mathbf{c}})_{S_{\mathbf{c}}}
	=(\overline{L}^d\cdot\overline{M}\mid U)_S.
	\]
	Consequently,
	\[
	\lim_{m\in I}(\overline{M}\mid x_m)_S
	=\frac{1}{\deg_{\widetilde{L}}(U)}(\overline{L}^d\cdot\overline{M}\mid U)_S.
	\]
	By the very definition of $\overline{M}$ and \cref{prop:intersection on subvariety}, the left-hand side equals the average of 
	$f_v$ over the Galois orbit of $x_m$, while the right-hand side equals the integral 
	of $f_v$ against the measure $c_1(\overline{L}_v)^d$. Hence
	\begin{equation}\label{eq:equid for points 1}
		\lim_{m\in I}\frac{1}{|O(x_m)|}
		\sum_{x_m^{\sigma}\in O(x_m)}f_v(x_m^{\sigma})
		=\frac{1}{\deg_{\widetilde{L}}(U)}
		\int_{U_v^\an}f_v\, c_1(\overline{L}_v)^d.
	\end{equation}
	
	Finally, the model functions are dense in $C(X_v^\an)$ by Gubler's density theorem 
	(see \cite[Theorem~7.12]{gubler1998local}). Thus \eqref{eq:equid for points 1} holds for any $f_v\in C(X_v^\an)$. In particular, \eqref{eq:equid for points 1} holds for any $f_v\in C_c(U_v^\an)\subset C(X_v^\an)$.
	This completes the proof of \cref{thm:equidistribution over function fields}.
\end{proof}

\begin{remark}
	As mentioned in \cite[Theorem~3.1]{yuan2008big}, the theorem remains valid if one 
	replaces $U_v^\an$ by $U_{\C_v}^\an$, the analytification of 
	$U\times_{\Spec(K)}\Spec(\C_v)$. By the Stone--Weierstrass theorem, the two versions 
	are equivalent. 
\end{remark}

\subsection{Fundamental inequality}

\begin{definition}\label{def:ess minimal}
	Let $\overline{L}\in\widehat{\Pic}_{S,\Q}(U)_\integrable^\YZ$. The 
	\emph{essential minimum} of $\overline{L}$ is defined as
	\[
	\zeta_{\mathrm{ess}}(\overline{L})\coloneq
	\sup_{V\subset U}\inf_{x\in V(\overline{K})}h_{\overline{L}}(x),
	\]
	where the supremum is taken over all Zariski open subschemes $V$ of $U$.
\end{definition}

\begin{proposition}\label{proposition:fundamental inequality}
	Let $\overline{L}\in\widehat{\Pic}_{S,\Q}(U)_{\nef}^\YZ$ and 
	$\overline{M}\in\widehat{\Pic}_{S,\Q}(U)_{\integrable}^\YZ$ be such that the image 
	of $\overline{M}$ in $\widetilde{\Pic}_\Q(U)_{\mathrm{int}}$ is trivial. Then for any 
	$\epsilon\in\Q$, we have
	\[
	\zeta_{\mathrm{ess}}(\overline{L}\otimes\overline{M}^{\otimes\epsilon})
	\geq\frac{(\overline{L}^{d+1}\mid U)_S+\epsilon(d+1)(\overline{L}^d\cdot\overline{M}\mid U)_S}
	{(d+1)\deg_{\widetilde{L}}(U)}+O(\epsilon^2),
	\]
	where $\widetilde{L}$ denotes the image of $\overline{L}$ in 
	$\widetilde{\Pic}_\Q(U)_{\integrable}$. The implicit constant in $O(\epsilon^2)$ may 
	depend on $\overline{L}$ and $\overline{M}$, but is independent of $\epsilon$.
\end{proposition}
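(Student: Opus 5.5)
The plan is to reduce to the case $\dim(T)=1$ via the generic curve of \cref{generic curves}, and on the curve side to combine the Yuan--Zhang fundamental inequality $\zeta_{\mathrm{ess}}\geq \widehat{\vol}/((d+1)\deg)$ with an arithmetic Siu inequality. If $\dim(T)=1$ we skip the reduction. The inequality is first order in $\epsilon$, so the $O(\epsilon^2)$ term will come from expanding a degree-$(d+1)$ polynomial in $\epsilon$.

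\textbf{Step 1: transfer of essential minima.} Let $\overline{L_{\mathbf c}}$ and $\overline{M_{\mathbf c}}$ be the pull-backs to $U_{\mathbf c}$ from \cref{prop:intersection number after base changes}. By that corollary, the right-hand side of the claimed inequality is unchanged by base change, and so are the heights of points $x\in U(\overline K)$, since $h_{\overline{N}}(x)=h_{\overline{N_{\mathbf c}}}(x_{\mathbf c})$ for $\overline{N}=\overline{L}\otimes\overline{M}^{\otimes\epsilon}$. It therefore suffices to show
\[
\zeta_{\mathrm{ess}}(\overline{N})\geq \zeta_{\mathrm{ess}}(\overline{N_{\mathbf c}}).
\]
Take a Zariski open $V'\subset U_{\mathbf c}$ with complement $Y'$. \Cref{lemma:lift closed subset} gives a proper closed $Y\subset U$ such that $Y'(\overline{K_{\mathbf c}})\cap U(\overline K)\subset Y(\overline K)$. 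Put $V=U\setminus Y$. Then every point of $V(\overline K)$ lies in $V'(\overline{K_{\mathbf c}})$, so
\[
\inf_{V(\overline K)} h_{\overline N}\geq \inf_{V'(\overline{K_{\mathbf c}})}h_{\overline{N_{\mathbf c}}}.
\]
Taking the supremum over $V'$ gives the claim, and reduces everything to the curve base $S_{\mathbf c}$.

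\textbf{Step 2: the curve case.} Write $\overline M=\overline{M_1}-\overline{M_2}$ with $\overline{M_i}$ nef and with the same geometric image. For $\epsilon<0$ we swap the roles of $\overline{M_1}$ and $\overline{M_2}$, so we may assume $\epsilon\geq 0$. Then $\overline{A}=\overline L+\epsilon\overline{M_1}$ and $\overline{B}=\epsilon\overline{M_2}$ are nef.

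\begin{enumerate}
\item Over a curve base, the Yuan--Zhang fundamental inequality gives $\zeta_{\mathrm{ess}}(\overline{A}-\overline{B})\geq \widehat{\vol}(\overline{A}-\overline{B})/((d+1)\deg_{\widetilde L}(U))$. This follows from the analogues of \cite[Theorem~5.2.1, 5.2.2]{yuan2021adelic} noted in \cref{rmk:relation with yz adelic line bundle}, or from \cite{biswas2024concave}. Here $\widetilde{A}-\widetilde{B}=\widetilde L$ because $\overline{M}$ is geometrically trivial.
\item The arithmetic Siu inequality gives $\widehat{\vol}(\overline{A}-\overline{B})\geq (\overline{A}^{d+1}\mid U)-(d+1)(\overline{A}^d\cdot\overline{B}\mid U)$.
\item Expanding by multilinearity, the right-hand side equals $(\overline L^{d+1}\mid U)+\epsilon(d+1)(\overline L^d\cdot(\overline{M_1}-\overline{M_2})\mid U)+O(\epsilon^2)$. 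The constant in $O(\epsilon^2)$ depends only on the mixed intersection numbers of $\overline L,\overline{M_1},\overline{M_2}$.
\end{enumerate}

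\textbf{Main obstacle.} The hard part is Step 2.2: establishing Siu's inequality and the volume estimate for compactified $\YZ$-line bundles over the curve base, rather than only for projective models. I would first prove it for model $S_{\mathbf c}$-metrized bundles on projective models, where it is the classical arithmetic Siu inequality over a function field of one variable (equivalently, geometric Siu on the total space over $k''$ via \eqref{eq:arithmetic = geometric}). I would then pass to limits in the $\overline{B}$-boundary topology. This needs the continuity of $\widehat{\vol}$ and of the intersection numbers in that topology, after shrinking $U$ so that all bundles are approximated with respect to one common boundary divisor. If continuity of $\widehat{\vol}$ is unavailable, I would bypass volumes entirely. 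Instead I would construct small sections of $N(\overline{A}-\overline{B})$ on the models directly, use them to bound heights off their divisors, and then take limits of the resulting lower bounds on $\zeta_{\mathrm{ess}}$.
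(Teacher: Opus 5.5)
Your proposal follows essentially the same route as the paper. You transfer essential minima to the generic curve via \cref{prop:intersection number after base changes} and \cref{lemma:lift closed subset}, giving $\zeta_{\mathrm{ess}}(\overline{L}\otimes\overline{M}^{\otimes\epsilon})\geq\zeta_{\mathrm{ess}}(\overline{L_{\mathbf c}}\otimes\overline{M_{\mathbf c}}^{\otimes\epsilon})$. On the curve base you then combine the Yuan--Zhang fundamental inequality with the arithmetic Siu inequality, which the paper simply cites as the $\dim(T)=1$ analogues of \cite[Lemma~5.3.4, Theorem~5.2.2]{yuan2021adelic} (see \cref{rmk:relation with yz adelic line bundle}).

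One small caveat concerns your Step~2.1: the bound $\zeta_{\mathrm{ess}}(\overline{N})\geq\widehat{\vol}(\overline{N})/((d+1)\deg_{\widetilde{L}}(U))$ holds only for big $\overline{N}$. It fails, for example, for a constant family with $(\overline{L}^{d+1}\mid U)_S=0$ twisted by $-\epsilon$ times a fibre: there the volume is $0$ but $\zeta_{\mathrm{ess}}<0$. So when $(\overline{L}^{d+1}\mid U)_S=0$, first twist $\overline{L}$ by a positive constant Green function at one place. The twisted bundle is still nef, the twist shifts both sides of the target inequality by the same amount, and it makes the Siu lower bound positive for small $\epsilon$.
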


\begin{proof}
	When $\dim(T)=1$, the proposition follows from \cite[Lemma~5.3.4, 
	Theorem~5.2.2]{yuan2021adelic} for function fields of one variable together with 
	\cref{rmk:relation with yz adelic line bundle}; see also the proof of 
	\cite[Theorem~5.4.3]{yuan2021adelic}.
	
	Now assume $\dim(T)\geq 2$. 
	Let $T_{\mathbf{c}}$ be the generic curve given in 
	\cref{generic curves}, and let $S_{\mathbf{c}}$ be the corresponding adelic curve. 
	By \cref{prop:intersection number after base changes}, we have
	\[
	(\overline{L}^{d+1}\mid U)_S
	=(\overline{L_{\mathbf{c}}}^{d+1}\mid U_{\mathbf{c}})_{S_{\mathbf{c}}},\qquad
	(\overline{L}^d\cdot\overline{M}\mid U)_S
	=(\overline{L_{\mathbf{c}}}^d\cdot\overline{M_{\mathbf{c}}}\mid U_{\mathbf{c}})_{S_{\mathbf{c}}},
	\]
	and $\deg_{\widetilde{L}}(U)=\deg_{\widetilde{L_{\mathbf{c}}}}(U_{\mathbf{c}})$, where 
	$\overline{L_{\mathbf{c}}},\overline{M_{\mathbf{c}}}$ are the pull-backs of 
	$\overline{L},\overline{M}$ to $U_{\mathbf{c}}$, and $\widetilde{L_{\mathbf{c}}}$ is the image of $\overline{L_{\mathbf{c}}}$ in $\widetilde{\Pic}_{\Q}(U_{\mathbf{c}})_\cpt$.
	
	Let $Y'$ be a proper closed subset of $U_{\mathbf{c}}$. By 
	\cref{lemma:lift closed subset}, there exists a proper closed subset $Y$ of $X$ such 
	that every $x\in U(\overline{K})\setminus Y(\overline{K})$ lies outside $Y'$ when 
	viewed as a point of $U_{\mathbf{c}}(\overline{K_{\mathbf{c}}})$. Hence
	\[\inf\limits_{x\in U(\overline{K})\setminus Y}h_{\overline{L}}(x)\geq \inf\limits_{x\in U(\overline{K})\setminus Y'}h_{\overline{L}}(x)\geq \inf\limits_{x\in U(\overline{K_{\mathbf{c}}})\setminus Y'}h_{\overline{L_{\mathbf{c}}}}(x),\]
	and consequently $\zeta_{\mathrm{ess}}(\overline{L})
	\geq\zeta_{\mathrm{ess}}(\overline{L_{\mathbf{c}}})$.
	
	Since $\dim(T_{\mathbf{c}})=1$, applying the already established case to 
	$S_{\mathbf{c}}$ yields
	\begin{align*}
		\zeta_{\mathrm{ess}}(\overline{L}\otimes\overline{M}^{\otimes\epsilon})
		&\geq\zeta_{\mathrm{ess}}(\overline{L_{\mathbf{c}}}\otimes\overline{M_{\mathbf{c}}}^{\otimes\epsilon})\\
		&\geq\frac{(\overline{L_{\mathbf{c}}}^{d+1}\mid U_{\mathbf{c}})_{S_{\mathbf{c}}}
			+\epsilon(d+1)(\overline{L_{\mathbf{c}}}^d\cdot\overline{M_{\mathbf{c}}}\mid U_{\mathbf{c}})_{S_{\mathbf{c}}}}
		{(d+1)\deg_{\widetilde{L_{\mathbf{c}}}}(U_{\mathbf{c}})}+O(\epsilon^2)\\
		&=\frac{(\overline{L}^{d+1}\mid U)_S+\epsilon(d+1)(\overline{L}^d\cdot\overline{M}\mid U)_S}
		{(d+1)\deg_{\widetilde{L}}(U)}+O(\epsilon^2).
	\end{align*}
	This completes the proof.
\end{proof}

Using the fundamental inequality, we now give a second proof of 
\cref{thm:equidistribution over function fields} for the nef case.

\begin{proof}[Second proof of \cref{thm:equidistribution over function fields} 
	for the nef case]
	Assume that $\overline{L}\in\widehat{\Pic}_{S,\Q}(U)_\nef^\YZ$. We follow the strategy 
	of \cite[Theorem~5.4.3]{yuan2021adelic}. For any 
	$\overline{M}=(M,\metr_M)\in\widehat{\Pic}_{S,\Q}(U)_{\integrable}^\YZ$ with trivial 
	image in $\widetilde{\Pic}_\Q(U)_\cpt$, the fundamental inequality 
	(\cref{proposition:fundamental inequality}) gives
	\begin{equation}\label{eq:fundamental inequality}
		\liminf_{m\in I}h_{\overline{L}\otimes\overline{M}^{\otimes\epsilon}}(x_m)
		\geq\frac{(\overline{L}^{d+1}\mid U)_S+\epsilon(d+1)(\overline{L}^d\cdot\overline{M}\mid U)_S}
		{(d+1)\deg_{\widetilde{L}}(U)}+O(\epsilon^2).
	\end{equation}
	Since $(x_m)$ is small with respect 
	to $\overline{L}$, we have
	\[
	\lim_{m\in I}h_{\overline{L}}(x_m)
	=\frac{(\overline{L}^{d+1}\mid U)_S}{(d+1)\deg_{\widetilde{L}}(U)}.
	\]
	Since $h_{\overline{L}\otimes\overline{M}^{\otimes\epsilon}}
	=h_{\overline{L}}+\epsilon h_{\overline{M}}$, subtracting limit above from \eqref{eq:fundamental inequality} yields
	\[
	\liminf_{m\in I}\epsilon h_{\overline{M}}(x_m)
	\geq\epsilon\frac{(\overline{L}^d\cdot\overline{M}\mid U)_S}{\deg_{\widetilde{L}}(U)}
	+O(\epsilon^2).
	\]
	If $\epsilon>0$, dividing by $\epsilon$ we obtain
	\[
	\liminf_{m\in I}h_{\overline{M}}(x_m)
	\geq\frac{(\overline{L}^d\cdot\overline{M}\mid U)_S}{\deg_{\widetilde{L}}(U)}+O(\epsilon).
	\]
	Similarly, if $\epsilon<0$, dividing by $\epsilon$ reverses the inequality and gives
	\[
	\limsup_{m\in I}h_{\overline{M}}(x_m)
	\leq\frac{(\overline{L}^d\cdot\overline{M}\mid U)_S}{\deg_{\widetilde{L}}(U)}+O(|\epsilon|).
	\]
	Letting $\epsilon\to0$, we conclude that
	\begin{equation}\label{eq:key equality in pf of equidistribution}
		\lim_{m\in I}h_{\overline{M}}(x_m)
		=\frac{(\overline{L}^d\cdot\overline{M}\mid U)_S}{\deg_{\widetilde{L}}(U)}.
	\end{equation}
	
	Now fix $v\in\Omega$. Let $(\overline{L_j})_{j\in N_{\geq 1}}$ be a sequence in 
	$P_{S,\Q}(U)_\mo$ converging to $\overline{L}$ with respect to the 
	$\overline{B}$-boundary topology for some weak boundary divisor 
	$\overline{B}\in\widehat{\Div}_{S,\Q}(U)_\mo$. Let $X_j$ be projective $K$-models 
	of $U$ such that $\overline{L_j}\in P_{S,\Q}(X_j)_\mo$. For any 
	$f_v\in C_c(U_v^\an)$, viewed as a continuous function on $X_{1,v}^\an$ via the open 
	immersion $U\hookrightarrow X_1$, let $\overline{M}=(\OO_{X_1},\metr_M)\in 
	\widehat{\Pic}_{S,\Q}(X_1)_\mo$ be the model metrized line bundle determined by
	\[
	-\log\|1\|_{M,\omega}\coloneq
	\begin{cases}
		f_v & \text{if }\omega=v,\\[2mm]
		0 & \text{if }\omega\in\Omega\setminus\{v\}.
	\end{cases}
	\]
	By \cite[Proposition~2.3]{burgos2019the} (or \cref{prop:intersection on subvariety} 
	for $0$-dimensional subvarieties), we have
	\begin{equation}\label{eq:second proof 1}
		h_{\overline{M}}(x_m)
		=\frac{\nu(v)}{|O(x_m)|}\sum_{x_m^\sigma\in O(x_m)}f_v(x_m^\sigma).
	\end{equation}
	On the other hand, by \cite[Theorem~7.22~(v)]{cai2024abstract},
	\[
	(\overline{L}^d\cdot\overline{M}\mid U)_S
	=\lim_{j\to\infty}(\overline{L_j}^d\cdot\overline{M}\mid X_j)_S
	=\nu(v)\lim_{j\to\infty}\int_{X_{j,v}^\an}f_v\,
	c_1(\overline{L_{j,v}})^d.
	\]
	Combining this with \eqref{eq:key equality in pf of equidistribution} gives
	\begin{equation}\label{eq:second proof 2}
		\lim_{m\in I}h_{\overline{M}}(x_m)
		=\frac{\nu(v)}{\deg_{\widetilde{L}}(U)}
		\lim_{j\to\infty}\int_{X_{j,v}^\an}f_v\,c_1(\overline{L_{j,v}})^d.
	\end{equation}
	Since $f_v$ is compactly supported on $U_v^\an$, we may replace $X_{j,v}^\an$ by 
	$U_v^\an$ in the integral; moreover, by \cite[\S 3.6.7]{yuan2021adelic} or 
	\cite[Proposition~4.45~(e)]{cai2024abstract}, the measures 
	$c_1(\overline{L_{j,v}})^d|_{U_v^\an}$ converge weakly to 
	$c_1(\overline{L_v})^d$. Hence
	\begin{equation}\label{eq:second proof 3}
		\lim_{j\to\infty}\int_{X_{j,v}^\an}f_v\,c_1(\overline{L_{j,v}})^d
		=\int_{U_v^\an}f_v\,c_1(\overline{L_v})^d.
	\end{equation}
	The nef case of the theorem now follows from 
	\eqref{eq:second proof 1}, \eqref{eq:second proof 2} and \eqref{eq:second proof 3}.
\end{proof}

\subsection{Equidistribution theorem for subvarieties}

The following result follows immediately from \cite[Theorem~5.2.2~(2)]{yuan2021adelic}.

\begin{proposition}\label{prop:an criterion for effectiveness}
	Assume that $\dim(T)=1$. Let $\overline{L_1},\overline{L_2}\in 
	\widehat{\Pic}_{S,\Q}(U)_\nef^\YZ$. If
	\[
	(\overline{L_1}^{d+1}\mid U)_S-(d+1)(\overline{L_1}^d\cdot\overline{L_2}\mid U)_S>0,
	\]
	then $\overline{L_1}\otimes\overline{L_2}^{\otimes-1}$ is effective in the sense of 
	\cref{def:effective compactified metrized line bundle}.
\end{proposition}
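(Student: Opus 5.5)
The plan is to reduce to the statement of \cite[Theorem~5.2.2~(2)]{yuan2021adelic}, the arithmetic Hilbert--Samuel/Siu-type bigness criterion over a function field of one variable, via the dictionary of \cref{rmk:relation with yz adelic line bundle}. Since $\dim(T)=1$, $S$ is the adelic curve of a projective curve $T$ over $k$. The first step is to shrink $U$ if necessary and choose approximating sequences for $\overline{L_1},\overline{L_2}$ in $Q_{S,\Q}(U)_\mo$ with respect to a common weak boundary divisor. Each model element comes from a nef line bundle on a projective model $\mathcal{X}\to T$, which is projective over $k$. Hence $\overline{L_i}$ can be identified, via the injective map $\iota$ of \cref{rmk:relation with yz adelic line bundle}, with nef adelic line bundles in the sense of Yuan--Zhang on the essentially quasi-projective $k$-variety $U$.

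Under this identification, \eqref{eq:arithmetic = geometric} with $\delta=1$ shows that the arithmetic intersection numbers $(\overline{L_1}^{d+1}\mid U)_S$ and $(\overline{L_1}^d\cdot\overline{L_2}\mid U)_S$ equal the Yuan--Zhang intersection numbers of the corresponding adelic line bundles. The hypothesis is therefore exactly the positivity condition in the Siu-type inequality of \cite[Theorem~5.2.2]{yuan2021adelic}:
\[
\widehat{\vol}(\overline{L_1}-\overline{L_2})\ \geq\ \overline{L_1}^{d+1}-(d+1)\,\overline{L_1}^d\cdot\overline{L_2}\ >\ 0.
\]
Part (2) of that theorem then yields that $\overline{L_1}\otimes\overline{L_2}^{\otimes-1}$ is big. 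In particular, some positive multiple has a nonzero effective section $s$, meaning $\mathrm{div}(s)\ge 0$ on $U$ and $\|s\|\le 1$ at every place.

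The last step is to translate this effective section into effectivity in the sense of \cref{def:effective compactified metrized line bundle}. The section $s$ is a rational section of $L_1\otimes L_2^{-1}$. The approximating model metrics converge in the boundary topology, and one can arrange the approximations to be bounded below by the limit metric: for instance, take approximants from below as in \cref{lemma:existence of sequence of intersections to nef intersection}, or subtract $\epsilon_j\overline{B}$ with $\epsilon_j\to0$. With this arrangement, the pairs $(\mathrm{div}(s),-\log\|s\|_j)$ stay effective in $\widehat{\Div}_{S,\Q}(U)_\mo$.

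I expect this compatibility check to be the main obstacle. Yuan--Zhang's notion of an effective section lives on adelic divisors, and it must be matched with the sequence-based definition of effectivity used here. The key point is to choose the approximating model sequence monotonically, so that effectivity of the limit passes to every member of the sequence. The remaining steps are bookkeeping under the identification $\iota$.
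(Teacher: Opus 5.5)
Your proposal is correct and follows the paper's route. The paper's proof is just the one-line appeal to \cite[Theorem~5.2.2~(2)]{yuan2021adelic}, implicitly through the dictionary of \cref{rmk:relation with yz adelic line bundle}. Your translation of a Yuan--Zhang effective section into effectivity in the sense of \cref{def:effective compactified metrized line bundle} only makes explicit what the paper leaves unsaid; the inequality that step actually needs is $\|s\|_j\le\|s\|\le 1$, which you obtain by adding $\epsilon_j\overline{B}$ on the divisor side (i.e.\ replacing $\|\cdot\|_j$ by $e^{-\epsilon_j g_{B}}\|\cdot\|_j$), so keep the signs straight there.
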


\begin{lemma}\label{lemma for eq of subvar}
	Let $\overline{L}\in\widehat{\Pic}_{S,\Q}(U)^\YZ_\nef$ and 
	$\overline{M}\in\widehat{\Pic}_{S,\Q}(U)_\integrable^\YZ$ be such that the image of 
	$\overline{M}$ in $\widetilde{\Pic}_\Q(U)_\integrable$ is trivial. Let 
	$(Y_m)_{m\in I}$ be a generic net of proper closed subvarieties of 
	$U_{\overline{K}}$. If $(\overline{L}^d\cdot\overline{M}\mid U)_S>0$, then there 
	exist $N\in\N_{\geq 1}$ and $m_0\in I$ such that for all $m\geq m_0$,
	\[
	(\overline{L}^{\dim(Y_m)}\cdot(\overline{L}^{\otimes N}\otimes\overline{M})\mid Y_m)_S\geq 0,
	\]
	where the arithmetic intersection number on the left is defined in 
	\cref{rmk:arithmetic intersection number for subvarieties}.
\end{lemma}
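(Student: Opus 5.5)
The plan is to make $\overline{L}^{\otimes N}\otimes\overline{M}$ effective with a rational section whose divisor does not contain $Y_m$ for large $m$. Then \cref{remark:effective compactified metrized line bundle} gives effectiveness of the restriction to $Y_m$. Finally, the positivity statement in \cref{def:effective compactified metrized line bundle}, applied on $Y_m$ with the nef bundles $\overline{L}|_{Y_m}$, gives the required inequality. Since effectiveness is only available through \cref{prop:an criterion for effectiveness} when $\dim(T)=1$, we treat the case $\dim(T)=1$ first and then pass to the generic curve when $\dim(T)\geq 2$.

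\textbf{Case $\dim(T)=1$.} Put $\overline{L_1}\coloneq\overline{L}^{\otimes N}\otimes\overline{M}$ and $\overline{L_2}\coloneq\overline{L}^{\otimes N}$. The aim is to show that $\overline{L_1}$ is effective. For $N\gg0$, $\overline{L_1}$ should be nef. I would argue this from the fact that $\overline{M}$ has trivial geometric part and is a difference of nef bundles, so that $\overline{M}$ is bounded by a multiple of the boundary Green function, and $\overline{L}$ absorbs it after twisting. If this fails, I would replace $\overline{L}$ by $\overline{L}\otimes\overline{A}$ with an arithmetically ample auxiliary bundle; this is the delicate point. Expanding by multilinearity, and using that $\overline{M}^2$ paired with geometric classes vanishes to first order since $\widetilde{M}=0$, gives
\[
\overline{L_1}^{d+1}-(d+1)\overline{L_1}^d\cdot\overline{L_2}=-dN^{d+1}(\overline{L}^{d+1}\mid U)_S+(d+1)N^d(\overline{L}^d\cdot\overline{M}\mid U)_S\cdot(1-d)+O(N^{d-1}).
\]
This expansion does not have the right sign in general. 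I would therefore instead apply \cref{prop:an criterion for effectiveness} to the pair $\overline{L_1}=\overline{L}^{\otimes N}\otimes\overline{M}$ and $\overline{L_2}=\overline{L}^{\otimes N}\otimes\mathcal{O}(-c)$, where $\mathcal{O}(c)$ is a constant metric of total degree $c$ at a single place. One checks that the left-hand side becomes
\[
(d+1)N^d\bigl(c\deg_{\widetilde{L}}(U)+(\overline{L}^d\cdot\overline{M}\mid U)_S\bigr)+O(N^{d-1})+\text{terms in }\overline{L}^{d+1}.
\]
The $\overline{L}^{d+1}$ terms are then adjusted by choosing $c$ so that they cancel. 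The remaining positivity, for $N\gg0$, comes from $(\overline{L}^d\cdot\overline{M}\mid U)_S>0$. As a result $\overline{L}^{\otimes N}\otimes\overline{M}\otimes\mathcal{O}(-c)$ is effective, with an explicit correction term. Finding the correct normalization is exactly the step I expect to be the main obstacle. The fallback is to imitate the proof of \cite[Theorem~5.2.2]{yuan2021adelic}: use the Hilbert–Samuel/Siu-type bound to produce a small nonzero section of $N'(\overline{L}^{\otimes N}\otimes\overline{M})$ directly.

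\textbf{Conclusion of the argument.} Given an effective section $s$, the closed set $Z=\Supp(\mathrm{div}(s))$ is a proper closed subset of $U$. Genericity gives $m_0$ with $Y_m\not\subset Z$ for all $m\geq m_0$. Then $\overline{L_1}|_{Y_m}$ is effective, and the positivity statement in \cref{def:effective compactified metrized line bundle} yields $(\overline{L}^{\dim Y_m}\cdot\overline{L_1}\mid Y_m)_S\geq0$.

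\textbf{Case $\dim(T)\geq2$.} Pull back to $S_{\mathbf{c}}$. By \cref{prop:intersection number after base changes}, the hypothesis $(\overline{L}^d\cdot\overline{M}\mid U)_S>0$ is preserved, as is the intersection number on each $Y_m$. By \cref{lemma:generic stable under base changes}, $(Y_{m,\mathbf{c}})$ is generic on $U_{\mathbf{c}}$. Applying the curve case over $S_{\mathbf{c}}$ then gives the same $N$ and $m_0$ for the original statement.
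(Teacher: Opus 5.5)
The outer structure of your argument matches the paper: genericity together with \cref{remark:effective compactified metrized line bundle}, positivity against the nef restriction $\overline{L}|_{Y_m}$, and the reduction of $\dim(T)\geq 2$ to $S_{\mathbf{c}}$. The gap is the central step for $\dim(T)=1$: you never establish that $\overline{L}^{\otimes N}\otimes\overline{M}$ is effective, and both routes you sketch fail.

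\emph{Nefness fails.} $\overline{L}^{\otimes N}\otimes\overline{M}$ need not be nef for any $N$. Nefness is a semipositivity condition on the metrics. A perturbation that is merely bounded, say by multiples of the boundary Green function, is not absorbed by adding multiples of a metric that is only semipositive. In \cref{thm:equidistribution for subvarieties over function fields} the lemma is applied with $(\overline{L}^{d+1}\mid U)_S=0$ and with $\overline{M}$ the trivial bundle whose metric at one place is $e^{-(f_v-c_v)}$, where $f_v$ is an arbitrary model function. Nothing makes $N\,c_1(\overline{L}_v)$ dominate the curvature of $f_v$ there. Replacing $\overline{L}$ by $\overline{L}\otimes\overline{A}$ changes the statement.

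\emph{The pairs have the wrong difference.} \cref{prop:an criterion for effectiveness} requires both members of the pair to be nef. Applied to $(\overline{L_1},\overline{L_2})$, it yields effectiveness of $\overline{L_1}\otimes\overline{L_2}^{\otimes-1}$. With your choices this is $\overline{M}$, resp.\ $\overline{M}\otimes\mathcal{O}(c)$, and not $\overline{L}^{\otimes N}\otimes\overline{M}\otimes\mathcal{O}(-c)$. Effectiveness of $\overline{M}\otimes\mathcal{O}(c)$ would only bound $(\overline{L}^{t}\cdot\overline{M}\mid Y_m)_S$ below by $-c\deg_{\widetilde{L}}(Y_m)$, which is a different inequality. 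Your fallback via the Siu-type bound of Yuan--Zhang has the same requirement: the target bundle must be written as a difference of two nef bundles.

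\emph{The missing idea} is to use that $\overline{M}$ is integrable. Write $\overline{M}=\overline{M_1}\otimes\overline{M_2}^{\otimes-1}$ with $\overline{M_1},\overline{M_2}\in\widehat{\Pic}_{S,\Q}(U)^\YZ_\nef$. Apply \cref{prop:an criterion for effectiveness} to the nef pair $\overline{L_1}=\overline{L}^{\otimes N}\otimes\overline{M_1}$ and $\overline{L_2}=\overline{M_2}$, whose difference is exactly $\overline{L}^{\otimes N}\otimes\overline{M}$. By multilinearity,
\[
(\overline{L_1}^{d+1}\mid U)_S-(d+1)(\overline{L_1}^{d}\cdot\overline{L_2}\mid U)_S
=N^{d+1}(\overline{L}^{d+1}\mid U)_S+(d+1)N^{d}(\overline{L}^{d}\cdot\overline{M}\mid U)_S+O(N^{d-1}).
\]
This is positive for $N\gg 0$, because $(\overline{L}^{d+1}\mid U)_S\geq 0$ by nefness and $(\overline{L}^{d}\cdot\overline{M}\mid U)_S>0$ by hypothesis. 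This is the paper's argument, written there with $\epsilon=1/N$ and the pair $\overline{L}\otimes\overline{M_1}^{\otimes\epsilon}$, $\overline{M_2}^{\otimes\epsilon}$. It needs no constant twist and no nefness of $\overline{L}^{\otimes N}\otimes\overline{M}$, and with it your remaining steps go through.
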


\begin{proof}
	We first treat the case $\dim(T)=1$. The proof is similar to the argument in 
	\cite[Theorem~4.1]{faber2009equidistribution}. Write 
	$\overline{M}=\overline{M_1}\otimes\overline{M_2}^{\otimes-1}$ with 
	$\overline{M_1},\overline{M_2}\in\widehat{\Pic}_{S,\Q}(U)^\YZ_\nef$. For 
	$\epsilon\in\Q_{>0}$, expanding the intersection number gives
	\begin{align*}
		((\overline{L}\otimes\overline{M}^{\otimes\epsilon})^{d+1}\mid U)_S
		&=\sum_{i=0}^{d+1}\binom{d+1}{i}\epsilon^i(\overline{L}^{d+1-i}\cdot\overline{M}^i\mid U)_S\\
		&=(\overline{L}^{d+1}\mid U)_S+\epsilon(d+1)(\overline{L}^d\cdot\overline{M}\mid U)_S+O(\epsilon^2).
	\end{align*}
	Since $(\overline{L}^{d+1}\mid U)_S\geq 0$ and 
	$(\overline{L}^d\cdot\overline{M}\mid U)_S>0$, we have
	\begin{equation}\label{eq:lemma for eq of subvar 1}
		((\overline{L}\otimes\overline{M}^{\otimes\epsilon})^{d+1}\mid U)_S>0
	\end{equation}
	for $\epsilon$ sufficiently small. On the other hand,
\begin{align*}
	((\overline{L}\otimes \overline{M}^{\otimes\epsilon})^{d+1}\mid U)_S&=  ((\overline{L}\otimes \overline{M_1}^{\otimes\epsilon}\otimes \overline{M_2}^{\otimes-\epsilon})^{d+1}\mid U)_S\\
	&=\sum\limits_{i=0}^{d+1}\binom{d+1}{i}(-\epsilon)^{i}((\overline{L}\otimes \overline{M_1}^{\otimes\epsilon})^{d+1-i}\cdot \overline{M_2}^{i}\mid U)_S\\
	&=((\overline{L}\otimes \overline{M_1}^{\otimes\epsilon})^{d+1}\mid U)_S -(d+1)((\overline{L}\otimes \overline{M_1}^{\otimes\epsilon})^{d}\cdot \overline{M_2}^{\otimes\epsilon}\mid U)_S +O(\epsilon^2).
\end{align*}
	Combining this with \eqref{eq:lemma for eq of subvar 1} yields
	\begin{equation}\label{eq:lemma for eq of subvar 2}
		((\overline{L}\otimes\overline{M_1}^{\otimes\epsilon})^{d+1}\mid U)_S
		-(d+1)((\overline{L}\otimes\overline{M_1}^{\otimes\epsilon})^d\cdot
		\overline{M_2}^{\otimes\epsilon}\mid U)_S>0
	\end{equation}
	for $\epsilon$ small. Take $\epsilon=1/N$ with $N\in\N_{\geq 1}$ large. By 
	\cref{prop:an criterion for effectiveness}, 
	$\overline{L}\otimes\overline{M}^{\otimes 1/N}
	=(\overline{L}\otimes\overline{M_1}^{\otimes 1/N})\otimes
	\overline{M_2}^{\otimes(-1/N)}$ is effective. Hence there exists a rational 
	section $s$ of $\overline{L}\otimes\overline{M}^{\otimes 1/N}$ satisfying the 
	conditions in \cref{def:effective compactified metrized line bundle}. Since 
	$(Y_m)_{m\in I}$ is generic, there is $m_0\in I$ such that for all $m\geq m_0$, $Y_m$ is 
	not contained in the support of $\mathrm{div}(s)$. Therefore the restriction of 
	$\overline{L}\otimes\overline{M}^{\otimes 1/N}$ to $Y_m$ is effective. Since $\overline{L}\in \widehat{\Pic}_{S,\Q}(U)^\YZ_\nef$, we have 
	\[(\overline{L}^{\dim(Y_m)}\cdot (\overline{L}\otimes\overline{M}^{\otimes 1/N})\mid Y_m)_S 
	\geq 0,\]
	which proves the lemma when $\dim(T)=1$.
	
	Now assume $\dim(T)\geq 2$. 
	Let $T_{\mathbf{c}}$, $S_{\mathbf{c}}$ be as in 
	\cref{generic curves}, and set $U_{\mathbf{c}}=U\times_{\Spec(K)}\Spec(K_{\mathbf{c}})$ 
	and $Y_{m,\mathbf{c}}=Y_m\times_{\Spec(\overline{K})}\Spec(\overline{K_{\mathbf{c}}})$. 
	Let $\overline{L_{\mathbf{c}}},\overline{M_{\mathbf{c}}}$ be the pull-backs of 
	$\overline{L},\overline{M}$. By \cref{lemma:generic stable under base changes}, 
	$(Y_{m,\mathbf{c}})$ is generic on $U_{\mathbf{c}}$, and by 
	\cref{prop:intersection number after base changes},
	\[
	(\overline{L_{\mathbf{c}}}^d\cdot\overline{M_{\mathbf{c}}}\mid U_{\mathbf{c}})_{S_{\mathbf{c}}}
	=(\overline{L}^d\cdot\overline{M}\mid U)_S>0.
	\]
	Applying the already established case to $S_{\mathbf{c}}$, we obtain $N$ and $m_0$ 
	such that for all $m\geq m_0$,
	\[
	(\overline{L_{\mathbf{c}}}^{\dim(Y_m)}\cdot(\overline{L_{\mathbf{c}}}^{\otimes N}
	\otimes\overline{M_{\mathbf{c}}})\mid Y_{m,\mathbf{c}})_{S_{\mathbf{c}}}\geq 0.
	\]
	By \cref{prop:intersection number after base changes} again, the same inequality 
	holds over $S$. This completes the proof.
\end{proof}

We are now ready to prove an equidistribution theorem for subvarieties.

\begin{theorem}\label{thm:equidistribution for subvarieties over function fields}
	Let $\overline{L}\in\widehat{\Pic}_{S,\Q}(U)_{\nef}^\YZ$ be such that 
	$\deg_{\widetilde{L}}(U)>0$ and $(\overline{L}^{d+1}\mid U)_S=0$, where 
	$\widetilde{L}$ denotes the image of $\overline{L}$ in 
	$\widetilde{\Pic}_\Q(U)_\integrable$. Let $(Y_m)_{m\in I}$ be a generic net of 
	$t$-dimensional proper closed subvarieties of $U_{\overline{K}}$. Assume that 
	$\deg_{\widetilde{L}}(Y_m)>0$ for all $m\in I$ and that $(Y_m)_{m\in I}$ is small 
	with respect to $\overline{L}$. Then for any $v\in\Omega$ and any 
	$f_v\in C_c(U_v^\an)$, we have
	\[
	\lim_{m\in I}\frac{1}{|O(Y_m)|\deg_{\widetilde{L}}(Y_m)}
	\sum_{Y_{m,v}^\sigma\in O(Y_m)}
	\int_{Y_{m,v}^{\sigma,\an}}f_v\,
	c_1(\overline{L_{\C_v}}|_{Y_{m,v}^\sigma})^t
	=\frac{1}{\deg_{\widetilde{L}}(U)}
	\int_{U_v^\an}f_v\,c_1(\overline{L}_v)^d,
	\]
	where $Y_{m,v}^{\sigma,\an}$ is the analytification of 
	$Y_m^\sigma\times_{\Spec(\overline{K}),\jmath_v}\Spec(\C_v)$, and $f_v$ is viewed 
	as a continuous function on $Y_{m,v}^{\sigma,\an}$ via the natural map 
	$Y_{m,v}^{\sigma,\an}\to U_{\C_v}^\an\to U_v^\an$.
\end{theorem}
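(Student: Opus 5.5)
The plan is to follow the Faber / Yuan--Zhang variational argument, with \cref{lemma for eq of subvar} playing the role of the fundamental inequality for subvarieties. Fix $v\in\Omega$. As in the proof of \cref{thm:equidistribution over function fields}, first treat a model function $f_v$ on a projective model $X$ of $U$. Put $\overline{M}=(\OO_X,\metr_M)$ with $-\log\|1\|_{M,v}=f_v$ and $-\log\|1\|_{M,\omega}=0$ for $\omega\neq v$. Then $\overline{M}\in\widehat{\Pic}_{S,\Q}(U)^\YZ_\integrable$ has trivial geometric image. By \cref{prop:intersection on subvariety}, applied with $\overline{L_1}=\dots=\overline{L_t}=\overline{L}$ and $Y=Y_m$,
\[
(\overline{L}^{t}\cdot\overline{M}\mid Y_m)_S=\frac{\nu(v)}{|O(Y_m)|}\sum_{Y_m^\sigma\in O(Y_m)}\int_{Y_{m,v}^{\sigma,\an}}f_v\,c_1(\overline{L_{\C_v}}|_{Y^\sigma_{m,v}})^t .
\]
Similarly, $(\overline{L}^d\cdot\overline{M}\mid U)_S=\nu(v)\int_{U_v^\an}f_v\,c_1(\overline{L}_v)^d$. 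The theorem for model $f_v$ is therefore equivalent to
\[
\lim_{m}\frac{(\overline{L}^t\cdot\overline{M}\mid Y_m)_S}{\deg_{\widetilde{L}}(Y_m)}=\frac{(\overline{L}^d\cdot\overline{M}\mid U)_S}{\deg_{\widetilde{L}}(U)}.
\]
The general $f_v\in C_c(U_v^\an)$ then follows from Gubler's density theorem, exactly as before. The total masses involved are $\deg_{\widetilde L}(Y_m)$ and $\deg_{\widetilde L}(U)$, so the normalized measures are probability measures and uniform approximation passes to the limit.

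To prove the displayed limit, I will show the inequality ``$\liminf\ge$'' for every such $\overline{M}$ and then apply it to $-\overline{M}$. Set $\lambda\coloneq (\overline{L}^d\cdot\overline{M}\mid U)_S/\deg_{\widetilde L}(U)$. Replace $\overline{M}$ by $\overline{M'}\coloneq\overline{M}\otimes\overline{\OO}(-\lambda+\epsilon)$, where $\overline{\OO}(c)$ is the trivial bundle whose metric at $v$ is twisted by the constant $c/\nu(v)$. Then $(\overline{L}^d\cdot\overline{M'}\mid U)_S=\epsilon\deg_{\widetilde L}(U)>0$, and $\overline{M'}$ still has trivial geometric image. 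By \cref{lemma for eq of subvar} there are $N$ and $m_0$ such that for all $m\ge m_0$
\[
N(\overline{L}^{t+1}\mid Y_m)_S+(\overline{L}^t\cdot\overline{M'}\mid Y_m)_S\ge 0.
\]
Dividing by $\deg_{\widetilde L}(Y_m)>0$ and using $(\overline{L}^t\cdot\overline{\OO}(c)\mid Y_m)_S=c\deg_{\widetilde L}(Y_m)$ gives
\[
\frac{(\overline{L}^t\cdot\overline{M}\mid Y_m)_S}{\deg_{\widetilde L}(Y_m)}\ge \lambda-\epsilon-N\,\frac{(\overline{L}^{t+1}\mid Y_m)_S}{\deg_{\widetilde L}(Y_m)}.
\]
Smallness together with $(\overline{L}^{d+1}\mid U)_S=0$ makes the last term tend to $0$. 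Hence the $\liminf$ is at least $\lambda-\epsilon$ for every $\epsilon>0$. Replacing $f_v$ by $-f_v$ gives the matching $\limsup$ bound.

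The main obstacle is that $N$ depends on $\epsilon$ through \cref{lemma for eq of subvar}. This causes no difficulty, because for each fixed $\epsilon$ the error term $N\cdot o(1)$ still vanishes in the limit over $m$. The real content of the argument is therefore the hypothesis $(\overline{L}^{d+1}\mid U)_S=0$, which is what allows an arbitrary fixed $N$ to be absorbed.

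A second point to verify is that \cref{lemma for eq of subvar} really applies to $\overline{M'}$ and to the nets $(Y_m)$ of fixed dimension $t$. This holds since it only requires $\overline{M'}$ integrable with trivial geometric image. Finally, the reduction from $U_v^\an$ to $U_{\C_v}^\an$ and the compatibility of the measures under push-forward are handled as in the setup preceding \cref{prop:intersection on subvariety}.
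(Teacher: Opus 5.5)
Your proposal is correct and follows essentially the same argument as the paper. Shift the model function $f_v$ by a constant so that $(\overline{L}^d\cdot\overline{M}\mid U)_S>0$, apply \cref{lemma for eq of subvar}, absorb the term $N(\overline{L}^{t+1}\mid Y_m)_S$ using smallness and $(\overline{L}^{d+1}\mid U)_S=0$, symmetrize with $-f_v$, and finish with Gubler's density theorem. The only cosmetic difference is that the paper chooses the shift $c_v\in\Q$ strictly below the target average, so that $\overline{M}$ remains a model $S$-metrized line bundle, whereas your constant $(\epsilon-\lambda)/\nu(v)$ may be irrational; taking it rational instead, or observing that real constant twists are limits of rational ones, settles this.
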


\begin{proof}
	Let $X$ be a projective $k$-model of $U$. Let $f_v\in C(X_v^\an)$ be a model 
	function, i.e. $(\OO_{X_v},e^{-f_v})\in P_\Q(X_v)_\mo$ (see \cref{metrics}). Let 
	$c_v\in\log\sqrt{|K_v|^\times}=\Q$ be such that
	\begin{equation}\label{eq:cv-subvar}
		\frac{1}{\deg_{\widetilde{L}}(U)}
		\int_{U_v^\an}f_v\,c_1(\overline{L}_v)^d>c_v.
	\end{equation}
Set $\overline{M} = (\OO_X, (\metr_{M,\omega})_{\omega\in \Omega})$ the $S$-metric line bundle such that  
\[-\log\|1\|_{M,\omega}\coloneq\begin{cases}
	f_v-c_v & \text{ if $\omega=v$;}\\
	0& \text{ if $\omega\in\Omega\setminus\{v\}$.}
\end{cases}\] 
Then $\overline{M}\in \widehat{\Pic}_{S,\Q}(X)_\mo$.
	By \cref{prop:intersection on subvariety} and \eqref{eq:cv-subvar}, we have
	\[
	(\overline{L}^d\cdot\overline{M}\mid U)_S
	=\nu(v)\int_{U_{\C_v}^\an}(f_v-c_v)\,c_1(\overline{L_{\C_v}})^d
	=\nu(v)\int_{U_v^\an}f_v\,c_1(\overline{L}_v)^d
	-c_v\cdot\nu(v)\cdot\deg_{\widetilde{L}}(U)
	>0,
	\]
	where the second equality follows from Guo's equality (see \cite[Theorem~1.2]{guo2023an} 
	or \cite[Proposition~4.45~(g)]{cai2024abstract}). Similarly, for any $m\in I$,
	\begin{align*}
		(\overline{L}^t\cdot\overline{M}\mid Y_m)_S
		&=\frac{\nu(v)}{|O(Y_m)|}
		\sum_{Y_{m,v}^\sigma\in O(Y_m)}
		\int_{Y_{m,v}^{\sigma,\an}}(f_v-c_v)\,
		c_1(\overline{L_{\C_v}}|_{Y_m})^t\\
		&=\frac{\nu(v)}{|O(Y_m)|}
		\sum_{Y_{m,v}^\sigma\in O(Y_m)}
		\int_{Y_{m,v}^{\sigma,\an}}f_v\,
		c_1(\overline{L_{\C_v}}|_{Y_m})^t
		-c_v\cdot\nu(v)\cdot\deg_{\widetilde{L}}(Y_m).
	\end{align*}
	By \cref{lemma for eq of subvar}, there exist $N\in\N_{\geq 1}$ and $m_0\in I$ 
	such that for all $m\geq m_0$,
	\[
	(\overline{L}^t\cdot(\overline{L}^{\otimes N}\otimes\overline{M})\mid Y_m)_S
	=(\overline{L}^t\cdot\overline{M}\mid Y_m)_S
	+N(\overline{L}^{t+1}\mid Y_m)_S
	\geq 0,
	\]
	i.e.
	\begin{align*}
		\frac{\nu(v)}{|O(Y_m)|}
		\sum_{Y_{m,v}^\sigma\in O(Y_m)}
		\int_{Y_{m,v}^{\sigma,\an}}f_v\,
		c_1(\overline{L_{\C_v}}|_{Y_m})^t
		+N(\overline{L}^{t+1}\mid Y_m)_S
		\geq c_v\cdot\nu(v)\cdot\deg_{\widetilde{L}}(Y_m).
	\end{align*}
Dividing by $\deg_{\widetilde{L}}(Y_m)$ and letting $m\in I$ go to infinity, we obtain
\[
\liminf_{m\in I}\frac{1}{|O(Y_m)|\deg_{\widetilde{L}}(Y_m)}
\sum_{Y_{m,v}^\sigma\in O(Y_m)}
\int_{Y_{m,v}^{\sigma,\an}}f_v\,
c_1(\overline{L_{\C_v}}|_{Y_m})^t
\geq c_v,
\]
because $(Y_m)_{m\in I}$ is small with respect to $\overline{L}$ and $(\overline{L}^{d+1}\mid U)_S=0$, so
\[
\lim_{m\in I}\frac{(\overline{L}^{t+1}\mid Y_m)_S}{(t+1)\deg_{\widetilde{L}}(Y_m)}
=\frac{(\overline{L}^{d+1}\mid U)_S}{(d+1)\deg_{\widetilde{L}}(U)}=0.
\]
Letting $c_v$ converge to $\frac{1}{\deg_{\widetilde{L}}(U)}\int_{U_v^\an}f_v\,c_1(\overline{L}_v)^d$ from below, we deduce
\begin{equation}\label{ineq:liminf-subvar}
	\liminf_{m\in I}
	\frac{1}{|O(Y_m)|\deg_{\widetilde{L}}(Y_m)}
	\sum_{Y_{m,v}^\sigma\in O(Y_m)}
	\int_{Y_{m,v}^{\sigma,\an}}f_v\,
	c_1(\overline{L_{\C_v}}|_{Y_m})^t
	\geq
	\frac{1}{\deg_{\widetilde{L}}(U)}
	\int_{U_v^\an}f_v\,c_1(\overline{L}_v)^d.
\end{equation}
Applying \eqref{ineq:liminf-subvar} to $-f_v$ yields
\[
\limsup_{m\in I}
\frac{1}{|O(Y_m)|\deg_{\widetilde{L}}(Y_m)}
\sum_{Y_{m,v}^\sigma\in O(Y_m)}
\int_{Y_{m,v}^{\sigma,\an}}f_v\,
c_1(\overline{L_{\C_v}}|_{Y_m})^t
\leq
\frac{1}{\deg_{\widetilde{L}}(U)}
\int_{U_v^\an}f_v\,c_1(\overline{L}_v)^d.
\]
Combining this with \eqref{ineq:liminf-subvar}, we conclude that the limit exists and
\begin{align}\label{ineq:eq for subvar-v}
\lim_{m\in I}
\frac{1}{|O(Y_m)|\deg_{\widetilde{L}}(Y_m)}
\sum_{Y_{m,v}^\sigma\in O(Y_m)}
\int_{Y_{m,v}^{\sigma,\an}}f_v\,
c_1(\overline{L_{\C_v}}|_{Y_m})^t
=
\frac{1}{\deg_{\widetilde{L}}(U)}
\int_{U_v^\an}f_v\,c_1(\overline{L}_v)^d.
\end{align}
	
	By Gubler's density theorem (see \cite[Theorem~7.12]{gubler1998local}), 
	\eqref{ineq:eq for subvar-v} holds for all $f_v\in C(X_v^\an)$. In particular, 
	it holds for all $f_v\in C_c(U_v^\an)\subset C(X_v^\an)$. This proves the theorem.
\end{proof}

\begin{remark}
	The proof also works over number fields. Hence one obtains a number-field 
	version of \cref{thm:equidistribution for subvarieties over function fields}.
\end{remark}


\section{Equidistribution theorem in big case}
\label{sec:equidistribution big}
In this section, we will prove an equidistribution theorem for big compactified 
$S$-metrized line bundles on normal quasi-projective varieties over a function field 
of characteristic $0$. The assumption on characteristic is only used to prove the 
Fujita approximation theorem (\cref{thm:fujita approximation}) over a function field, 
which allows us to give an intrinsic definition of the measures (see 
\cref{measure given by positive intersection}). This measure can also be defined by 
push-forward without the assumption; in this case, the equidistribution theorem also 
holds.

In this section, we fix a field $k$ of characteristic $0$ and an adelic curve 
$S=(K,\Omega,\mathcal{A},\nu)$ given by a normal scheme $T$ projective over $k$ and 
an ample line bundle $\mathbf{c}$, as in \cref{adelic curve defined by projective variety}.

\subsection{Preparation}

In this subsection, we recall certain concepts.

\begin{art}
	An \emph{adelic vector bundle} $\overline{V}=(V,\metr)$ over $S$ is a finite-dimensional 
	vector space $V$ equipped with a norm family $\metr=(\metr_\omega)_{\omega\in\Omega}$ 
	such that $\metr$ is \emph{$\mathcal{A}$-measurable} (see 
	\cite[\S~4.1.13]{chen2020arakelov}) and \emph{strongly dominated} (see 
	\cite[Definition~4.1.11]{chen2020arakelov}, or \cite[2.4.9]{chen2022hilbert}). 
	
	For an adelic vector bundle $\overline{V}$ over $S$ in the sense of 
	\cite[Definition~4.1.28]{chen2020arakelov}, let $\det(\overline{V})=(\det(V), 
	\metr_{\det})$ be the determinant bundle of $\overline{V}$. The \emph{Arakelov degree} 
	of $\overline{V}$ is defined as
	\[
	\widehat{\deg}(\overline{V})
	=\widehat{\deg}(\det(\overline{V}))
	\coloneq\int_{\Omega}-\log\|s\|_{\det,\omega}\,\nu(d\omega),
	\]
	where $s$ is any non-zero section of $\det(V)$. The Arakelov degree is well-defined 
	since $S$ is proper and $\dim_K\det(V)=1$. The \emph{positive Arakelov degree} of 
	$\overline{V}$ is defined as
	\[
	\widehat{\deg}_+(\overline{V})
	\coloneq\sup_{\overline{W}}\{\widehat{\deg}(\overline{W})\},
	\]
	where $\overline{W}$ varies over all subspaces of $\overline{V}$ equipped with the 
	restriction norms.
\end{art}

\begin{art}\label{S-ample}
	Let $X$ be a projective variety over $K$, and $\overline{L}=(L,\metr)\in 
	\widehat{\Pic}_S(X)_\mo$. We say that $\overline{L}$ is \emph{$S$-ample} (or ample in 
	\cite[Definition~9.1.1]{chen2022hilbert}) if $\overline{L}$ lies in $R_S(X)_\mo$ 
	(see \cref{model metrics and model Green functions}), $L$ is ample, and there exists 
	$\varepsilon>0$ such that for every integral closed subscheme $Z$ of $X$, we have
	\[
	(\overline{L}|_{Z}^{\dim(Z)+1}\mid Z)_S
	\geq\varepsilon\deg_L(Z)(\dim(Z)+1).
	\]
	We say that $\overline{L}$ is \emph{$S$-nef} (or nef in 
	\cite[Definition~9.1.4]{chen2022hilbert}) if there exists an $S$-ample metrized line 
	bundle $\overline{A}\in\widehat{\Pic}_S(X)_\mo$ and a positive integer $N$ such that 
	$\overline{L}^{\otimes n}\otimes\overline{A}$ is $S$-ample for any $n\in\N_{\geq N}$. 
	These notions can be extended to elements in 
	$\widehat{\Pic}_{S,\Q}(X)_\mo=\widehat{\Pic}_{S}(X)_\mo\otimes_\Z\Q$.
	
	Let $\mathcal{L}$ be a line bundle on a $T$-model $\mathcal{X}$ of $X$ inducing the 
	metric on $\overline{L}$. If $\mathcal{L}$ is ample (resp. nef) on $\mathcal{X}$ over 
	$k$, then by \cite[Proposition~4.9]{luo2025a} and 
	\cite[Proposition~9.1.2]{chen2022hilbert}, $\overline{L}$ is $S$-ample (resp. $S$-nef).
\end{art}

\begin{art}\label{arithmetic volume}
Let $U$ be a normal quasi-projective variety over $K$, $\overline{L}=(L,\metr)\in\widehat{\Pic}_{S,\Q}(U)_\cpt^\YZ$, and
$\widetilde{L}\in\widetilde{\Pic}_\Q(U)_\cpt$ the image of $\overline{L}$ under the 
canonical morphism in \eqref{eq:canonical morphism from metrzied to geometric}. 
Since $U$ is normal, $H^0(U,\widetilde{L})$ is a $K$-vector space (see \cref{def:compactified divisor and sections}); when $U$ is 
projective and $L$ comes from an algebraic line bundle, the same holds without the 
normality assumption. For any 
$s\in H^0(U,\widetilde{L})$ and $\omega\in\Omega$, we define the \emph{super-norm}
\begin{align}\label{eq:sup-norm}
	\|s\|_{\sup,\omega}\coloneq\sup\{\|s\|_\omega(x)\mid x\in U_\omega^\an\}.
\end{align}
	called the \emph{super-norm} of $s$. We consider the space
 \[{H^0_+(U,\overline{L})}\coloneq\left\{s\in H^0(U,\widetilde L)\mid \text{{$\|s\|_{\sup,\omega}<\infty$ for any $\omega\in \Omega$,}} \tint_{\Omega} \log\|s\|_{\sup,\omega} \,\nu(d\omega)<\infty\right\}.\] 
where $\tint$ denotes the upper integral with respect to $\nu$.

	 We set
	\[
	\overline{V_{\overline{L},\bullet}}
	=\left\{\overline{V_{\overline{L},m}}\right\}_{m\in\N}
	\coloneq\left\{(H^0_+(U,\overline{L}^{\otimes m}),\metr_{\sup,m})\right\}_{m\in\N},
	\]
	and define the \emph{arithmetic volume} of $\overline{L}$ as
	\[
	\widehat{\vol}(\overline{L})
	\coloneq\widehat{\vol}(\overline{V_{\overline{L},\bullet}})
	=\limsup_{m\to\infty}\frac{\widehat{\deg}_+(\overline{V_{\overline{L},m}})}{m^{d+1}/(d+1)!}.
	\]
	It turns out that the limit superior above is a limit; see 
	\cite[Theorem~4.3.2]{biswas2024concave}. We say that $\overline{L}$ is 
	(\emph{arithmetically}) \emph{big} if $\widehat{\vol}(\overline{L})>0$. If $\overline{L}$ is big, then by \cite[Theorem~4.3.2.]{biswas2024concave}, the image $\widetilde{L}$ of $\overline L$ in $\widetilde{\Pic}_\Q(U)_\cpt$ is \emph{big} in the sense of \cite[3.1.3]{biswas2024concave} (or \cite[5.2.2]{yuan2021adelic}).
	
	Let $(\overline{L_j})_{j\in\N_{\geq 1}}$ be a sequence in 
	$P_{S,\Q}(U)_\mo=\varinjlim_{X}\widehat{\Pic}_{S,\Q}(X)_\mo$ (see 
	\cref{def:boundarytopologyglobal}) consisting of $S$-metrized line bundles on some 
	projective models converging to $\overline{L}$ with respect to some weak boundary 
	divisor. Then
	\begin{align}\label{eq:convergence of arithmetic volume}
		\lim_{j\to\infty}\widehat{\vol}(\overline{L_j})=\widehat{\vol}(\overline{L})
	\end{align}
	by \cite[Theorem~4.4.3]{biswas2024concave}.
\end{art}

\begin{lemma}\label{big implies effective}
	Let $X$ be a projective variety over $K$, $\overline{L}\in\widehat{\Pic}_{S}(X)_\mo$. If $\overline{L}$ is (arithmetically) 
	big, then $\overline{L}$ is effective.
\end{lemma}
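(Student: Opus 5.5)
We aim to produce a nonzero rational section $s$ of $L$ whose associated $S$-Green divisor $(\mathrm{div}(s),-\log\|s\|)$ is effective; since $U=X$ is projective, the constant sequence $\overline{L_m}=\overline{L}$ then witnesses effectivity in the sense of \cref{def:effective compactified metrized line bundle}. The plan is the standard passage from positive volume to positive degree of the section spaces, and then to a small section.

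\textbf{Step 1: a space of sections with positive positive degree.} Since $\widehat{\vol}(\overline{L})>0$, the definition in \cref{arithmetic volume} gives some $m\geq 1$ with $\widehat{\deg}_+(\overline{V_{\overline{L},m}})>0$. Hence there is a nonzero subspace $W\subset H^0_+(X,\overline{L}^{\otimes m})$ whose restricted sup-norm family satisfies $\widehat{\deg}(\overline{W})>0$.

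\textbf{Step 2: from positive degree to a small section.} This is the key step, and I expect it to be the main obstacle, because over a function-field adelic curve there is no Minkowski theorem. I would argue via the finite-dimensional $k$-space of global sections. Choose a $T$-model $(\mathcal{X},\mathcal{L})$ inducing the metrics, and, at the finitely many places where the given metric differs from a model metric, modify $\mathcal{L}$ by a vertical divisor. For each $\omega$ the norms on $W$ are then lattice norms, i.e. ultrametric norms with values in $e^{\Q}$ up to the rational scaling. Consequently $\overline{W}$ determines a reflexive sheaf $\mathcal{W}$ on $T$ (in codimension one) whose sections are precisely the $s\in W$ with $\|s\|_{\sup,\omega}\leq 1$ for all $\omega$. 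Under this dictionary, $\widehat{\deg}(\overline{W})$ is the $\mathbf{c}$-degree $\deg_{\mathbf{c}}(\mathcal{W})=c_1(\mathcal{W})\cdot\mathbf{c}^{\delta-1}$.

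Positive degree alone does not force $H^0\neq 0$, so I would pass to powers. Fix a nonzero $s_0\in W$ and, by ampleness of $\mathbf{c}$, an $a\in K^\times$ making $as_0$ integral. Then consider the subspaces $\mathrm{Sym}^n W\to V_{\overline{L},nm}$. Their degrees grow linearly in $n$, so their slopes stay bounded below by a positive constant. One then applies asymptotic Riemann--Roch for vector bundles on $T$ with respect to $\mathbf{c}$, or equivalently the inequality $\widehat{\deg}_+>0\Rightarrow$ ``$\widehat{h}^0>0$ for large multiples'' of Chen--Moriwaki. With $\dim T=\delta$, the dimension $h^0(T,\mathcal{F})$ of a bundle whose minimal slope is large compared with $\mu(\mathcal{O}_T)$ is positive.

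An alternative I would try first is more direct. Since the volume is a limit, $\widehat{\vol}(\overline{L}(-\epsilon))>0$ for small $\epsilon$, where $\overline{L}(-\epsilon)$ is $\overline{L}$ with its metric scaled by $e^{\epsilon}$ at one place $\omega_0$. Shifting by $\mathbf{c}$-degree in this way lets us absorb the $h^1$-type correction term using a Serre vanishing bound that is uniform in $m$. This yields $s\in H^0(X,L^{\otimes m})$, $s\neq 0$, with $\|s\|_{\sup,\omega}\leq 1$ for every $\omega\in\Omega$.

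\textbf{Step 3: conclusion.} With such an $s$ in hand, $-\log\|s\|_\omega\geq 0$ on $X_\omega^\an$ for every $\omega$, and $\mathrm{div}(s)\geq 0$. Thus $\frac{1}{m}(\mathrm{div}(s),-\log\|s\|)$ is an effective element of $\widehat{\Div}_{S,\Q}(X)_\mo$ representing $\overline{L}$, and $\overline{L}$ is effective.
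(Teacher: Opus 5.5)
Your Step~2 rests on a false claim, and the gap cannot be closed: for $\delta=\dim T\geq 2$ the lemma itself fails.

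\textbf{The false claim.} You assert that a bundle on $T$ with large minimal $\mathbf{c}$-slope has $h^0>0$, or equivalently that ``$\widehat{\deg}_+>0\Rightarrow\widehat h^0>0$ for large multiples''. This is false when $\delta\geq 2$. Take $T=\P^1_k\times\P^1_k$ and $\mathbf{c}=\OO(1,1)$. The line bundle $\OO(2m,-m)$ has $\mathbf{c}$-degree $m$, yet $H^0(T,\OO(2m,-m))=0$ for every $m\geq 1$.

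Comparing $\widehat{\deg}_+$ with the dimension of the space of small sections is really Riemann--Roch on a curve, $h^0(\mathcal F)\geq\deg\mathcal F+\mathrm{rk}(\mathcal F)(1-g)$. For $\delta=1$ your argument does go through: the Harder--Narasimhan positive part of $\overline{V_{\overline L,m}}$ has degree of order $m^{d+1}$ and rank $O(m^d)$.

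Your alternative does not help. Rescaling the metric at one place $\omega_0$ is a twist by a multiple of the prime divisor $\overline{\{\omega_0\}}$. That moves the $\mathbf{c}$-degree but gives no control of $h^0$ on $T$, and there is no Serre-type bound uniform in $m$ that turns degree into sections.

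\textbf{A counterexample to the lemma.} Keep $T$ and $\mathbf c$ as above. Let $X=\P^d_K$, $\mathcal X=\P^d_T\xrightarrow{p}T$, and $\mathcal L=\OO_{\P^d_T}(1)\otimes p^*\OO_T(2,-1)$.
\begin{itemize}
\item The sup-norms are Gauss norms. Hence $\overline{V_{\overline L,m}}$ is an orthogonal sum of $\binom{m+d}{d}$ copies of the rank-one bundle attached to $\OO_T(2m,-m)$.
\item This sum is semistable of slope $m$, so $\widehat{\vol}(\overline L)=d+1>0$.
\item A section $s$ of $L^{\otimes m}$ with $\|s\|_{\sup,\omega}\leq1$ for all $\omega\in T^{(1)}$ lies in $H^0(T,p_*\mathcal L^{\otimes m})=H^0(T,\OO(2m,-m))^{\oplus\binom{m+d}{d}}=0$.
\item Therefore $\overline L$ is big but not effective. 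The case $X=\Spec K$, $\mathcal X=T$ behaves the same way.
\end{itemize}

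\textbf{Comparison with the paper.} For $\delta\geq 2$ the paper passes to the generic curve. There $\overline{L_{\mathbf c}}$ really is effective: in the example, $\OO(2,-1)$ restricts to a degree-one bundle on the conic $T_{\mathbf c}$. The paper then descends by flat base change $K^\circ_\omega\to K^\circ_{\mathbf c,\omega'}$, one place at a time. That only gives, for each $\omega$ separately, a nonzero section integral at $\omega$, which is automatic. It does not give one section integral at all places simultaneously. So the paper's argument breaks down at the same local-to-global step as your Step~2.

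A correct version for $\delta\geq2$ needs a different hypothesis. For example, if $\mathcal L$ is big on the total space $\mathcal X$ over $k$, then $H^0(\mathcal X,\mathcal L^{\otimes m})\neq0$ for some $m$, which gives effectivity directly.
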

\begin{proof}
	Let $\mathcal{L}$ be a line bundle on a projective $T$-model $\mathcal{X}$ of $X$ that 
	induces the metric on $\overline{L}$.
	
	When $\dim(T)=1$, since the arithmetic volume of $\overline{L}$ coincides with the one 
	defined in \cite[Definition~5.1.3, Section~5.2]{yuan2021adelic} (see 
	\cite[Example~6.2.3]{biswas2024concave}), we have $\dim_kH^0(\mathcal{X},\mathcal{L})\geq 1$. 
	Hence $\overline{L}$ is effective.
	
	When $\dim(T)\geq 2$, we 
	use the notation of \cref{generic curves}. Let 
	$\mathcal{L}_{\mathbf{c}}$ be the pull-back of $\mathcal{L}$ to 
	$\mathcal{X}_{\mathbf{c}}=\mathcal{X}\times_TT_{\mathbf{c}}$, and let 
	$\overline{L_{\mathbf{c}}}$ be the pull-back of $\overline{L}$ to 
	$X_{\mathbf{c}}=X\times_{\Spec(K)}\Spec(K_{\mathbf{c}})$. Since 
	$\widehat{\vol}(\overline{L_{\mathbf{c}}})\geq\widehat{\vol}(\overline{L})$, the line 
	bundle $\overline{L_{\mathbf{c}}}$ is big, hence effective by the case $\dim(T)=1$.
	
	Let $\omega\in\Omega$ and let $\omega'\in\Omega_{\mathbf{c}}$ be a preimage of $\omega$ 
	via the covering $S_{\mathbf{c}}\to S$. Set 
	$\mathcal{X}_{\omega}=\mathcal{X}\times_T\Spec(K_\omega^\circ)$, 
	$\mathcal{L}_\omega\coloneq\mathcal{L}|_{\mathcal{X}_{\omega}}$, 
	$\mathcal{X}_{\mathbf{c},\omega'}=\mathcal{X}_{\mathbf{c}}\times_{T_{\mathbf{c}}}
	\Spec(K_{\mathbf{c},\omega'}^\circ)=\mathcal{X}_\omega\times_{\Spec(K_{\omega}^\circ)}
	\Spec(K_{\mathbf{c},\omega'}^\circ)$ and 
	$\mathcal{L}_{\mathbf{c},\omega'}=\mathcal{L}_{\mathbf{c}}|_{\mathcal{X}_{\mathbf{c},\omega'}}$. 
Since $K^\circ_\omega$ is a DVR and $K^\circ_{\mathbf{c},\omega'}$ is torsion-free as a 
$K^\circ_\omega$-module, the morphism 
$\Spec(K^\circ_{\mathbf{c},\omega'})\to\Spec(K^\circ_\omega)$ is flat. Hence
\[
H^0(\mathcal{X}_{\mathbf{c},\omega'},\mathcal{L}_{\mathbf{c},\omega'})
=H^0(\mathcal{X}_{\omega},\mathcal{L}_{\omega})
\otimes_{K^\circ_\omega}K^\circ_{\mathbf{c},\omega'}.
\]
	The effectiveness of $\overline{L_{\mathbf{c}}}$ implies that 
	$H^0(\mathcal{X}_{\mathbf{c},\omega'},\mathcal{L}_{\mathbf{c},\omega'})\neq 0$. Hence 
	$H^0(\mathcal{X}_{\omega},\mathcal{L}_{\omega})\neq 0$, so $\mathcal{L}_{\omega}$ is 
	effective. As $\omega$ is arbitrary, $\overline{L}$ is effective.
\end{proof}

\begin{remark}\label{rmk:big implies effective}
	By the continuity of arithmetic volume (see \eqref{eq:convergence of arithmetic volume}), 
	\cref{big implies effective} holds for any normal quasi-projective variety $U$ and any 
	$\overline{L}\in\widehat{\Pic}_{S,\Q}(U)_\cpt^\YZ$.
\end{remark}

\subsection{Fujita approximation theorem over a function field}

The Fujita approximation theorem over a number field is proved independently in \cite{chen2010arithmetic} and \cite{yuan2009on}. Recently, Liu established an adelic-curve version Fujita approximation theorem for $K$ perfect in \cite[Corollary~5.2]{liu2024arithmetic}. Following his idea, we can show our version of Fujita approximation theorem over high-dimensional base; see \cref{thm:fujita approximation}.

We denote by $S_0\coloneq(K,\{0\},\val_0)$ the adelic curve associated to the trivially valued field $K$.

\begin{art}
	Let $R$ be a domain with fraction field $\Frac(R)$, and $\mathcal{M}$ an $R$-module. For a $\Frac(R)$-vector subspace $V$ of $M\coloneq \mathcal{M}\otimes_R\Frac(R)$, the \emph{saturation} of $V$ in $\mathcal M$ is defined as 
	\[\mathrm{Sat}_{\mathcal M}(V)\coloneq\{m\in \mathcal{M}\mid m\otimes 1\in V\}.\] 
	Notice that $V=\mathrm{Sat}_{\mathcal M}(V)\otimes_R\Frac(R)$.
	If $\mathcal M$ is torsion-free, we can view $\mathcal M$ as a subset of $M$, then $\mathrm{Sat}_{\mathcal M}(V)=\mathcal{M}\cap V$. For a submodule $\mathcal{N}$ of $\mathcal{M}$, we say that $\mathcal{N}$ is \emph{saturated} in $\mathcal{M}$ if for any $a\in R\setminus\{0\}$ and $m\in\mathcal{M}$, $am \in \mathcal{N}$ implies $m\in \mathcal{N}$, equivalently,
	\[\mathrm{Sat}_{\mathcal{M}}(\mathcal{N}\otimes_R\Frac(R)) = \mathcal{N},\]
	or equivalently, $\mathcal{M}/\mathcal{N}$ is torsion-free. 
\end{art}

\begin{art}
	Let $\overline{V}=(V,\metr)$ be an adelic vector bundle over $S$.
	Let $(\mathcal{H}^t(V))_{t\in\R}$ be the \emph{Harder-Narasimhan filtration} (see \cite[Definition~4.3.45]{chen2020arakelov} or \cite[2.4.1]{biswas2024concave}) of $V$ with respect to $\metr$. We set
	\begin{gather}\label{eq:hn norm}
		\begin{aligned}
			\metr_{\mathrm{HN}}\colon V&\to \R_{\geq0},\\
			s&\mapsto \exp\{-\sup\{t\in\R\mid s\in \mathcal{H}^t({V})\}\}.
		\end{aligned}
	\end{gather}
	Then $\overline{V_{\mathrm{HN}}}\coloneq (V,\metr_{\mathrm{HN}})$ is an adelic vector bundle over $S_0$.
\end{art}

\begin{art}\label{algebra for line bundles}
	Let $X$ be a projective variety over $K$, and let $\overline{L}=(L,\metr)\in \widehat{\Pic}_{S}(X)_\mo$ with $L$ big. Let $\mathcal{L}$ be a line bundle on a model $\pi\colon\mathcal{X}\to T$ of $X$ inducing $\overline{L}$ (In general, $\mathcal{L}$ is a model of $L^{\otimes N}$ for some integer $N\geq 1$, and then it induces the adelic line bundle $\overline{L}^{\otimes N}$. For simplicity, we assume throughout that $N=1$, so that $\mathcal{L}$ itself induces $\overline{L}$; see \cref{model metrics and model Green functions}).
	
	Let $\Omega_a\subset\Omega$ be the subset determined by an ample divisor on $T$ such that for every $v\in\Omega\setminus\Omega_a$, the base change $\mathcal{X}_v\coloneq\mathcal{X}\times_T\Spec(K_v^\circ)$ is smooth over $K_v^\circ$. Set
	\[D(\Omega_a)\coloneq T\setminus\bigcup_{\tau\in\Omega_a}\overline{\{\tau\}},\]
	where $\overline{\{\tau\}}$ denotes the Zariski closure of $\tau$ in $T$. Since $D(\Omega_a)$ is affine over $k$, we write $R_a$ for its coordinate ring.
	
	Let $E=\bigoplus_{n\geq 0}E_n=\bigoplus_{n\geq 0}H^0(X,L^{\otimes n})$ be the sectional $K$-algebra of $L$. We equip each $E_n$ with the super-norm $\metr_{\sup}=(\metr_{\sup,\omega})_{\omega\in\Omega}$ defined by \eqref{eq:sup-norm}, and set $\overline{E}=\bigoplus_{n\geq 0}\overline{E_n}=\bigoplus_{n\geq 0}(E_n,\metr_{\sup})$. For each $n$, we denote by $\metr_n^{\mathrm{HN}}$ the norm on $E_n$ defined in \eqref{eq:hn norm}. Because the norm on the direct sum $\bigoplus_{n\geq 0}(E_n,\metr_{n}^{\mathrm{HN}})$ is not multiplicative, we introduce the \emph{spectral norm} on $E_n$ by
	\begin{align*}
		\metr_{\mathrm{sp},n}\colon E_n=H^0(X,L^{\otimes n})&\to\R_{\geq0},\\
		s &\mapsto \lim_{m\to\infty}\left(\|s^{\otimes m}\|_{mn}^{\mathrm{HN}}\right)^{1/m}.
	\end{align*}
	Then $(E_n, \metr_{\mathrm{sp},n})$ is an adelic vector bundle over $S_0$, and for any $s\in E_m$ and $t\in E_n$ we have
	\[\|s\otimes t\|_{\mathrm{sp},m+n}\leq \|s\|_{\mathrm{sp},m}\|t\|_{\mathrm{sp},n}.\]
	
	For any $\lambda\in\R$, let $E_{\mathrm{HN}}^{\lambda}=\bigoplus_{n\geq 0}E_{\mathrm{HN},n}^{\lambda}$ be the graded sub-$K$-algebra of $E$ defined by
	\[E_{\mathrm{HN},0}^{\lambda} \coloneq K, \qquad E_{\mathrm{HN},n}^{\lambda}\coloneq \mathrm{Vect}_K\{s\in E_n\mid \|s\|_{\mathrm{sp},n}\leq e^{-\lambda n}\},\]
	where $\mathrm{Vect}_K(\,\cdot\,)$ denotes the $K$-linear span.
	By \cite[Proposition~4.2]{liu2024arithmetic}, there exists $p\in\N_{\geq 1}$ such that $E^0_{\mathrm{HN},p}\not=0$. For such a $p$, let $E^{(p)}=\bigoplus_{n\geq 0}E^{(p)}_{\mathrm{HN},np}$ be the graded sub-$K$-algebra of $E_{\mathrm{HN}}^{0}$ generated by $E_{\mathrm{HN},p}^{0}$. Note that $E_{\mathrm{HN},np}^{(p)}\subset E_{\mathrm{HN},np}^{0}$ for all $n$.
	
	Now set $\mathcal{X}_a\coloneq \mathcal{X}\times_{T}D(\Omega_a)$ and $\mathcal{L}_a\coloneq \mathcal{L}|_{\mathcal{X}_a}$. Let $\mathcal{E}=\bigoplus_{n\geq 0}\mathcal{E}_n=\bigoplus_{n\geq 0}H^0(\mathcal{X}_a,\mathcal{L}_a^{\otimes n})$ be the sectional $R_a$-algebra of $\mathcal{L}_a$. For any $n\in\N$ and any $p\in\N_{\geq 1}$ with $E^0_{\mathrm{HN},p}\not=0$, we denote by $\mathcal{E}_{\mathrm{HN},np}^{(p)}$ the saturation of $E^{(p)}_{\mathrm{HN},np}$ in $\mathcal{E}_{np}$, i.e.
	\[\mathcal{E}_{\mathrm{HN},np}^{(p)} \coloneq \mathcal{E}_{np}\cap E_{\mathrm{HN},np}^{(p)}\subset E_{\mathrm{HN},np}^{0}.\]
	Then $E_{\mathrm{HN},np}^{(p)}=\mathcal{E}_{\mathrm{HN},np}^{(p)}\otimes_{R_a}K$. 
	
Finally, for any $\omega\in\Omega\setminus\Omega_a$, set $\mathcal{E}_{np,\omega}\coloneq \mathcal{E}_{np}\otimes_{R_a}K_\omega^\circ$ and $\mathcal{E}_{\mathrm{HN},np,\omega}^{(p)}\coloneq \mathcal{E}_{\mathrm{HN},np}^{(p)}\otimes_{R_a}K_\omega^\circ$. Write $\metr_{\mathcal{E}_{np},\omega}$ and $\metr_{\mathcal{E}_{\mathrm{HN},np}^{(p)},\omega}$ for the norms induced by $\mathcal{E}_{np,\omega}$ and $\mathcal{E}_{\mathrm{HN},np,\omega}^{(p)}$, respectively. By construction, $\mathcal{E}_{\mathrm{HN},np}^{(p)}$ is saturated in $\mathcal{E}_{np}$, so the quotient $\mathcal{E}_{np}/\mathcal{E}_{\mathrm{HN},np}^{(p)}$ is torsion-free over $R_a$; consequently,
\[\mathcal{E}_{np,\omega}/\mathcal{E}_{\mathrm{HN},np,\omega}^{(p)}\simeq \left(\mathcal{E}_{np}/\mathcal{E}_{\mathrm{HN},np}^{(p)}\right)\otimes_{R_a}K_\omega^\circ\]
is also torsion-free. This implies that $\mathcal{E}_{\mathrm{HN},np,\omega}^{(p)}$ is saturated in $\mathcal{E}_{np,\omega}$, and therefore $\metr_{\mathcal{E}_{\mathrm{HN},np}^{(p)},\omega}$ is simply the restriction of $\metr_{\mathcal{E}_{np},\omega}$ to $E^{(p)}_{\mathrm{HN},np,\omega}$. Moreover, since $\mathcal{X}_a$ is smooth over $D(\Omega_a)$, \cite[Lemma~6.3~(iii)]{boucksom2021spaces} implies that $\metr_{\mathcal{E}_{np},\omega}$ coincides with the super-norm $\metr_{\sup,\omega}$, and hence so does $\metr_{\mathcal{E}_{\mathrm{HN},np}^{(p)},\omega}$.
\end{art}

\begin{theorem}\label{thm:fujita approximation}
	Let $X$ be a projective variety over $K$ of dimension $d=\dim(X)$, and let $\overline{L}\in \widehat{\Pic}_{S}(X)_\mo$. If $\overline{L}$ is arithmetically big, then for every $\varepsilon>0$ there exist a birational morphism $\varphi\colon X'\to X$ and a positive integer $p$, together with a decomposition 
	\[\varphi^*(\overline{L}^{\otimes p}) =\overline{A}\otimes\overline{M}\]
	satisfying:
	\begin{enumeratea}
		\item $\overline{A} \in \widehat{\Pic}_{S}(X')_\mo$ is $S$-ample and $\overline{M}\in \widehat{\Pic}_{S}(X')_\mo$ is effective;
		\item $p^{-(d+1)}\widehat{\vol}(\overline{A})\geq \widehat{\vol}(\overline{L})-\varepsilon.$
	\end{enumeratea}
\end{theorem}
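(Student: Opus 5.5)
We follow Liu's strategy \cite{liu2024arithmetic}, using the setup of \cref{algebra for line bundles}. Fix $\varepsilon>0$. The first step is to show that the Harder--Narasimhan filtered algebra computes the arithmetic volume. Concretely, since $\widehat{\deg}_+(\overline{E_n})$ is, up to $o(n^{d+1})$, the integral of the positive part of the HN slopes, we have
\[
\widehat{\vol}(\overline{L})=\lim_{n\to\infty}\frac{(d+1)!}{n^{d+1}}\sum_{\lambda>0}\dim_K\mathcal{H}^\lambda(E_n)\,d\lambda ,
\]
and the spectral norm agrees with the HN norm up to $o(n)$ by the sub-multiplicativity argument of \cite[Prop.~4.2]{liu2024arithmetic}. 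This reduces the volume to a concave transform (Okounkov body) over the trivially valued field $S_0$. Then, by the Lazarsfeld--Musta\c{t}\u{a}/Boucksom--Chen approximation for finitely generated subalgebras, for $p$ large the subalgebra $E^{(p)}$ generated by $E^0_{\mathrm{HN},p}$ recovers all but $\varepsilon$ of the volume:
\[
\lim_n \frac{(d+1)!}{(np)^{d+1}}\,\widehat{\deg}_+\bigl(E^{(p)}_{\mathrm{HN},np},\metr_{\sup}\bigr)\ \ge\ \widehat{\vol}(\overline{L})-\varepsilon .
\]
Characteristic $0$ (so $K$ is perfect) enters exactly here, as in Liu.

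The second step is geometric. Let $\varphi\colon X'\to X$ be the normalized blow-up of the base ideal of $E^{0}_{\mathrm{HN},p}\subset H^0(X,L^{\otimes p})$, so that $\varphi^*L^{\otimes p}=A\otimes \mathcal{O}(F)$, where $A$ is generated by these sections and $F$ is effective. Since $E^{(p)}$ is finitely generated and $\varphi$ is birational, $A$ is big and semiample. Replacing $X'$ by a further blow-up and perturbing $A$ by a small multiple of $-F$ (Fujita's trick), we may take $A$ ample at the cost of $\varepsilon$ more volume.

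To metrize, work on $\mathcal{X}_a$ over $D(\Omega_a)$. Take the saturated lattice $\mathcal{E}^{(p)}_{\mathrm{HN},p}$, blow up $\mathcal{X}$ along the ideal it generates to obtain a $T$-model $\mathcal{X}'$, and let $\mathcal{A}$ be the induced line bundle. For $\omega\notin\Omega_a$, saturation together with the identification with the super-norm in \cref{algebra for line bundles} shows that the model metric of $\mathcal{A}$ is the quotient metric. For the finitely many $\omega\in\Omega_a$, define the metric as the quotient (Fubini--Study) metric coming from $\metr_{\sup}$ on $E^0_{\mathrm{HN},p}$, twisted by constants. Then $\overline{M}=\varphi^*\overline{L}^{\otimes p}\otimes\overline{A}^{-1}$ has the canonical section $s_F$ with $\|s_F\|\le 1$ everywhere, so $\overline{M}$ is effective.

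The main obstacle is $S$-ampleness of $\overline{A}$ with a uniform positivity constant $\epsilon$ over all $Z$, i.e. the Zhang-type condition in \cref{S-ample}, given that $\overline{A}\in R_S$. The vertical part follows since the generating sections have sup-norm $\le1$ and are base-point free, so the reductions are nef and $\overline{A}$ is relatively nef. For strict positivity, first twist by $\mathcal{O}(\epsilon)$, i.e. use a metric scaled by $e^{-\epsilon}$ at one place $v$, using that $E^0_{\mathrm{HN},p}$ is generated by sections with spectral norm $\le e^{0}$; replacing $0$ by a small $\lambda>0$ costs only $O(\lambda)$ in volume. Then apply the height lower bound $h_{\overline{A}}(Z)\ge\lambda$ coming from these small sections, as in \cite[Prop.~9.1.2]{chen2022hilbert}, \cite{luo2025a}. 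The volume comparison $p^{-(d+1)}\widehat{\vol}(\overline{A})\ge\widehat{\vol}(\overline{L})-\varepsilon$ then follows from $H^0(X',A^{n})\supset E^{(p)}_{np}$ with norms no larger than the sup-norms from the first step.
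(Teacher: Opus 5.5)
Your first half (HN and spectral filtrations, the finitely generated subalgebra $E^{(p)}$, the saturated lattice over $D(\Omega_a)$, and model metrics agreeing with quotient metrics off $\Omega_a$) is what the paper imports from Liu in its Step~1. The genuine gap is the passage to an \emph{$S$-ample} $\overline{A}$ whose metrized remainder is \emph{effective}.

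\textbf{The perturbation step.} Perturbing $A$ by a small multiple of $-F$ does not make it ample in general. If the sections in $E^{0}_{\mathrm{HN},p}$ have no base locus, then $F=0$, $\varphi$ is the identity, and $A=L^{\otimes p}$ need not be ample. The correct geometric device is Kodaira's lemma $A\sim_{\Q}H+N$. At the metrized level, however, this is exactly the non-formal input. You need an $S$-ample $\overline{H}$ with $\overline{A}^{\otimes q}\otimes\overline{H}^{\otimes -1}$ effective at \emph{every} $\omega\in\Omega$, i.e.\ that arithmetically big implies effective (\cref{big implies effective}). Over a base of dimension $\geq 2$ the paper proves this by passing to the generic curve.

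\textbf{How the paper gets $S$-ampleness.} Its Step~3 applies the nef version to $\overline{L}^{\otimes m}\otimes\overline{B}^{\otimes -1}$ for an $S$-ample $\overline{B}$, which is still big with almost the same normalized volume for $m\gg0$. This gives $\varphi^*(\overline{L}^{\otimes mp}\otimes\overline{B}^{\otimes -p})=\overline{G}\otimes\overline{M}$ with $\overline{G}$ $S$-nef. It then sets $\overline{A}=\overline{G}^{\otimes q}\otimes\overline{B'}$ with $\overline{B'}$ $S$-ample on $X'$. The remainder $\overline{M}^{\otimes q}\otimes(\varphi^*\overline{B}^{\otimes pq}\otimes\overline{B'}^{\otimes -1})$ is effective because its second factor is big. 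Ampleness of $A$ and the uniform inequality of \cref{S-ample} then follow from ``$S$-nef $\otimes$ $S$-ample is $S$-ample'', so no height lower bound is needed.

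\textbf{Your strict-positivity device.} It fails as stated. Scaling the metric of $\overline{A}$ by $e^{-\epsilon}$ at $v$ multiplies $\|s_F\|_{M,v}$ by $e^{\epsilon}$, which destroys the effectivity of $\overline{M}$ at $v$. Switching from $E^{0}_{\mathrm{HN},p}$ to $E^{\lambda}_{\mathrm{HN},p}$ does not create local room, because the spectral norm lives over the trivially valued $S_0$ and controls slopes, not sup-norms at $v$. Even if $\lambda>0$ gives positive point heights, the inequality in \cref{S-ample} presupposes that $A$ is ample. So everything rests on the missing Kodaira-type step.

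\textbf{Model metrics at $\Omega_a$.} You never show $\overline{A}\in\widehat{\Pic}_S(X')_\mo$. At $\omega\in\Omega_a$ you prescribe quotient metrics twisted by constants, but the lattice $\mathcal{E}^{(p)}_{\mathrm{HN},p}$ only lives over $D(\Omega_a)$. Blowing up an extension of its ideal to $\mathcal{X}$ yields model metrics at $\Omega_a$ unrelated to the ones you prescribe. The paper's Step~2 repairs this in three moves:
\begin{itemize}
\item it approximates the metrics at $\omega\in\Omega_a$ by model metrics of $K^\circ_\omega$-models, up to a small constant;
\item it glues these with $\mathcal{X}'_a$ by a birational join over $\mathcal{X}$;
\item it absorbs the error into the effective twist $\OO_a(c)$, paid for in advance by choosing $m$ with $\overline{L}^{\otimes m}\otimes\OO_a(-c)$ still big.
\end{itemize}

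\textbf{The final volume comparison.} Your last inequality goes the wrong way for tensor powers of the degree-$p$ quotient metric. Since $\|s_F\|_M\leq 1$, you only get $\|s\|_{\sup,L^{\otimes np}}\leq\|s\|_{\sup,A^{\otimes n}}$. The needed bound holds for the quotient metric that $E^{(p)}_{\mathrm{HN},np}$ induces on $A^{\otimes n}$. This is why Liu's construction, and the paper's Step~1, use the metrics $\alpha_n$ with a suitable $n$.
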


\begin{proof}
	Let $\mathcal{L}$ be a line bundle on a model $\pi\colon\mathcal{X}\to T$ of $X$ inducing $\overline{L}$, and keep the notation of \cref{algebra for line bundles}. Furthermore, for a $D(\Omega_a)$-scheme $\mathcal{X}_a'$ whose generic fiber is $X$, a line bundle $\mathcal{M}_a\in \Pic(\mathcal{X}_a')$, and metrics $\metr_{M,v}$ on the pullback $M_v$ of $M\coloneq \mathcal{M}_a|_{X}$ to $X_v$ for $v\in \Omega_a$, we view the pair $\overline{\mathcal{M}_a}\coloneq (\mathcal{M}_a,(\metr_{M,v})_{v\in\Omega_a})$ as an element of $\widehat{\Pic}_S(X)$ by taking the corresponding model metrics induced by $\mathcal{M}_a$. We denote the set of such metrized line bundles by $\widehat{\Pic}_S(\mathcal{X}_a')$ for simplicity. Note that $\widehat{\Pic}_{S}(\mathcal{X}_a')\otimes_\Z\Q\subset \widehat{\Pic}_{S,\Q}(X)_\cpt^\YZ$ (see \cref{def:boundarytopologyglobal}). In what follows we set $\mathcal{X}_a\coloneq \mathcal{X}\times_TD(\Omega_a)$ and $\mathcal{L}_a\coloneq \mathcal{L}|_{\mathcal{X}_a}$, and we prove \cref{thm:fujita approximation} in three steps.
	
	\vspace{2mm} \noindent
	\textbf{Step 1.} {\it For any $\varepsilon>0$, there exists a birational morphism $\varphi_a\colon \mathcal{X}_a'\to \mathcal{X}_a$, an integer $p\in \N_{\geq 1}$, and a decomposition $\varphi_a^*\overline{L}^{\otimes p}\simeq \overline{\mathcal{G}_a}\otimes\overline{\mathcal{M}_a}$ in $\widehat{\Pic}_S(X)$ such that $\overline{\mathcal{G}_a}\in \widehat{\Pic}_S(\mathcal{X}_a')$ is $S$-nef, $\overline{\mathcal{M}_a}\in \widehat{\Pic}_S(\mathcal{X}_a')$ is effective, and
		\[p^{-(d+1)}\widehat{\vol}(\overline{\mathcal{G}_a})\geq \widehat{\vol}(\overline{L})-\varepsilon.\]}
	
	\vspace{2mm} \noindent
	We keep the notation of the proof of \cite[Theorem~5.1]{liu2024arithmetic}. Step~1 follows directly from that proof together with the following observations.
	\begin{enumerate}
		\item For any $p\in\N_{\geq 1}$ such that $E_{\mathrm{HN},p}^0\not=0$ (see \cite[Proposition~4.2]{liu2024arithmetic}), the generic fiber of 
		\[\phi_p\colon \mathcal{X}_{a,p}\coloneq\mathrm{Proj}\left(\Im\left(\bigoplus_{n\geq 0}\pi^*\mathcal{E}_{\mathrm{HN},np}^{(p)}\to\bigoplus_{n\geq 0}\mathcal{L}_a^{\otimes np}\right)\right) \longrightarrow \mathcal{X}_a\]
		over $D(\Omega_a)=\Spec(R_a)$ coincides with the blow-up $X_p$ of $X$ constructed in the proof of \cite[Theorem~5.1]{liu2024arithmetic}. Indeed, $E_{\mathrm{HN},np}^{(p)}=\mathcal{E}_{\mathrm{HN},np}^{(p)}\otimes_{R_a}K$ for all $n\geq 0$, see \cref{algebra for line bundles}.
		
		\item\label{proof:fact 2} For any $\omega\in \Omega\setminus\Omega_a$, the norm on ${E}_{\mathrm{HN},np,\omega}^{(p)}$ induced by $\mathcal{E}_{\mathrm{HN},np}^{(p)}$ coincides with the super-norm $\metr_{\sup,\omega}$, as explained in \cref{algebra for line bundles}.
		
		\item Let $\mathcal{G}_{a,p}\coloneq \OO_{\mathcal{X}_{a,p}}(1)$, and $\mathcal{M}_{a,p}$ the line bundle on $\mathcal{X}_{a,p}$ corresponding to the exceptional divisor. Then $\phi_p^*\mathcal{L}_a^{\otimes p}=\mathcal{G}_{a,p}\otimes\mathcal{M}_{a,p}$. Consider the surjective homomorphism
		\begin{align}\label{eq:quotient norm}
			\phi_p^*\pi^*\mathcal{E}_{\mathrm{HN},np}^{(p)} \longrightarrow \mathcal{G}_{a,p}^{\otimes n}.
		\end{align}  
		By \ref{proof:fact 2} and \cite[Proposition~2.3.12~(3)]{chen2020arakelov}, for any $\omega\in\Omega\setminus\Omega_a$ the quotient metric on $G_{a,p}^{\otimes n}=\mathcal{G}_{a,p}^{\otimes n}|_{X}$ induced by $\metr_{\sup,\omega}$ on $\mathcal{E}_{\mathrm{HN},np}^{(p)}$ coincides with the model metric induced by $\mathcal{G}_{a,p}^{\otimes n}$. For $\omega\in\Omega_a$, as in the proof of \cite[Theorem~5.1]{liu2024arithmetic}, we endow $G_{a,p}$ with quotient norm $\metr_{n,\omega}$ induced by \eqref{eq:quotient norm} by restriction on generic fiber. Hence $\overline{\mathcal{G}_{a,p}}=(\mathcal{G}_{a,p},(\metr_{n,\omega})_{\omega\in\Omega_a})\in \widehat{\Pic}_{S}(X_{p})$ coincides with the adelic line bundle $(G_p,\alpha_n)$ appearing in the proof of \cite[Theorem~5.1]{liu2024arithmetic}. We define $\overline{\mathcal{M}_{a,p}}\coloneq \phi_p^*\overline{L}^{\otimes p}\otimes \overline{\mathcal{G}_{a,p}}^{\otimes-1}$.
	\end{enumerate}
	As shown in \cite[Theorem~5.1]{liu2024arithmetic}, there exist $p\in\N_{\geq 1}$ and $n\in\N_{\geq1}$ such that $\overline{\mathcal{G}_{a,p}}$ and $\overline{\mathcal{M}_{a,p}}$ satisfy the properties required in Step~1.
	
		
	
	\vspace{2mm} \noindent
	\textbf{Step 2.} {\it Reduction to the case where $\overline{A}$ is $S$-nef. More precisely, for any $\varepsilon>0$ there exist a birational morphism $\varphi\colon X'\to X$, a positive integer $p$, and a decomposition $\varphi^*(\overline{L}^{\otimes p}) =\overline{A}\otimes\overline{M}$ with $\overline{A}\in\widehat{\Pic}_{S}(X')_\mo$ being $S$-nef and $\overline{M}\in\widehat{\Pic}_{S}(X')_\mo$ effective, such that
		\[p^{-(d+1)}\widehat{\vol}(\overline{A})\geq \widehat{\vol}(\overline{L})-\varepsilon.\]}
	
	\vspace{2mm} \noindent
	For any $c\in\R$, we denote by $\OO_a(c)\in \widehat{\Pic}_S(X)$ the trivial line bundle $\OO_X$ equipped with the metric $\metr_a$ defined by
	\[\|1\|_{a,\omega}=\begin{cases}
		1& \text{ if $\omega\in\Omega\setminus\Omega_a$,}\\[2pt]
		e^{-c}& \text{ if $\omega\in\Omega_a$.}
	\end{cases}\]
	Recall that $\Omega_a$ is a finite set, since it is determined by an ample divisor on $T$.
	
	Let $\varepsilon>0$. Since $\overline{L}$ is big, the continuity of volumes proved in \cite[Theorem~6.4.24]{chen2020arakelov} yields an integer $m\in\N_{\geq 1}$ such that 
	\[\overline{L_{m,\varepsilon}}\coloneq \overline{L}^{\otimes m}\otimes \OO_a\left(-\frac{\varepsilon}{2(d+1)\vol(L)\cdot\#\Omega_a}\right)\]
	is big. Here $\vol(L)$ denotes the geometric volume of the underlying line bundle $L$ on $X$. Let $\varphi_a\colon \mathcal{X}_a'\to \mathcal{X}_a$, $p\in\N_{\geq 1}$, and $\overline{\mathcal{G}_a} = (\mathcal{G}_a,(\metr_{G,\omega})_{\omega\in\Omega})$, $\overline{\mathcal{M}_a}\in \widehat{\Pic}_S(\mathcal{X}_a')$ be the data produced by Step~1 applied to $\overline{L_{m,\varepsilon}}$ and $\varepsilon/2$, so that
	\[p^{-(d+1)}\widehat{\vol}(\overline{\mathcal{G}_a})\geq \widehat{\vol}(\overline{L_{m,\varepsilon}})-\frac{\varepsilon}{2}.\]
	
	For each $\omega\in \Omega_a$, choose a model $(\mathcal{X}_\omega', \mathcal{G}_\omega')$ over $K_\omega^\circ$ of $(X, G_{a})$ such that
	\begin{align}\label{eq:compare metrics of arithnef}
		0\leq \log\frac{\metr_{G,\omega}}{\metr_{G,\omega}'}\leq \frac{\varepsilon p}{2(d+1)\vol(L)\cdot\#\Omega_a},
	\end{align}
	where $\metr_{G,\omega}'$ is the metric induced by $\mathcal{G}_\omega'$. Because $\Omega_a$ is finite, by taking a suitable birational join of $(\mathcal{X}_\omega')_{\omega\in\Omega_a}$ and $\mathcal{X}_a'$ over $\mathcal{X}$, we obtain a birational morphism $\psi\colon\mathcal{X}'\to \mathcal{X}$ of $T$-models of $X$ and a line bundle $\mathcal{G}\in \Pic(\mathcal{X}')$ modelling $G_{a}|_{X}$ such that 
	\[\metr_{G,\omega}'' =\begin{cases}
		\metr_{G,\omega} & \text{ if $\omega\in\Omega\setminus\Omega_a$,}\\[2pt]
		\metr_{G,\omega}'& \text{ if $\omega\in\Omega_a$,}
	\end{cases}\]
	where $\metr_{G}''=(\metr_{G,\omega}'')_{\omega\in\Omega}$ denotes the family of metrics induced by $\mathcal{G}$. We may assume that $\mathcal{X}'|_{D(\Omega_a)}=\mathcal{X}_a'$. Let $\varphi\colon X'\to X$ be the morphism induced by $\psi$ on generic fibers.
	
	Since $\overline{\mathcal{G}_{a}}$ is $S$-nef and $-\log\metr_{G,\omega}\leq -\log\metr_{G,\omega}''$ for any $\omega\in\Omega$, we have that $\overline{G''}\coloneq(G,\metr_G'')$ is $S$-nef (this can be deduced by \cite[Proposition~9.1.7]{chen2022hilbert}). Moreover, the choice of $m$ gives
	\[\widehat{\vol}(\overline{L_{m,\varepsilon}})\geq \widehat{\vol}(\overline{L}^{\otimes m})-(d+1)\vol(L)\cdot \frac{\varepsilon}{2(d+1)\vol(L)\cdot\#\Omega_a}\cdot \#\Omega_a = m^{d+1}\widehat{\vol}(\overline{L})-\frac{\varepsilon}{2}.\]
	Consequently,
	\begin{align*}
		(mp)^{-(d+1)}\widehat{\vol}(\overline{G''})\geq &(mp)^{-(d+1)}\widehat{\vol}(\overline{\mathcal{G}_a})\\
		\geq& m^{-(d+1)}\left(\widehat{\vol}(\overline{L_{m,\varepsilon}})-\frac{\varepsilon}{2}\right)\\
		\geq& \widehat{\vol}(\overline{L})-\frac{\varepsilon}{2m^{d+1}}-\frac{\varepsilon}{2m^{d+1}}\\
		\geq& \widehat{\vol}(\overline{L})-\varepsilon.
	\end{align*}
	Let $\overline{M}\coloneq \varphi^*\overline{L}^{\otimes mp}\otimes\overline{G''}^{\otimes-1}$. Using \eqref{eq:compare metrics of arithnef}, we may rewrite
	\begin{align*}
		\varphi^*\overline{L}^{\otimes mp}\otimes\overline{G''}^{\otimes-1}
		=&\;\varphi^*\overline{L_{m,\varepsilon}}^{\otimes p}\otimes\overline{\mathcal{G}_{a}}^{\otimes -1}\otimes \left(\overline{\mathcal{G}_{a}}\otimes \overline{G''}^{\otimes-1}\otimes \OO_a\left(\frac{\varepsilon p}{2(d+1)\vol(L)\cdot\#\Omega_a}\right)\right)\\
		=&\;\overline{\mathcal{M}_{a}}\otimes\left(\overline{\mathcal{G}_{a}}\otimes \overline{G''}^{\otimes-1}\otimes \OO_a\left(\frac{\varepsilon p}{2(d+1)\vol(L)\cdot\#\Omega_a}\right)\right).
	\end{align*}
	We claim that the second factor is effective.  Indeed, for $\omega\in\Omega_a$ we have
	\[
	-\log\|1\|_{\omega}
	= -\log\|1\|_{{G},\omega} + \log\|1\|_{G,\omega}'' + \frac{\varepsilon p}{2(d+1)\vol(L)\cdot\#\Omega_a},
	\]
	where $\|1\|_{\omega}$ denotes the metric of the second factor at $\omega$. By \eqref{eq:compare metrics of arithnef},
	\[
	0 \leq \log\frac{\|1\|_{G,\omega}}{\|1\|_{G,\omega}'} 
	= \log\|1\|_{G,\omega} - \log\|1\|_{G,\omega}'' 
	\leq \frac{\varepsilon p}{2(d+1)\vol(L)\cdot\#\Omega_a},
	\]
	whence
	\[
	-\log\|1\|_{\omega}
	= \left( -\log\|1\|_{{G},\omega} + \log\|1\|_{G,\omega}'' \right) 
	+ \frac{\varepsilon p}{2(d+1)\vol(L)\cdot\#\Omega_a}
	\geq 0.
	\]
For $\omega\notin\Omega_a$, we have $\|1\|_{G,\omega}=\|1\|_{G,\omega}''$ and $\|1\|_{a,\omega}=1$ by construction, so $-\log\|1\|_{\omega}=0$. Thus $-\log\|1\|_{\omega}\geq 0$ for all $\omega\in\Omega$, which means exactly that the second factor is effective. Since $\overline{\mathcal{M}_a}$ is effective by Step~1, it follows that $\overline{M}$ is effective. This completes Step~2.
	
	\vspace{2mm} \noindent
	\textbf{Step 3.} {\it \cref{thm:fujita approximation} holds.}
	
	\vspace{2mm} \noindent
	We follow the idea of the proof of \cite[Corollary~5.2]{liu2024arithmetic}. By \cref{S-ample}, there exists an $S$-ample line bundle $\overline{B}\in \widehat{\Pic}_S(X)_\mo$, which is big by \cite[Proposition~9.2.2]{chen2022hilbert}. Let $\varepsilon\in\R_{>0}$. By \cite[Theorem~6.4.24]{chen2020arakelov}, there is an integer $m\in\N_{\geq 2}$ such that $\overline{L}^{\otimes m}\otimes \overline{B}^{\otimes-1}$ is big and
	\[m^{-(d+1)}\widehat{\vol}\left(\overline{L}^{\otimes m}\otimes \overline{B}^{\otimes -1}\right)\geq \widehat{\vol}(\overline{L})-\frac{\varepsilon}{2}.\] 
	Applying Step~2 to $\overline{L}^{\otimes m}\otimes \overline{B}^{\otimes -1}$ and $\varepsilon/2$, we obtain a birational morphism $\varphi\colon X'\to X$, an integer $p\in\N_{\geq 1}$, and a decomposition 
	\[\varphi^*\left(\overline{L}^{\otimes mp}\otimes \overline{B}^{\otimes -p}\right)=\overline{G}\otimes \overline{M}\]
	with $\overline{G}\in\widehat{\Pic}_{S}(X')_\mo$ being $S$-nef and $\overline{M}\in\widehat{\Pic}_{S}(X')_\mo$ effective, such that
	\[p^{-(d+1)}\widehat{\vol}(\overline G)\geq \widehat{\vol}\left(\overline{L}^{\otimes m}\otimes \overline{B}^{\otimes -1}\right)-\frac{\varepsilon}{2}.\]
	Let $\overline{B'}\in\widehat{\Pic}_S(X')_\mo$ be an $S$-ample line bundle; such a line bundle exists by \cref{S-ample}. Since $\varphi^*\overline{B}$ is big, there exists $q\in \N_{\geq 1}$ such that $\varphi^*\overline{B}^{\otimes pq}\otimes\overline{B'}^{\otimes -1}$ is big. By the effectivity of big model metrics (see \cref{big implies effective}), this line bundle is effective. We then have the decomposition
	\[\varphi^*\overline{L}^{\otimes mpq} = (\overline{G}^{\otimes q}\otimes \overline{B'})\otimes \left(\overline{M}^{\otimes q}\otimes \varphi^*\overline{B}^{\otimes pq}\otimes \overline{B'}^{\otimes -1}\right).\]
	Here $\overline{A}\coloneq \overline{G}^{\otimes q}\otimes \overline{B'}$ is $S$-ample (as the tensor product of an $S$-nef and an $S$-ample line bundle), the second factor is effective, and 
	\begin{align*}
		(mpq)^{-(d+1)}\widehat{\vol}(\overline{A})\geq &(mpq)^{-(d+1)}\widehat{\vol}(\overline{G}^{\otimes q})\\
		\geq& m^{-(d+1)}\left(\widehat{\vol}(\overline{L}^{\otimes m}\otimes \overline{B}^{\otimes-1})-\frac{\varepsilon}{2}\right)\\
		\geq&\widehat{\vol}(\overline{L})-\frac{\varepsilon}{2}-\frac{\varepsilon}{2m^{d+1}}\\
		\geq&\widehat{\vol}(\overline{L})-\varepsilon.
	\end{align*}
	This proves Step~3, and hence finishes the proof of \cref{thm:fujita approximation}.
\end{proof}

\subsection{Positive intersection product}

If $\dim(T)\geq 2$, let $T_{\mathbf{c}}$ and $S_{\mathbf{c}}=(K_{\mathbf{c}}, \Omega_{\mathbf{c}},\mathcal{A}_{\mathbf{c}},\nu_{\mathbf{c}})$ be as in \cref{generic curves}. When $\dim T=1$, for consistency, the notation $T_{\mathbf c}$ and $S_{\mathbf c}$ mean $T$ and $S$, respectively. Let $\alpha=(\alpha^\#,\alpha_\#,1)\colon S_{\mathbf{c}}\to S$ be the covering defined in \cref{prop:B is covered by a curve as adelic curves}. As before, for an algebraic variety $Y$ over $K$, we denote by $Y_{\mathbf c}$ its base change via $K_{\mathbf c}/K$. Similarly for metrized line bundles.

In the rest of this section, we fix a normal quasi-projective variety $U$.

\begin{lemma}\label{comparison of arithmetic volume lemma}
	Let $\overline{L}\in\widehat{\Pic}_{S,\Q}(U)_\cpt^\YZ$. Then 
	\[\widehat{\vol}(\overline{L_{\mathbf c}})\geq \widehat{\vol}(\overline{L}).\]
\end{lemma}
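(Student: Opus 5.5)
We compare the two volumes level by level: for each $m$ we build an adelic vector bundle over $S_{\mathbf c}$ that contains the base change of $\overline{V_{\overline{L},m}}$, bound its positive degree from below, and then pass to the limit. When $\dim(T)=1$ there is nothing to prove, so assume $\dim(T)\geq 2$.

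\textbf{Step 1: base change of sections and norms.} For each $m$, base change gives an injective $K_{\mathbf c}$-linear map
\[
H^0_+(U,\overline{L}^{\otimes m})\otimes_K K_{\mathbf c}\hookrightarrow H^0_+(U_{\mathbf c},\overline{L_{\mathbf c}}^{\otimes m}).
\]
Rational sections and the effectivity of divisors are preserved under the flat base change $U_{\mathbf c}\to U$.

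Next we compare sup-norms. Take $\omega'\in\Omega_{\mathbf c}$ and set $\omega=\alpha_\#(\omega')$. The covering has $\gamma_\alpha\equiv1$, so $\val_{\omega'}$ extends $\val_\omega$. Hence $U^\an_{\mathbf c,\omega'}\to U^\an_\omega$ is surjective and pulls back the metric, which gives $\|s\|_{\sup,\omega'}=\|s\|_{\sup,\omega}$ for $s\in H^0_+(U,\overline{L}^{\otimes m})$. The norm $\metr_{\sup,\omega'}$ restricted to $V_m\otimes_K K_{\mathbf c}$ therefore dominates the scalar extension $\metr_{\sup,\omega}\otimes K_{\mathbf c,\omega'}$ in the ultrametric sense. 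I would state this as: the $\omega'$-sup norm is at most the $\varepsilon$-extension of the $\omega$-sup norm. This is the standard fact that the sup-norm on a base change equals the $\varepsilon$-tensor norm, and it holds because $U^\an_{\mathbf c,\omega'}$ surjects onto $U^\an_\omega$ and fibers carry compatible norms.

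\textbf{Step 2: degree comparison.} By \cite[Theorem~4.3.6]{chen2021arithmetic}, or directly from the covering property and \cref{rmk:strong between discrete sigma}, the Arakelov degree is invariant under pull-back along a covering of adelic curves. Concretely,
\[
\widehat{\deg}(\overline{W}_{S_{\mathbf c}})=\widehat{\deg}(\overline{W})
\]
for any adelic vector bundle $\overline{W}$ over $S$ whose norms are extended by $\varepsilon$-extension, using $\sum_{\omega'\mapsto\omega}\nu_{\mathbf c}(\omega')=\nu(\omega)$. Combining this with Step 1 and the monotonicity of degree under larger norms on subspaces, every subspace $\overline{W}\subset\overline{V_{\overline{L},m}}$ gives $\widehat{\deg}(\overline{W}_{\mathbf c})\geq\widehat{\deg}(\overline{W})$. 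Here $\overline{W}_{\mathbf c}$ carries the norms induced from $\overline{V_{\overline{L_{\mathbf c}},m}}$, and the inequality holds because $-\log$ of those norms is at least the extended ones. Taking the supremum over $\overline{W}$ yields
\[
\widehat{\deg}_+(\overline{V_{\overline{L_{\mathbf c}},m}})\geq\widehat{\deg}_+(\overline{V_{\overline{L},m}}).
\]
Dividing by $m^{d+1}/(d+1)!$ and taking the limit, which exists by \cite[Theorem~4.3.2]{biswas2024concave}, gives $\widehat{\vol}(\overline{L_{\mathbf c}})\geq\widehat{\vol}(\overline{L})$.

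\textbf{Main obstacle.} The delicate point is the degree invariance in Step 2 for determinant norms under the covering $S_{\mathbf c}\to S$, combined with the comparison of sup-norms over non-complete or infinitely ramified bases. If a direct argument is awkward, I would first prove the statement for $\overline{L}\in P_{S,\Q}(U)_\mo$ on projective models. In that case the norms are lattice norms and the determinant computation is a flat base change of lattices, as in the proof of \cref{big implies effective}. I would then conclude for general $\overline{L}$ using the continuity \eqref{eq:convergence of arithmetic volume} on both sides, noting that pull-back preserves convergence in the boundary topology by \cref{prop:intersection number after base changes}.
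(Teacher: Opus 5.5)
Your argument is correct and is essentially the paper's proof. The paper first reduces to model metrics on projective models via the continuity of arithmetic volume, which is your fallback. It then uses that the $\omega'$-sup-norm on $W\otimes_K K_{\mathbf c}$ extends the $\omega$-sup-norm on $W$ to get $\|s_1\wedge\cdots\wedge s_N\|_{\mathbf c,\sup,\omega',\det}\leq\|s_1\wedge\cdots\wedge s_N\|_{\sup,\omega,\det}$, and sums against $\nu_{\mathbf c}$ over the fibres of the covering, which is your Step~2. Only the inequality (the base-changed sup-norm is at most the scalar extension of the original one) is needed, so you can drop your claim that the two norms are equal.
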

\begin{proof}
	By \eqref{eq:convergence of arithmetic volume}, it suffices to consider the case where $U=X$ is projective over $K$ and $\overline{L}\in \widehat{\Pic}_{S,\Q}(X)_\mo$. In this case, $H^0(X_{\mathbf{c}},L_{\mathbf{c}})=H^0(X,L)\otimes_K K_{\mathbf{c}}$, and for any $\omega\in\Omega$, $\omega'\in \Omega_{\mathbf c}$ over $\omega$, the super-norm $\metr_{\mathbf c,\sup,\omega'}$ on $H^0(X_{\mathbf{c}},L_{\mathbf{c}})$ at $\omega'$ extends the super-norm $\metr_{\sup,\omega}$ on $H^0(X,L)$. Therefore, by \cite[Proposition~1.1.66]{chen2020arakelov}, for any subspace $W\subset  H^0(X,L)$ with a basis $s_1,\dots, s_N$, one has
	\[\|s_1\wedge\cdots\wedge s_N\|_{\sup,\omega,\det}\geq \|s_1\wedge\cdots\wedge s_N\|_{\mathbf c,\sup,\omega',\det},\]
	so that $\widehat{\deg}((W,\metr_{\sup}))\leq \widehat{\deg}((W\otimes K_{\mathbf c},\metr_{\mathbf c, \sup}))$. Taking suprema over all subspaces $W$ and passing to the limit, we obtain $\widehat{\vol}(\overline{L}) \leq \widehat{\vol}(\overline{L_{\mathbf c}})$.
\end{proof}

\begin{definition}\label{positive intersection def}
	Let $\overline{L}\in \widehat{\Pic}_{S,\Q}(U)_\cpt^\YZ$ and $\overline{M}\in \widehat{\Pic}_{S,\Q}(U)_{\nef}^\YZ$. We define the \emph{positive intersection product} of $\overline{L}$ with $\overline{M}$ as 
	\[\langle\overline{L}^d\rangle\cdot \overline{M}\coloneq \sup\limits_{(\overline{A},U',\pi)}(\overline{A}^{d}\cdot \pi^*\overline{M}\mid U')_S,\]
	where $(\overline{A},U',\pi)$ runs over all triples such that $\pi\colon U'\to U$ is a birational morphism of normal quasi-projective varieties, and $\overline{A}\in P_{S,\Q}(U')_\mo$ is a model, $S$-nef $\Q$-line bundle on some projective model $X'$ of $U'$ such that $\overline{A}\leq\pi^*\overline{L}$ (such a triple $(\overline{A},U',\pi)$ is called an \emph{admissible approximation} of $\overline{L}$).  
\end{definition}

\begin{remark}
	If $\overline{L}$ has an admissible approximation, then $\overline{L}$ is big. Conversely, by the continuity of arithmetic volume \eqref{eq:convergence of arithmetic volume} and \cref{thm:fujita approximation}, if $\overline{L}$ is big, then it has an admissible approximation. In the degenerate case where $\overline{L}$ is not big (we will not discuss this case), i.e.~$\overline{L}$ has no admissible approximation, we set $\langle\overline{L}^d\rangle\cdot \overline{M}=0$. 
\end{remark}

\begin{remark}
	By definition, it is easy to see that $\langle\overline{L}^d\rangle\cdot \overline{M}$ is stable under birational pull-backs. 
\end{remark}

\begin{remark}\label{rmk:basic properties of pip}
	By definition, it is easy to show that for any $\lambda\in\Q_{>0}$, we have 
	\begin{itemize}
		\item $\langle(\overline{L}^{\otimes \lambda})^d\rangle\cdot \overline{M} =\lambda^d\cdot (\langle\overline{L}^d\rangle\cdot \overline{M})$;
		\item $\langle\overline{L}^d\rangle\cdot (\overline{M}^{\otimes \lambda})=\lambda\cdot (\langle\overline{L}^d\rangle\cdot \overline{M})$.
	\end{itemize} 
	If $\overline{L'}\in\widehat{\Pic}_{S,\Q}(U)_\cpt^\YZ$ with $\overline{L}\leq \overline{L'}$, then $\langle\overline{L}^d\rangle\cdot \overline{M} \leq \langle\overline{L'}^d\rangle\cdot \overline{M}$. 
\end{remark}

\begin{lemma}\label{fujita type condition lemma}
	Assume that $\dim T=1$. Let $\overline{L}\in \widehat{\Pic}_{S,\Q}(U)_\cpt^\YZ$ be a big compactified $S$-metrized line bundle, and $((\overline{A_m},U_m,\pi_m))_{m\in\N_{\geq 1}}$ a sequence of admissible approximations of $\overline{L}$. If $\lim_{m\to\infty}(\overline{A_m}^{d+1}\mid U_m)_S = \widehat{\vol}(\overline{L})$,
	then for $\overline{M}\in \widehat{\Pic}_{S,\Q}(U)_{\nef}^\YZ$, one has
	\[\lim_{m\to\infty}(\overline{A_m}^{d}\cdot \overline{M}\mid U_m)_S = \langle\overline{L}^d\rangle\cdot \overline{M}.\]
\end{lemma}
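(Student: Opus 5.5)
The inequality $\limsup_{m}(\overline{A_m}^{d}\cdot\overline{M}\mid U_m)_S\leq\langle\overline{L}^d\rangle\cdot\overline{M}$ holds by definition, since each $(\overline{A_m},U_m,\pi_m)$ is an admissible approximation. So the work is the reverse inequality: for every admissible approximation $(\overline{B},U',\pi)$ of $\overline{L}$ we must show
\[\liminf_{m\to\infty}(\overline{A_m}^{d}\cdot\overline{M}\mid U_m)_S\geq(\overline{B}^{d}\cdot\pi^*\overline{M}\mid U')_S.\]
The plan is to transfer the problem to a geometric one and then use the Boucksom--Favre--Jonsson argument for differentiability of volumes.

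\emph{Reduction to geometry.} Since $\dim T=1$, by \cref{rmk:relation with yz adelic line bundle} all objects are geometric. Model $S$-metrized line bundles are line bundles on projective $k$-varieties $\mathcal{X}$ of dimension $d+1$ fibred over $T$, and arithmetic intersection numbers are ordinary intersection numbers by \eqref{eq:arithmetic = geometric}. The $S$-nef condition corresponds to nefness on $\mathcal{X}$ (by \cref{S-ample}, up to the usual limiting argument). We pass to a common birational model $U''$ dominating $U_m$ and $U'$, which is harmless because intersection numbers and the positive product are stable under birational pull-back. After this, $\overline{A_m}$ and $\overline{B}$ are nef model divisors on a common projective model, and both are $\leq\overline{L}$ (pulled back).

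\emph{Key step: build a common upper bound.} For each $m$ we construct a nef model divisor $\overline{P_m}$ with $\overline{A_m}\leq\overline{P_m}$, $\overline{B}\leq\overline{P_m}$ and $\overline{P_m}\leq\overline{L}$. The first thing to try is $\overline{P_m}=\max(\overline{A_m},\overline{B})$. Choosing rational sections compatible with the inequalities $\leq\overline{L}$, this means taking the maximum of Green functions of $\Q$-divisors, i.e.\ the minimum of the metrics, which preserves (semi)positivity by \cite[Lemma~10.3]{cai2024abstract}. The underlying divisors of $\overline{A_m}$ and $\overline{B}$ need not coincide, so we may need to blow up to make the maximum a model nef divisor, or approximate it from below by such divisors. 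Then $\overline{P_m}\leq\overline{L}$ gives
\[(\overline{P_m}^{d+1})\leq\widehat{\vol}(\overline{L}),\]
because a nef divisor dominated by $\overline{L}$ has volume equal to its top self-intersection and volume is monotone. Since $\overline{P_m}\geq\overline{A_m}$, we also have $(\overline{P_m}^{d+1})\geq(\overline{A_m}^{d+1})\to\widehat{\vol}(\overline{L})$. Hence $\delta_m\coloneq(\overline{P_m}^{d+1})-(\overline{A_m}^{d+1})\to0$.

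\emph{Hodge-index estimate.} We use the BFJ-type inequality, derived from the Khovanskii--Teissier and Hodge index inequalities on the $(d+1)$-dimensional variety $\mathcal{X}$. For nef $\overline{A}\leq\overline{P}$ and nef $\overline{M}$ it reads
\[0\leq(\overline{P}^d\cdot\overline{M})-(\overline{A}^d\cdot\overline{M})\leq C\bigl((\overline{P}^{d+1})-(\overline{A}^{d+1})\bigr)^{1/2},\]
with $C$ depending only on upper bounds for $(\overline{P}^{d+1})$ and $(\overline{P}^d\cdot\overline{M})$. These are uniform in $m$, since $\overline{P_m}\leq\overline{L}$ is dominated by a fixed nef big class. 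It follows that $(\overline{A_m}^d\cdot\overline{M})\geq(\overline{P_m}^d\cdot\overline{M})-C\delta_m^{1/2}$. Monotonicity in each nef entry gives $(\overline{P_m}^d\cdot\overline{M})\geq(\overline{B}^d\cdot\overline{M})$. Letting $m\to\infty$ yields the desired $\liminf$ bound.

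\emph{Main obstacle.} The hardest step is making $\max(\overline{A_m},\overline{B})$ a genuine nef model class on a common model over $k$ while keeping it $\leq\overline{L}$. A secondary difficulty is obtaining the uniform constant $C$. If the maximum causes trouble, the fallback is to apply the Fujita approximation \cref{thm:fujita approximation} to produce near-optimal nef classes that dominate both, up to $\varepsilon$ in volume.
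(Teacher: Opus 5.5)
Your route is genuinely different from the paper's. The paper builds no common upper bound. It combines monotonicity of $\widehat{\vol}$ with Siu's inequality \cite[Theorem~5.2.2]{yuan2021adelic} to get $\widehat{\vol}(\overline{L}\otimes\overline{M}^{\otimes(-t)})\geq(\overline{A_m}^{d+1}\mid U_m)_S-(d+1)t\,(\overline{A_m}^{d}\cdot\overline{M}\mid U_m)_S$ for $t\in\Q_{>0}$. It then lets $m\to\infty$, using $(\overline{A_m}^{d+1}\mid U_m)_S\to\widehat{\vol}(\overline{L})$, and lets $t\to 0^+$. Finally it identifies the one-sided derivative of the volume with $(d+1)\langle\overline{L}^d\rangle\cdot\overline{M}$ via the differentiability theorem \cite[Theorem~3.13]{nijerdiff}.

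So the paper black-boxes differentiability of the arithmetic volume. You instead reprove the relevant part of the Boucksom--Favre--Jonsson mechanism: directedness of admissible approximations plus a Teissier-type estimate. This needs only the Hodge index inequality on the $(d+1)$-dimensional total space $\mathcal{X}$ over $k$, which is available exactly because $\dim T=1$. Your argument is more self-contained; the paper's is two lines given \cite{nijerdiff}.

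As written, two steps do not hold up, though both can be repaired.

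\emph{(i) Nefness of the maximum.} \cite[Lemma~10.3]{cai2024abstract} concerns two metrics on the same line bundle and says nothing when the underlying divisors differ. Your fallback via \cref{thm:fujita approximation} would not produce a class dominating both $\overline{A_m}$ and $\overline{B}$ either. The missing argument is as follows.
\begin{itemize}
\item On a common model $\mathcal{X}$, choose a Cartier divisor $D\geq\overline{A_m},\overline{B}$ and put $E=D-\overline{A_m}$, $F=D-\overline{B}$, both effective.
\item Let $\mathcal{X}'$ be the normalized blow-up of $\OO(-E)+\OO(-F)$; this ideal becomes $\OO_{\mathcal{X}'}(-G)$. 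Set $\overline{P_m}:=D-G$.
\item Then $E-G$ and $F-G$ are effective Cartier divisors with disjoint supports, so every curve $C$ fails to lie in at least one of them. Hence $\overline{P_m}\cdot C=\overline{A_m}\cdot C+(E-G)\cdot C\geq 0$ or $\overline{P_m}\cdot C=\overline{B}\cdot C+(F-G)\cdot C\geq 0$.
\item Since $G=\min(E,F)$ valuation-wise, the Green functions of $\overline{P_m}$ are the pointwise maxima of those of $\overline{A_m}$ and $\overline{B}$, so $\overline{P_m}\leq\overline{L}$.
\end{itemize}
You also need the implication $S$-nef $\Rightarrow$ nef on $\mathcal{X}$, which is the converse of what \cref{S-ample} states. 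For $\dim T=1$ it holds because every curve on $\mathcal{X}$ is either vertical or the closure of a closed point of $X$.

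\emph{(ii) The constant.} Your estimate is false if $C$ depends only on upper bounds for $(\overline{P}^{d+1})$ and $(\overline{P}^d\cdot\overline{M})$. On $\P^1\times\P^1$ over $T=\P^1$, take $\overline{A}=0$, $\overline{P}=F_1+\epsilon F_2$ and $\overline{M}=F_2$. The left side equals $1$, while $(\overline{P}^2)=2\epsilon\to 0$.

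The Hodge-index proof does work, but it gives a different constant. Write $P,A,M$ for the classes on $\mathcal{X}$ and split $E=P-A$ and $M$ along $A$ for the form $(x,y)\mapsto P^{i}A^{d-1-i}\cdot x\cdot y$. Use $P^{i}A^{d-1-i}\cdot E^2\geq -P^{i}A^{d-i}\cdot E$ and $P^{i}A^{d-i}\cdot E\leq (P^{d+1})-(A^{d+1})$. This yields the bound with $C$ depending also on a positive lower bound for $(\overline{A}^{d+1})$. That is harmless here, since $(\overline{A_m}^{d+1}\mid U_m)_S\to\widehat{\vol}(\overline{L})>0$.

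With these two fixes, your argument proves the lemma.
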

\begin{proof}
	By definition of positive intersection product, one has 
	\begin{align}\label{eq:differential 1}
		\limsup_{m\to\infty}(\overline{A_m}^d\cdot \overline{M}\mid U_m)_S \leq \langle\overline{L}^d\rangle\cdot \overline{M}.
	\end{align}
	Conversely, for any $t\in\Q_{>0}$ and any $m\in\N_{\geq 1}$, by \cite[Theorem~5.2.2]{yuan2021adelic}, we have 
	\[\widehat{\vol}(\overline{L}\otimes\overline{M}^{\otimes (-t)})\geq \widehat{\vol}(\overline{A_m}\otimes\overline{M}^{\otimes (-t)}) \geq (\overline{A_m}^{d+1}\mid U_m)_S-(d+1)\cdot t\cdot (\overline{A_m}^{d}\cdot \overline M\mid U_m)_S,\]
	i.e.
	\begin{align}\label{eq:differential converse}
		\frac{(\overline{A_m}^{d+1}\mid U_m)_S-\widehat{\vol}(\overline{L}\otimes\overline{M}^{\otimes (-t)})}{t}\leq (d+1)(\overline{A_m}^{d}\cdot \overline M\mid U_m)_S.
	\end{align}
	After taking $\liminf_{m\to\infty}$ and $\lim_{t\to0^+}$ in \eqref{eq:differential converse}, by \cite[Theorem~3.13]{nijerdiff}, one has 
	\begin{align}\label{eq:differential converse1}
		(d+1)\langle\overline{L}^d\rangle\cdot \overline{M}\leq (d+1)\liminf_{m\to\infty}(\overline{A_m}^{d}\cdot \overline M\mid U_m)_S.
	\end{align}
	Then \eqref{eq:differential 1} and \eqref{eq:differential converse1} imply that $\lim_{m\to\infty}(\overline{A_m}^d\cdot \overline{M}\mid U_m)_S = \langle\overline{L}^d\rangle\cdot \overline{M}$. This completes the proof.
\end{proof}

\begin{lemma}\label{vol equal lemma}
	Let $\overline{L}\in \widehat{\Pic}_{S,\Q}(U)_\cpt^\YZ$ be a big compactified $S$-metrized line bundle. If $\widehat{\vol}(\overline{L}) = \widehat{\vol}(\overline{L_{\mathbf c}})$, then for any $\overline{M}\in \widehat{\Pic}_{S,\Q}(U)_{\nef}^\YZ$, one has
	\[\langle\overline{L}^d\rangle\cdot \overline{M} = \langle\overline{L_{\mathbf{c}}}^d\rangle\cdot \overline{M_{\mathbf c}}.\]
\end{lemma}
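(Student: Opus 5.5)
We prove the two inequalities separately. The inequality $\langle\overline{L}^d\rangle\cdot \overline{M} \leq \langle\overline{L_{\mathbf{c}}}^d\rangle\cdot \overline{M_{\mathbf c}}$ holds without the volume hypothesis. Let $(\overline{A},U',\pi)$ be an admissible approximation of $\overline{L}$. We base change it to $(\overline{A_{\mathbf c}},U'_{\mathbf c},\pi_{\mathbf c})$. Then $U'_{\mathbf c}$ is still normal, since $K_{\mathbf c}/K$ is a regular (purely transcendental followed by a generic function field) extension, and $\pi_{\mathbf c}$ is birational. The pull-back $\overline{A_{\mathbf c}}$ is a model $S_{\mathbf c}$-metrized line bundle with $\overline{A_{\mathbf c}}\leq \pi_{\mathbf c}^*\overline{L_{\mathbf c}}$, since effectivity is preserved under base change. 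We also need $\overline{A_{\mathbf c}}$ to be $S_{\mathbf c}$-nef. For this we use the criterion in \cref{S-ample}: after replacing $\overline{A}$ by a model induced by a nef line bundle on a $T$-model (or approximating an $S$-nef model by $S$-ample ones), the argument in the proof of \cref{prop:intersection number after base changes} shows that the pull-back to $\mathcal{X}_{\mathbf c}$ stays nef. By \cref{prop:intersection number after base changes},
\[(\overline{A}^d\cdot\pi^*\overline{M}\mid U')_S=(\overline{A_{\mathbf c}}^d\cdot\pi_{\mathbf c}^*\overline{M_{\mathbf c}}\mid U'_{\mathbf c})_{S_{\mathbf c}}\le \langle\overline{L_{\mathbf c}}^d\rangle\cdot\overline{M_{\mathbf c}}.\]
Taking the supremum gives the inequality.

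For the reverse inequality we use the volume hypothesis together with \cref{fujita type condition lemma} on the curve $S_{\mathbf c}$. By \cref{thm:fujita approximation} and the continuity \eqref{eq:convergence of arithmetic volume}, we choose admissible approximations $(\overline{A_m},U_m,\pi_m)$ of $\overline{L}$ with $(\overline{A_m}^{d+1}\mid U_m)_S\to\widehat{\vol}(\overline{L})$. This uses that for $S$-nef model $\overline{A}$ one has $\widehat{\vol}(\overline{A})=(\overline{A}^{d+1})_S$, which can be checked after passing to the generic curve via \cref{prop:intersection number after base changes} and Yuan--Zhang's volume formula in the one-dimensional case, combined with \cref{comparison of arithmetic volume lemma} for one inequality. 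Their base changes $(\overline{A_{m,\mathbf c}},U_{m,\mathbf c},\pi_{m,\mathbf c})$ are admissible approximations of $\overline{L_{\mathbf c}}$, as in the first step. By invariance of intersection numbers,
\[(\overline{A_{m,\mathbf c}}^{d+1}\mid U_{m,\mathbf c})_{S_{\mathbf c}}=(\overline{A_m}^{d+1}\mid U_m)_S\to\widehat{\vol}(\overline{L})=\widehat{\vol}(\overline{L_{\mathbf c}}).\]
Since $\dim T_{\mathbf c}=1$, \cref{fujita type condition lemma} yields
\[(\overline{A_{m,\mathbf c}}^d\cdot\overline{M_{\mathbf c}}\mid U_{m,\mathbf c})_{S_{\mathbf c}}\to\langle\overline{L_{\mathbf c}}^d\rangle\cdot\overline{M_{\mathbf c}}.\]
The left side equals $(\overline{A_m}^d\cdot\pi_m^*\overline{M}\mid U_m)_S\le\langle\overline{L}^d\rangle\cdot\overline{M}$. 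Passing to the limit gives the reverse inequality.

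The main obstacle is the volume statement in the second step. We need that the Fujita approximations from \cref{thm:fujita approximation} can be arranged with $\widehat{\vol}(\overline{A_m})=(\overline{A_m}^{d+1}\mid U_m)_S$, where $\overline{A_m}$ is $S$-ample hence $S$-nef. Then $p^{-(d+1)}\widehat{\vol}(\overline{A})\ge\widehat{\vol}(\overline{L})-\varepsilon$ becomes a statement about self-intersections, and the upper bound $(\overline{A_m}^{d+1})_S\le\widehat{\vol}(\overline{L})$ follows from monotonicity of volume. I would first try to prove $\widehat{\vol}(\overline{A})\ge(\overline{A}^{d+1})_S$ through a Hilbert–Samuel type result for $S$-ample bundles, citing \cite{chen2022hilbert}. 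The other inequality would follow from $(\overline{A}^{d+1})_S=(\overline{A_{\mathbf c}}^{d+1})_{S_{\mathbf c}}=\widehat{\vol}(\overline{A_{\mathbf c}})\ge\widehat{\vol}(\overline{A})$, using the curve case and \cref{comparison of arithmetic volume lemma}.
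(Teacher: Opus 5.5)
Your proposal is correct and is essentially the paper's proof. Base-changing admissible approximations gives $\langle\overline{L}^d\rangle\cdot \overline{M}\le\langle\overline{L_{\mathbf c}}^d\rangle\cdot\overline{M_{\mathbf c}}$, and a Fujita sequence from \cref{thm:fujita approximation}, transported to $S_{\mathbf c}$ via \cref{prop:intersection number after base changes} and fed into \cref{fujita type condition lemma}, gives the reverse inequality.

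Two small remarks. First, your justification that $\overline{A_{\mathbf c}}$ is $S_{\mathbf c}$-nef (passing to a nef line bundle on a $T$-model) is not supplied by \cref{S-ample}, which only gives the converse implication; the paper, however, also takes this base-change step for granted. Second, the paper reads the convergence $(\overline{A_m}^{d+1}\mid U_m)_S\to\widehat{\vol}(\overline{L})$ directly off \cref{thm:fujita approximation}, and it needs no Hilbert--Samuel input over $S$: under the hypothesis $\widehat{\vol}(\overline{L})=\widehat{\vol}(\overline{L_{\mathbf c}})$, both bounds follow from the volume formula for nef bundles on $S_{\mathbf c}$, together with \cref{comparison of arithmetic volume lemma} and monotonicity of volume.
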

\begin{proof}
	Let $((\overline{A_m},U_m,\pi_m))_{m\in\N_{\geq 1}}$ be a sequence of admissible approximations of $\overline{L}$ such that $\lim_{m\to\infty}(\overline{A_m}^{d+1}\mid U_m)_S = \widehat{\vol}(\overline{L})$. Such a sequence exists by the Fujita approximation theorem over $S$, see \cref{thm:fujita approximation}. Then $((\overline{A_{m,\mathbf c}},U_{m,\mathbf{c}},\pi_{m,\mathbf c}))_{m\in\N_{\geq 1}}$ is a sequence of admissible approximations of $\overline{L_{\mathbf c}}$ and 
	\[\lim_{m\to\infty}(\overline{A_{m,\mathbf c}}^{d+1}\mid U_{m,\mathbf c})_{S_{\mathbf c}} =\lim_{m\to\infty}(\overline{A_m}^{d+1}\mid U_m)_S = \widehat{\vol}(\overline{L}) = \widehat{\vol}(\overline{L_{\mathbf c}}).\]
	Notice that every admissible approximation of $\overline{L}$ over $S$ induces one of $\overline{L_{\mathbf c}}$ over $S_{\mathbf c}$ by base change, so the supremum over $S_{\mathbf c}$ is at least that over $S$. Consequently, for any $\overline{M}\in \widehat{\Pic}_{S,\Q}(U)_{\nef}^\YZ$ and any $m\in\N_{\geq 1}$, one has
	\[ \langle\overline{L_{\mathbf{c}}}^d\rangle\cdot \overline{M_{\mathbf c}} \geq \langle\overline{L}^d\rangle\cdot \overline{M} \geq (\overline{A_m}^{d}\cdot \overline{M}\mid U_m)_S = (\overline{A_{m,\mathbf c}}^{d}\cdot \overline{M_{\mathbf c}}\mid U_{m,\mathbf c})_{S_{\mathbf c}}.\]
	Since $\dim T_{\mathbf c}=1$, Lemma \ref{fujita type condition lemma} applies to the adelic curve $S_{\mathbf c}$ and yields
	\[\langle\overline{L_{\mathbf{c}}}^d\rangle\cdot\overline{M_{\mathbf c}}=\lim_{m\to\infty} (\overline{A_{m,\mathbf c}}^{d}\cdot \overline{M_{\mathbf c}}\mid U_{m,\mathbf c})_{S_{\mathbf c}}.\]
	Hence $\langle\overline{L_{\mathbf{c}}}^d\rangle\cdot \overline{M_{\mathbf c}}= \langle\overline{L}^d\rangle\cdot \overline{M}$.
\end{proof}

\begin{art}\label{measure given by positive intersection}
	Let $\overline{L}=(L,\metr)\in\widehat{\Pic}_{S,\Q}(U)_\cpt^\YZ$ be a big compactified $S$-metrized line bundle. Assume that $\widehat{\vol}(\overline{L}) = \widehat{\vol}(\overline{L_{\mathbf c}})$. By \cref{vol equal lemma}, and as in \cite{nijerdiff}, for any $\overline{M} \in\widehat{\Pic}_{S,\Q}(U)_\integrable^\YZ$, the positive intersection product $\langle\overline{L}^d\rangle\cdot \overline{M}$ can be defined by linearity (since $\widehat{\Pic}_{S,\Q}(U)_\integrable^\YZ$ is generated by differences of nef elements), and we have 
	\[\langle\overline{L}^d\rangle\cdot \overline{M} = \langle\overline{L_{\mathbf{c}}}^d\rangle\cdot \overline{M_{\mathbf c}}.\] 	
	Let $v\in\Omega$. For any $f_{v}\in C_c(U_{v}^\an)$, we set $\overline{\OO_U(f_v)} =(\OO_U, (\metr_\omega)_{\omega\in \Omega})$ to be the $S$-metrized line bundle such that  
	\[-\log\|1\|_\omega\coloneq\begin{cases}
		f_v & \text{ if $\omega=v$;}\\[2pt]
		0& \text{ if $\omega\in\Omega\setminus\{v\}$,}
	\end{cases}\] 
	which lies in $\widehat{\Pic}_{S,\Q}(U)_{\integrable}^\YZ$. If $f_v\geq 0$, then $\overline{\OO_U(f_v)}$ is effective and hence $\langle\overline{L}^d\rangle\cdot\overline{\OO_U(f_v)}\geq 0$. Therefore the functional $f_v\mapsto \frac{\langle\overline{L}^d\rangle\cdot \overline{\OO_U(f_v)} }{\vol(\widetilde L)}$ is positive, where $\widetilde{L}$ is the image of $\overline L$ in $\widetilde{\Pic}_\Q(U)_\cpt$, and $\vol(\widetilde L)$ is the geometric volume of the compactified line bundle $L$ (see \cite[5.2.2]{yuan2021adelic} or \cite[3.1.3]{biswas2024concave}). By the Riesz representation theorem for measures, it defines a Borel measure $\mu_{\overline{L}, v}$ on $U_{v}^\an$.
\end{art}

\subsection{Equidistribution for big case}

\begin{definition}\label{def:small point wrt big}
	Let $\overline{L}=(L,\metr)\in\widehat{\Pic}_{S,\Q}(U)_\cpt^\YZ$, and $\widetilde{L}$  the image of $\overline L$ in $\widetilde{\Pic}_\Q(U)_\cpt$. Assume that $\overline{L}$ is big. We say that $(x_m)_{m\in I}\subset U(\overline{K})$ is \emph{small with respect to $\overline{L}$} if
	\[\lim\limits_{m\in I}h_{\overline{L}}(x_m) = \frac{\widehat{\vol}(\overline{L})}{(d+1)\vol(\widetilde L)}.\]

\end{definition}

\begin{lemma}\label{small for x_c lemma}
	Let $\overline{L}\in\widehat{\Pic}_{S,\Q}(U)_\cpt^\YZ$. Assume that $\overline{L}$ is big. Let $(x_m)_{m\in I}\subset U(\overline{K})$. Assume that $(x_m)_{m\in I}\subset U(\overline{K})$ is a generic net and small with respect to $\overline{L}$. Then 
	\begin{enumerate1}
		\item\label{small for x_c lemma 1} $\widehat{\vol}(\overline{L}) = \widehat{\vol}(\overline{L_{\mathbf c}})$;
		\item\label{small for x_c lemma 2} the pull-back $(x_{m,\mathbf{c}})_{m\in I}\subset U_{\mathbf c}(\overline{K_{\mathbf c}})$ is a generic net and small with respect to $\overline{L_{\mathbf c}}$.
	\end{enumerate1}
\end{lemma}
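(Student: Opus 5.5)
The plan is to sandwich the limit of heights between the two arithmetic volumes. First record what is preserved under the base change $K_{\mathbf c}/K$.

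\emph{Genericity.} By \cref{lemma:generic stable under base changes}, applied to $F=K$, $F'=K_{\mathbf c}$ and $X=U$, the net $(x_{m,\mathbf c})_{m\in I}$ is generic on $U_{\mathbf c}$.

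\emph{Heights.} By \cref{prop:intersection number after base changes} (the $0$-dimensional case, i.e.\ $Y=x_m$), we have $h_{\overline{L_{\mathbf c}}}(x_{m,\mathbf c})=h_{\overline{L}}(x_m)$. Since $\overline{L}$ is only assumed compactified, I would first check this for $\overline{L}\in P_{S,\Q}(U)_\mo$. There the pull-back of a model line bundle restricted to a point is again a model bundle, and the covering property of \cref{prop:B is covered by a curve as adelic curves} gives equality of the integrals over $\Omega$. The general case then follows by passing to limits in the $\overline B$-boundary topology, since heights of points are continuous in that topology.

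\emph{Geometric volume.} We have $\vol(\widetilde{L_{\mathbf c}})=\vol(\widetilde L)$. This holds because $H^0(U_{\mathbf c},\widetilde{L_{\mathbf c}}^{\otimes n})=H^0(U,\widetilde L^{\otimes n})\otimes_KK_{\mathbf c}$ by flat base change on models, followed by passing to the limit.

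Next, the key inequality is over the curve $T_{\mathbf c}$. For any generic net $(y_m)$ on $U_{\mathbf c}$ I want
\[\liminf_{m\in I}h_{\overline{L_{\mathbf c}}}(y_m)\geq \zeta_{\mathrm{ess}}(\overline{L_{\mathbf c}})\geq \frac{\widehat{\vol}(\overline{L_{\mathbf c}})}{(d+1)\vol(\widetilde{L_{\mathbf c}})}.\]
The first inequality is immediate from genericity. For the second, take $t\in\Q$ with $t<\widehat{\vol}(\overline{L_{\mathbf c}})/((d+1)\vol(\widetilde{L_{\mathbf c}}))$, and twist the metric of $\overline{L_{\mathbf c}}$ by a constant at finitely many places so that the total shift is $t$; call the result $\overline{L_{\mathbf c}}(-t)$. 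The shift formula for volumes over a curve (Yuan--Zhang, or \cite{biswas2024concave}) gives
\[\widehat{\vol}(\overline{L_{\mathbf c}}(-t))\geq \widehat{\vol}(\overline{L_{\mathbf c}})-(d+1)t\vol(\widetilde{L_{\mathbf c}})>0.\]
Hence $\overline{L_{\mathbf c}}(-t)$ is big, and so effective by \cref{rmk:big implies effective}. Therefore $h_{\overline{L_{\mathbf c}}}\geq t$ outside the support of a section. This step is the main obstacle: I must make sure the shift inequality and effectivity hold for compactified (not only model) bundles. I would reduce to models via \eqref{eq:convergence of arithmetic volume} and the continuity of heights.

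Finally, combine everything. Write $A=\widehat{\vol}(\overline{L})/((d+1)\vol(\widetilde L))$. Using smallness, the invariance of heights and volumes, the key inequality, and \cref{comparison of arithmetic volume lemma}, we get
\[A=\lim_m h_{\overline L}(x_m)=\lim_m h_{\overline{L_{\mathbf c}}}(x_{m,\mathbf c})\geq \frac{\widehat{\vol}(\overline{L_{\mathbf c}})}{(d+1)\vol(\widetilde{L_{\mathbf c}})}\geq A.\]
Thus all the inequalities are equalities. Since $\vol(\widetilde{L_{\mathbf c}})=\vol(\widetilde L)$, this forces $\widehat{\vol}(\overline{L_{\mathbf c}})=\widehat{\vol}(\overline L)$, which is \ref{small for x_c lemma 1}. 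The same chain then says $\lim_m h_{\overline{L_{\mathbf c}}}(x_{m,\mathbf c})=\widehat{\vol}(\overline{L_{\mathbf c}})/((d+1)\vol(\widetilde{L_{\mathbf c}}))$, i.e.\ the net $(x_{m,\mathbf c})$ is small with respect to $\overline{L_{\mathbf c}}$. Together with the genericity shown above, this gives \ref{small for x_c lemma 2}.

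When $\dim T=1$ there is nothing to prove, since $S_{\mathbf c}=S$ by convention.
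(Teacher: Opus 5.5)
Your proof is correct and follows the paper's argument. Both proofs get genericity from \cref{lemma:generic stable under base changes}, use invariance of heights and of $\vol(\widetilde L)$ under the base change, use the lower bound $\zeta_{\mathrm{ess}}(\overline{L_{\mathbf c}})\geq \widehat{\vol}(\overline{L_{\mathbf c}})/((d+1)\vol(\widetilde{L_{\mathbf c}}))$ on the curve, and combine these with \cref{comparison of arithmetic volume lemma} into the same sandwich; the only differences are that the paper cites \cite[Theorem~4.5.3]{biswas2024concave} for that lower bound where you sketch it via a constant twist and \cref{rmk:big implies effective}, and that your $\liminf$ form of the genericity step is slightly cleaner than the paper's $\limsup$.
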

\begin{proof}
	By \cref{lemma:generic stable under base changes}, $(x_{m,\mathbf{c}})_{m\in I}$ is a generic net in $U$. Then
	$$\limsup\limits_{m\in I}h_{\overline{L_{\mathbf{c}}}}(x_{m,\mathbf c})\geq \zeta_{\mathrm{ess}}(\overline{L_{\mathbf{c}}})\geq  \frac{\widehat{\vol}(\overline{L_{\mathbf{c}}})}{(d+1)\vol(L_{\mathbf{c}})}.$$
	The last inequality is from \cite[Theorem~4.5.3]{biswas2024concave} (it holds if we replace $\widehat{\vol}_\chi^\mathrm{num}(\overline{L_{\mathbf{c}}})$ by $\widehat{\vol}(\overline{L_{\mathbf{c}}})$). Since $\vol(L_{\mathbf{c}}) =\vol(L)$ and $\widehat{\vol}(\overline{L_{\mathbf{c}}}) \geq \widehat{\vol}(\overline{L})$ (by \cref{comparison of arithmetic volume lemma}), we have 
	\[\lim\limits_{m\in I}h_{\overline{L}}(x_m)=\lim\limits_{m\in I}h_{\overline{L_{\mathbf{c}}}}(x_{m,\mathbf c})=\limsup\limits_{m\in I}h_{\overline{L_{\mathbf{c}}}}(x_{m,\mathbf c})\geq \frac{\widehat{\vol}(\overline{L_{\mathbf{c}}})}{(d+1)\vol(L_{\mathbf{c}})}\geq \frac{\widehat{\vol}(\overline{L})}{(d+1)\vol(L)}\]
	where the first equality follows from \cref{prop:intersection number after base changes}. By our assumption that $(x_m)_{m\in I}$ is small with respect to $\overline{L}$, i.e. $\lim\limits_{m\in I}h_{\overline{L}}(x_m)= \frac{\widehat{\vol}(\overline{L})}{(d+1)\vol(\widetilde L)}$, one has 
	\[\lim\limits_{m\in I}h_{\overline{L_{\mathbf{c}}}}(x_{m,\mathbf c})= \frac{\widehat{\vol}(\overline{L_{\mathbf{c}}})}{(d+1)\vol(L_{\mathbf{c}})}.\]
	This completes the proof of the lemma.
\end{proof}

\begin{theorem}
	\label{theorem:equidsitributionforfinitenergy}
Let $\overline{L}=(L,\metr)\in \widehat{\Pic}_{S,\Q}(U)^{\YZ}_{\cpt}$. Assume that $\overline{L}$ is big.  
Let $(x_m)_{m\in I}$ be a generic net of points which is small with respect to $\overline{L}$. Then  $\widehat{\vol}(\overline{L}) = \widehat{\vol}(\overline{L}_{\mathbf c})$. Moreover, for any $v\in\Omega$ and any $f_v\in C_c(U_v^\an),$ 
\begin{align}\label{eq:equidistribution}
	\lim_{m\in I} \frac{\nu(v)}{\#O(x_m)}\sum_{x^\sigma_m\in O(x_m)}f_v(x^\sigma_m)=\frac{\langle \overline{L}^d\rangle\cdot \overline{\OO_U(f_v)}}{\vol(\widetilde{L})},
\end{align}
where $\langle \overline{L}^d\rangle\cdot \overline{\OO_U(f_v)}$ is well-defined by \cref{measure given by positive intersection} and $\widetilde{L}$ is the image of $\overline L$ in $\widetilde{\Pic}_\Q(U)_\cpt$.
\end{theorem}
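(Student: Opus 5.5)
The first assertion, $\widehat{\vol}(\overline{L})=\widehat{\vol}(\overline{L_{\mathbf c}})$, is exactly \cref{small for x_c lemma}~\ref{small for x_c lemma 1}, so only \eqref{eq:equidistribution} needs an argument. The plan is to reduce to the case $\dim T=1$ via the generic curve, as in the proof of \cref{thm:equidistribution over function fields}. Then, over the curve $S_{\mathbf c}$, I would run the variational argument of Yuan--Zhang, using the positive intersection product in place of $(\overline{L}^d\cdot\overline{M}\mid U)_S$.

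\textbf{Step 1 (curve case).} Assume $\dim T=1$ and let $\overline{M}=\overline{\OO_U(f_v)}$. By the smallness hypothesis and genericity, for every $t\in\Q$ we have
\[\liminf_{m} h_{\overline{L}}(x_m)+t\,h_{\overline{M}}(x_m)\ \geq\ \zeta_{\mathrm{ess}}(\overline{L}\otimes\overline{M}^{\otimes t})\ \geq\ \frac{\widehat{\vol}(\overline{L}\otimes\overline{M}^{\otimes t})}{(d+1)\vol(\widetilde L)}.\]
The second inequality is \cite[Theorem~4.5.3]{biswas2024concave}, applied with $\widehat{\vol}$; it is valid because $\overline{L}\otimes\overline{M}^{\otimes t}$ is still big for $|t|$ small and has the same geometric part $\widetilde L$. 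Now use the differentiability of the arithmetic volume, as in \cite[Theorem~3.13]{nijerdiff}:
\[\frac{d}{dt}\Big|_{t=0}\widehat{\vol}(\overline{L}\otimes\overline{M}^{\otimes t})=(d+1)\langle\overline{L}^d\rangle\cdot\overline{M}.\]
Subtracting the smallness limit, dividing by $t>0$ and by $t<0$, and letting $t\to0$ gives
\[\lim_m h_{\overline{M}}(x_m)=\frac{\langle\overline{L}^d\rangle\cdot\overline{M}}{\vol(\widetilde L)}.\]
By \cref{prop:intersection on subvariety} in dimension $0$, we have $h_{\overline{M}}(x_m)=\frac{\nu(v)}{\#O(x_m)}\sum_{\sigma} f_v(x_m^\sigma)$, which is \eqref{eq:equidistribution}. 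Since $\overline{M}$ is only integrable, the derivative formula must be applied through linearity: write $f_v$ as a difference of functions giving nef metrics, or first treat model functions and then pass to all $f_v\in C_c(U_v^\an)$ by Gubler's density and positivity of both sides.

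\textbf{Step 2 (higher-dimensional base).} Assume $\dim T\ge2$. By \cref{small for x_c lemma}, the volumes agree and $(x_{m,\mathbf c})$ is generic and small with respect to $\overline{L_{\mathbf c}}$. Since $\vol(\widetilde{L_{\mathbf c}})=\vol(\widetilde L)$, Step 1 over $S_{\mathbf c}$ applied to $\overline{M_{\mathbf c}}=\overline{\OO_U(f_v)}_{\mathbf c}$ yields
\[\lim_m h_{\overline{M_{\mathbf c}}}(x_{m,\mathbf c})=\frac{\langle\overline{L_{\mathbf c}}^d\rangle\cdot\overline{M_{\mathbf c}}}{\vol(\widetilde L)}.\]
Note that $\overline{M_{\mathbf c}}$ has nontrivial metrics only at the finitely many places in $\alpha_\#^{-1}(v)$, so Step 1 should be run for this $\overline{M_{\mathbf c}}$ directly rather than one place at a time. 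The left-hand side equals $h_{\overline{M}}(x_m)$ by \cref{prop:intersection number after base changes}. The right-hand side equals $\langle\overline{L}^d\rangle\cdot\overline{M}/\vol(\widetilde L)$ by \cref{vol equal lemma} and \cref{measure given by positive intersection}, which apply because the volumes agree. Finally, convert $h_{\overline{M}}(x_m)$ into the orbit average via \cref{prop:intersection on subvariety}.

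\textbf{Main obstacle.} The hard step is the differentiability of $\widehat{\vol}$ in the quasi-projective compactified setting over $S_{\mathbf c}$. Concretely, this means identifying the derivative with the positive intersection of \cref{positive intersection def} for an $\overline{M}$ supported at one place. I would try to derive it from \cref{fujita type condition lemma}. That lemma already supplies the lower bound on the derivative, via Yuan--Zhang's bigness inequality applied to admissible approximations. The upper bound should come from applying the same inequality with $-t$, using the concavity of $\widehat{\vol}^{1/(d+1)}$. For the quasi-projective case I would approximate by models using \eqref{eq:convergence of arithmetic volume}.
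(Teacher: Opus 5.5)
Your proposal is correct and, for $\dim T\ge 2$, follows the paper's argument: \cref{small for x_c lemma}, then the curve case over $S_{\mathbf c}$, then transfer back via \cref{prop:intersection number after base changes}, \cref{vol equal lemma} (extended by linearity as in \cref{measure given by positive intersection}) and \cref{prop:intersection on subvariety}. There are two differences: the paper does not reprove the case $\dim T=1$ but cites \cite[Theorem~6.5.3]{biswas2024concave}, so your Step~1 and the ``main obstacle'' are not needed; and it applies the curve case separately at each $\omega\in\alpha_\#^{-1}(v)$ to $\pi_\omega^*f_v$ and sums with weights $\nu_{\mathbf c}(\omega)$, using linearity of $\langle\overline{L_{\mathbf c}}^d\rangle\cdot(\,\cdot\,)$, rather than treating $\overline{M_{\mathbf c}}$ all at once.
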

\begin{proof}
	By \cite[Theorem~6.5.3]{biswas2024concave}, the theorem holds when $\dim T=1$.
	
	When $\dim T\geq 2$, by \cref{small for x_c lemma}~\ref{small for x_c lemma 2}, the pull-back $(x_{m,\mathbf{c}})_{m\in I}\subset U_{\mathbf c}(\overline{K_{\mathbf c}})$ is a generic net and small with respect to $\overline{L_{\mathbf c}}$. 
	For any $\omega\in \alpha_\#^{-1}(v)$, let $\pi_{\omega}\colon U_{\mathbf{c},\omega}\to U_v$ be the canonical morphism. Since $\dim(T_{\mathbf{c}})=1$, the one-dimensional case established above yields
	\begin{align}\label{eq:equidistribution over generic curve big}
		\lim_{m\in I}\frac{1}{\#O(x_{m,\mathbf{c}})}\sum_{x_{m,\mathbf{c}}^\sigma\in O(x_{m,\mathbf{c}})}\pi^*_\omega(f_v)(x_{m,\mathbf{c}}^{\sigma})=\frac{\langle \overline{L_{\mathbf c}}^d\rangle\cdot \overline{\OO_{U_{\mathbf c}}(\pi_{\omega}^*(f_{v}))}}{\vol(\widetilde{L_{\mathbf c}})}.
	\end{align}
	where $\widetilde{L_{\mathbf c}}$ is the image of $\overline{L_{\mathbf c}}$ in $\widetilde{\Pic}_\Q(U_{\mathbf c})_\cpt$.
	By \cite[Proposition~2.3]{burgos2019the} (or \cref{prop:intersection on subvariety}) and \eqref{eq:equidistribution over generic curve big}, we have 
	\begin{align*}
		\lim_{m\in I}(\overline{\OO_{U}(f_{v})}_{\mathbf c}\mid x_{m,\mathbf{c}})_{S_{\mathbf{c}}}
		=&\lim_{m\in I}\sum_{\omega\in\alpha_\#^{-1}(v)}\frac{\nu_{\mathbf{c}}(\omega)}{\#O(x_{m,\mathbf{c}})}\sum_{x_{m,\mathbf{c}}^\sigma\in O(x_{m,\mathbf{c}})}\pi^*_\omega(f_v)(x_{m,\mathbf{c}}^{\sigma}) \\
		=& \sum_{\omega\in \alpha_\#^{-1}(v)}\frac{\langle \overline{L_{\mathbf c}}^d\rangle\cdot \overline{\OO_{U_{\mathbf c}}(\pi_{\omega}^*(f_{v}))}}{\vol(\widetilde{L_{\mathbf c}})}\\
		=&\frac{\langle \overline{L_{\mathbf c}}^d\rangle\cdot \overline{\OO_{U}(f_{v})}_{\mathbf{c}}}{\vol(\widetilde{L_{\mathbf c}})}.
	\end{align*}
	By \cref{prop:intersection number after base changes}, \cref{small for x_c lemma}~\ref{small for x_c lemma 1}, and \cref{vol equal lemma}, the above equality implies that
	\[\lim_{m\in I}(\overline{\OO_{U}(f_{v})}\mid x_m)_{S}=\frac{\langle \overline{L}^d\rangle\cdot \overline{\OO_U(f_v)}}{\vol(\widetilde L)},\]
	so
	\begin{align}\label{eq:equid for points big 1}
		\lim_{m\in I}\frac{1}{\#O(x_m)}\sum_{x_m^{\sigma}\in O(x_m)}f_v(x_m^{\sigma})= \frac{\langle \overline{L}^d\rangle\cdot \overline{\OO_U(f_v)}}{\vol(\widetilde L)}.
	\end{align} 
	Together with the first part of the proof, this establishes the theorem.
\end{proof}

\end{document}